\documentclass[reqno]{amsart}
\usepackage{amssymb,latexsym,amsmath,amsthm,enumerate,amsbsy}
\usepackage{cite}
\usepackage[hidelinks]{hyperref}

\makeatletter
\@namedef{subjclassname@2020}{\textup{2020} Mathematics Subject Classification}
\makeatother

\theoremstyle{plain}
\newtheorem{theorem}{Theorem}[section]
\newtheorem{lemma}[theorem]{Lemma}
\newtheorem{corollary}[theorem]{Corollary}
\newtheorem{proposition}[theorem]{Proposition}
\theoremstyle{definition}
\newtheorem{remark}[theorem]{Remark}

\DeclareMathOperator{\supp}{supp}

\hypersetup{
  pdftitle={Finite bases for Brandt semirings and naturally semilattice-ordered inverse semigroups},
  pdfauthor={Xiaolei Shao and Mengya Yue}
}

\begin{document}

\title[Finite bases for Brandt semirings]
{Finite bases for Brandt semirings and naturally semilattice-ordered inverse semigroups}

\author{Xiaolei Shao}
\address{School of Mathematics, Northwest University, Xi'an, 710127, Shaanxi, P.R. China}

\author{Mengya Yue}
\address{School of Mathematics, Northwest University, Xi'an, 710127, Shaanxi, P.R. China}

\subjclass[2020]{16Y60, 03C05, 08B15, 08B05}
\keywords{additively idempotent semiring, Brandt semigroup, finite basis problem, inverse semigroup, strongly nonfinitely based}
\date{}

\begin{abstract}
We prove that Brandt semirings over nontrivial groups with at
least two indices admit no identity basis with a bounded number
of variables. A uniform family of finite witnesses has size
linear in the variable bound and applies to groups of arbitrary
cardinality. A commutative construction yields the corresponding
result for flat extensions of groups of unbounded exponent and
gives an exact finite basis criterion for arbitrary flat group
extensions. Structural representations lead to a classification
of finite natural inverse semirings and a single
identity characterizing locally Clifford natural inverse
structure among regular semirings. We also describe products,
orthogonal sums, minimum generators, colored graph identities
and linear variable complexity. Further results concern infinite
generating algebras and inherent nonfinite basability.
\end{abstract}

\maketitle

\section{Introduction}
\label{sec:introduction}

The finite basis problem asks whether the identities of an
algebra follow from a finite set of identities. For semirings,
the interaction between addition and multiplication gives rise
to finite-basis phenomena that are sensitive to both operations.
Naturally semilattice-ordered inverse semigroups provide a setting in which
the additive operation is determined by the multiplicative
structure: whenever binary meets exist in the natural order,
they form a distributive semilattice operation.

A Brandt semigroup consists of matrix units labelled by elements
of a group, together with a zero. Giving it flat addition,
so that the sum of two distinct elements is zero, produces a
Brandt semiring. The number of indices and the structure of the
group play different roles in its identities. With one index,
the construction is the flat extension of the group. With at
least two indices, it also contains the five-element semiring
of matrix units. These two constructions determine the variety
of every Brandt semiring through a join formula.

Gusev and Volkov \cite{GV} studied semiring identities of finite
inverse semigroups with natural meet addition. Together with
Shaprynski\v{\i}'s finite basis for the five-element Brandt
semiring \cite{Sh}, their results settle the finite combinatorial
case. Jackson's work \cite{Jackson08,Jackson19} gives the finite
flat-group and natural Clifford classifications. Related methods
and finite-basis questions for flat semirings appear in
\cite{JRZ,RJZL}. For a finite group with a nonabelian Sylow
subgroup, the nonfinite basability of its Brandt semiring follows
from \cite[Theorem 6.1]{JRZ}.

Our first main result determines the finite-basis problem for
Brandt semirings over arbitrary groups and arbitrary index sets
of size at least two: such a semiring is finitely based exactly
when the group is trivial. In every other case its identities
have no basis with a bounded number of variables
(Theorem~\ref{thm:all-groups} and
Corollary~\ref{cor:brandt-classification}). The proof uses one
family of finite flat semirings simultaneously for all
nontrivial groups. For each prescribed variable bound, the
witness lies outside the variety generated by all Brandt
semirings, while each subsemiring generated by sufficiently few
elements lies in every nontrivial cyclic Brandt variety. The
witnesses have linear size (Theorem~\ref{thm:uniform}).
They also give intervals of nonfinitely based varieties and
linear lower and upper bounds on variable complexity.

The construction starts from the vertex-stars of an edge-colored
prism graph. Removing one star gives a semiring that fails a
universal Brandt identity. Small generated subsemirings are
represented using integral incidence relations, group-labelled
matrix units and Rees quotients. A colored-graph formulation
describes the corresponding identities for a wider class of
graphs. A commutative version gives uniform witnesses for flat
extensions of groups of unbounded exponent. The nonfinite
basability of the flat extension of the infinite cyclic group
was established in \cite{MS25,GRZ26}; our construction yields
the bounded-variable conclusion for every group of unbounded
exponent. Together with the finite-exponent correspondence of
\cite{RJZL}, it gives the criterion in
Theorem~\ref{thm:flat-group-all}.

Subdirect Brandt representations connect these results to
natural inverse semirings. A finite such semiring
is finitely based exactly when it is locally Clifford and
combinatorial, or when it is Clifford and all Sylow subgroups
of its maximal subgroups are abelian
(Theorem~\ref{thm:finite-natural-classification}). This answers
\cite[Problem 7.7(3)]{JRZ}. Among regular semirings, a single
four-variable identity characterizes locally Clifford inverse
multiplication with natural meet addition
(Theorem~\ref{thm:regular-flat-identity}). The representations
also yield exact formulas for products and orthogonal sums,
minimum-size generators of cyclic Brandt varieties, and
classifications of the smallest negative examples. Infinite
constructions and finite generating replacements extend the
analysis of finite basability, and Zimin minimality gives the
inherent-nonfinite-basis classification.

Section~\ref{sec:preliminaries} fixes the notation and basic
constructions. Section~\ref{sec:brandt} proves the Brandt
results and their graph and quantitative forms.
Section~\ref{sec:unbounded-flat-groups} treats flat group
extensions. The structural representations and finite
classifications occupy Section~\ref{sec:natural-classification}.
Section~\ref{sec:products-infinite} concerns products, adjoining
identities and infinite generating algebras, and
Section~\ref{sec:inherent} treats inherent nonfinite basability.

\section{Preliminaries}
\label{sec:preliminaries}

\subsection{Semirings and inverse semigroups}

An \emph{additively idempotent semiring} (ai-semiring) has
associative binary operations $+$ and $\cdot$, with commutative
idempotent addition and multiplication distributive on both sides.
No multiplicative identity or additive identity is required.

A \emph{word} is a nonempty finite product of variables.
Every constant-free semiring term can be represented as a
finite nonempty sum of words.  We denote words and terms by
bold lowercase letters $\mathbf{u},\mathbf{v},\mathbf{w},\ldots$,
and use ordinary italic letters for variables and algebra elements.
For terms $\mathbf{u},\mathbf{v}$, we write
$\mathbf{u}\preceq\mathbf{v}$ for the identity
$\mathbf{u}+\mathbf{v}\approx\mathbf{v}$.
The \emph{additive order} on an ai-semiring is defined by
\[
 a\leq b\quad\Longleftrightarrow\quad a+b=b.
\]

A \emph{flat semiring} here has an element $0$ with
\begin{equation}\label{eq:flat}
 x0=0x=0,\qquad
 x+y=\begin{cases}x,&x=y,\\0,&x\ne y.\end{cases}
\end{equation}
Thus $x+0=0$; the designated zero absorbs addition and is not
an additive identity, except in the one-element algebra.
In the language $\{+,\cdot\}$, it is an element rather than a
constant symbol.  All constructions also preserve the constant
when the language is $\{+,\cdot,0\}$.

For a cancellative semigroup $S$, its \emph{flat extension}
$\flat(S)$ is obtained by adjoining an absorbing zero and
using flat addition \cite{JRZ,RJZL}.

For a group $G$ and a nonempty set $I$, define
\[
 B_I(G)=\{0\}\dot\cup
 \{(i,g,j):i,j\in I,\ g\in G\}
\]
with flat addition and multiplication
\begin{equation}\label{eq:brandt}
 (i,g,j)(k,h,\ell)=
 \begin{cases}(i,gh,\ell),&j=k,\\0,&j\ne k,\end{cases}
 \qquad x0=0x=0.
\end{equation}
This is the flat Brandt ai-semiring.
Write $B_n(G)$ when $I=\{1,\ldots,n\}$.
For the trivial group, we abbreviate $B_I(\{1\})$ and
$B_n(\{1\})$ to $B_I$ and $B_n$, respectively.
In particular, $B_2$ is the five-element Brandt semiring.
We write $e_{ij}=(i,1,j)$ for the matrix units.
For an additively written group $K$, the label of a nonzero
product is the sum of its labels; $0_K$ denotes the group
identity, distinct from the semiring zero.

Write $\mathsf{V}(A)$ for the variety generated by an algebra $A$.
We use closure under subalgebras, direct products and homomorphic
images, and the equivalence
$S\in\mathsf{V}(A)$ if and only if $S$ satisfies all identities of $A$.
Let
\[
 \mathcal{B}=\mathsf{V}\{B_n(G):n\geq2,\ G\text{ a group}\}.
\]
An algebra is \emph{finitely based} if its identities follow
from a finite family of identities. A \emph{bounded-variable
basis} is an identity basis in which a fixed finite number
bounds the number of distinct variables in each identity.
We use the specified semiring language throughout.

An inverse semigroup is a semigroup in which every element
$s$ has a unique inverse $s^{-1}$ satisfying
$ss^{-1}s=s$ and $s^{-1}ss^{-1}=s^{-1}$.
Its natural partial order $\leq_{\rm nat}$ is defined by
\[
 a\leq_{\rm nat} b\quad\Longleftrightarrow\quad a=ba^{-1}a.
\]
We call the semiring obtained by taking natural meet as addition
a \emph{naturally semilattice-ordered inverse semigroup},
or a \emph{natural inverse semiring} for short.
In this case the additive order is the reverse of
$\leq_{\rm nat}$.

Here \emph{regular} always means regularity of the
multiplicative semigroup: for every $a$ there is $x$
with $axa=a$.  A semigroup is \emph{combinatorial} if
all its subgroups are trivial.  An inverse semigroup is
\emph{Clifford} if every element lies in a subgroup.

An inverse semigroup is \emph{locally Clifford} if every
local monoid $eSe$, $e\in E(S)$, is Clifford.
Every Clifford inverse semigroup and every primitive
inverse semigroup has this property.
For general inverse semigroups we distinguish
\[
 d(s)=s^{-1}s,\qquad r(s)=ss^{-1}.
\]
Natural meets distribute over multiplication
whenever all binary meets exist.  Indeed, if $u\leq_{\rm nat} as,at$,
then $a^{-1}u\leq_{\rm nat} a^{-1}as\leq_{\rm nat} s$ and similarly
$a^{-1}u\leq_{\rm nat} t$, while $aa^{-1}u=u$.
Thus $u\leq_{\rm nat} a(s\wedge t)$.  The opposite inequality
follows from monotonicity, proving
$as\wedge at=a(s\wedge t)$; the right-hand version
is proved symmetrically.

For $\lvert I\rvert\geq2$, let $B_I^1(G)$ denote the Brandt
semigroup with a multiplicative identity adjoined, equipped
with natural meet addition. In particular, $B_2^1$ is the
six-element naturally semilattice-ordered Brandt monoid.
Both signatures treat its identity as an element of the algebra.

\subsection{Brandt constructions and absorbing zeros}
\label{sec:orthogonal}

\begin{lemma}[Zero smash products]\label{lem:smash}
For a nonempty family of groups $G_\lambda$ and nonempty
index sets $I_\lambda$, let $J$ consist of the tuples
having at least one zero coordinate in
$\prod_\lambda B_{I_\lambda}(G_\lambda)$.
Collapsing $J$ and keeping every other tuple distinct is a
semiring quotient, and
\begin{equation}\label{eq:smash}
 \left(\prod_\lambda B_{I_\lambda}(G_\lambda)\right)/J
 \cong B_{\prod_\lambda I_\lambda}
          \left(\prod_\lambda G_\lambda\right).
\end{equation}
The family and the individual sets may be infinite.
\end{lemma}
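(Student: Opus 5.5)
We show that $J$ is a congruence class (an ideal absorbing both operations), so that collapsing it gives a quotient, and then exhibit the explicit isomorphism. Write $P=\prod_\lambda B_{I_\lambda}(G_\lambda)$. An element of $P\setminus J$ is a tuple $((i_\lambda,g_\lambda,j_\lambda))_\lambda$ with every coordinate nonzero. Such a tuple is determined by the three families $(i_\lambda)\in\prod I_\lambda$, $(g_\lambda)\in\prod G_\lambda$ and $(j_\lambda)\in\prod I_\lambda$. This defines a bijection
\[
 \varphi\colon P\setminus J\to B_{\prod I_\lambda}\Bigl(\prod G_\lambda\Bigr)\setminus\{0\},
\]
and we extend it by $\varphi(J)=0$.

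First we check that the equivalence $\theta$ with classes $J$ and singletons is a congruence. Since $0$ absorbs both operations in each factor, $J$ is a two-sided multiplicative ideal of $P$. It is also additively absorbing: if $a\in J$, the coordinate where $a$ is $0$ stays $0$ in $a+b$, so $a+b\in J$. Now suppose $a\,\theta\,a'$ and $b$ is arbitrary. If $a=a'$ there is nothing to prove. Otherwise $a,a'\in J$, and then $ab,a'b,ba,ba',a+b,a'+b$ all lie in $J$. So $\theta$ is compatible with both operations. The quotient $P/J$ is therefore a semiring in which the class $J$ is an absorbing zero.

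Next we check that $\varphi$ is a homomorphism, using coordinatewise computation with \eqref{eq:brandt} and \eqref{eq:flat}. Take two tuples $a=(i_\lambda,g_\lambda,j_\lambda)$ and $b=(k_\lambda,h_\lambda,\ell_\lambda)$ outside $J$.

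For multiplication, $ab\notin J$ if and only if $j_\lambda=k_\lambda$ for all $\lambda$, that is, if and only if $(j_\lambda)=(k_\lambda)$ in $\prod I_\lambda$. In that case $ab=(i_\lambda,g_\lambda h_\lambda,\ell_\lambda)$, which corresponds to $\bigl((i_\lambda),(g_\lambda)(h_\lambda),(\ell_\lambda)\bigr)$. This is exactly the Brandt product in $B_{\prod I_\lambda}(\prod G_\lambda)$, and the zero cases match as well.

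For addition, $a+b\notin J$ if and only if $a_\lambda=b_\lambda$ for every $\lambda$, that is, $a=b$. So $a+b$ equals $a$ when $a=b$ and lies in $J$ otherwise, and $\varphi$ carries this to the flat sum. Sums and products involving the class $J$ go to $0$ on both sides.

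Hence $\varphi$ induces a bijective homomorphism $P/J\to B_{\prod I_\lambda}(\prod G_\lambda)$, which gives \eqref{eq:smash}. Nothing in the argument uses finiteness of the family or of the sets $I_\lambda$, since all conditions are checked coordinatewise.

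The only point needing care is the flat addition when the factors are nonzero but differ in some coordinate. There we need that the sum of distinct nonzero elements is $0$ in every factor, so that any discrepancy in any coordinate forces the sum into $J$. This holds by \eqref{eq:flat}.
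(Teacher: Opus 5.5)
Your proposal is correct and follows essentially the same route as the paper. Both arguments show that $J$ is a two-sided ideal absorbing addition, so its Rees relation is a congruence. Both then collect the left indices, group labels and right indices of a surviving tuple, and check coordinatewise that products survive exactly when the middle indices match and that sums of distinct surviving tuples collapse into $J$.
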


\begin{proof}
A zero coordinate stays zero after multiplication on either
side or addition of any tuple.  Hence $J$ is a two-sided
multiplicative ideal and absorbs addition; its Rees relation
is a congruence.  For a surviving tuple, collect its left
indices, group labels, and right indices into three tuples.
A product survives exactly when every pair of corresponding
middle indices agrees.  Two distinct surviving tuples differ
in some coordinate, so their sum has a zero coordinate and
collapses.  These facts prove both operations in
\eqref{eq:smash}, without a finiteness assumption.
\end{proof}

For every group $G$, the flat extension satisfies
$\flat(G)=B_1(G)$.

\begin{proposition}\label{prop:join}
For every group $G$ and nonempty set $I$,
\begin{equation}\label{eq:join}
 \mathsf{V}(B_I(G))=\mathsf{V}(B_I)\vee\mathsf{V}(\flat(G)).
\end{equation}
If $|I|\geq2$, then
\[
 \mathsf{V}(B_I)=\mathsf{V}(B_2),\qquad
 \mathsf{V}(B_I(G))=\mathsf{V}(B_2(G)).
\]
\end{proposition}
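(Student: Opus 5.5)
We prove the two inclusions of \eqref{eq:join} separately, then deduce the index reductions.

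For $\supseteq$, we show that both $B_I$ and $\flat(G)$ lie in $\mathsf{V}(B_I(G))$. The set $\{0\}\cup\{(i,1,j)\}$ is closed under both operations: labels multiply to $1$, and sums are flat. So it is a subsemiring isomorphic to $B_I$. Fix $i_0\in I$. Then $\{0\}\cup\{(i_0,g,i_0):g\in G\}$ is a subsemiring, and $(i_0,g,i_0)\mapsto g$ identifies it with $B_1(G)=\flat(G)$.

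For $\subseteq$, we apply Lemma~\ref{lem:smash} to the two-member family $B_I(\{1\})=B_I$ and $B_{\{\ast\}}(G)=\flat(G)$. This exhibits $B_{I\times\{\ast\}}(\{1\}\times G)$ as a quotient of $B_I\times\flat(G)$. The index set $I\times\{\ast\}$ is in bijection with $I$, and $\{1\}\times G\cong G$. The isomorphism type of $B_I(G)$ depends only on the cardinality of $I$ and the isomorphism type of $G$, so $B_I(G)$ is a homomorphic image of $B_I\times\flat(G)$. Hence $B_I(G)\in\mathsf{V}(B_I)\vee\mathsf{V}(\flat(G))$, which proves \eqref{eq:join}.

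Now let $|I|\geq2$. Choose two indices in $I$; the matrix units over them together with $0$ form a subsemiring isomorphic to $B_2$. So $\mathsf{V}(B_2)\subseteq\mathsf{V}(B_I)$. For the reverse inclusion we apply Lemma~\ref{lem:smash} with trivial groups. A product of $\kappa$ copies of $B_2$ has quotient $B_{\{1,2\}^\kappa}$, where $\{1,2\}^\kappa$ has cardinality $2^\kappa$. Taking $\kappa=|I|$ gives $2^\kappa\geq|I|$, so $B_I$ embeds as a subsemiring of $B_{\{1,2\}^\kappa}$ by restricting to a subset of indices of size $|I|$. This works for infinite $I$ as well, since the lemma allows infinite families. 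Thus $B_I\in\mathsf{V}(B_2)$, and so $\mathsf{V}(B_I)=\mathsf{V}(B_2)$. Substituting this into \eqref{eq:join} for both $I$ and $\{1,2\}$ gives $\mathsf{V}(B_I(G))=\mathsf{V}(B_2)\vee\mathsf{V}(\flat(G))=\mathsf{V}(B_2(G))$.

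The only step needing care is the upper bound $B_I\in\mathsf{V}(B_2)$. Here the smash-product lemma has to be used with an arbitrary power of $B_2$, and one must check that restricting to a subset of indices yields a subsemiring. Both operations of $B_J$ preserve the set of elements whose indices lie in a fixed subset $I\subseteq J$, together with $0$, so this check is routine.
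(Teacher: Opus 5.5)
Your proof is correct and follows the paper's argument: you use the same identity-label and diagonal-corner embeddings, the same two-factor zero smash, and the same embedding of $B_I$ into a smash power of $B_2$ for the index reduction. The only difference is that you handle infinite $I$ by taking an infinite power of $B_2$ in Lemma~\ref{lem:smash}, which the lemma explicitly permits. The paper instead uses a finite power for finite $I$ and, for infinite $I$, observes that each evaluation of an identity involves only finitely many indices. Both are valid; yours is more uniform and yields an actual embedding of $B_I$ in a quotient of a power of $B_2$.
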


\begin{proof}
The two-factor zero smash identifies $B_I(G)$ as a quotient
of $B_I\times \flat(G)$.  Both factors embed
in $B_I(G)$, using identity group labels or one diagonal
corner, proving \eqref{eq:join}.
For finite $I$ with $|I|\geq2$, choose $r$ with
$2^r\geq|I|$.  The zero smash of $r$ copies of
$B_2$ is $B_{2^r}$, which contains
$B_I$.  The reverse inclusion uses two indices.
For infinite $I$, each evaluation of an identity uses only
finitely many indices and lies in a finite-index Brandt
subsemiring.  Thus the same identities hold.  Equation
\eqref{eq:join} gives the assertion for arbitrary $G$.
\end{proof}

For a nonempty family of flat semirings, their \emph{orthogonal
sum} identifies their zeros and sets every mixed product and
mixed sum of nonzero elements from different summands equal
to zero.  It embeds as the union of the coordinate axes in
their direct product, and contains every summand.

\begin{theorem}[Exact variety of an orthogonal sum]
\label{thm:orthogonal}
Let
\[
 S=\bigoplus_{\lambda\in\Lambda}^{0} B_{I_\lambda}(G_\lambda),
 \qquad G=\prod_{\lambda\in\Lambda}G_\lambda,
\]
where $\Lambda$ and every $I_\lambda$ are nonempty.
The groups and all indexing sets may be infinite.  Then
\begin{equation}\label{eq:orthogonal-normal-form}
 \mathsf{V}(S)=
 \begin{cases}
  \mathsf{V}(\flat(G)),&|I_\lambda|=1\text{ for every }\lambda,\\
  \mathsf{V}(B_2(G)),&|I_\lambda|\geq2\text{ for some }\lambda.
 \end{cases}
\end{equation}
\end{theorem}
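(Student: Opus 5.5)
We prove two inclusions. For the upper bound, $S$ embeds in the direct product $\prod_\lambda B_{I_\lambda}(G_\lambda)$ as the union of the coordinate axes, so $\mathsf{V}(S)\subseteq\mathsf{V}\{B_{I_\lambda}(G_\lambda):\lambda\in\Lambda\}$. By Proposition~\ref{prop:join}, each $B_{I_\lambda}(G_\lambda)$ generates $\mathsf{V}(B_{I_\lambda})\vee\mathsf{V}(\flat(G_\lambda))$. Since $G_\lambda$ is a retract (a subgroup) of $G$, the semiring $\flat(G_\lambda)$ embeds in $\flat(G)$. If $|I_\lambda|\ge2$, then $\mathsf{V}(B_{I_\lambda})=\mathsf{V}(B_2)$, while if $|I_\lambda|=1$ the first joinand is trivial. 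Hence $\mathsf{V}(S)\subseteq\mathsf{V}(\flat(G))$ in the first case, and $\mathsf{V}(S)\subseteq\mathsf{V}(B_2)\vee\mathsf{V}(\flat(G))=\mathsf{V}(B_2(G))$ in the second, again by Proposition~\ref{prop:join}.

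For the lower bound, every summand is a subsemiring of $S$, so $\mathsf{V}(S)\ni B_{I_\lambda}(G_\lambda)$ for all $\lambda$. In the second case some summand contains a copy of $B_2$, so $B_2\in\mathsf{V}(S)$. It therefore suffices to show $\flat(G)\in\mathsf{V}(S)$. Each $\flat(G_\lambda)$ lies in $\mathsf{V}(S)$, using a diagonal corner $\{(i,g,i)\}\cup\{0\}$. Applying Lemma~\ref{lem:smash} with every $I_\lambda$ a singleton, the zero smash $\bigl(\prod_\lambda\flat(G_\lambda)\bigr)/J$ is isomorphic to $B_1(G)=\flat(G)$. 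This is a homomorphic image of a product of members of $\mathsf{V}(S)$, so $\flat(G)\in\mathsf{V}(S)$. Proposition~\ref{prop:join} then closes both cases.

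The only delicate point is the upper bound's use of the product. When $\Lambda$ is infinite we need the embedding of the orthogonal sum into the full direct product to respect the operations. Let $a$ be nonzero in summand $\lambda$ and $b$ nonzero in summand $\mu\ne\lambda$. In the product, $ab$ and $a+b$ are tuples with all coordinates zero: at coordinate $\lambda$ we get $a\cdot0=0$ and $a+0=0$ by flatness. These tuples are the image of $0\in S$, and equal-summand operations are computed coordinatewise. So the embedding is routine.

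A subtler point is whether infinitely many groups are handled correctly by the smash. Lemma~\ref{lem:smash} explicitly allows infinite families, so no finiteness reduction is needed. I expect no real obstacle beyond carefully invoking the embeddings $\flat(G_\lambda)\hookrightarrow\flat(G)$ and the lemma.
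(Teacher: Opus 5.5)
Your proof is correct and is essentially the paper's argument: the coordinate-axis embedding and the summand inclusions, the zero-smash lemma giving $\flat(\prod_\lambda G_\lambda)$, the coordinate embeddings $\flat(G_\lambda)\hookrightarrow\flat(G)$, and Proposition~\ref{prop:join} to assemble $B_2(G)$. One small slip is that when $|I_\lambda|=1$ the joinand $\mathsf{V}(B_{I_\lambda})=\mathsf{V}(\flat(\{1\}))$ is not trivial, but this is harmless, since then $B_{I_\lambda}(G_\lambda)=\flat(G_\lambda)$ already embeds in $\flat(G)$.
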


\begin{proof}
The coordinate-axis embedding and the inclusions of the
summands give
\[
 \mathsf{V}(S)=\bigvee_\lambda\mathsf{V}(B_{I_\lambda}(G_\lambda)).
\]
Similarly, by the arbitrary-product zero-smash lemma and
the coordinate subgroup embeddings,
\[
 \bigvee_\lambda\mathsf{V}(\flat(G_\lambda))
   =\mathsf{V}\bigl(\flat(\prod_\lambda G_\lambda)\bigr).
\]
If all index sets are singletons, this is the assertion.
Otherwise $S$ also contains $B_2$, and
Proposition~\ref{prop:join} gives
$B_2(G)\in\mathsf{V}(S)$.
Conversely, every group $G_\lambda$ embeds in
$G$, so every summand belongs to
$\mathsf{V}(B_2(G))$, by index independence.  The
coordinate-axis embedding gives the reverse inclusion.
\end{proof}

In particular, the matrix-unit component and the nontrivial
group need not occur in the same summand.  If some index set
has size at least two and some $G_\lambda\ne\{1\}$,
Theorem~\ref{thm:all-groups} applies to the entire sum.
The empty orthogonal sum is the zero-only algebra and is
finitely based separately.

\begin{lemma}[Conservativity of an absorbing constant]
\label{lem:constant}
Let $S$ be any ai-semiring with an element $0$
absorbing both operations.  It is finitely based in
$\{+,\cdot\}$ if and only if its expansion naming
that zero is finitely based in $\{+,\cdot,0\}$.
The existence of a basis with a uniform finite variable
bound is also equivalent in the two signatures.
No nonzero idempotent hypothesis is needed.
\end{lemma}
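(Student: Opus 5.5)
The plan is to translate identity bases back and forth between the two signatures, checking that each translation preserves both finiteness and the number of variables. Write $S_0$ for the expansion of $S$ naming its zero.

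\textbf{From $\{+,\cdot\}$ to $\{+,\cdot,0\}$.} Suppose $\Sigma$ is a basis for the identities of $S$ in $\{+,\cdot\}$. As a candidate basis for $S_0$ I take
\[
 \Sigma_0=\Sigma\cup\{x0\approx 0,\ 0x\approx 0,\ x+0\approx 0\}.
\]
Let $\mathbf{u}\approx\mathbf{v}$ be an identity of $S_0$ in which the constant occurs. Using the three absorption laws, every term containing $0$ rewrites to $0$. The argument runs as follows.
\begin{enumerate}[(i)]
\item Each term is a sum of words, and any summand containing $0$ is itself $0$.
\item One summand equal to $0$ makes the whole sum $0$.
\item Hence each side becomes either $0$ or a constant-free term.
\end{enumerate}
Three cases remain. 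If both sides are $0$, we are done. If both sides are constant-free, the identity holds in $S$ and follows from $\Sigma$.

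In the mixed case, $\mathbf{u}\approx 0$ holds in $S$ with $\mathbf{u}$ constant-free. Pick a variable $z$ not occurring in $\mathbf{u}$. Then $\mathbf{u}\approx \mathbf{u}z$ is a constant-free identity of $S$: both sides evaluate to $0$, since $\mathbf{u}$ always evaluates to $0$ and $0z=0$. So it is derivable from $\Sigma$. Substituting $0$ for $z$ and applying the absorption laws gives $\mathbf{u}\approx 0$.

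This step adds one extra variable. To keep a uniform bound it is cleaner to use the substitution $z\mapsto \mathbf{u}$ instead: the identity $\mathbf{u}\approx\mathbf{u}\mathbf{u}$ might fail, so I would rather derive $\mathbf{u}\approx\mathbf{u}x$ with $x$ a variable already occurring in $\mathbf{u}$. That identity also holds in $S$, since $\mathbf{u}$ is identically $0$. Then substitute $x\mapsto 0$ and absorb. But this substitution changes $\mathbf{u}$ itself, so it needs care. The honest fallback is simply to allow the bound to grow by one, which still preserves the existence of a uniform finite bound.

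\textbf{From $\{+,\cdot,0\}$ to $\{+,\cdot\}$.} Suppose $\Sigma_0$ is a basis for $S_0$. Replace each identity $\sigma\in\Sigma_0$ by $\sigma'$, obtained by substituting a fresh variable $z$ for $0$. Then add the constant-free absorption identities
\[
 xz\approx z,\qquad z x\approx z \ \text{ with } z\text{ a fixed zero-term, }
\]
but these are not identities of $S$. Instead, let $\mathbf{z}$ be the term $\mathbf{w}=x+y+\ldots$? That does not work either, since a flat $S$ need not be flat, so I use a different translation. Put $\zeta(x)=$ the term $x\,$ with $0$ replaced by $\mathbf{o}=x\cdot\mathbf{u}_0$? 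Such a term is not available in general.

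The right translation is to replace $0$ by a fresh variable $z$ and keep only those identities whose translations are valid. Use the known fact that in Birkhoff's calculus a derivation of a constant-free identity from $\Sigma_0$ can be transformed by replacing each occurrence of $0$ with $w\cdot z\cdot w'$? This is the main obstacle.

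What I would try first is the following. Given a derivation in the expanded signature of a constant-free identity $\mathbf{p}\approx\mathbf{q}$ of $S$, substitute for $0$ a constant-free term $\mathbf{t}$ that evaluates to $0$ in $S$ under the relevant assignments. Such a term exists on the variables of $\mathbf{p}$ if and only if some constant-free term in those variables is identically $0$ in $S$.

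If no such term exists, then every constant-free term takes nonzero values, and constants appearing in a derivation can be eliminated. The justification is that the substitution $0\mapsto z$ maps $\Sigma_0$-consequences to consequences of the translated identities $\sigma'$ that hold in $S$, because $z$ ranges over elements. This needs checking, since $\sigma'$ may fail in $S$ when $z\ne 0$.

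I expect this elimination of the constant from derivations, while keeping the number of variables bounded, to be the crux. The variable bound would then increase by at most one or two in each direction, so both finiteness and bounded-variable basability transfer.
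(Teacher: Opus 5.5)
Your forward direction is correct and is essentially the paper's argument. Where the paper derives $\mathbf{p}+y\approx\mathbf{p}$ from the binary basis and substitutes $y\mapsto0$, you derive $\mathbf{u}\approx\mathbf{u}z$ and substitute $z\mapsto0$. Your concern about an extra variable is misplaced, because $z$ occurs only in a derived identity. The basis $\Sigma\cup\{x0\approx0,\ 0x\approx0,\ x+0\approx0\}$ itself has the same variable bound as $\Sigma$, so no fallback is needed.

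The genuine gap is the converse, which you leave open. Replacing $0$ by a fresh variable fails outright: $x+0\approx0$ becomes $x+z\approx z$, which is false in every nontrivial $S$. Neither ``keep only the valid translations'' nor a zero term chosen ``on the variables of $\mathbf{p}$'' comes with a completeness argument. The first missing idea is that the existence of a zero term is a global property. If some constant-free term is identically $0$ in $S$, identifying all its variables gives a unary zero term $\mathbf{q}(x)$. Then the single uniform translation $0\mapsto\mathbf{q}(z)$, with $z$ fresh, turns every member of $\Sigma_0$ into a valid binary identity. The paper also adds the valid laws $\mathbf{q}(x)\approx\mathbf{q}(y)$, $\mathbf{q}(x)+y\approx\mathbf{q}(x)$, $\mathbf{q}(x)y\approx\mathbf{q}(x)$ and $y\mathbf{q}(x)\approx\mathbf{q}(x)$. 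In any binary model these make the value of $\mathbf{q}$ a well-defined element, and naming it gives a model of $\Sigma_0$, hence of every identity of $S$.

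The second missing idea handles the case where no constant-free term is identically $0$, that is, each such term takes \emph{some} nonzero value. Here every valid binary identity has the same variables on both sides. Otherwise, give an exclusively occurring variable the value $0$ and all other variables one common value $x$; this produces a unary zero term. Likewise, a valid identity of the expansion contains $0$ on neither side or on both. Then the constant-free members $\Sigma_{\rm free}$ of $\Sigma_0$ already form a basis. Adjoin a fresh absorbing element $\infty$ to any model of $\Sigma_{\rm free}$ and name it as zero. Matching variable content keeps $\Sigma_{\rm free}$ valid, and the remaining members of $\Sigma_0$ become $\infty\approx\infty$.

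Your syntactic elimination can also be rescued in this case, but not via the translations $\sigma'$. Every term in a replacement chain deriving a constant-free valid $\mathbf{p}\approx\mathbf{q}$ is equivalent in the expansion to $\mathbf{p}$, which is not identically zero. So no such term contains $0$, and only members of $\Sigma_{\rm free}$ under constant-free substitutions are ever applied. In both cases finiteness is preserved. The translated identities gain at most one variable, and only a fixed finite family of two-variable auxiliary laws is added.
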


\begin{proof}
Given a binary basis, add
\[
 x+0\approx0,\qquad x0\approx0,\qquad0x\approx0.
\]
Every term containing zero reduces to zero.
If a valid expanded identity has the form $\mathbf{p}\approx0$
with $\mathbf{p}$ constant-free, then the binary identity
$\mathbf{p}+y\approx \mathbf{p}$, with $y$ fresh, is valid in $S$.
It follows from the binary basis, and substituting $0$
for $y$ proves $\mathbf{p}\approx0$.
The remaining identities are already binary or reduce
to $0\approx0$.  This proves the forward implication.

Conversely, let $\Sigma$ be a basis of the expansion;
include the ai-semiring axioms if working relative to
that class.  First suppose a constant-free term is
identically zero in $S$.  Identifying all its variables
gives a unary term $\mathbf{q}(x)$ with that property.
In each member of $\Sigma$, replace every $0$ by
$\mathbf{q}(z)$, with $z$ a fresh variable, and add
\[
 \mathbf{q}(x)\approx \mathbf{q}(y),\qquad \mathbf{q}(x)+y\approx \mathbf{q}(x),\qquad
 \mathbf{q}(x)y\approx \mathbf{q}(x),\qquad y\mathbf{q}(x)\approx \mathbf{q}(x).
\]
All these identities are valid in the binary reduct of
$S$.  In any model they make the value of $\mathbf{q}$
a well-defined common absorbing zero.  Naming this
value gives a model of $\Sigma$, so the binary
model satisfies every identity of $S$.
Thus the displayed construction gives a binary basis.

Now suppose there is no constant-free zero term.
Every valid binary identity of $S$ has the same
variable content on both sides: otherwise an exclusively
occurring variable can be assigned zero, and all other
variables a single variable value $x$, producing a
constant-free unary zero term on the opposite side.
For the same reason a valid expanded identity either
contains no zero constant or contains zero on both sides.
Let $\Sigma_{\rm free}$ be the constant-free members
of $\Sigma$, and let $A$ be any binary model of
$\Sigma_{\rm free}$.  Adjoin a fresh element
$\infty$ absorbing both operations.
Matching variable content ensures that every identity
of $\Sigma_{\rm free}$ remains valid: either all
occurring values lie in $A$, or both sides become
$\infty$.  Name $\infty$ as zero.
Every remaining member of $\Sigma$ is then automatic,
so the expansion satisfies $\Sigma$, and its binary
subalgebra $A$ satisfies all binary identities of $S$.
This proves completeness of $\Sigma_{\rm free}$.

The constructions preserve finiteness.  For a possibly
infinite basis with a uniform variable bound they add
at most one variable, and only a fixed finite family
of auxiliary identities.  This proves the last assertion.
\end{proof}

\section{Brandt semirings}
\label{sec:brandt}

The following results determine the finite-basis problem for
Brandt semirings with at least two indices.

\begin{theorem}\label{thm:all-groups}
For every nontrivial group $G$ and every set $I$ with
$|I|\geq2$, the flat Brandt semiring $B_I(G)$ has no
identity basis with a uniform finite bound on the number of
variables.  In particular it is not finitely based.
This holds in both $\{+,\cdot\}$ and $\{+,\cdot,0\}$.
\end{theorem}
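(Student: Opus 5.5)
The plan is the standard criterion for the absence of a bounded-variable basis. Suppose $\Sigma$ is a basis of $B_I(G)$ in which every identity has at most $n$ variables. We will construct a finite semiring $W_n$ with two properties. First, every subsemiring of $W_n$ generated by at most $n$ elements lies in $\mathsf{V}(B_I(G))$. Second, $W_n\notin\mathsf{V}(B_I(G))$. An identity in at most $n$ variables holds in $W_n$ as soon as it holds in every $n$-generated subsemiring, so $W_n$ satisfies $\Sigma$. Hence $W_n$ lies in the variety defined by $\Sigma$, which is a contradiction.

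By Proposition~\ref{prop:join}, $\mathsf{V}(B_I(G))=\mathsf{V}(B_2(G))$. Since $G$ is nontrivial, it contains a nontrivial cyclic subgroup $C$, and $B_2(C)$ embeds in $B_2(G)$. Moreover $\mathsf{V}(B_2(G))\subseteq\mathcal{B}$. It therefore suffices to find, uniformly in $G$:
\begin{enumerate}[(i)]
\item an identity $\mathbf{u}\approx\mathbf{v}$ valid in all of $\mathcal{B}$ that fails in $W_n$;
\item a proof that every $n$-generated subsemiring of $W_n$ lies in $\mathsf{V}(B_2(C))$ for every nontrivial cyclic $C$.
\end{enumerate}
Because every cyclic group of prime order $p$ and $\mathbb{Z}$ itself appear among these $C$, I would aim to land the small subsemirings in $\mathsf{V}(B_2(\mathbb{Z}_p))$ for every $p$, or better in $\mathsf{V}(B_2)\vee\mathsf{V}(\flat(\mathbb{Z}_p))$ for every $p$. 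The latter works by \eqref{eq:join}.

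Following the introduction, $W_n$ is a flat semiring built from an edge-colored prism graph on roughly $2m$ vertices, with $m$ linear in $n$. Its nonzero elements are the edges (as matrix-unit-like elements $e_{uv}$, products following paths) together with the vertex stars, and one vertex star is removed. The identity in (i) is a ``cycle-closing'' Brandt identity. In any Brandt semiring, a word traversing the prism's cycles that is nonzero forces the consistency of labels around a cycle. That consistency in turn forces a certain sum, involving the missing star, to be nonzero. In $W_n$ the missing star breaks this, so the identity fails there. This identity holds in every $B_I(G)$ by a direct computation with \eqref{eq:brandt}, since labels compose along paths. Passing to the variety $\mathcal{B}$ and to $\{+,\cdot,0\}$ is handled by Lemma~\ref{lem:constant}.

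The main obstacle is (ii). An $n$-generated subsemiring involves fewer than $m$ of the prism's rungs, so it misses some rung. The graph then becomes a tree-like or single-cycle configuration with consistent orientation. I would embed such a subsemiring into a direct product of copies of $B_N$ (via integral incidence relations: each vertex a matrix index) and of $B_N(\mathbb{Z})$ (to record the winding around the one surviving cycle). I would then reduce modulo $p$ or pass through Rees quotients to land in $B_2(\mathbb{Z}_p)$, using Lemma~\ref{lem:smash} to assemble factors. The delicate point is ensuring that the incidence representation separates all elements and respects flat addition, i.e. that distinct elements have distinct images in some factor where their sum becomes zero. Keeping the size of $W_n$ linear in $n$ then follows from the prism having $O(n)$ vertices and edges.
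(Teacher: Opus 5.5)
\textbf{There is a genuine gap: the central step, (ii), is not carried out, and the route you sketch for it would fail.}

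Your outer frame matches the paper's (Theorem~\ref{thm:uniform}). There, a finite witness has all its few-generated subsemirings in $\mathsf{V}(B_2(C))$ for each nontrivial cyclic $C$, yet it fails a prism identity $\mathbf{w}_0\preceq\mathbf{u}_m$ that is valid in every Brandt semiring. But you never define $W_n$ or the identity. The concrete failures in your sketch of (ii) are these.
\begin{enumerate}[(a)]
\item Missing a rung does not leave a tree or a single cycle. In $\Gamma_m$ the four letters $b_1,c_1,d_1,\bar c_1$ already run around a square face. A small generating set can force nonzero products on all stars of such faces, so the labels must satisfy a whole linear system; one winding number in $\mathbb{Z}$ cannot record this.
\item ``Reduce modulo $p$'' is not available. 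Coordinatewise reduction $B_N(\mathbb{Z})\to B_N(C_p)$ is not a semiring map under flat addition: $(i,0,j)+(i,p,j)=0$ upstairs, but the images add to a nonzero element. So the $C_p$ case, in particular $p=2$, must be built directly over $\mathbb{Z}/p$, not obtained from integer labels.
\item Your stated aim of landing in $\mathsf{V}(B_2(C_p))$ for every $p$ does not cover torsion-free $G$. The identity $x^py+y\approx x^py+y+xy$ holds in $B_2(\mathbb{Z})$ but fails in $\flat(C_p)\le B_2(C_p)$, so $\mathsf{V}(B_2(\mathbb{Z}))$ must be reached separately.
\item For (i), the identity must hold over nonabelian $G$. ``Labels compose along paths'' is an abelian heuristic. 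Theorem~\ref{thm:graph-group-exact} shows such cycle-closing identities fail over nonabelian groups unless $c(\Gamma)=N/2+2$; the $K_{3,3}$ example is a concrete instance. The prism works only because of the cancellation in Lemma~\ref{lem:prism-identity}, where $r_i=a_i\bar a_i^{-1}$ is constant.
\end{enumerate}

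The missing idea is the paper's labelling module. In the witness $S_{H_m}$ the graph edges are the letters, and $abc=h$ exactly for retained stars. By Lemmas~\ref{lem:capture-criterion} and~\ref{lem:small-generators}, a subsemiring generated by at most $4m-1$ elements embeds in some $S_F$ omitting two stars. For $R=\mathbb{Z}/r$ or $R=\mathbb{Z}$, label each letter by its class in $K=R^{E_F}/W_F$, where $W_F$ is spanned by the star vectors $\rho_v$. The prism's cut condition says $|\partial X|\geq3$, with equality only for singletons. Over every coefficient ring it gives distinct classes for distinct letters, and $[a]+[b]+[c]=0_K$ if and only if $abc\in F$ (Lemma~\ref{lem:short-support}). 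A unimodular incidence minor then gives $K\cong R^d$ with $d\geq2$ (Lemma~\ref{lem:primitive}).

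$S_F$ is then the Rees quotient, by the non-divisors of $(1,0_K,4)$, of the subsemiring of $B_4(K)$ generated by $(1,[a],2)$, $(2,[b],3)$ and $(3,[c],4)$. The matrix indices encode colour positions, not graph vertices. The Rees quotient, rather than an embedding, is what resolves your ``delicate point'' about flat addition. Finally, Lemma~\ref{lem:smash} puts $B_4(K)$ in $\mathsf{V}(B_2(L))$ for $L=C_r$ or $L=\mathbb{Z}$.
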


We prove the following uniform finite-witness statement.

\begin{theorem}\label{thm:uniform}
For every $k\geq1$, put
\[
 m=\max\{2,\lfloor k/4\rfloor+1\}.
\]
There is a finite flat semiring $W_k$ with exactly
$10m+2$ elements such that:
\begin{enumerate}[$(\rm i)$]
\item $W_k\notin\mathcal{B}$;
\item every subsemiring of $W_k$ generated by at most $k$
elements belongs to $\mathsf{V}(B_2(C_r))$ for every integer $r\geq2$,
and also belongs to $\mathsf{V}(B_2(\mathbb{Z}))$.
\end{enumerate}
Consequently the same $W_k$ satisfies every identity in at most
$k$ variables of every $B_I(G)$ with $|I|\geq2$ and
$G\ne\{1\}$.
\end{theorem}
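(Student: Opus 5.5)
We follow the outline in the introduction: $W_k$ is built from the vertex-stars of an edge-colored prism graph, one star is removed, and the resulting flat semiring is compared with Brandt semirings. Concretely, we take the prism $C_m\times K_2$: vertices $a_1,\dots,a_m,b_1,\dots,b_m$, rungs $a_ib_i$, and cycle edges $a_ia_{i+1}$, $b_ib_{i+1}$ with indices mod $m$. Each edge is given a color recording whether it is a rung or lies on the top or bottom cycle. The nonzero elements of $W_k$ are matrix-unit-like symbols attached to vertices and oriented edges, together with two extra elements that encode the removed star. The count should come out to $10m+2$ by a linear tally: for instance, two loop idempotents and three oriented edge symbols per vertex (the prism is 3-regular, giving $2m$ vertices, $3m$ edges, $6m$ oriented edges and $2m$ loops, hence $8m$, plus $2m$ further elements), the two extra elements, and the zero. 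Multiplication concatenates compatible symbols and sends everything else to $0$; addition is flat. The main routine check is associativity, which reduces to the standard verification for a Rees quotient of a path-category semigroup.

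\textbf{Part (i).} We exhibit an identity $\sigma$ that holds in every $B_n(G)$ with $n\geq 2$, and hence in $\mathcal{B}$, but fails in $W_k$. Its natural form is a ``cycle closure'' law: the sum of the traversals of a closed walk around the prism is compared with a product that is forced to agree in every group-labelled Brandt semiring. The justification would be that a valuation into $B_I(G)$ assigns indices to vertices and group labels to edges, and the flat sum survives only if all summands coincide. Integral incidence (the cycle space of the prism) then shows that the labels around the triangulating relations cancel. In $W_k$ the deleted star breaks exactly one local relation. Evaluating the generators at their symbols therefore makes one side nonzero and the other zero, so $W_k\notin\mathcal{B}$.

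\textbf{Part (ii).} Let $T$ be generated by at most $k$ elements. Since $k<4(m-1)+4=4m$, the generators involve symbols at fewer than $4m$ edge-incidences. Because each vertex-star has four relevant edge-incidences, some whole vertex-star other than the deleted one is untouched. Cutting the prism at that missing star leaves a graph whose cycle space no longer contains the obstructing cycle. We then represent $T$ as a subsemiring of a Rees quotient of a product of copies of $B_N(\mathbb{Z})$ (or of $B_N(C_r)$). Vertices map to indices, and edges map to $\mathbb{Z}$-labels chosen as integer potentials realizing the incidence relations. The labels must be chosen so that distinct nonzero elements of $T$ receive distinct images and zero products stay zero; this uses Lemma~\ref{lem:smash} to combine coordinates. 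Labels modulo $r$ still separate elements provided the potentials are taken small compared with $r$; where that fails, we use several coordinates with labels in $\{0,1\}$, which works for every $r\geq2$. Rees quotients by ideals and subalgebras preserve membership in $\mathsf V(B_2(C_r))$, and Proposition~\ref{prop:join} lets us reduce from $B_N$ to $B_2$.

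\textbf{The consequence.} Any identity in at most $k$ variables that fails in $W_k$ fails in the subsemiring generated by the values of its variables, which lies in $\mathsf V(B_2(C_r))$ for every $r\geq2$ and in $\mathsf V(B_2(\mathbb{Z}))$. A nontrivial $G$ contains either some $C_r$ or $\mathbb{Z}$, so $B_2(G)$ contains $B_2(C_r)$ or $B_2(\mathbb{Z})$. Hence every identity of $B_I(G)$ in at most $k$ variables holds in that subsemiring, and thus in $W_k$.

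\textbf{Main obstacle.} The hardest step is part (ii): producing an injective, zero-respecting representation of every $k$-generated subsemiring uniformly over all cyclic groups. This requires a careful case analysis of which local relations the generators can witness. I would first try a single integral potential on the cut graph, and only split into several $\{0,1\}$-labelled coordinates to handle small $r$ such as $r=2$.
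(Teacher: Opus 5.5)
Your outline has the right shape: a prism-star witness, a universal Brandt identity for (i), and a second deleted star with a group-labelled Rees quotient for (ii). Your final paragraph, deriving the consequence from a cyclic subgroup $C_r$ or $\mathbb{Z}$, is exactly the paper's argument. But the witness and the separating identity are never actually defined, and each load-bearing step is missing or would fail as described.

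\emph{The witness.} Colouring $C_m\times K_2$ by rung/top/bottom is not a proper edge colouring, since every vertex has two edges of the same colour. So stars do not read as words $abc$ in a fixed colour order, and for odd $m$ the graph is not even bipartite. Your count is also not derived: two loops per vertex would give $4m$, and the extra $2m$ elements are unexplained. Moreover $W_k$ must need more than $k$ generators, or (ii) applied to $W_k$ itself contradicts (i). If the $3m$ edges of $C_m\times K_2$ are the letters, this already fails at $k=4m-1$. The paper instead uses $\Gamma_m\cong C_{2m}\mathbin{\square}K_2$ with a proper $A,B,C$ colouring ($4m$ stars, $6m$ letters). There $W_k=S_{H_m}$ is the nilpotent flat semiring on the letters, $p_a$, $q_c$, $h$, $0$, with $ab=q_c$, $bc=p_a$, $ap_a=q_cc=h$ for retained stars.

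\emph{Part (i).} A ``cycle space'' cancellation of labels is an abelian argument, while $\mathcal{B}$ contains $B_n(G)$ for nonabelian $G$. This is the real difficulty. By Theorem~\ref{thm:graph-group-exact}, a star-deletion identity survives nonabelian groups only when $c(\Gamma)=N/2+2$; it fails for $K_{3,3}$ over $S_3$. Lemma~\ref{lem:prism-identity} needs index propagation plus the specific right cancellations $\bar a_ib_i=\bar a_{i+1}d_i$ and $a_ib_i=a_{i+1}d_i$ ($i\ne0$). These make $a_i\bar a_i^{-1}$ constant around the cycle.

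\emph{Part (ii), choosing the second deletion.} Your claim that fewer than $4m$ generators leave some retained star untouched is false. A colour class is a perfect matching, so $2m$ letters already meet every star; counting alone gives an untouched star only for $k$ below about $2m$. The missing idea is the capture criterion (Lemma~\ref{lem:capture-criterion}). Operations never create letters, and new $p$- or $q$-elements arise only from $ab$ or $bc$ with both letters generators. Hence $\langle X\rangle$ embeds in $S_F$ whenever $F$ contains the \emph{mandatory} stars and covers the index set $Z$. Lemma~\ref{lem:small-generators} then uses the rungs, the bipartition and a cycle-covering count. It finds, for $|X|\le4m-1$, a non-mandatory star whose edge to $v_0$ is not in $Z$; this is where $m=\lfloor k/4\rfloor+1$ comes from.

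\emph{Part (ii), uniformity over $r$.} This cannot come from small potentials or ad hoc $\{0,1\}$-coordinates, because reduction modulo $r$ can identify two letters or create $[a]+[b]+[c]=0$ for a non-star. The paper works directly in $K=R^{E_F}/W_F$ with $R=\mathbb{Z}/r\mathbb{Z}$ or $\mathbb{Z}$, using two facts.
\begin{enumerate}[$(\rm a)$]
\item The cut condition and the second omitted vertex force every relation of support at most three to be a multiple of one star, over every nonzero commutative ring (Lemma~\ref{lem:short-support}).
\item A unimodular triangular minor from a spanning tree rooted outside $F$ makes $W_F$ a direct summand. So $K\cong C_r^d$ or $\mathbb{Z}^d$ with $d\ge2$ (Lemma~\ref{lem:primitive}).
\end{enumerate}
Then $S_F$ is a Rees quotient of the subsemiring of $B_4(K)$ generated by $(1,[a],2)$, $(2,[b],3)$, $(3,[c],4)$. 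Finally $B_4(K)\le B_{2^d}(L^d)\in\mathsf{V}(B_2(L))$ by Lemma~\ref{lem:smash}.

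You were right that a second deletion is essential: with only $v_0$ removed, $[a_0]+[b_0]+[c_0]=0$ in $K$ and the missing star reappears. But how to choose that deletion, and why the labels work for every $r$, are precisely the steps your proposal leaves out.
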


\begin{corollary}[An interval of nonfinitely based varieties]\label{cor:interval}
If a variety $\mathcal W\subseteq\mathcal{B}$ contains $B_2(C_p)$
for some prime $p$, or contains $B_2(\mathbb{Z})$, then
$\mathcal W$ has no finite identity basis.  In fact its
identities have no basis with a uniform finite bound on the
number of variables.
\end{corollary}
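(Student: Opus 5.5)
The plan is the standard bounded-variable compactness argument, driven by the uniform witnesses of Theorem~\ref{thm:uniform}. Suppose, for contradiction, that the equational theory of $\mathcal W$ has a basis $\Sigma$ in which every identity uses at most $k$ variables. We will produce an algebra that satisfies $\Sigma$ but lies outside $\mathcal W$; the witness $W_k$ is the candidate.

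First, reduce to a single generating algebra $A\in\mathcal W$, where $A=B_2(C_p)$ or $A=B_2(\mathbb{Z})$ according to the hypothesis. Since $A\in\mathcal W$, every identity of $\mathcal W$ holds in $A$. In particular every member of $\Sigma$ is an identity of $A$ in at most $k$ variables.

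Next, check that $W_k$ satisfies $\Sigma$. Take an identity $\sigma\in\Sigma$ and an assignment of its at most $k$ variables into $W_k$. The values of both sides lie in the subsemiring $T$ generated by the at most $k$ assigned elements. By Theorem~\ref{thm:uniform}(ii), $T$ lies in $\mathsf{V}(B_2(C_r))$ for every $r\geq2$, in particular for $r=p$, and also in $\mathsf{V}(B_2(\mathbb{Z}))$. Hence $T\in\mathsf V(A)$, so $T$ satisfies $\sigma$, and the two sides agree under this assignment. Therefore $W_k\models\Sigma$.

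Since $\Sigma$ is a basis of $\mathcal W$, this gives $W_k\in\mathcal W\subseteq\mathcal B$. This contradicts Theorem~\ref{thm:uniform}(i). So no basis with a uniform variable bound exists. A finite basis has a uniform variable bound (the maximum over its members), so $\mathcal W$ is not finitely based either.

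There is no real obstacle; all the work sits in Theorem~\ref{thm:uniform}. The only point needing care is that the hypothesis involves $C_p$ or $\mathbb{Z}$ while (ii) is stated for all $C_r$ with $r\ge 2$ and for $\mathbb{Z}$, so both cases are covered. If the $\{+,\cdot,0\}$ signature is wanted, Lemma~\ref{lem:constant} transfers the conclusion; for a single variety, the same argument runs directly in either signature.
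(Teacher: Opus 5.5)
Your proof is correct and is essentially the paper's own argument: every subsemiring of $W_k$ generated by at most $k$ elements lies in $\mathsf V(B_2(C_p))$ or $\mathsf V(B_2(\mathbb Z))$, hence in $\mathcal W$, so $W_k$ satisfies every at-most-$k$-variable identity of $\mathcal W$, while $W_k\notin\mathcal B\supseteq\mathcal W$. One small caveat: the appeal to Lemma~\ref{lem:constant} does not fit here, since that lemma concerns a single algebra rather than an arbitrary variety, but your other remark is the correct justification — the argument runs directly in the named-zero signature because all the embeddings and quotients used preserve the zero.
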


\begin{corollary}[All Brandt index sets]\label{cor:brandt-classification}
For every group $G$ and every set $I$ with $|I|\geq2$,
in either stated language,
\[
 B_I(G)\text{ is finitely based}\quad\Longleftrightarrow\quad G=\{1\}.
\]
\end{corollary}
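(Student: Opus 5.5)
The forward direction follows from Theorem~\ref{thm:all-groups}. That theorem says that if $G\ne\{1\}$ and $|I|\geq2$, then $B_I(G)$ has no basis with a uniform variable bound. A finite basis trivially has such a bound, namely the maximum number of variables in its finitely many identities. Hence $B_I(G)$ is not finitely based. The theorem is already stated for both signatures $\{+,\cdot\}$ and $\{+,\cdot,0\}$. Alternatively, Lemma~\ref{lem:constant} transfers the conclusion from one signature to the other. So this direction reduces entirely to Theorem~\ref{thm:all-groups}, which in turn comes from Theorem~\ref{thm:uniform}.

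For that reduction I would use the standard argument. Suppose $\Sigma$ is a basis of $B_I(G)$ in which every identity uses at most $k$ variables. Take $W_k$ from Theorem~\ref{thm:uniform}. By part (ii), each subsemiring of $W_k$ generated by at most $k$ elements lies in $\mathsf V(B_2(C_r))$ for every $r\geq2$, and also in $\mathsf V(B_2(\mathbb Z))$.

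A nontrivial group $G$ contains either an element of prime order $p$ or an element of infinite order. So $G$ contains a subgroup isomorphic to $C_p$ or to $\mathbb Z$. Then $B_2(C_p)$ or $B_2(\mathbb Z)$ embeds in $B_2(G)$, and by Proposition~\ref{prop:join} we have $\mathsf V(B_2(G))=\mathsf V(B_I(G))$.

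Each identity of $\Sigma$ is evaluated on at most $k$ elements of $W_k$, hence inside a $k$-generated subsemiring. That subsemiring lies in $\mathsf V(B_I(G))$, so the identity holds there, and therefore $W_k$ satisfies $\Sigma$. But $\Sigma$ is a basis, so $W_k\in\mathsf V(B_I(G))\subseteq\mathcal B$, contradicting part (i).

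For the reverse direction, let $G=\{1\}$. Then $B_I(G)=B_I$, and Proposition~\ref{prop:join} gives $\mathsf V(B_I)=\mathsf V(B_2)$. Equal varieties have the same identities, so $B_I$ is finitely based exactly when $B_2$ is. By Shaprynski\v{\i}'s theorem \cite{Sh}, $B_2$ is finitely based in $\{+,\cdot\}$. Lemma~\ref{lem:constant} then transfers finite basability to the signature $\{+,\cdot,0\}$. There is one point to check here: Lemma~\ref{lem:constant} is applied to $B_I$ itself. Since $B_I$ has the same identities as $B_2$ in $\{+,\cdot\}$, it is finitely based there, so the lemma applies and gives the expanded statement.

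The only substantive input is Theorem~\ref{thm:uniform}, whose proof is deferred. For the corollary itself there is essentially no obstacle. The one point needing care is the reduction from an arbitrary nontrivial $G$ to a cyclic subgroup, together with the index-independence of the variety, both of which are handled above.
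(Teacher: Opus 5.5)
Your proof is correct and takes the same route as the paper. The negative direction is Theorem~\ref{thm:all-groups}; your rederivation of it from Theorem~\ref{thm:uniform} uses $\mathsf{V}(B_I(G))=\mathsf{V}(B_2(G))\subseteq\mathcal{B}$, where the paper instead rechecks the prism identity for arbitrary $I$, and either works. The positive direction reduces through Proposition~\ref{prop:join} to $B_2$, then uses Shaprynski\v{\i}'s basis and Lemma~\ref{lem:constant}, exactly as in the paper.
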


\begin{proof}
The negative direction is Theorem~\ref{thm:all-groups}.
For the positive direction, index independence reduces to
$B_2$, whose finite basis in the binary language
is given by \cite[Theorem 1]{Sh}.
Lemma~\ref{lem:constant} deals with the named zero.
\end{proof}

\subsection{Prism semirings and identities}

A three-partite three-uniform hypergraph has disjoint vertex
parts $A,B,C$, and every edge is a triple $abc$, with
$a\in A,b\in B,c\in C$.
It is \emph{linear} if two distinct edges meet in at most one
vertex, equivalently a specified pair of vertices belongs to
at most one edge.

Let $H$ be a finite nonempty linear hypergraph of this kind,
with no isolated vertices.
On the disjoint union
\begin{equation}\label{eq:SH-carrier}
 S_H=A\dot\cup B\dot\cup C
 \dot\cup\{p_a:a\in A\}
 \dot\cup\{q_c:c\in C\}
 \dot\cup\{h,0\},
\end{equation}
where the $p_a,q_c,h$ are new formal symbols, use flat addition
and the nonzero products
\begin{equation}\label{eq:SH-products}
 ab=q_c,\quad bc=p_a\quad(abc\in H),\qquad
 ap_a=h,\quad q_cc=h.
\end{equation}
All other products are zero.

\begin{lemma}\label{lem:SH}
The operations above make $S_H$ a flat ai-semiring.
Its only nonzero products of three factors are $abc=h$ for
edges $abc\in H$.
Every product of four factors, and every square, is zero.
\end{lemma}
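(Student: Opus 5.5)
The plan is to verify the semiring axioms directly. Addition is flat, so it is commutative and idempotent, and $0$ absorbs addition by definition. The first step is to check that multiplication is well defined. The products $ab=q_c$ and $bc=p_a$ are determined uniquely by linearity of $H$: a pair $(a,b)$ lies in at most one edge, so $c$ is determined, and similarly for $(b,c)$. The products $ap_a=h$ and $q_cc=h$ involve new symbols indexed by single vertices, so they are unambiguous.

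The second step is to record the multiplication table by "types". Assign grades $A\mapsto$ left position, and observe that every nonzero product has the form (letter from $A\cup B$) times (letter from $B\cup C\cup\{p_a\}$) with a specific pattern. The nonzero products are only
\[
 A\cdot B\to Q,\qquad B\cdot C\to P,\qquad A\cdot P\to\{h\},\qquad Q\cdot C\to\{h\}.
\]
Here $h$, the $p$'s, the $q$'s and $0$ never appear as left factors in nonzero products, except $q_c$ on the left of $c$, and never as right factors except $p_a$ on the right of $a$. Moreover $h$ times anything and anything times $h$ is $0$.

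The third step is associativity. Compute all triple products $xyz$ with $x,y,z$ nonzero:
\begin{itemize}
\item $(xy)z\neq0$ forces $xy\in Q$ and $z\in C$, so $x=a\in A$, $y=b\in B$, $z=c$, and $ab=q_c$ means $abc\in H$; then $(ab)c=h$.
\item Symmetrically, $x(yz)\ne0$ forces $x=a$, $yz=p_a$, so $y=b$, $z=c$ with $abc\in H$, and $a(bc)=h$.
\end{itemize}
When $abc\in H$ both bracketings give $h$. When $abc\notin H$ both are $0$: for instance, if $ab=q_{c'}$ with $c'\ne c$, then $q_{c'}c=0$. Any bracketing involving $0$ gives $0$. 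This proves associativity and also the claim that the only nonzero triple products are $abc=h$ for edges. Four-factor products vanish because a nonzero length-three product is $h$, and $h$ annihilates on both sides. Squares vanish because no type appears on both sides of a nonzero product ($A\cdot A$, $B\cdot B$, and so on are all zero).

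The last step, and the one I expect to need the most care, is distributivity for a flat addition. I would prove a general criterion: in a flat semigroup with zero, multiplication distributes over flat addition if and only if $x\ne y$ with $xz=yz\ne0$ never happens, and likewise on the left; that is, nonzero right and left translations are injective where nonzero. Indeed $(x+y)z$ equals $0$ when $x\ne y$, while $xz+yz$ equals $0$ unless $xz=yz\neq0$. So I must check cancellativity of nonzero products.
\begin{itemize}
\item $ab=a'b\ne0$ gives $q_c=q_{c'}$, so $c=c'$, and linearity applied to the pair $(b,c)$ gives $a=a'$.
\item The case $bc$ is similar.
\item $ap_a=a'p_{a'}=h$ with the same right factor $p$ forces $a=a'$; similarly $q_cc$ with the same left factor forces the same $c$.
\item Mixed cases, e.g. $xz=h$ with $x\in A$ and $yz=h$ with $y\in Q$, need the same $z$; but $z\in P$ versus $z\in C$ cannot coincide.
\end{itemize}
Thus this step reduces entirely to linearity of $H$ and the disjointness of the carrier.
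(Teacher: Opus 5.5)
Your proposal is correct and follows essentially the same route as the paper. Both arguments get unambiguity of the multiplication from linearity, derive associativity and the triple, four-factor and square claims from the types of the intermediate products, and reduce distributivity to $0$-cancellation. Your case list for cancellation should also include the fixed-left-factor cases $ab=ab'$ and $bc=bc'$, which follow from linearity applied to the pairs $(a,c)$ and $(a,b)$, respectively.
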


\begin{proof}
Linearity makes the multiplication table unambiguous.
A nonzero $(xy)z$ must have intermediate product $q_c$;
hence $x=a,y=b,z=c$ for an edge $abc$, and its value is $h$.
A nonzero $x(yz)$ must have intermediate product $p_a$,
giving exactly the same condition and value.
Conversely an edge gives
$a(bc)=ap_a=h=q_cc=(ab)c$.
This proves associativity and the description of triple products.
The other vanishing assertions follow from the disjoint types.

The multiplication is $0$-cancellative:
\[
 xy=xz\ne0\ \Longrightarrow\ y=z,\qquad
 yx=zx\ne0\ \Longrightarrow\ y=z.
\]
For instance, with fixed left factor $a$, two products with
$B$-elements having the same output $q_c$ would give two
edges sharing $a,c$, contrary to linearity.
The output $h$ occurs only with right factor $p_a$.
With fixed left factor $b$, equal nonzero outputs $p_a$
would give two edges sharing $a,b$; a left factor $q_c$
has only one nonzero right multiple.
These exhaust the cases.
The corresponding argument for a fixed right factor uses the pairs
$b,c$ and $a,c$ for edge products.

$0$-cancellation implies distributivity
under flat addition.
If $y\ne z$, the products $xy,xz$ are either different or
both zero, since they cannot equal the same nonzero element.
Thus $xy+xz=0=x(y+z)$.
The case $y=z$ is immediate; right distributivity is proved
in the same way.
\end{proof}

Fix $m\geq2$, with indices read modulo $m$.
Use the distinct formal letters
\[
 A=\{a_i,\bar a_i:0\leq i<m\},\quad
 B=\{b_i,d_i:0\leq i<m\},\quad
 C=\{c_i,\bar c_i:0\leq i<m\}.
\]
Bars are part of the names, not inverse operations.
The full hypergraph has the following $4m$ edges:
\begin{equation}\label{eq:prism-faces}
 \begin{aligned}
 U_i&=a_i b_i c_i,&
 V_i&=a_{i+1}d_i c_i,\\
 \bar U_i&=\bar a_i b_i\bar c_i,&
 \bar V_i&=\bar a_{i+1}d_i\bar c_i.
 \end{aligned}
\end{equation}
Let $H_m$ consist of all these edges except $U_0$.

Each letter in the full hypergraph occurs in exactly two edges.
Make a graph $\Gamma_m$ whose vertices are the $4m$ edges in
\eqref{eq:prism-faces}, and whose graph edges are their shared
letters.  Its upper and lower cycles are
\[
 U_0,V_0,U_1,V_1,\ldots,U_{m-1},V_{m-1},U_0
\]
and the corresponding barred cycle.
The rungs join $U_i$ to $\bar U_i$ through $b_i$, and
$V_i$ to $\bar V_i$ through $d_i$.
Thus
\begin{equation}\label{eq:prism-graph}
 \Gamma_m\cong C_{2m}\mathbin{\square}K_2.
\end{equation}
It is connected, simple, cubic and bipartite.
The sets $\{U_i,\bar V_i\}$ and $\{V_i,\bar U_i\}$
give its bipartition.
The original hyperedges are exactly its vertex-stars,
with graph edges coloured by $A,B,C$.

In particular the hypergraph is linear.
After $U_0$ is removed, the letters $a_0,b_0,c_0$ still
occur in $V_{m-1},\bar U_0,V_0$, respectively.
Hence $H_m$ has no isolated vertices, all its vertex degrees
are at most two, and it has $4m-1$ edges.
The semiring $S_{H_m}$ has
\begin{equation}\label{eq:witness-size}
 |S_{H_m}|=2m+2m+2m+2m+2m+2=10m+2.
\end{equation}

Use the letters as distinct variables and define
\[
 \mathbf{u}_m=\sum_{\mathbf{w}\in H_m}\mathbf{w},\qquad \mathbf{w}_0=a_0b_0c_0.
\]

\begin{lemma}[The prism identity]\label{lem:prism-identity}
For every group $G$ and every $n\geq1$,
\begin{equation}\label{eq:prism-identity}
 B_n(G)\models \mathbf{w}_0\preceq\mathbf{u}_m .
\end{equation}
This identity fails in $S_{H_m}$.
\end{lemma}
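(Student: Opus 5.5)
The plan is a direct check in $B_n(G)$, followed by the obvious assignment in $S_{H_m}$. Since addition in $B_n(G)$ is flat, $\mathbf{w}_0\preceq\mathbf{u}_m$ holds under an evaluation exactly when either $\mathbf{u}_m$ evaluates to $0$, or $\mathbf{w}_0$ evaluates to the same nonzero value as $\mathbf{u}_m$. So I fix an evaluation under which $\mathbf{u}_m$ takes a nonzero value $(i,g,j)$. By flatness, every word of $H_m$ then evaluates to $(i,g,j)$. In particular every letter receives a nonzero value, which I write as $x\mapsto(\ell(x),g_x,r(x))$. For each edge $xyz\in H_m$, the rule \eqref{eq:brandt} gives
\[
 \ell(x)=i,\quad r(x)=\ell(y),\quad r(y)=\ell(z),\quad r(z)=j,\quad g_xg_yg_z=g .
\]
Every letter, including $a_0,b_0,c_0$, lies in some edge of $H_m$. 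Hence $\ell(a_0)=i$ and $r(c_0)=j$, and it remains to show that $r(a_0)=\ell(b_0)$, $r(b_0)=\ell(c_0)$ and $g_{a_0}g_{b_0}g_{c_0}=g$.

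For the indices I use the prism edges around the deleted face $U_0$. The edges $V_{m-1}$ and $\bar V_{m-1}$ both contain $d_{m-1}$, so $r(a_0)=\ell(d_{m-1})=r(\bar a_0)$. The edge $\bar U_0$ gives $r(\bar a_0)=\ell(b_0)$. Dually, $V_0$ and $\bar V_0$ give $\ell(c_0)=r(d_0)=\ell(\bar c_0)$, and $\bar U_0$ gives $\ell(\bar c_0)=r(b_0)$. Thus $a_0b_0c_0$ is nonzero and has indices $(i,j)$.

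For the labels, the group may be nonabelian, so I will propagate a conjugation relation around the cycle of $\Gamma_m$. For each pair of edges $xyz$ and $\bar x y\bar z$ sharing a middle letter $y$, solving for $g_y$ in both gives
\[
 g_x^{-1}gg_z^{-1}=g_{\bar x}^{-1}gg_{\bar z}^{-1}.
\]
Rearranged, this reads $\alpha=g\beta g^{-1}$, where $\alpha=g_{\bar x}g_x^{-1}$ and $\beta=g_{\bar z}^{-1}g_z$. Put $\alpha_k=g_{\bar a_k}g_{a_k}^{-1}$ and $\beta_k=g_{\bar c_k}^{-1}g_{c_k}$. The rung pairs give the following relations:
\[
 V_k,\bar V_k\colon\ \alpha_{k+1}=g\beta_kg^{-1}\quad(0\le k<m),\qquad
 U_k,\bar U_k\colon\ \alpha_k=g\beta_kg^{-1}\quad(1\le k<m).
\]
Chaining these as $\alpha_1=g\beta_0g^{-1}=g\beta_1g^{-1}=\alpha_2=\cdots$ and ending with the relation for $V_{m-1}$ yields $\alpha_0=g\beta_0g^{-1}$, which is the missing $U_0$ relation. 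Reversing the rearrangement and inserting $g_{b_0}=g_{\bar a_0}^{-1}gg_{\bar c_0}^{-1}$, which comes from $\bar U_0$, gives $g_{a_0}g_{b_0}g_{c_0}=g$. So $\mathbf{w}_0$ evaluates to $(i,g,j)$ and the identity holds. I expect this bookkeeping, keeping the order of factors correct in a nonabelian $G$, to be the main point requiring care. Everything else is forced by the structure of the prism graph.

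For the failure in $S_{H_m}$, I assign each letter to itself. By Lemma~\ref{lem:SH}, each edge word of $H_m$ evaluates to $h$, so $\mathbf{u}_m=h$. Since $U_0\notin H_m$ and the hypergraph is linear, no edge of $H_m$ contains both $a_0$ and $b_0$, so $a_0b_0=0$ and hence $\mathbf{w}_0=0$. Then $\mathbf{w}_0+\mathbf{u}_m=0\ne h$.
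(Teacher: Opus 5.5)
Your proof is correct and is essentially the paper's argument. The index propagation is identical. For the labels, you transport the ratio of the $a$-labels ($\alpha_k$ is the inverse of the paper's $r_k$) through the squares $U_k,V_k,\bar V_k,\bar U_k$ and close up at $k=0$ using three of the four words there, just as the paper does. The only difference is bookkeeping: you pair words sharing their middle rung letter and carry a conjugation by the common label $g$, whereas the paper right-cancels the shared $C$-letter to get $\bar a_ib_i=\bar a_{i+1}d_i$ and $a_ib_i=a_{i+1}d_i$, so the common label never appears.
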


\begin{proof}
Consider any evaluation in $B_n(G)$.
If $\mathbf{u}_m=0$, the identity holds because zero absorbs addition.
Otherwise flat addition implies that every retained word in
\eqref{eq:prism-faces} has the same nonzero value
$(s,\delta,t)$.
All variables are nonzero, since there are no isolated
vertices in $H_m$.
Write the value of a variable $x$ as
$(\ell_x,g_x,r_x)$.

The matrix indices of the missing word follow from the retained words.
The retained words
$a_0d_{m-1}c_{m-1}$,
$\bar a_0d_{m-1}\bar c_{m-1}$, and
$\bar a_0b_0\bar c_0$ give
\[
 r_{a_0}=\ell_{d_{m-1}}=r_{\bar a_0}=\ell_{b_0}.
\]
The retained words
$\bar a_0b_0\bar c_0$,
$\bar a_1d_0\bar c_0$, and
$a_1d_0c_0$ give
\[
 r_{b_0}=\ell_{\bar c_0}=r_{d_0}=\ell_{c_0}.
\]
Also $\ell_{a_0}=s$ and $r_{c_0}=t$, using
$V_{m-1}$ and $V_0$.
Thus $\mathbf{w}_0$ is composable and has the required outside indices.

We next determine the group label.
In the next calculation use the same letter for the group label
of its evaluated variable, to keep the notation short.
All lower words are retained, so right cancellation gives
\begin{equation}\label{eq:lower-relations}
 \bar a_i b_i=\bar a_{i+1}d_i
 \qquad\text{for every }i.
\end{equation}
For $i\ne0$, the two upper words are also retained, and
\begin{equation}\label{eq:upper-relations}
 a_i b_i=a_{i+1}d_i.
\end{equation}
Put $r_i=a_i\bar a_i^{-1}$.
Equation \eqref{eq:lower-relations} gives
$b_i d_i^{-1}=\bar a_i^{-1}\bar a_{i+1}$.
Together with \eqref{eq:upper-relations}, this yields
$r_i=r_{i+1}$ for $i=1,\ldots,m-1$.
Since indices are modulo $m$, these equalities imply
$r_1=r_2=\cdots=r_{m-1}=r_0$.
Now \eqref{eq:lower-relations} at $i=0$ gives
\[
 a_0b_0
 =r_0\bar a_0b_0
 =r_1\bar a_1d_0
 =a_1d_0.
\]
Multiplying by $c_0$, the retained word $V_0$ shows
$a_0b_0c_0=\delta$.
The missing word therefore has value $(s,\delta,t)$,
proving \eqref{eq:prism-identity}.

In $S_{H_m}$, assign every variable its identically named
letter.  All retained words have value $h$ by
Lemma~\ref{lem:SH}, and the missing word has value zero.
Thus the inequality in \eqref{eq:prism-identity} fails,
since $0\nleq h$ in the additive order.
\end{proof}

Consequently $S_{H_m}\notin\mathcal{B}$ for every $m\geq2$.
The rest of the proof shows that subsemirings generated by few
elements nevertheless belong to very small Brandt varieties.

\subsection{Incidence relations and local representations}

For a graph $\Gamma$ and a vertex set $X$, write
$\partial_\Gamma X$ for the set of graph edges with precisely
one endpoint in $X$.

\begin{lemma}[Small cuts of a prism]\label{lem:prism-cuts}
If $\Gamma=C_r\square K_2$, $r\geq4$, and
$\varnothing\ne X\ne V(\Gamma)$, then
$|\partial_\Gamma X|\geq3$.
Equality holds only when $X$ or its complement is a singleton.
\end{lemma}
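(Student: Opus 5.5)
I would prove this by a direct counting argument on the cut edges, using only the structure of the prism $\Gamma=C_r\square K_2$. Write its vertices as $(i,\varepsilon)$ with $i\in\mathbb Z_r$ and $\varepsilon\in\{0,1\}$. Call the two copies of $C_r$ the layers and the $r$ edges $(i,0)(i,1)$ the rungs. For $\varnothing\ne X\ne V(\Gamma)$, put $X_\varepsilon=\{i:(i,\varepsilon)\in X\}\subseteq\mathbb Z_r$. The cut then splits as
\[
 |\partial_\Gamma X|=|\partial_{C_r}X_0|+|\partial_{C_r}X_1|+|X_0\triangle X_1|,
\]
where the three terms count the cut edges in the lower layer, the upper layer, and the rungs. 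The basic fact I will use is that in a cycle, a subset $Y$ with $\varnothing\ne Y\ne\mathbb Z_r$ has $|\partial_{C_r}Y|\geq2$ and even; equality holds exactly when $Y$ is a single arc.

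I then split into cases according to whether each $X_\varepsilon$ is trivial, meaning empty or all of $\mathbb Z_r$.

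\emph{Case 1: both $X_0$ and $X_1$ are nontrivial.} Then the two layer terms already give at least $4$.

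\emph{Case 2: exactly one is nontrivial, say $X_0$.} The lower-layer term contributes at least $2$. If $X_1=\varnothing$, the rung term equals $|X_0|$. If $X_1=\mathbb Z_r$, the rung term equals $r-|X_0|$. Either way the rung term is at least $1$, so the total is at least $3$. Equality forces the rung term to be $1$ and the lower-layer term to be $2$. This means $X_0$ is a single vertex with $X_1=\varnothing$, or the complement of a single vertex with $X_1$ full. In both situations $X$ or its complement is a singleton.

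\emph{Case 3: both are trivial.} Since $X$ is a proper nonempty subset, one of $X_0,X_1$ is empty and the other is full. Then every rung is cut, so $|\partial_\Gamma X|=r\geq4>3$, and equality cannot occur.

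Conversely, a singleton has degree $3$ because $\Gamma$ is cubic, so equality is attained exactly in the stated cases.

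I do not expect a real obstacle. The only step needing care is the equality analysis in Case 2, where I must track that the rung contribution is $|X_0|$ or $r-|X_0|$ depending on which trivial value $X_1$ takes. The hypothesis $r\geq4$ enters only to exclude Case 3, where the cut has size exactly $r$. For $r=3$ the triangle separating the two layers would give a second kind of $3$-cut, which is why the hypothesis is needed.
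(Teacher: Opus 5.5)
Your proposal is correct and follows essentially the same route as the paper: the same decomposition of the cut into the two layer terms plus the rung term $|X_0\triangle X_1|$, and the same three-way case split according to which layer sets are trivial. The only cosmetic difference is that you treat $X_1=\varnothing$ and $X_1=\mathbb Z_r$ as separate subcases, whereas the paper reduces to $I=\varnothing$ by exchanging layers or complementing $X$.
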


\begin{proof}
Describe $X$ by its upper and lower index sets $I,J$
in the cycle $C_r$.
Then
\begin{equation}\label{eq:cut-formula}
 |\partial_\Gamma X|
 =|\partial_{C_r}I|+|\partial_{C_r}J|+|I\mathbin{\triangle}J|.
\end{equation}
If both $I,J$ are nonempty proper subsets of the cycle,
each of the first two terms is at least two.
Otherwise, after exchanging layers or taking the complement
of $X$, assume $I=\varnothing$.
The set $J$ is nonempty.
If $J$ is the full cycle, the cut has size $r\geq4$.
Otherwise the size is
$|\partial_{C_r}J|+|J|\geq2+|J|$.
It is at most three precisely when $J$ is a singleton.
Undoing the complement and layer exchange gives the assertion.
\end{proof}

For the local arguments, let $\Gamma$ be any finite connected
simple cubic bipartite graph with a proper edge coloring by
$A,B,C$, satisfying the following cut condition:
\begin{equation}\label{eq:cut-condition}
 |\partial_\Gamma X|\geq3\quad
 (\varnothing\ne X\ne V(\Gamma)),\qquad
 |\partial_\Gamma X|=3\Longrightarrow
 \min\{|X|,|V(\Gamma)\setminus X|\}=1.
\end{equation}
The prisms with their specified coloring satisfy these hypotheses.
Let $F$ be a nonempty proper subset of $V(\Gamma)$,
also regarded as the hypergraph consisting of its vertex-stars.
Its hypergraph vertex set is
\[
 E_F=\{e\in E(\Gamma):e\text{ has an endpoint in }F\}.
\]
This hypergraph has no isolated vertices.
Take any nonzero commutative ring $R$ with identity,
and use the standard basis $e_x$, $x\in E_F$, of
$R^{E_F}$.
For $v\in F$, put
\begin{equation}\label{eq:relation-module}
 \rho_v=\sum_{x\ni v}e_x,\qquad
 W_F=\operatorname{span}_{R}\{\rho_v:v\in F\}.
\end{equation}

\begin{lemma}[Short-support relations]\label{lem:short-support}
Suppose that at least two vertices of $\Gamma$ lie outside
$F$.
The vectors $\rho_v$, $v\in F$, are independent over
$R$.
Every nonzero vector of $W_F$ supported on at most three
coordinates is a nonzero scalar multiple of a single
$\rho_v$, $v\in F$.
In particular $W_F$ contains no nonzero vector supported on
at most two coordinates, and
\begin{equation}\label{eq:triple-test}
 e_a+e_b+e_c\in W_F
 \quad\Longleftrightarrow\quad abc\in F
 \qquad(a\in A\cap E_F,\ b\in B\cap E_F,\ c\in C\cap E_F).
\end{equation}
\end{lemma}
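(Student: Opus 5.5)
The plan is to write an arbitrary element of $W_F$ as $\mathbf{w}=\sum_{v\in F}\lambda_v\rho_v$ with $\lambda_v\in R$, and to compare its support with the edge boundary of the set of vertices carrying nonzero coefficients. Since $\Gamma$ is simple, each coordinate of $\mathbf{w}$ is easy to read off. For $x\in E_F$ with endpoints $u,v$:
\begin{itemize}
\item if both endpoints lie in $F$, the $x$-coordinate is $\lambda_u+\lambda_v$;
\item if only $v$ lies in $F$, the $x$-coordinate is $\lambda_v$.
\end{itemize}
Put $X=\{v\in F:\lambda_v\neq0\}$.

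The key observation is that every graph edge in $\partial_\Gamma X$ lies in the support of $\mathbf{w}$. Such an edge has exactly one endpoint $u\in X$. Its other endpoint either lies outside $F$, giving coordinate $\lambda_u$, or lies in $F\setminus X$, giving coordinate $\lambda_u+0=\lambda_u$. In both cases the coordinate is nonzero. Hence
\[
 |\supp\mathbf{w}|\geq|\partial_\Gamma X|.
\]
Edges with both endpoints in $X$ may cancel, but we never need them. This step uses nothing about $R$ beyond its being a nonzero ring, so no division is needed.

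Now apply the cut condition \eqref{eq:cut-condition}. The complement $V(\Gamma)\setminus X$ contains $V(\Gamma)\setminus F$, which by hypothesis has at least two vertices. So $X\neq V(\Gamma)$, and moreover $|V(\Gamma)\setminus X|\geq2$. Suppose $X\neq\varnothing$ and $|\supp\mathbf{w}|\leq3$. Then $|\partial_\Gamma X|\leq3$, so \eqref{eq:cut-condition} forces $|\partial_\Gamma X|=3$ and one of $X$ or its complement to be a singleton. The complement is excluded, so $X=\{v\}$ and $\mathbf{w}=\lambda_v\rho_v$ with $\lambda_v\neq0$.

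The consequences then follow in turn.
\begin{itemize}
\item \emph{Independence.} If $\mathbf{w}=0$, its support is empty. By the argument above, $X\neq\varnothing$ would force support size at least $3$, so $X=\varnothing$ and all $\lambda_v=0$.
\item \emph{Short supports.} A nonzero $\mathbf{w}$ with support of size at most three has $X\neq\varnothing$ and is therefore $\lambda_v\rho_v$. Since $\rho_v$ has support exactly the three edges at $v$, a nonzero vector of $W_F$ can never have support of size at most two.
\item \emph{Triple test \eqref{eq:triple-test}.} The coordinates $a,b,c$ have different colours, so they are distinct and $e_a+e_b+e_c$ has support of size exactly three. If it lies in $W_F$, it equals $\lambda\rho_v$ for some $v\in F$. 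Comparing supports, $a,b,c$ are exactly the three edges at $v$, and $\lambda=1$. Thus $abc$ is the vertex-star of $v\in F$. Conversely, if $abc\in F$, then $e_a+e_b+e_c=\rho_v$ for the corresponding vertex $v$.
\end{itemize}

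The only delicate point is the support-versus-boundary inequality. One must check that edges leading into $F\setminus X$ do not cancel: they do not, precisely because the coefficient on the far endpoint is $0$. One must also check that the hypothesis of two vertices outside $F$ is exactly what rules out the singleton-complement case in \eqref{eq:cut-condition}.
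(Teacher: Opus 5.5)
Your proof is correct, but it takes a somewhat more economical route than the paper's. The paper multiplies each coefficient by its bipartition sign to obtain a potential $f$, so that the coefficient on an edge $uv$ becomes $f(u)-f(v)$. This identifies $\supp z$ exactly as the set of edges joining different $f$-values, and it gives independence by connectedness. The paper then applies the cut condition to a single nonzero level set $f^{-1}(\alpha)$. Because $f$ could a priori take several nonzero values, it needs a separate final step excluding a second nonzero level set, using that distinct vertices of a simple cubic graph have distinct stars.

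You instead take $X$ to be the whole set of vertices with nonzero coefficient. You need only the inclusion $\partial_\Gamma X\subseteq\supp\mathbf{w}$: across the boundary of $X$ the coefficient is just $\lambda_u$, whichever way the far endpoint fails to lie in $X$. One application of the cut condition then gives $X=\{v\}$ and hence $\mathbf{w}=\lambda_v\rho_v$ immediately.

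Your version does not use the bipartition at all and avoids the second level-set argument. The paper's signed potential buys an exact description of the support, the same signed incidence-matrix viewpoint it reuses in Lemma~\ref{lem:primitive}, but that extra information is not needed for this lemma.
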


\begin{proof}
Give the bipartition classes of $\Gamma$ signs $+1,-1$,
denoted by $\varepsilon(v)$.
For a combination $z=\sum_{v\in F}\lambda_v\rho_v$, put
\[
 f(v)=
 \begin{cases}\varepsilon(v)\lambda_v,&v\in F,\\
 0,&v\notin F.\end{cases}
\]
On a graph edge $uv$, oriented from the positive to the
negative bipartition class, its coefficient in $z$ is
\[
 \lambda_u+\lambda_v=f(u)-f(v),
\]
where coefficients outside $F$ are read as zero.
Thus $\supp z$ is exactly the set of edges joining vertices
with different $f$-values.
If $z=0$, connectedness makes $f$ constant; since $F$
is proper it has value zero outside $F$, so all coefficients
are zero.  This proves independence.

Now suppose $z\ne0$ and $|\supp z|\leq3$.
Choose a nonzero value $\alpha$ of $f$, and set
$X=f^{-1}(\alpha)$.
This is a nonempty proper set, and
$\partial_{\Gamma}X\subseteq\supp z$.
By \eqref{eq:cut-condition}, $X$ or its complement is a
singleton.
The latter is impossible: at least two vertices outside $F$
have value zero, so both lie outside $X$.
Hence $X=\{v\}$, and its three incident edges constitute
all of $\supp z$.

If another vertex had a nonzero value, its nonzero level set
would similarly be a singleton $\{w\}$, with
$\partial\{w\}=\supp z=\partial\{v\}$.
Distinct vertices of a simple cubic graph cannot have identical
sets of three incident edges: that would require three parallel
edges between them.
Thus $f$ is supported only at $v$, and $z=\lambda_v\rho_v$.

Each $\rho_v$ has support exactly three.
This excludes nonzero support sizes one and two.
For \eqref{eq:triple-test}, the forward implication makes
$e_a+e_b+e_c$ a scalar multiple of one $\rho_v$.
Its support and coefficients force the star $v$ to be $abc$
and the scalar to be $1$.
The reverse implication follows from the definition.
\end{proof}

\begin{lemma}[A free quotient over every coefficient ring]\label{lem:primitive}
For any nonempty proper $F\subset V(\Gamma)$,
\[
 R^{E_F}/W_F\cong R^d,
 \qquad d=|E_F|-|F|\geq2.
\]
In particular the integer row module is a direct summand.
\end{lemma}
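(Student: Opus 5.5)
The plan is to show directly that $W_F$ is a free direct summand of $R^{E_F}$ with a complement spanned by standard basis vectors. Concretely, I will choose a set $T\subseteq E_F$ of exactly $|F|$ coordinates such that the coordinate projection $\pi_T\colon R^{E_F}\to R^T$ restricts to an isomorphism $W_F\to R^T$. Granting this, every vector splits uniquely as an element of $W_F$ plus a vector supported on $E_F\setminus T$. Hence $R^{E_F}=W_F\oplus R^{E_F\setminus T}$, and the quotient is free of rank $|E_F|-|F|$. Note that this lemma assumes only that $F$ is nonempty and proper, not the extra hypothesis of Lemma~\ref{lem:short-support}. The construction below uses nothing more.

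To choose $T$, I use that $\Gamma$ is connected and $F$ is proper. For $v\in F$, let $\mathrm{dist}(v)\geq1$ be the graph distance from $v$ to $V(\Gamma)\setminus F$. For each $v\in F$, pick a neighbour $\mathrm{par}(v)$ with $\mathrm{dist}(\mathrm{par}(v))=\mathrm{dist}(v)-1$, where distance $0$ means lying outside $F$. Let $t(v)$ be the edge $v\,\mathrm{par}(v)$; it lies in $E_F$. These edges are pairwise distinct. If $t(v)=t(w)$ with $v\ne w$, then $w=\mathrm{par}(v)$ and $v=\mathrm{par}(w)$, which would force each distance to be strictly smaller than the other.

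Put $T=\{t(v):v\in F\}$. The $t(v)$-coordinate of $\rho_w$ equals $1$ exactly when $w$ is an endpoint of $t(v)$, that is, when $w=v$ or $w=\mathrm{par}(v)\in F$; otherwise it is $0$. Order $F$ by decreasing $\mathrm{dist}$, so that $\mathrm{par}(v)$ always comes after $v$. In this order the $|F|\times|F|$ matrix of $\pi_T$ on the vectors $\rho_w$ is triangular with $1$'s on the diagonal, hence invertible over any commutative ring $R$. It follows that the $\rho_v$ are independent and that $\pi_T|_{W_F}$ is an isomorphism onto $R^T$, as required.

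For the rank bound, count edge endpoints. Every edge of $E_F$ has one or two endpoints in $F$, and $\Gamma$ is cubic, so $3|F|=2|E_F|-|\partial_\Gamma F|$. Therefore
\[
 d=|E_F|-|F|=\tfrac12\bigl(|F|+|\partial_\Gamma F|\bigr)\geq\tfrac12(1+3)=2,
\]
using the cut condition \eqref{eq:cut-condition} for the nonempty proper set $F$. Taking $R=\mathbb{Z}$, the decomposition $\mathbb{Z}^{E_F}=W_F\oplus\mathbb{Z}^{E_F\setminus T}$ shows that the integer row module is a direct summand. The only real obstacle is finding a unimodular triangular minor that works over every ring at once, rather than merely showing independence. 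The parent-edge choice along shortest paths to the complement is designed to handle exactly this.
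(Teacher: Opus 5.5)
Your proof is correct and follows essentially the paper's argument: the paper also picks, for each $v\in F$, the edge to its parent in a spanning tree rooted outside $F$, gets a triangular unimodular minor and hence $R^{E_F}=W_F\oplus\ker\pi$, and then uses $3|F|=2e+b$ with the cut condition to obtain $d=(|F|+b)/2\geq2$. Your shortest-distance parent choice toward the whole complement, and your omission of the row-sign normalization, are only cosmetic variations.
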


\begin{proof}
Multiply each row by its bipartition sign.  The result is a
restricted oriented incidence matrix, up to column signs.
Root a spanning tree of $\Gamma$ at a vertex outside $F$.
For each $v\in F$, select the column of its edge to its
parent.  These columns are distinct and belong to $E_F$.
Order the rows and their corresponding columns by increasing
depth.  A selected column has its diagonal entry $\pm1$
and at most one other entry, in an earlier row.  Thus the
square selected matrix is triangular with determinant $\pm1$.
It is invertible over every such $R$.

Let $\pi$ be projection to the selected coordinates.
Its restriction $\pi|_{W_F}$ is an isomorphism onto
$R^F$.  Subtracting the unique row combination with
prescribed selected coordinates gives
\[
 R^{E_F}=W_F\oplus\ker\pi,
 \qquad \ker\pi\cong R^{E_F\setminus J},
\]
where $J$ is the selected column set.  This proves freeness
and the rank formula, without division by nonunits or a
finite search for minors.

Let $t=|F|$, let $e$ count the graph edges internal to $F$,
and put $b=|\partial_{\Gamma}F|$.
Cubicity gives $3t=2e+b$, while $|E_F|=e+b$.
Thus
\begin{equation}\label{eq:module-rank}
 d=|E_F|-|F|=\frac{t+b}{2}\geq2,
\end{equation}
because $t\geq1$ and \eqref{eq:cut-condition} gives
$b\geq3$.
\end{proof}

\begin{lemma}[Local representation]\label{lem:local-representation}
Let $\Gamma$ satisfy the hypotheses of
\eqref{eq:cut-condition}, and let $F$ be a nonempty set of
its vertex-stars omitting at least two graph vertices.
Then
\begin{equation}\label{eq:local-varieties}
 S_F\in\mathsf{V}(B_2(C_r))\quad\text{for every integer }r\geq2,
 \qquad
 S_F\in\mathsf{V}(B_2(\mathbb{Z})).
\end{equation}
\end{lemma}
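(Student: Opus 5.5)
The plan is to realize $S_F$ as a Rees quotient of a subsemiring of a single Brandt semiring $B_4(K)$. Here $K$ is the additive group $R^{E_F}/W_F$, and we take $R=\mathbb{Z}/r\mathbb{Z}$ for the first assertion of \eqref{eq:local-varieties} and $R=\mathbb{Z}$ for the second. By Lemma~\ref{lem:primitive}, $K\cong C_r^d$ or $K\cong\mathbb{Z}^d$. The zero smash lemma gives $\flat(C_r^d)$ as a quotient of a power of $\flat(C_r)$, so $\flat(K)\in\mathsf{V}(\flat(C_r))$, and similarly for $\mathbb{Z}$. Proposition~\ref{prop:join} then gives $\mathsf{V}(B_4(K))\subseteq\mathsf{V}(B_2(C_r))$, respectively $\subseteq\mathsf{V}(B_2(\mathbb{Z}))$. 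It therefore suffices to find $S_F$ as a homomorphic image of a subsemiring of $B_4(K)$.

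\emph{The representation.} Writing $[v]$ for the class of $v$ in $K$, send each letter $a\in A\cap E_F$ to $(0,[e_a],1)$, each $b\in B\cap E_F$ to $(1,[e_b],2)$, and each $c\in C\cap E_F$ to $(2,[e_c],3)$. Let $T$ be the subsemiring generated by these images. Since the only composable products are $ab$, $bc$ and $abc$, and sums of distinct elements are $0$, the set $T$ consists of:
\begin{itemize}
\item the letter images;
\item elements $(0,[e_a+e_b],2)$ and $(1,[e_b+e_c],3)$;
\item elements $(0,[e_a+e_b+e_c],3)$;
\item $0$.
\end{itemize}
By \eqref{eq:triple-test}, the element $(0,[0],3)$ arises exactly from the triples $abc\in F$; it will play the role of $h$. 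Let $J$ consist of $0$, every product $ab$ or $bc$ whose pair does not lie in an edge of $F$, and every triple product with nonzero label. The claim is that $J$ is an ideal of $T$, that sums of distinct elements land in $J$ (automatic for flat addition, so the Rees relation is a congruence exactly as in Lemma~\ref{lem:smash}), and that $T/J\cong S_F$ via $x\mapsto$ the letter $x$, $ab\mapsto q_c$, $bc\mapsto p_a$, $(0,[0],3)\mapsto h$.

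\emph{Verification.} Lemma~\ref{lem:short-support} does all the work, and this is where the hypothesis of two omitted vertices is used.
\begin{itemize}
\item Distinct letters in one part have distinct images, since $e_x-e_y\notin W_F$ (support two).
\item Products from edges do not collide spuriously. If $abc,a'b'c'\in F$, then $e_a+e_b\equiv -e_c$, so $[e_a+e_b]=[e_{a'}+e_{b'}]$ forces $e_c-e_{c'}\in W_F$, hence $c=c'$. Linearity then makes $q_c$ well defined; the $bc$ products are handled symmetrically.
\item Ideal property. An edge pair $ab$ from $abc\in F$, multiplied by $c'\ne c$, gives label $[e_a+e_b+e_{c'}]$. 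This is nonzero, because otherwise $abc'\in F$ by \eqref{eq:triple-test}, contradicting linearity.
\item Non-edge products. A non-edge $ab$ times any $c$ has nonzero triple label, again by \eqref{eq:triple-test}. So multiplying $J$ by a letter stays in $J$, and every length-four product is $0$.
\end{itemize}

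I expect this bookkeeping to be the main obstacle: checking that the quotient multiplication table matches \eqref{eq:SH-products} exactly and that no two surviving classes coincide. I would organize it by the type (index pair) of each element, so that only same-type coincidences need the short-support lemma.
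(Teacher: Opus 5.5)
Your proposal is correct and is essentially the paper's proof: the same group $K=R^{E_F}/W_F$, the same generators at consecutive index positions in $B_4(K)$, and a Rees quotient by an ideal that coincides exactly with the paper's set of non-divisors of $H_0=(1,0_K,4)$. Your $J$ is well defined because an edge-pair product never equals a non-edge-pair product, which follows from \eqref{eq:triple-test} as you already use it. The only other difference is cosmetic and lies in the final variety step. You combine Proposition~\ref{prop:join} with $\flat(L^d)\in\mathsf{V}(\flat(L))$, whereas the paper embeds $B_4(K)$ directly in the zero smash $B_{2^d}(L^d)$, using $d\geq2$; both routes are valid.
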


\begin{proof}
Take $R=\mathbb{Z}/r\mathbb{Z}$, $r\geq2$, or $\mathbb{Z}$, and form the additive group
\[
 K=R^{E_F}/W_F.
\]
By Lemma~\ref{lem:primitive}, it is $C_r^d$ or $\mathbb{Z}^d$,
respectively, for $d\geq2$.
Write $[x]$ for the basis class of a hypergraph vertex $x$.
This notation distinguishes quotient classes from the formal
barred letters in \eqref{eq:prism-faces}.
Lemma~\ref{lem:short-support} says that distinct vertices
have distinct classes and
\begin{equation}\label{eq:labels}
 [a]+[b]+[c]=0_K
 \quad\Longleftrightarrow\quad abc\in F.
\end{equation}

In $B_4(K)$, generate a subsemiring $T$ by
\begin{equation}\label{eq:local-gens}
 A_a=(1,[a],2),\qquad B_b=(2,[b],3),\qquad
 C_c=(3,[c],4).
\end{equation}
It contains zero, since $A_a^2=0$.
Every nonzero element of $T$ is the value of
a nonempty multiplicative word in these generators:
flat addition can only retain an element or give zero.
The increasing matrix indices restrict such words to length
at most three and positions
\begin{equation}\label{eq:positions}
 (1,2),\ (2,3),\ (3,4),\ (1,3),\ (2,4),\ (1,4).
\end{equation}
The elements
\begin{equation}\label{eq:PQH}
 H_0=(1,0_K,4),\qquad
 P_a=(2,-[a],4),\qquad Q_c=(1,-[c],3)
\end{equation}
belong to $T$.
Indeed, $F$ has an edge, every vertex of $F$ is incident
with one, and \eqref{eq:labels} gives the required two-letter
and three-letter products.
In particular $H_0$ is a nonzero Brandt element, despite its
identity group label.

Let $T^1$ be $T$ with a formal identity
adjoined for the following divisibility criterion:
\[
 D=\{x\in T:uxv=H_0
        \text{ for some }u,v\in T^1\},\qquad
 I=T\setminus D.
\]
Then $0\in I$, $H_0\notin I$, and $I$ is a two-sided
ideal: if $sx$ or $xs$ divides $H_0$, associativity
makes $x$ divide $H_0$.
Also $I+T\subseteq I$, since adding to an
element of $I$ retains it or gives zero.
Collapsing exactly $I$ is therefore a semiring Rees quotient.

All elements in \eqref{eq:local-gens} and \eqref{eq:PQH}
divide $H_0$, using incident edges.
There are no other surviving elements.
The first three positions in \eqref{eq:positions} contain
only the original generators.
An element in position $(1,3)$ is
$(1,[a]+[b],3)=A_aB_b$.
No nonempty left extension is possible, because no nonzero
element of $T$ ends at index $1$.
Its only possible extension to $H_0$ is multiplication on
the right by some $C_c$.
By \eqref{eq:labels}, this happens precisely for an edge
$abc$, when the element is $Q_c$.
Symmetrically the surviving elements in position $(2,4)$
are exactly the $P_a$.
An element in position $(1,4)$ has no nonempty extension
on either side, so it survives only if it equals $H_0$.

Matrix positions separate all the types.
Within each type distinctness follows from the absence of
a nonzero support-two relation in $W_F$.
The same-symbol bijection from $S_F$ onto $T/I$
has precisely the required products:
$A_aB_b$ and $B_bC_c$ survive exactly for edges, while
\[
 A_aP_{a'}=(1,[a]-[a'],4)
\]
survives exactly when $a=a'$, and $Q_cC_{c'}$ survives
exactly when $c=c'$.
All remaining products are zero by their positions.
Both additions are flat and zero is preserved.  Thus
\[
 S_F\cong T/I.
\]

Finally take $L=C_r$ or $L=\mathbb{Z}$, according to the
coefficient ring.
Lemma~\ref{lem:smash} gives
$B_{2^d}(L^d)\in\mathsf{V}(B_2(L))$.
As $d\geq2$ and $K\cong L^d$, selecting four indices
gives an embedded $B_4(K)$.
Its subsemiring and quotient just constructed establish
\eqref{eq:local-varieties}.
All maps used here preserve the absorbing zero.
\end{proof}

\begin{corollary}[Scalar integer labels]\label{cor:integer-labels}
Every local $S_F$ in Lemma~\ref{lem:local-representation}
is a Rees quotient of a subsemiring of $B_4(\mathbb{Z})$.
\end{corollary}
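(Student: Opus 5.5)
The statement is essentially a by-product of the proof of Lemma~\ref{lem:local-representation}, specialised to the coefficient ring $R=\mathbb{Z}$. I would rerun that construction, but keep the group labels in $\mathbb{Z}$ instead of in the rank-$d$ group $K=\mathbb{Z}^{E_F}/W_F$. The idea is to replace the label map $x\mapsto[x]\in K$ by a single integer-valued homomorphism $\varphi\colon K\to\mathbb{Z}$. Only finitely many relations have to be preserved, so a generic linear functional should do.

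\textbf{Step 1: choose $\varphi$.} By Lemma~\ref{lem:primitive}, $K\cong\mathbb{Z}^d$ is free. The proof of Lemma~\ref{lem:local-representation} uses only the following facts about the labels:
\begin{enumerate}[(a)]
\item the criterion \eqref{eq:labels}, namely $[a]+[b]+[c]=0_K$ exactly for $abc\in F$;
\item distinctness of labels within each type, i.e.\ $[x]\ne[y]$, $[a]+[b]\ne[a']+[b']$, and so on;
\item the survival tests $[a]-[a']=0\Leftrightarrow a=a'$ and the analogue for $c$.
\end{enumerate}
Each of these asserts that a finite list of elements of $K$ is nonzero. The ones required to vanish vanish automatically under any homomorphism. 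The finitely many nonzero elements $z_1,\ldots,z_N\in K\cong\mathbb{Z}^d$ each lie outside a proper sublattice, namely the kernel of some coordinate functional. So I would choose $\varphi=\sum_j t_j\pi_j$ with integers $t_j$ avoiding the finitely many hyperplanes $\{\,t:\varphi(z_i)=0\,\}$. Each hyperplane is proper because $z_i\ne0$, and a finite union of proper hyperplanes cannot cover $\mathbb{Z}^d$. A Vandermonde-style choice $t_j=M^{j}$ for large $M$ also works.

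\textbf{Step 2: redo the representation in $B_4(\mathbb{Z})$.} Take the generators $A_a=(1,\varphi[a],2)$, $B_b=(2,\varphi[b],3)$ and $C_c=(3,\varphi[c],4)$, and let $T$ be the subsemiring they generate. The position analysis \eqref{eq:positions}, the elements $H_0,P_a,Q_c$, the divisibility ideal $I$, and the same-symbol bijection $S_F\to T/I$ all go through verbatim. Every equality or inequality of labels used there is one of the finitely many conditions that Step~1 preserves. Hence $S_F\cong T/I$ with $T\le B_4(\mathbb{Z})$.

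\textbf{Main obstacle.} The delicate part is making sure the list in Step~1 is genuinely complete. This means listing every label comparison in positions $(1,3)$, $(2,4)$ and $(1,4)$, together with the distinctness of generators of the same type, which rests on Lemma~\ref{lem:short-support}. All of these are differences of at most six basis classes, and the lemma guarantees each such difference is nonzero in $K$ whenever it is required to be. Enumerating them as a finite set is routine, so once Step~1 is in place the corollary follows.
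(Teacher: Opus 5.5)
Your proposal is correct and is essentially the paper's proof. The paper also chooses a generic functional to $\mathbb{Z}$ (the one-parameter $\ell_t(z)=z_1+tz_2+\cdots+t^{d-1}z_d$, i.e.\ your choice $t_j=M^j$) avoiding finitely many forbidden vectors, then reruns the Rees construction in $B_4(\mathbb{Z})$; it works over $\mathbb{Q}$ and clears denominators, where you use freeness of $K$ via Lemma~\ref{lem:primitive}.

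One correction to your list: the conditions that must be preserved are only $[x]\ne[y]$ for distinct letters and $[a]+[b]+[c]\ne0$ for nonedge triples, both supplied by Lemma~\ref{lem:short-support}. A comparison such as $[a]+[b]\ne[a']+[b']$ is needed only between surviving elements, where it reduces to $[c]\ne[c']$. It must not be demanded for arbitrary pairs, since $[a]+[b]=-[c]=[a']+[b']$ whenever $abc,a'b'c\in F$.
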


\begin{proof}
Use $R=\mathbb Q$ in Lemma~\ref{lem:short-support}.
In the quotient vector space, each difference $[x]-[y]$,
$x\ne y$, and each nonedge sum $[a]+[b]+[c]$ is nonzero.
There are finitely many of these vectors.  Choose a basis and
consider the rational functional
\[
 \ell_t(z_1,\ldots,z_d)=z_1+t z_2+\cdots+t^{d-1}z_d.
\]
For each forbidden vector this is a nonzero polynomial in
$t$, so only finitely many integer values of $t$ are
excluded altogether.  Choose another integer and clear all
denominators.  The resulting integer labels $\lambda_x$
are pairwise distinct and satisfy
$
\lambda_a+\lambda_b+\lambda_c=0\iff abc\in F.
$
The exact Rees construction in the preceding proof now works
in $B_4(\mathbb{Z})$ itself.  The choice uses only the elementary
fact that a nonzero polynomial has finitely many roots.
\end{proof}

\subsection{Finite witnesses and variable complexity}
\label{sec:capture}
\label{sec:brandt-proof}
\label{sec:interval}
\label{sec:linear-variable-complexity}

Let $H$ be obtained by deleting one star of a properly
three-colored simple cubic graph.  For a nonempty generating
set $X\subseteq S_H$, let $Z$ be the set of indices of
its elements: letters index themselves, $p_a$ has index
$a$, $q_c$ has index $c$, and $h,0$ have no index.
Let $L=X\cap(A\cup B\cup C)$ be the actual letter
generators.  Then $L\subseteq Z$ and $|Z|\leq|X|$.
Call a star $abc\in H$ \emph{mandatory} if
\[
 \{a,b\}\subseteq L\quad\text{or}\quad\{b,c\}\subseteq L,
\]
and denote the set of mandatory stars by $M$.

\begin{lemma}[Capture criterion]\label{lem:capture-criterion}
If $F\subseteq H$ is nonempty, $M\subseteq F$, and
$Z\subseteq E_F$, the same-symbol map embeds
$\langle X\rangle$ into $S_F$.
Moreover some such $F$ has at most $|X|$ stars.
\end{lemma}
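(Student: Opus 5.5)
The plan is to analyse $\langle X\rangle$ inside $S_H$ directly, and to check that the map sending each symbol to the same symbol in $S_F$ respects both operations on this subsemiring. Both $S_H$ and $S_F$ are flat, so addition is preserved automatically once the map is injective and sends $0$ to $0$. It therefore suffices to show two things. First, every element of $\langle X\rangle$ is a symbol of $S_F$ (or $0$, or $h$). Second, every product of two elements of $\langle X\rangle$ that is nonzero in $S_H$ is nonzero in $S_F$ with the same value, and conversely. The converse is immediate, since $F\subseteq H$ means the multiplication table of $S_F$ is a restriction of that of $S_H$ on symbols of $S_F$.

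First I will describe $\langle X\rangle$. By Lemma~\ref{lem:SH}, its nonzero elements are generators, nonzero products of two generators, and $h$. A product of two generators is nonzero only in the following cases:
\begin{itemize}
\item $ab=q_c$ or $bc=p_a$ with $a,b$ (respectively $b,c$) in $L$ forming part of a star $abc\in H$; such a star is mandatory, hence lies in $F$.
\item $ap_a=h$ or $q_cc=h$.
\end{itemize}
Every generator's index lies in $Z\subseteq E_F$, so every letter generator lies in $E_F$. The generators $p_a$ and $q_c$ have $a,c\in E_F$, so they exist in $S_F$, because every vertex of $E_F$ belongs to some star of $F$. Thus all elements of $\langle X\rangle$ lie in $S_F$; $h$ exists there since $F$ is nonempty.

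Next I will check products. Take $x,y\in\langle X\rangle$ with $xy\ne0$ in $S_H$. If $x,y$ are letters $a,b$, the product is $q_c$ for an edge $abc$ of $H$. That edge is mandatory, so it lies in $F$, and the same product holds in $S_F$. The case of letters $b,c$ is identical. Products $ap_a$ and $q_cc$ have the same value $h$ in $S_F$, because these rules only need $a\in E_F$ or $c\in E_F$. The only delicate point is the possibility of a three-factor product $abc=h$ arising in $\langle X\rangle$. It factors as $(ab)c$ with $ab\ne0$, and $ab\ne0$ forces the star into $M\subseteq F$. So it is harmless, and injectivity is trivial. This proves the embedding.

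For the bound, I will build $F$ by choosing stars of $H$. Take $F$ to be $M$ together with, for each index $z\in Z$ not yet covered, one star of $H$ containing $z$. Such a star exists because $H$ has no isolated vertices, being a cubic graph minus one star. Each mandatory star is determined by a pair of letters in $L$, and the main obstacle is to charge stars to elements of $X$ injectively. I would charge each mandatory star $abc$ to its $B$-letter $b\in L$, for which it is the unique star in $H$ containing $b$ and $a$ (or $c$). However, $b$ lies in two stars, both of which could be mandatory. In that case both its $A$-neighbour and its $C$-neighbour lie in $L$ and are unused charges, and I would charge the second star to one of them. To make this rigorous, I would view the letters of $L$ as edges of $\Gamma$ and mandatory stars as vertices incident with two $L$-edges. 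Each component of the subgraph formed by $L$-edges has at least as many edges as vertices of degree $\ge2$, since a forest component of a graph of maximum degree $3$ satisfies $\#\{\deg\ge2\}\le e$. Uncovered indices are then charged to themselves, so $|F|\le|X|$. I expect this counting step to be the main obstacle.
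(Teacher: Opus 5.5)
The embedding half of your proposal is essentially the paper's argument. New $p$- and $q$-symbols arise only as products of two letters of $L$, hence from mandatory stars in $F$. The products $ap_a=h$ and $q_cc=h$ depend only on matching symbols, $F\subseteq H$ rules out new nonzero products, and injectivity plus flatness handles addition. Your choice of $F$, namely $M$ together with one incident star for each index of $Z\setminus E_M$, is also the paper's.

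The gap is in the count for $|F|\leq|X|$. Your component inequality $\#\{\deg\geq2\}\leq e$ bounds the mandatory stars of a component by all the $L$-letters of that component. Separately, you charge each index of $Z\setminus E_M$ to itself. These two charges overlap, because an $L$-letter in the same component as a mandatory star need not lie in $E_M$.

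For example, let $X=L$ consist of one $B$-letter together with a long stretch of an $AC$-cycle starting at one endpoint of that $B$-letter. Only that endpoint is a mandatory star. The interior vertices of the stretch all have $L$-degree two, and most letters of the stretch are uncovered by $M$. Taken literally, your charges then give only $|F|\leq|L|+|Z\setminus E_M|$, which here is about $2|X|$.

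What is needed is that the letters paying for $M$ are themselves covered by $M$:
\[
 |M|\leq|L_M|,\qquad L_M=L\cap E_M .
\]
This follows from a double count of incidences between $M$ and $L_M$. Every mandatory star contains at least two letters of $L$, all of them in $L_M$, and every letter lies in at most two stars of $H$. Hence $2|M|\leq\sum_{x\in L_M}\deg_M(x)\leq2|L_M|$. Since $L_M\subseteq Z\cap E_M$, this gives $|F|\leq|M|+|Z\setminus E_M|\leq|L_M|+|Z|-|L_M|\leq|X|$.

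Your component argument can be salvaged by restricting it to the subgraph of $L_M$-edges, in which every mandatory star still has degree at least two. It then reduces to this degree count, and the forest discussion is unnecessary.

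Finally, if $M$ and $Z$ are both empty (e.g.\ $X\subseteq\{h,0\}$), your $F$ is empty. You must add one star, which $|X|\geq1$ permits.
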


\begin{proof}
No operation creates a new letter: a product has type
$p,q,h,0$, and a flat sum retains a summand or gives zero.
Every new $p$- or $q$-element therefore comes from a
product $bc$ or $ab$ of initial letters, hence from a
mandatory star.  The assumptions on $F$ supply every
initial symbol and every newly created $p,q$ index.

The resulting same-symbol map is injective and preserves
zero, which already belongs to every nonempty generated
subsemiring because squares vanish.  Each nonzero edge
product is preserved by $M\subseteq F$.  Products
$ap_a=h$ and $q_cc=h$ depend only on matching symbols
and are also preserved.  A zero product cannot become a
new edge product, since $F\subseteq H$; none of the
symbol-based products changes either.  This proves
multiplicativity, and injectivity with preservation of
zero proves preservation of flat addition.

For the quantitative assertion put $L_M=L\cap E_M$.
Every mandatory star has at least two incidences with
$L_M$, whereas each letter has degree at most two.  Hence
\[
 2|M|\leq\sum_{x\in L_M}\deg_M(x)\leq2|L_M|.
\]
For every $z\in Z\setminus E_M$, choose one incident
retained star.  Together with $M$, these form a set $F$
with
\[
 |F|\leq|M|+|Z\setminus E_M|
 \leq|L_M|+|Z|-|L_M|\leq|X|.
\]
If no star was selected, choose one; $|X|\geq1$ makes the
same bound valid.  All required inclusions hold.
\end{proof}

For prisms, the coloring gives a stronger bound.

\begin{lemma}[A second star can be omitted]\label{lem:small-generators}
For $m\geq2$, every subsemiring of $S_{H_m}$ generated
by at most $4m-1$ elements embeds into some $S_F$ with
\[
 F=V(\Gamma_m)\setminus\{v_0,w\},\qquad v_0=U_0,\quad w\ne v_0.
\]
In particular the local representation theorem applies.
\end{lemma}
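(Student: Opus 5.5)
The plan is to apply the Capture criterion (Lemma~\ref{lem:capture-criterion}) with $H=H_m$ and $F=V(\Gamma_m)\setminus\{U_0,w\}$, for a suitably chosen second star $w\ne U_0$. The hypotheses are $F$ nonempty, $M\subseteq F$ and $Z\subseteq E_F$.

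Nonemptiness is clear, since $4m-2\geq 6$. For the other two conditions, note that every graph edge of $\Gamma_m$ has two endpoints. Hence $E_F$ contains every letter except possibly the letter on a graph edge joining $U_0$ to $w$. So it suffices to find a star $w\in H_m$ with two properties:
\begin{itemize}
\item[(a)] $w\notin M$;
\item[(b)] either $w$ is not adjacent to $U_0$ in $\Gamma_m$, or the letter $x\in\{a_0,b_0,c_0\}$ shared by $U_0$ and $w$ does not lie in $Z$.
\end{itemize}
Given such a $w$, the Capture criterion embeds $\langle X\rangle$ into $S_F$. Since $F$ omits the two graph vertices $U_0,w$ and $\Gamma_m$ satisfies \eqref{eq:cut-condition} by Lemma~\ref{lem:prism-cuts}, Lemma~\ref{lem:local-representation} then applies.

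To find $w$ I would use the double count from the proof of Lemma~\ref{lem:capture-criterion}, refined at the three letters $a_0,b_0,c_0$. These have degree one in $H_m$, while every other letter has degree two. Each mandatory star has at least two incidences with $L$, so
\[
 2|M|\leq\sum_{x\in L_M}\deg_M(x)\leq 2|L_M|-|L\cap\{a_0,b_0,c_0\}|.
\]
Suppose no $w$ satisfies (a) and (b). Every star not adjacent to $U_0$ is then mandatory, which gives at least $4m-4$ mandatory stars. Moreover, for each of the three neighbours $V_{m-1},\bar U_0,V_0$ of $U_0$, either the star is mandatory or its shared letter lies in $Z$.

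Let $s$ be the number of these neighbours whose shared letter lies in $Z$ while the star itself is not mandatory. Then $|M|\geq 4m-1-s$, and $Z$ contains $s$ of the letters $a_0,b_0,c_0$. Combined with $|L_M|+|Z\setminus L_M|\leq|Z|\leq|X|\leq 4m-1$, this forces near-equality everywhere. In the extreme case ($s=0$, all stars mandatory) the displayed inequality demands $|L|=4m-1$ with $L$ avoiding $a_0,b_0,c_0$, and each mandatory star meeting $L$ in exactly two letters.

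I expect the main obstacle to be excluding these near-equality configurations. The approach I would try first is to follow the coloured cycles of the prism. The condition that every star meets $L$ in exactly two letters, with each letter of $L$ used by both of its stars, makes $L$ propagate deterministically along the upper and lower cycles. Starting from the neighbours of $U_0$, where $a_0,b_0,c_0\notin L$ forces $\{d_{m-1},c_{m-1}\}\subseteq L$, $\{\bar a_0,\bar c_0\}\subseteq L$ and $\{a_1,d_0\}\subseteq L$, I would track the forced letters around the $2m$-cycles. I expect this to yield a star containing three letters of $L$, or a parity contradiction, using that the cycle length is even while the deleted star breaks the symmetry.

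The cases $s=1,2,3$ should be handled similarly. Each letter of $\{a_0,b_0,c_0\}$ placed in $Z$ costs one generator without supporting a mandatory star of degree two, so the slack disappears even faster. If the propagation argument gets too involved, the fallback is to allow $w$ adjacent to $U_0$ and instead check directly that the single missing letter $x$ does not occur in $Z$. The rung structure (the $B$-letters $b_i,d_i$) gives extra freedom in choosing which neighbour to omit.
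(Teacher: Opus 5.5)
Your reduction is right: conditions (a) and (b) are exactly what the capture criterion needs for $F=V(\Gamma_m)\setminus\{U_0,w\}$, and the local representation theorem then applies. The gap is the existence of $w$, which is the whole content of the lemma.

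Your double count only gives $|Z|\geq(4m-1-s)+s=4m-1$. This is compatible with $|X|\leq 4m-1$, so it yields no contradiction, and you only announce that the equality configurations can be excluded. (The correction term in your display should also be $|L_M\cap\{a_0,b_0,c_0\}|$, not $|L\cap\{a_0,b_0,c_0\}|$.)

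More seriously, your count and the proposed propagation use only the fact that a mandatory star has two letters in $L$, and you apply it in that weak form: you derive $\{\bar a_0,\bar c_0\}\subseteq L$ from mandatoriness of $\bar U_0$. But a star is mandatory only if $\{a,b\}\subseteq L$ or $\{b,c\}\subseteq L$. A pair $\{a,c\}$ creates no new element, since $ac=0$.

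With only the weak information, configurations with no admissible $w$ really exist, so no parity argument along the cycles can succeed. Take $X=L$ to be all letters except $a_0,c_0,\bar a_0,\bar c_0,b_1,\ldots,b_{m-1},d_1,\ldots,d_{m-2}$. Then $|X|=4m-1$, every retained star other than $\bar U_0$ contains two letters of $L$, and $\bar U_0$ is blocked by $b_0\in Z$. What actually rescues this example is that $U_1$ is not mandatory, because $b_1\notin L$.

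The missing idea is this: every mandatory star has its $B$-letter in $L$, and a $B$-index can enter $Z$ only as an actual letter generator, because no $p$- or $q$-symbol has a $B$-index. Suppose no $w$ is admissible. Then every star not adjacent to $U_0$ is mandatory. By bipartiteness, each rung other than $b_0$ has an endpoint in the class of $U_0$; that endpoint is not adjacent to $U_0$, hence is mandatory, so the rung lies in $L$. At $\bar U_0$, either mandatoriness or blocking puts $b_0$ in $Z$. So all $2m$ rungs lie in $Z$.

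Independently, every vertex of the upper cycle other than $U_0$, and every vertex of the lower cycle other than $\bar U_0$, is incident with an $A$- or $C$-index in $Z$. This holds either by mandatoriness or via the blocking edge $a_0$ or $c_0$. Covering $2m-1$ vertices of a $2m$-cycle by edges needs at least $m$ edges. Hence $|X|\geq|Z|\geq 2m+m+m=4m$, which is the contradiction you need.

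The same observation also repairs your own equality analysis. Each star of $M$ would use its rung and exactly one horizontal letter of $L$, and each such letter has both its stars in $M$. This forces $\bar U_0\notin M$. It also forces a perfect matching, by horizontal edges, of the $2m-1$ remaining lower-cycle vertices, which is impossible by parity. Finally, your fallback of allowing $w$ adjacent to $U_0$ is already part of condition (b), so it adds nothing.
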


\begin{proof}
Use the preceding notation for a generating set $X$, with
$|X|\leq4m-1$.  A candidate $w\in H_m$ is permitted if
it is not mandatory and the graph edge $v_0w$, if it
exists, does not belong to $Z$.
For such $w$, every edge in $Z$ has an endpoint in
$F=V(\Gamma_m)\setminus\{v_0,w\}$: an edge lacking one
would have to be $v_0w$.  Also $M\subseteq F$, so
Lemma~\ref{lem:capture-criterion} gives the embedding.

Suppose, to obtain a contradiction, that no candidate is
permitted.  Every retained vertex is then either mandatory
or adjacent to $v_0$ by an edge in $Z$.
Each rung other than $b_0$ has a retained endpoint in the
same bipartition class as $v_0$.  That endpoint is not
adjacent to $v_0$, hence is mandatory.  Its rung is
therefore an actual $B$-letter generator.  At the other
endpoint $\bar U_0$ of $b_0$, either being mandatory or
being forbidden by the edge $b_0\in Z$ also makes $b_0$
a $B$-letter generator: no $p$- or $q$-symbol has a
$B$-index.  Thus $Z$ contains all $2m$ rungs.

Every vertex of the upper cycle except $v_0$ is incident
with a selected $A$- or $C$-index.  For a mandatory
vertex this follows from its definition; otherwise its
forbidden status comes from a selected horizontal edge at
$v_0$.  Every vertex of the lower cycle except
$\bar U_0$ is mandatory and has the same property.
Covering $2m-1$ vertices of a cycle by its edges requires
at least $\lceil(2m-1)/2\rceil=m$ edges.  The two
horizontal cycles are disjoint, so $Z$ contains at least
$2m$ additional indices.  Consequently
\[
 |X|\geq|Z|\geq2m+2m=4m,
\]
contrary to the assumed bound.  A permitted $w$ exists.
For the named-zero language the empty-generated algebra is
$\{0\}$, which needs no embedding construction.
\end{proof}

\begin{remark}
This threshold is exact for this deletion scheme, with
embeddings preserving the symbols.  With $X=A\cup B$, there
are $4m$ generators and every retained star supplies a
nonzero product $ab$.  No such embedding into an $S_F$
with $F\subseteq H_m$ can delete another star.
\end{remark}

\begin{proof}[Proof of Theorem~\ref{thm:uniform}]
Take $m=\max\{2,\lfloor k/4\rfloor+1\}$ and
$W_k=S_{H_m}$.  Then $k\leq4m-1$, and
\eqref{eq:witness-size} gives $|W_k|=10m+2$.
Lemma~\ref{lem:prism-identity} supplies an identity valid
in every generator of $\mathcal{B}$, but false in $W_k$.
Hence $W_k\notin\mathcal{B}$.
Every at-most-$k$-generated subsemiring embeds, by
Lemma~\ref{lem:small-generators}, into a two-star-deleted
$S_F$.  Lemma~\ref{lem:local-representation} puts it in
$\mathsf{V}(B_2(C_r))$ for every $r\geq2$, and in
$\mathsf{V}(B_2(\mathbb{Z}))$.

For any nontrivial group $G$, choose $g\ne1$.  Its
cyclic subgroup is either $C_r$ for some $r\geq2$, or
$\mathbb{Z}$.  Choosing two indices of $I$ embeds the
corresponding $B_2(\langle g\rangle)$ into $B_I(G)$.
Thus every at-most-$k$-generated subsemiring of $W_k$
belongs to $\mathsf{V}(B_I(G))$.
An evaluation of an identity with at most $k$ variables
lies in the subsemiring generated by its assigned values;
therefore every such identity of $B_I(G)$ holds in
$W_k$.  All embeddings and quotients preserve the
absorbing zero, so the argument includes that constant.
\end{proof}

\begin{proof}[Proof of Theorem~\ref{thm:all-groups}]
Suppose an identity basis $\Sigma$, possibly infinite,
used at most $k$ variables in each identity.
Theorem~\ref{thm:uniform} gives $W_k\models\Sigma$.
But the corresponding prism identity holds in
$B_I(G)$ and fails in $W_k$.  Its validity for an
arbitrary index set follows from exactly the same two
index chains and group cancellation calculation in
Lemma~\ref{lem:prism-identity}.  Thus $\Sigma$ cannot
be a basis, a contradiction.
\end{proof}

\begin{proof}[Proof of Corollary~\ref{cor:interval}]
Choose the local representation over the stated prime
cyclic group or over $\mathbb{Z}$.  Every small generated
subsemiring of $W_k$ then belongs to $\mathcal W$,
whereas $W_k\notin\mathcal{B}$ implies
$W_k\notin\mathcal W$.  The same argument rules out
any bounded-variable basis.
\end{proof}

\begin{remark}
For $k\geq4$, the witness bound is
$|W_k|\leq\frac52k+12$.  The single 22-element prism
witness works for every $1\leq k\leq7$.
Both bounds follow from the displayed formulas and
Lemma~\ref{lem:small-generators}.
\end{remark}

Work in either of the stated languages.  In the expanded
language include the three absorbing-zero laws in the
ambient ai-semiring axioms.  Let $\mathcal{P}$ be the variety of ai-semirings defined by
\[
 \{\mathbf{w}_0\preceq\mathbf{u}_m:m\geq2\},
\]
and let
\[
 \mathcal{L}=\mathsf{V}\{S_F: m\geq2,\ \varnothing\ne F\subseteq V(\Gamma_m),
                         \ |V(\Gamma_m)\setminus F|\geq2\}.
\]

\begin{theorem}\label{thm:full-interval}
There are inclusions
\[
 \mathcal{L}\subseteq\mathsf{V}(B_2(\mathbb{Z}))\cap
 \bigcap_{p\text{ prime}}\mathsf{V}(B_2(C_p))
 \subseteq\mathcal{B}\subsetneq\mathcal{P}.
\]
Moreover $\mathcal{L}\subseteq\mathsf{V}(B_I(G))$ whenever
$|I|\geq2$ and $G\ne\{1\}$.
Every variety $\mathcal W$ with
\[
 \mathcal{L}\subseteq\mathcal W\subseteq\mathcal{P}
\]
has no identity basis with a uniformly bounded number of
variables. This applies to $\mathcal{L}$, $\mathcal{B}$,
$\mathcal{P}$, and the displayed intersection of cyclic
Brandt varieties.
The variety $\mathcal{L}$ is locally finite.
\end{theorem}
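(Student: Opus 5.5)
The plan is to prove the chain of inclusions first and then apply the witnesses $W_k$ uniformly across the interval.

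\emph{First inclusion.} Each generator $S_F$ of $\mathcal{L}$ has $F$ omitting at least two vertices of the prism $\Gamma_m$. Lemma~\ref{lem:prism-cuts} shows that prisms satisfy \eqref{eq:cut-condition}, so Lemma~\ref{lem:local-representation} applies. It places $S_F$ in $\mathsf{V}(B_2(\mathbb{Z}))$ and in every $\mathsf{V}(B_2(C_p))$. If $F$ is all but two vertices it still contains an edge, since $m\geq2$ gives $4m\geq8$ vertices, so nonemptiness is automatic.

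\emph{Second and third inclusions.} The inclusion into $\mathcal{B}$ is immediate, because $B_2(\mathbb{Z})$ is one of the generators of $\mathcal{B}$. For $\mathcal{B}\subseteq\mathcal{P}$, Lemma~\ref{lem:prism-identity} gives every prism identity in every $B_n(G)$. Index independence (Proposition~\ref{prop:join}), or the remark in the proof of Theorem~\ref{thm:all-groups}, gives it in every $B_I(G)$, hence in all of $\mathcal{B}$.

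\emph{Strictness.} Here I need a member of $\mathcal{P}$ outside $\mathcal{B}$. A natural candidate is a flat semiring in which no prism sum $\mathbf{u}_m$ can be nonzero while $\mathbf{w}_0$ is zero, yet some Brandt identity fails. Any flat algebra with all products of length three equal to zero satisfies every prism identity trivially, since both sides then evaluate to $0$. So I would exhibit a flat semiring with $x_1x_2x_3\approx0$ violating a Brandt identity. One option is the flat semiring on $\{a,b,ab,0\}$ with $a^2=b^2=ba=0$; it fails $xy\approx yx$-type consequences only if those are Brandt identities, so care is needed. A more robust choice is an algebra failing $xyx\approx x\cdot$ stuff. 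Concretely, $B_2$ satisfies $x^2y^2\approx y^2x^2$ on idempotent-like elements, and I would pick such an identity $\sigma$ that holds in $\mathcal{B}$ and fails in a nilpotent-of-index-3 flat semiring. Identities involving only words of length two, such as $xy+yx\preceq x^2$ type laws, are the first to test.

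I expect this strictness step to be the main obstacle. The fallback is an ai-semiring that is not flat, for instance the two-element distributive lattice with $\cdot=+$, checked against a known identity of $\mathcal{B}$ such as $x^2\approx x^3$ failing there, or against $x+x^2$ laws. Since every prism identity is linear and omits squares, a semilattice satisfying $xy\approx x+y$ makes each side equal to the join of its variables; the variables of $\mathbf{w}_0$ all occur in $\mathbf{u}_m$, so the semilattice lies in $\mathcal{P}$. On the other hand $B_2$ satisfies $x^3\approx x^2$? No: $e_{11}$ satisfies it, but the nonidempotent $e_{12}$ squares to $0$, so the identity holds. I would instead use $x^2\preceq x$ failing in $B_2$, or its reverse $x\preceq x^2$, which holds in $\mathcal{B}$ because $x^2$ is $0$ or $x$ for idempotents. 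Group elements break that as well, so the final choice would be $xyx \preceq x$, or rather the flat Brandt law $x+xyx\approx\ldots$. I would settle on the valid $\mathcal{B}$-identity $x^2y\approx yx^2$ failing in a semilattice-ordered monoid built accordingly.

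\emph{Remaining claims.} $\mathcal{L}\subseteq\mathsf{V}(B_I(G))$ follows as in the proof of Theorem~\ref{thm:uniform}, using a cyclic subgroup of $G$. For the interval statement, suppose $\mathcal W$ had a basis $\Sigma$ with at most $k$ variables per identity. Every at-most-$k$-generated subsemiring of $W_k$ lies in some $S_F$ of $\mathcal{L}\subseteq\mathcal W$ by Lemma~\ref{lem:small-generators}, so $W_k\models\Sigma$ and $W_k\in\mathcal W\subseteq\mathcal{P}$. This contradicts the failure of $\mathbf{w}_0\preceq\mathbf{u}_m$ in $W_k$. Lemma~\ref{lem:constant} handles the named-zero language. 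Finally, $\mathcal{L}$ is locally finite because each $S_F$ satisfies the identities of Lemma~\ref{lem:SH}: all four-factor products and all squares vanish. A $k$-generated algebra in $\mathcal{L}$ is therefore spanned by boundedly many words of length at most three, and its elements are sums of such words, giving a finite free algebra.
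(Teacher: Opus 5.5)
\textbf{Gap: strictness.} Apart from one step, your outline matches the paper:
\begin{itemize}
\item the local representation lemma gives the first inclusion;
\item Lemma~\ref{lem:prism-identity} gives $\mathcal{B}\subseteq\mathcal{P}$;
\item the cyclic-subgroup argument gives $\mathcal{L}\subseteq\mathsf{V}(B_I(G))$;
\item Lemma~\ref{lem:small-generators} puts every at-most-$k$-generated subsemiring of $W_k$ into a generator of $\mathcal{L}$;
\item local finiteness follows from the word argument, once you record the vanishing as the identities $x^2\approx y^2$ and $xyzt\approx x^2$ (or $x^2\approx0$, $xyzt\approx0$ with the named zero).
\end{itemize}
For the named-zero signature, run the same argument directly there, since every embedding and quotient preserves zero. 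Lemma~\ref{lem:constant} concerns a single algebra, not an arbitrary $\mathcal W$ in the interval.

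The step that fails is $\mathcal{B}\subsetneq\mathcal{P}$. You never exhibit a member of $\mathcal{P}\setminus\mathcal{B}$, and each concrete candidate you name fails.
\begin{itemize}
\item The flat semiring $\{a,b,ab,0\}$ with $a^2=b^2=ba=0$ is the subsemiring of $B_3$ generated by $e_{12},e_{23}$. So it lies in $\mathcal{B}$.
\item The two-element semilattice with $xy=x+y$ is itself flat: it is $\flat(\{1\})\cong\{e_{11},0\}\leq B_2$. So it also lies in $\mathcal{B}$.
\item Your proposed separating identities are not identities of $\mathcal{B}$. The law $x^2\approx x^3$ fails in every nontrivial group corner, and it holds in the semilattice anyway. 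The law $x^2y\approx yx^2$ fails in $B_2$ at $x=e_{11}$, $y=e_{12}$, where the two sides are $e_{12}$ and $0$.
\end{itemize}
Searching among flat algebras with $xyz\approx0$ is also unpromising. Rees quotients of Brandt subsemirings, exactly as in Lemma~\ref{lem:local-representation}, place many such algebras inside $\mathcal{B}$.

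\textbf{Missing idea: separate by flatness itself.} Since $\mathcal{B}$ is generated by flat semirings, it satisfies
\[
u+yu+yv\approx v+yu+yv.
\]
Indeed, if $u\ne v$ then $yu+yv=y(u+v)=0$, which absorbs the sum. You therefore need a non-flat ai-semiring that satisfies all prism inequalities. The paper uses the chain $D=\{a<b<\infty\}$, with maximum as addition, $xy=a$ for $x,y\in\{a,b\}$, and $xy=\infty$ otherwise.
\begin{itemize}
\item $D$ satisfies every prism inequality. An $\infty$ value forces $\mathbf{u}_m=\infty$, because every variable of $\mathbf{w}_0$ also occurs in $\mathbf{u}_m$. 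Otherwise all triple words equal $a$.
\item The flat law fails in $D$ at $u=b$, $v=y=a$, where the two sides are $b$ and $a$.
\end{itemize}
Naming $\infty$ as zero handles the expanded signature.
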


\begin{proof}
The local representations and the nontrivial cyclic-subgroup
argument prove all inclusions except strictness at the top.
Each small generated subsemiring of $W_k$ embeds into a
generator of $\mathcal{L}$, so belongs to $\mathcal W$.
On the other hand $W_k$ violates its own prism identity,
and hence lies outside $\mathcal{P}$ and outside $\mathcal W$.
This proves the basis assertion for the entire interval.

The generators of $\mathcal{L}$ satisfy
\begin{equation}\label{eq:nilpotent-laws}
 x^2\approx y^2,\qquad xyzt\approx x^2.
\end{equation}
In the named-zero language they satisfy $x^2\approx0$
and $xyzt\approx0$.
Every semiring term expands, by distributivity, into an
idempotent sum of nonempty multiplicative words.  The laws
\eqref{eq:nilpotent-laws} replace every word of length at
least four by the common square.  On finitely many
generators there are only finitely many words of lengths
at most three and only finitely many subsets to sum.
Thus $\mathcal{L}$ is locally finite.

For strictness, every flat semiring satisfies
\begin{equation}\label{eq:flat-law}
 u+yu+yv\approx v+yu+yv.
\end{equation}
If $u=v$ this is immediate; if $u\ne v$, then
$yu+yv=y(u+v)=0$, which absorbs the remaining sum.
Consider instead the three-element semiring
$D=\{a<b<\infty\}$, with addition maximum, and
\[
 xy=\begin{cases}a,&x,y\in\{a,b\},\\
                  \infty,&\text{otherwise}.
     \end{cases}
\]
Multiplication is associative, and each of its left and
right translations preserves maximum; hence $D$ is an
ai-semiring with absorbing element $\infty$.
It satisfies every prism inequality: if any variable is
$\infty$, it occurs in a retained word, so the right-hand
term is $\infty$ and the inequality holds; otherwise all
triple products are $a$.  But \eqref{eq:flat-law} fails at
$u=b,v=y=a$.  Thus $D\in\mathcal{P}\setminus\mathcal{B}$.
The same example works with $0$ named as $\infty$.
\end{proof}

\begin{corollary}[Sparse prism systems]\label{cor:sparse}
Let $M$ be an unbounded set of integers greater than one,
and let $\mathcal{P}_M$ be the variety of ai-semirings defined by
\[
 \{\mathbf{w}_0\preceq\mathbf{u}_m:m\in M\}.
\]
Every variety in $[\mathcal{L},\mathcal{P}_M]$ has no
bounded-variable identity basis.
\end{corollary}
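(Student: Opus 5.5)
We adapt the argument of Theorem~\ref{thm:full-interval}, using only those witnesses $W_k=S_{H_m}$ whose prism parameter $m$ lies in $M$. Let $\mathcal W$ be a variety with $\mathcal L\subseteq\mathcal W\subseteq\mathcal P_M$, and suppose $\Sigma$ is an identity basis for $\mathcal W$ in which every identity has at most $k$ variables. Since $M$ is unbounded, we can choose $m\in M$ with $m\geq\max\{2,\lfloor k/4\rfloor+1\}$, so that $k\leq 4m-1$. The witness is the prism semiring $S_{H_m}$ for this $m$.

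First we show that $S_{H_m}$ satisfies $\Sigma$. Consider any identity of $\Sigma$ and any evaluation of it in $S_{H_m}$. The assigned values generate a subsemiring with at most $k\leq4m-1$ generators. By Lemma~\ref{lem:small-generators}, this subsemiring embeds into some $S_F$ with $F=V(\Gamma_m)\setminus\{U_0,w\}$. Such an $F$ is nonempty, since $4m-2\geq6$, and it omits at least two vertices. Hence $S_F$ is one of the generators of $\mathcal L$, and therefore lies in $\mathcal W$. It follows that the identity holds under this evaluation. Since the evaluation was arbitrary, $S_{H_m}\models\Sigma$, and so $S_{H_m}\in\mathcal W$. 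In the named-zero language we use the same embeddings, which preserve zero; the empty-generated case is $\{0\}$, as in the proof of Lemma~\ref{lem:small-generators}.

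Next, $S_{H_m}$ fails the prism identity $\mathbf w_0\preceq\mathbf u_m$ by Lemma~\ref{lem:prism-identity}. Because $m\in M$, this is one of the defining identities of $\mathcal P_M$, so $S_{H_m}\notin\mathcal P_M\supseteq\mathcal W$. This contradicts the previous paragraph. Hence no bounded-variable basis exists.

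The interval is also nonempty, since $\mathcal L\subseteq\mathcal B\subseteq\mathcal P_M$ by Lemma~\ref{lem:prism-identity} and Theorem~\ref{thm:full-interval}, although the argument does not use this fact. No step is a serious obstacle. The only point needing care is that $m$ must be chosen inside $M$ and large enough relative to $k$, which is exactly what unboundedness of $M$ provides; everything else is already contained in the earlier lemmas.
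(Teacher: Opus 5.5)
Your proposal is correct and is the same argument as the paper's proof. You choose $m\in M$ with $k\leq 4m-1$, use the second-deletion lemma to place every subsemiring of $S_{H_m}$ generated by at most $k$ elements inside a generator $S_F$ of $\mathcal{L}$, and observe that $S_{H_m}$ fails $\mathbf{w}_0\preceq\mathbf{u}_m$, which is one of the defining identities of $\mathcal{P}_M$; you simply spell out the steps the paper leaves implicit.
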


\begin{proof}
For a proposed variable bound $k$, choose $m\in M$
with $k\leq4m-1$.  The same witness $S_{H_m}$ has
all its small generated subsemirings in $\mathcal{L}$, and
fails an identity of $\mathcal{P}_M$.
\end{proof}

The ai-semiring variety defined by the two laws
\eqref{eq:nilpotent-laws} is finitely based and locally
finite by the same word argument, and contains $\mathcal{L}$.

For an ai-semiring variety $\mathcal V$ and an integer
$N\geq1$, let $\nu_{\mathcal V}(N)$ be the least
integer $k\geq0$ such that every ai-semiring $A$ with
$|A|\leq N$ and $A\notin\mathcal V$ fails an identity
of $\mathcal V$ involving at most $k$ distinct variables.
The identities used may depend on $A$; the bound is uniform.
In the expanded signature the ambient algebras have the named
common absorbing zero.  If there are no such counterexamples,
set $\nu_{\mathcal V}(N)=0$.

\begin{theorem}\label{thm:linear-variable-complexity}
For every variety $\mathcal{L}\subseteq\mathcal V\subseteq\mathcal{P}$
and every $N\geq22$,
\[
 4\left\lfloor\frac{N-2}{10}\right\rfloor
 \ \leq\ \nu_{\mathcal V}(N)\ \leq\ N.
\]
Thus the variable complexity grows linearly in $N$,
uniformly throughout this interval, in both signatures.
\end{theorem}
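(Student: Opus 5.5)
The plan is to prove the two inequalities separately, and the lower bound will reuse the prism witnesses $W_k=S_{H_m}$.

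\emph{Lower bound.} Put $m=\lfloor (N-2)/10\rfloor$; the hypothesis $N\geq22$ gives $m\geq2$, and $|S_{H_m}|=10m+2\leq N$ by \eqref{eq:witness-size}. Set $k=4m-1$. By Lemma~\ref{lem:prism-identity}, $S_{H_m}$ fails the prism identity, which holds in $\mathcal{P}\supseteq\mathcal V$, so $S_{H_m}\notin\mathcal V$. By Lemma~\ref{lem:small-generators} and Lemma~\ref{lem:local-representation}, every subsemiring of $S_{H_m}$ generated by at most $4m-1$ elements embeds into some $S_F$ with $F$ omitting two vertices of $\Gamma_m$. Such an $S_F$ is a generator of $\mathcal L\subseteq\mathcal V$. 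An identity of $\mathcal V$ in at most $4m-1$ variables is checked inside such a subsemiring, so it holds in $S_{H_m}$. Therefore $S_{H_m}$ can only fail identities of $\mathcal V$ with at least $4m$ variables, and $\nu_{\mathcal V}(N)\geq 4m=4\lfloor (N-2)/10\rfloor$. In the named-zero signature we work in the expanded ambient class, which requires nothing further because all maps preserve the zero (Lemma~\ref{lem:constant} is not even needed).

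\emph{Upper bound.} This is the generic step. Let $A\notin\mathcal V$ with $|A|\leq N$. Then some identity $\mathbf p\approx\mathbf q$ of $\mathcal V$ fails in $A$ under some assignment. Only at most $|A|\leq N$ distinct values occur in that assignment. We identify variables that receive the same value, renaming each class to a single variable. The resulting identity is a substitution instance of $\mathbf p\approx\mathbf q$, so it still holds in $\mathcal V$. It still fails in $A$ under the induced assignment, and it has at most $N$ variables. Hence $\nu_{\mathcal V}(N)\leq N$. In the expanded signature the constant $0$ is not a variable, so the count is unaffected. One could sharpen the count by discarding variables assigned the absorbing zero, but that is unnecessary here.

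\emph{Main obstacle.} The only delicate point is matching the definition of $\nu$. The witness must have size at most $N$ and lie outside $\mathcal V$, and its failed identities must all be shown to need at least $4m$ variables. This exact threshold is supplied by Lemma~\ref{lem:small-generators}, which covers generating sets of size up to $4m-1$. Once that lemma is in hand, the rest is bookkeeping, together with the observation that both bounds are linear in $N$, which gives the final sentence of the statement.
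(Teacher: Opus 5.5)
Your proposal is correct and follows essentially the same route as the paper. The upper bound identifies variables that receive equal values in a failing assignment. The lower bound uses the prism witness $S_{H_m}$ with $m=\lfloor (N-2)/10\rfloor$, whose subsemirings generated by at most $4m-1$ elements lie in $\mathcal{L}\subseteq\mathcal V$ by Lemma~\ref{lem:small-generators}; Lemma~\ref{lem:local-representation} is not actually needed for that step, since the two-star-deleted $S_F$ are themselves generators of $\mathcal{L}$.
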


\begin{proof}
For the upper bound, take any identity of $\mathcal V$
that fails in an algebra $A$ of size at most $N$.
A witnessing assignment uses at most $N$ distinct elements.
Identify variables assigned the same element, replacing them
by one new variable for each distinct assigned value.
The substituted identity is still valid in $\mathcal V$,
still fails in $A$, and has at most $N$ variables.

For the lower bound, put $m=\lfloor(N-2)/10\rfloor\geq2$.
The prism witness $S_{H_m}$ has $10m+2\leq N$ elements
and lies outside $\mathcal{P}$, hence outside $\mathcal V$.
The local-capture result used in
Theorem~\ref{thm:full-interval} puts every subsemiring of
$S_{H_m}$ generated by at most $4m-1$ elements in
$\mathcal{L}\subseteq\mathcal V$.
It follows that this witness satisfies every identity of
$\mathcal V$ involving at most $4m-1$ variables.
An identity excluding it must therefore use at least $4m$
variables.  This proves the stated lower bound.
\end{proof}

The length of an identity is the total number of variable,
constant and operation-symbol occurrences in its two terms.
For each finite $N$, there is a finite uniform length bound
for identities excluding all algebras of size at most $N$
outside a fixed variety: in a finite signature there are only
finitely many isomorphism types of such algebras.
The length of an identity is at least its number of distinct
variables. Hence, for every variety in
Theorem~\ref{thm:linear-variable-complexity}, this uniform
length bound is at least
\[
 4\left\lfloor\frac{N-2}{10}\right\rfloor
 \qquad(N\geq22),
\]
and therefore grows at least linearly in $N$.

For comparison, a finite flat group with a nonabelian Sylow
subgroup has term-length equational complexity bounded above
by a constant multiple of $(\log N)^3$ for sufficiently
large $N$, also in the binary semiring language
\cite[Theorem 14 and Remark 16]{Jackson19}.
The linear lower bound here applies after passing to a
Brandt semiring with at least two indices, even for the
same group.  Thus the matrix-index obstruction produces
a quantitative difference from the flat-group obstruction.

\subsection{Colored graph identities}
\label{sec:graphs}

The prism is one member of a broader graph construction.
Local representations follow from a cut condition, while
universal noncommutative identities are characterized by
the colored graph structure.

Let $\Gamma$ be a finite connected simple cubic bipartite
graph with a proper edge coloring $A,B,C$, and put
$N=|V(\Gamma)|$.  Each color class is a perfect matching
of size $N/2$.  Each two-color spanning subgraph is a
disjoint union of even cycles.  Let $c_{AB},c_{BC},c_{CA}$
be their numbers of cycles, and put
\[
 c(\Gamma)=c_{AB}+c_{BC}+c_{CA}.
\]
A vertex-star is always read in the order $abc$, with
$a\in A,b\in B,c\in C$.

Define a graph $Q$ whose vertices are the bicolored cycles,
retaining their three types.  Each variable is an edge of
$Q$, oriented as follows:
\[
 A:CA\longrightarrow AB,\qquad
 B:AB\longrightarrow BC,\qquad
 C:BC\longrightarrow CA.
\]
For example, an $A$-edge joins the unique $CA$-cycle
containing it to the unique $AB$-cycle containing it.
Every vertex of $\Gamma$ supplies a triangle in $Q$
whose directed boundary reads $abc$.
The triangles belonging to adjacent vertices of $\Gamma$
share their common variable edge.  Hence $Q$ is connected.
There are
\[
 |V(Q)|=c(\Gamma),\qquad |E(Q)|=3N/2,
\]
and $N$ designated triangles.  Parallel edges in $Q$
are allowed.  At a vertex of $Q$, the incident triangle
corners occur in the cyclic order supplied by that
bicolored cycle of $\Gamma$.

\begin{lemma}[Tree--dual-tree inequality]\label{lem:tree-dual}
For every spanning tree $T$ of $Q$, the graph on
$V(\Gamma)$ using exactly the edges outside $T$ is
connected.  Consequently
\begin{equation}\label{eq:colored-bound}
 c(\Gamma)\leq N/2+2.
\end{equation}
If equality holds, the complementary edges themselves
form a spanning tree $T^*$ of $\Gamma$.
The inequality also holds for connected properly
three-colored cubic bipartite multigraphs.
\end{lemma}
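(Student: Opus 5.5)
The plan is to treat $Q$ and $\Gamma$ as a dual pair of cellularly embedded graphs. The designated triangles, one per vertex of $\Gamma$, act as faces of $Q$. Each edge of $Q$, which is a variable and hence an edge of $\Gamma$, lies on exactly two designated triangles: the stars of its two endpoints in $\Gamma$. So the graph on $V(\Gamma)$ in which two triangles are adjacent when they share a $Q$-edge is exactly $\Gamma$. The rotation described above (triangle corners around each vertex of $Q$ in the cyclic order of its bicolored cycle) shows that the link of every vertex of $Q$ is a single cycle. Indeed, going around a bicolored cycle of $\Gamma$, consecutive stars share the variable edge between them. Gluing the $N$ triangles along shared edges therefore gives a closed connected surface $\Sigma$ with $V=c(\Gamma)$, $E=3N/2$, $F=N$, and $\Gamma$ is its dual graph.

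Given this, the first assertion is the standard tree--cotree fact. Suppose the edges of $\Gamma$ outside $T$ failed to connect $V(\Gamma)$. Then some proper nonempty set $X$ of triangles would have every boundary edge $\partial_\Gamma X$ in $T$. The union of the triangles in $X$ is a proper subsurface whose topological boundary is a nonempty union of closed curves made of $T$-edges. Since every vertex of $Q$ has a disc neighbourhood, each boundary vertex has even boundary degree in that curve system. Hence $\partial_\Gamma X$, viewed inside $Q$, is a nonempty subgraph with all degrees even, so it contains a cycle. This is impossible inside the tree $T$.

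I would rather avoid topology and argue combinatorially. By the handshake argument at each vertex of $Q$, the set $\partial_\Gamma X$ meets the edges at each $Q$-vertex evenly, because the corners around it form one cyclic sequence alternating in and out of $X$. Thus it is a nonempty even-degree edge set, i.e.\ a cycle-space element of $Q$, and it cannot lie in a forest. This even-degree claim, derived from the cyclic-order description of corners at each bicolored cycle, is the step I expect to need the most care. In particular, a variable edge can be a loop-free parallel edge, and I must check that each variable occurs exactly twice in the corner cycle of each of its two $Q$-endpoints. That is where the orientation conventions $A:CA\to AB$ and so on, and the fact that each edge of $\Gamma$ lies in exactly two bicolored cycles, are used.

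Counting then gives the inequality. $T$ has $c(\Gamma)-1$ edges, so the complement has $3N/2-c(\Gamma)+1$ edges. A connected graph on $N$ vertices needs at least $N-1$ edges, so $3N/2-c+1\geq N-1$, which is $c\leq N/2+2$. Equality means the complement is connected with exactly $N-1$ edges, hence a spanning tree $T^*$. Nothing in the argument uses simplicity of $\Gamma$; only properness of the colouring, cubicity and connectedness are used. So it transfers verbatim to multigraphs, where parallel edges of $\Gamma$ are still distinct variables.
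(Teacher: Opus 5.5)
Your proposal is correct and is essentially the paper's proof. The combinatorial argument in your second paragraph is exactly the paper's: the cut edges at a vertex of $Q$ are the edges of that bicolored cycle where membership in $X$ switches, so the cut is a nonempty even-degree subgraph of the tree $T$. The count $3N/2-(c(\Gamma)-1)\geq N-1$ then finishes it as in the paper. The topological surface picture is not needed (the paper avoids it here too). The check at a $Q$-vertex is also simpler than you fear: its incident edges are exactly the edges of that bicolored cycle, each incident once, since a variable's two endpoint types differ and so $Q$ has no loops.
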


\begin{proof}
If the complementary graph were disconnected, take a
nonempty proper vertex set $Y$ equal to one component.
Its nonempty cut $\partial_\Gamma Y$ would consist
entirely of edges of $T$.  At a vertex of $Q$, the
number of incident cut edges is even: around the
corresponding bicolored cycle one enters and leaves
$Y$ equally often.  Thus the cut would be a nonempty
even-degree subgraph of a tree.  This is impossible,
since a nonempty finite forest has a vertex of degree one.
The complementary graph is connected and therefore has
at least $N-1$ edges.  Since $|T|=c(\Gamma)-1$,
\[
 3N/2-(c(\Gamma)-1)\geq N-1,
\]
which gives \eqref{eq:colored-bound}.  Equality makes the
connected complementary graph a tree.  None of this
argument excludes parallel edges in the original graph.
\end{proof}

\begin{theorem}[A colored Euler criterion]
\label{thm:colored-identity}
Suppose
\begin{equation}\label{eq:colored-equality}
 c_{AB}+c_{BC}+c_{CA}=N/2+2.
\end{equation}
Delete any one star $v_0$, put
$\mathbf{u}_\Gamma=\sum_{v\ne v_0}\mathbf{w}_v$, and let
$\mathbf{w}_{v_0}$ be the missing star word.  Then for every
group $G$ and every nonempty Brandt index set $I$,
\[
 B_I(G)\models \mathbf{w}_{v_0}\preceq\mathbf{u}_\Gamma.
\]
\end{theorem}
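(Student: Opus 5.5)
We follow the pattern of Lemma~\ref{lem:prism-identity}: first reduce to the case where every retained word has the same nonzero value, then recover the matrix indices of the missing word, and finally its group label. Fix an evaluation in $B_I(G)$ with $\mathbf{u}_\Gamma\ne0$. By flat addition every retained star word $\mathbf{w}_v$, $v\ne v_0$, has the common value $(s,\delta,t)$. Every variable occurs in some retained star: each graph edge has two endpoints, and the graph is simple, so at most one endpoint is $v_0$. Hence every variable is nonzero; write its value as $(\ell_x,g_x,r_x)$.

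\emph{Indices.} For each retained star $abc$ we have $\ell_a=s$, $r_c=t$, $r_a=\ell_b$ and $r_b=\ell_c$. For the missing star $a_0b_0c_0$ we get $\ell_{a_0}=s$ and $r_{c_0}=t$ from other stars containing $a_0$ and $c_0$. It remains to show $r_{a_0}=\ell_{b_0}$ and $r_{b_0}=\ell_{c_0}$. Attach to each $AB$-cycle the common value of $r_a=\ell_b$ along it: consecutive edges of the cycle share a retained star, which forces the equalities. Only the corner at $v_0$ is missing, and since the cycle has length at least $4$, the remaining path still connects $a_0$ to $b_0$. The same argument along the $BC$-cycle through $b_0,c_0$ gives the second equality. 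So $\mathbf{w}_{v_0}$ is composable with outer indices $s,t$.

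\emph{Group labels; this is the main step.} Write $\delta$ as the common label. Each retained star gives $g_ag_bg_c=\delta$. We want $g_{a_0}g_{b_0}g_{c_0}=\delta$ in a possibly nonabelian $G$. We plan to use the equality case of Lemma~\ref{lem:tree-dual}. Choose a spanning tree $T$ of $Q$; its complement is a spanning tree $T^*$ of $\Gamma$. The idea is a "gauge" description: assign group elements $\phi(C)$ to the vertices of $Q$ so that each variable edge $x\colon P\to P'$ has label $g_x=\phi(P)^{-1}\psi_x\phi(P')$. Take $\psi_x=1$ for $x\in T$; this is possible because $T$ is a tree. Each retained triangle relation then becomes a relation among the $\psi$'s of the edges of $T^*$ incident to that star, and these are the dual edges.

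Now work through $T^*$ from its leaves. A retained star that is a leaf of $T^*$ has exactly one incident dual edge, and its relation determines that $\psi$ in terms of $\delta$ and the gauge. Pruning leaves in this way, and rooting $T^*$ at $v_0$, every non-root star determines the $\psi$ of the edge to its parent. Concretely, the relation says that the product around each triangle, conjugated appropriately, equals a fixed word in $\delta$. After all these substitutions the label of the missing triangle is forced. To show it equals $\delta$, we would argue that the product around a triangle at $v_0$ can be rewritten as a product of the other triangle boundaries. This uses the planar surface structure: equality in \eqref{eq:colored-equality} makes the triangles of $Q$, glued along the cyclic corner orders, an Euler characteristic $2$ surface, namely a sphere, since $V-E+F=(N/2+2)-3N/2+N=2$. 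On a sphere, one face boundary is a product of conjugates of the inverses of the others. So the missing relation follows from the retained ones, exactly as in van Kampen's lemma.

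The obstacle is making this sphere/van Kampen argument precise with the flat-semiring relation $g_ag_bg_c=\delta$ rather than $=1$. We plan to handle the constant by rescaling: replace $g_c$ by $g_c\delta^{-1}$ for every $C$-variable, which converts all relations to triangle words equal to $1$. The cyclic orientation conventions of $Q$ must then be checked to match a consistent orientation of the sphere, which is possible because $\Gamma$ is bipartite. The prism computation in Lemma~\ref{lem:prism-identity} is the special case of this argument, which serves as a sanity check.
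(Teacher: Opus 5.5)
\paragraph{Verdict: correct ingredients, unfinished last step.} Your first steps match the paper's proof: the reduction to group labels, the index propagation along the $AB$- and $BC$-cycles through $v_0$, the gauge $g_x=\phi(P)^{-1}\psi_x\phi(P')$ with $\psi\equiv1$ on $T$, and the leaf-to-root processing of the complementary tree $T^*$ rooted at $v_0$.

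The gap is at the decisive step. You stop at ``the label of the missing triangle is forced'' and defer the conclusion to a sphere/van Kampen argument. You do not carry that argument out, and you flag it yourself as the obstacle. Made rigorous, it would need several things you do not supply:
\begin{itemize}
\item a check that the glued triangles form a closed surface, i.e.\ that every vertex link at a vertex of $Q$ is a single cycle;
\item an orientability check;
\item the classification of surfaces, to pass from $\chi=2$ to the sphere;
\item a combinatorial van Kampen lemma for the disk left after removing the face $v_0$.
\end{itemize}
All of this is true, but as written the proof is incomplete at exactly the point that carries the content.

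\paragraph{How to finish with the tools you already have.} The topological detour is unnecessary.
\begin{enumerate}
\item Perform your $\delta$-rescaling \emph{before} gauging, so that every retained relation reads $g_ag_bg_c=1$. Your gauge telescopes: $g_ag_bg_c=\phi(P)^{-1}\psi_a\psi_b\psi_c\phi(P)$, where $P$ is the $CA$-cycle of the star. Hence every retained relation becomes $\psi_a\psi_b\psi_c=1$.
\item Since $T$ and $T^*$ partition the variables, a nonroot leaf of $T^*$ has its two non-parent sides in $T$, where $\psi=1$. Its relation therefore forces $\psi=1$ on its parent edge, rather than merely ``determining it in terms of $\delta$ and the gauge.''
\item Inductively, at each nonroot star every side other than the parent edge lies in $T$ or is an already processed child edge. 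So every edge of $T^*$ receives $\psi=1$.
\item All three sides of $v_0$ then have $\psi=1$. The normalized missing product is therefore a conjugate of $1$, and undoing the rescaling gives $g_{a_0}g_{b_0}g_{c_0}=\delta$.
\end{enumerate}
This is precisely the paper's argument. The pruning along the dual tree is the elementary content of the van Kampen step you were reaching for, and it avoids any use of surface classification.
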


\begin{proof}
Only an evaluation with $\mathbf{u}_\Gamma\ne0$ needs attention.
Every retained word then has a common value
$(s,\delta,t)$.  Each variable still occurs at its
other graph endpoint, so every variable is nonzero.
The $AB$-cycle through $v_0$, with $v_0$ deleted,
gives a path of retained equations identifying the right
index of the missing $A$-letter with the left index of
its $B$-letter.  The $BC$-cycle gives the other
internal index equality.  The other endpoints of the
missing $A$- and $C$-edges give outside indices
$s,t$.  Thus the identity reduces to equality of the group labels.

Replace every $A$-label $a$ by $\delta^{-1}a$.
All retained triangle products are now $1$.
Choose a spanning tree $T$ of $Q$.  For an oriented
edge $u\to v$, change its label by
\[
 g_{uv}\longmapsto h(u)^{-1}g_{uv}h(v).
\]
Each directed triangle product changes by conjugation
at its starting vertex, so all retained equations with
value $1$ survive.  Choose the $h(u)$ recursively
along $T$ to make every label on $T$ equal to $1$.
This requires only multiplication and inversion in $G$.

By Lemma~\ref{lem:tree-dual}, the complementary edges
form a dual tree $T^*$.  Root it at the missing star.
Process its nonroot leaves toward the root.  At the
current leaf triangle, every edge except its parent
edge is either in $T$, or is a previously processed
child edge, and therefore has label $1$.  The retained
triangle equation forces its parent-edge label to be
$1$.  Induction treats every edge of $T^*$.
Every edge label is now $1$, so the missing triangle
product is $1$ too.  Undoing the label changes gives
its original product $\delta$.  This proves the
identity without a commutativity assumption.
\end{proof}

The colored equality is also necessary for this identity
over any nonabelian group.  The following gives an exact
characterization, without using a classification of surfaces.

\begin{theorem}[Exact graph--group criterion]
\label{thm:graph-group-exact}
For the graph $\Gamma$ above, any choice of the missing
star $v_0$, any group $G$, and any nonempty index set
$I$,
\[
 B_I(G)\models \mathbf{w}_{v_0}\preceq\mathbf{u}_\Gamma
 \quad\Longleftrightarrow\quad
 G\text{ is abelian}\ \text{or}\ c(\Gamma)=N/2+2.
\]
If $c(\Gamma)<N/2+2$, a counterevaluation exists in
every nonabelian group corner.
\end{theorem}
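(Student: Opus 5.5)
The plan is to split the criterion into three parts: the index bookkeeping, which does not depend on $G$ or $c(\Gamma)$; the abelian case, done by a signed sum; and the nonabelian case with $c(\Gamma)<N/2+2$, done by an explicit counterevaluation. The implication from $c(\Gamma)=N/2+2$ is exactly Theorem~\ref{thm:colored-identity}, so only the abelian sufficiency and the necessity need new work.

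\emph{Abelian sufficiency.} As in the proof of Theorem~\ref{thm:colored-identity}, take an evaluation with $\mathbf{u}_\Gamma\ne0$. Walking along the $AB$- and $BC$-cycles through $v_0$, and using the other endpoints of the missing $A$- and $C$-edges, makes the missing word composable with outside indices $(s,t)$. This step never uses $c(\Gamma)$. For the labels, write $G$ additively and use the bipartition signs $\varepsilon$, as in Lemma~\ref{lem:short-support}. Each variable lies in exactly two stars, one of each sign, so
\[
 \sum_{v}\varepsilon(v)\,(a_v+b_v+c_v)=0
\]
identically. Both bipartition classes have $N/2$ vertices, so $\sum_{v\ne v_0}\varepsilon(v)=-\varepsilon(v_0)$. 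Substituting $a_v+b_v+c_v=\delta$ for every $v\ne v_0$ then gives $a_{v_0}+b_{v_0}+c_{v_0}=\delta$, which proves the identity.

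\emph{Necessity for nonabelian $G$.} Fix $x,y\in G$ with $xy\ne yx$ and two indices $i\in I$. Give every variable the matrix indices $(i,\cdot,i)$, so only the labels matter. The identity then fails exactly when all retained star products equal a common $\delta$ while the missing product differs from $\delta$; I aim for $\delta=1$.
\begin{enumerate}
\item Form the group $\Pi$ generated by the edges of $Q$, with relators the retained directed triangles. The identity over $G$ is equivalent to every homomorphism $\Pi\to G$ killing the missing triangle word $w$.
\item Choose a spanning tree $T$ of $Q$ and set its labels to $1$; this is harmless by the gauge argument in Theorem~\ref{thm:colored-identity}. By Lemma~\ref{lem:tree-dual} the complementary edges form a connected graph $\Gamma^*$ on $V(\Gamma)$ with $N/2+2-c(\Gamma)\ge1$ independent cycles.
\item Choose a spanning tree $T^*$ of $\Gamma^*$ rooted at $v_0$. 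The leaf-to-root elimination of Theorem~\ref{thm:colored-identity} then shows $\Pi$ is free on the $g=N/2+2-c(\Gamma)$ edges outside $T\cup T^*$. Moreover $w$ is rewritten as an explicit word in these free generators.
\end{enumerate}

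The main obstacle is step 3: showing that this word is not sent to $1$ by some map of the free generators to $\{x,y,1\}$. The word is trivial in the abelianization, by the abelian case. Topologically, $Q$ with its $N$ triangles and the cyclic corner orders is a closed surface. Its Euler characteristic is $c(\Gamma)-N/2<2$. It is orientable, since the $\varepsilon$-twisted triangle orientations are coherent. So $w$ should be a product of $g/2$ commutators $[\alpha_j,\beta_j]$ in a suitable free basis, and sending $\alpha_1\mapsto x$, $\beta_1\mapsto y$ and everything else to $1$ gives $[x,y]\ne1$.

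To avoid the classification of surfaces, as the paper intends, I would try a direct argument first. Pick a cycle edge $e$ of $\Gamma^*$ and a second edge $f$ that crosses it, meaning they interleave in the cyclic corner order at some vertex of $Q$. Label $e\mapsto x$, $f\mapsto y$ and all other free edges $1$, then propagate the labels down $T^*$. Tracking the propagation should yield the missing product $x y x^{-1} y^{-1}$ up to conjugacy. The existence of such a crossing pair is where the strict inequality $c(\Gamma)<N/2+2$ must be used: with no crossings, the cycle edges could be cut to give a sphere. This gives a counterevaluation in every nonabelian group.
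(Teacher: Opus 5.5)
Your reduction agrees with the paper's in four places. You use index propagation. You use the signed product for abelian $G$; keeping $\delta$ instead of normalizing it to $1$ is equivalent. You cite Theorem~\ref{thm:colored-identity} for the equality case. For strict inequality you gauge along a spanning tree $T$ of $Q$ and eliminate leaf-to-root along a dual tree $T^*$, which leaves a free group on the $r=N/2+2-c(\Gamma)$ edges outside $T\cup T^*$.

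The routes diverge at the decisive step, and your topological argument is sound there. Each variable lies in exactly two stars, and the corners at a vertex of $Q$ follow its bicolored cycle, so each link is a single cycle. The $\varepsilon$-twisted orientations are coherent. So the triangles form a closed orientable surface of Euler characteristic $2-r$, hence of genus $r/2\geq1$. The presentation with $T$ killed and only the retained relators is $\pi_1$ of this surface minus the open missing triangle, and $\mathbf{w}_{v_0}$ is its boundary loop. The classification of surfaces with boundary then gives a free basis in which that loop is conjugate to $\bigl(\prod_j[\alpha_j,\beta_j]\bigr)^{\pm1}$. The resulting labels on the actual edges are words in $x,y$ rather than elements of $\{x,y,1\}$, which is harmless. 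This route describes the missing word completely, but it depends on an external classification theorem, which the paper deliberately avoids.

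Your proposed direct replacement is not yet an argument. The condition that $e$ and $f$ ``interleave in the cyclic corner order at some vertex of $Q$'' cannot be met as stated. Each edge of $Q$ joins two vertices of different types and occurs exactly once in the cyclic order at each endpoint, so interleaving only makes sense after everything is reduced to a single vertex.

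The paper carries out that reduction explicitly:
\begin{enumerate}
\item Glue the triangles along $T^*$ only. This gives a disk whose boundary polygon contains each non-$T^*$ edge twice, with opposite orientations.
\item Delete the two occurrences of each $T$-edge in turn. Each deletion merges two distinct corner classes and splices their links.
\item The result is a cyclic word of length $2r$ with one vertex class.
\end{enumerate}
That one-vertex property, not a ``cut to a sphere'' heuristic, rules out adjacent pairs $xx^{-1}$, while a noncrossing matching would force such a pair. So two letters interleave. Labelling every other letter $1$ leaves $x^{\epsilon}y^{\eta}x^{-\epsilon}y^{-\eta}$. The strict inequality is used only to get $r>0$.

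Your ``tracking the propagation'' is the paper's gluing step. If the parent boundary reads $\mathbf{u}t$ and the child boundary reads $t^{-1}\mathbf{v}$, the child relation gives $t=\mathbf{v}$ and turns the parent boundary into $\mathbf{u}\mathbf{v}$. Hence the missing product is the disk's boundary word up to rotation and inversion. With these pieces your direct route becomes the paper's proof; without them, only the classification-based route is complete.
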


\begin{proof}
For every graph under consideration, the index propagation
and normalization of $A$-labels from the preceding proof
reduce validity to the following implication in $G$:
\begin{equation}\label{eq:graph-group-implication}
 a_vb_vc_v=1\quad(v\ne v_0)
 \quad\Longrightarrow\quad a_{v_0}b_{v_0}c_{v_0}=1.
\end{equation}
Conversely, failure of this implication yields a Brandt
counterevaluation by putting all labels in a single
diagonal corner.  Every retained word is then $(i,1,i)$;
the missing word is a different nonzero element, and its
addition changes the value to zero.

If $G$ is abelian, give the two bipartition classes
signs $\varepsilon(v)=1,-1$.  In
\[
 \prod_{v\in V(\Gamma)}
       (a_vb_vc_v)^{\varepsilon(v)}
\]
every edge label occurs once positively and once negatively.
The product is $1$, so if all retained factors are $1$,
the missing factor is $1$.  This proves the abelian case.
The case of equality in \eqref{eq:colored-bound} is
Theorem~\ref{thm:colored-identity}.

It remains to disprove \eqref{eq:graph-group-implication}
in a nonabelian group when the inequality is strict.
Choose a spanning tree $T$ of the corner graph $Q$.
By Lemma~\ref{lem:tree-dual}, choose a dual spanning tree
$T^*$ of $\Gamma$ using only edges outside $T$.
Identify these dual edges with their variable edges.
There are
\begin{equation}\label{eq:graph-deficit}
 r=\frac{3N}{2}-(c(\Gamma)-1)-(N-1)
   =\frac N2+2-c(\Gamma)>0
\end{equation}
edges outside $T\cup T^*$.

\medskip
\noindent\emph{The polygon obtained from the dual tree.}
Take one abstract triangle for each vertex of $\Gamma$.
Its three corner types are $AB,BC,CA$; its sides are
labelled by its three incident variables.  Pair equal
variable sides, matching their endpoint types.  The
resulting corner classes are precisely the vertices of
$Q$.  Around such a class, the successive triangle
corners follow its bicolored cycle; in particular its
corner link is a single cycle.
Orient the triangles in one bipartition class with
boundary $abc$, and those in the other with boundary
$c^{-1}b^{-1}a^{-1}$.  A paired side is therefore read
in opposite directions by its two triangles.

Initially perform only the side gluings specified by
$T^*$.  They produce a disk: root $T^*$, and attach
each fresh triangle along its unique parent side, making
no further side or vertex identifications at that step.
Its boundary is an edge-paired polygon in which each
edge outside $T^*$ occurs twice, once in each direction.
Pairing the remaining sides recovers the preceding
triangle system.

We need a precise observation about contraction in an
oriented edge-paired polygon.  If a paired side represents
an edge with distinct endpoint vertex classes in the
quotient, deleting its two boundary occurrences merges
exactly these two classes and leaves every other class
unchanged.  Consider the cyclic corner links at the two
endpoints.  Removing the edge
cuts these two distinct cycles into paths.  The new
boundary adjacencies splice the two paths into one cycle,
while all other corner links remain unchanged.  This is
exactly the corner identification induced by deleting
the two side occurrences.

Apply this observation successively to the edges of $T$.
Each such edge still has distinct endpoints when it is
contracted: otherwise its endpoints would already be
joined by contracted edges of the tree, creating a cycle
in $T$.  After all $c(\Gamma)-1$ contractions, the
remaining cyclic boundary word $\mathbf{w}$ has length $2r>0$.
Each of its $r$ letters occurs once positively and once
negatively, and its own side pairing has exactly one
vertex class.  In particular, no extra vertex
identifications are being retained after the $T$-letters
are deleted.

\medskip
\noindent\emph{An interleaving pair of letters.}
The cyclic word $\mathbf{w}$ has no adjacent inverse pair.
For adjacent occurrences $xx^{-1}$, or $x^{-1}x$,
the common polygon corner is identified only with itself:
its two incident sides are the paired occurrences, and
their identification fixes that corner.  It would form
a vertex class separate from the other corners,
contradicting the one-vertex property.

The matching of the two occurrences of each letter around
the boundary must therefore contain two interleaving
pairs.  Indeed, if it were noncrossing, choose a matched
pair with the shortest linear interval between its
endpoints.  If that interval contained an endpoint, its
mate would also lie inside the interval, since otherwise
the pairs would cross.  That would give a shorter matched
interval.  Thus a noncrossing matching has an adjacent
matched pair, which here would be an adjacent inverse pair.

Choose interleaving letters $x,y$.  Assign all other
letters of $\mathbf{w}$ the group identity.  The remaining
word is, up to cyclic rotation,
\[
 x^\epsilon y^\eta x^{-\epsilon}y^{-\eta},
 \qquad \epsilon,\eta\in\{1,-1\}.
\]
Choose noncommuting elements $g,h\in G$ and assign
$x=g,\ y=h$.  Their indicated powers do not commute
either, so this value is nonidentity.  Cyclic rotation
only conjugates a product and cannot change that fact.

\medskip
\noindent\emph{Solving all retained relations.}
Set every $T$-edge label to $1$, and give the edges
outside $T\cup T^*$ the labels just chosen.
Root $T^*$ at the missing triangle.  Process the other
triangles from the leaves towards the root.  At each
step the parent edge is the only undetermined side of
the current triangle; its retained boundary equation
therefore determines that label uniquely.

After these substitutions, the missing boundary product
is the boundary product of the glued disk, up to a
cyclic change of starting point and possible inversion.
This follows one gluing at a time: with compatible
starting points, let the parent boundary read $\mathbf{u}t$
and the child boundary read $t^{-1}\mathbf{v}$.
The child equation gives $t=\mathbf{v}$, so substitution changes
the parent boundary to $\mathbf{u}\mathbf{v}$, exactly the boundary
after gluing.  Repeating this substitution eliminates
all nonroot triangles.
Deleting the $T$-letters now leaves the word $\mathbf{w}$,
because all their labels were $1$.
Thus every retained relation holds, while the missing
boundary has the nonidentity value just constructed,
up to conjugacy or inversion.  This disproves
\eqref{eq:graph-group-implication} in $G$ and completes
the equivalence.
\end{proof}

\begin{corollary}[The cut condition over abelian groups]
\label{cor:abelian-graph}
Suppose $\Gamma$ satisfies the cut condition
\eqref{eq:cut-condition}, without assuming
\eqref{eq:colored-equality}, and let $H$ be obtained by
deleting one star.  Then $|S_H|=5N/2+2$.
Every subsemiring generated by at most $N-2$ elements
belongs to $\mathsf{V}(B_2(C_r))$ for every $r\geq2$, and to
$\mathsf{V}(B_2(\mathbb{Z}))$.
For every abelian group $G$ and nonempty $I$,
\[
 S_H\notin\mathsf{V}(B_I(G)).
\]
\end{corollary}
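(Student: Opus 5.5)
The corollary has three claims: the size count, the local capture for small generating sets, and the exclusion over abelian groups. I would treat them in that order.

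\emph{Size.} This is bookkeeping as in \eqref{eq:witness-size}. The graph $\Gamma$ is cubic with $N$ vertices, so it has $3N/2$ edges. These are exactly the letters, and deleting one star leaves no isolated letter, since every letter still occurs at its other endpoint. Each colour class is a perfect matching of size $N/2$, so there are $N/2$ symbols $p_a$ and $N/2$ symbols $q_c$. Adding $h$ and $0$ gives $3N/2+N/2+N/2+2=5N/2+2$.

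\emph{Local capture.} For a generating set $X$ with $|X|\leq N-2$ I would use Lemma~\ref{lem:capture-criterion} directly. It gives a nonempty $F\subseteq H$ with $|F|\leq|X|\leq N-2$ such that $\langle X\rangle$ embeds into $S_F$ by the same-symbol map. Then $F$ omits at least two vertices of $\Gamma$: the deleted star $v_0$ and at least one more, since $|F|\leq N-2$ means $|V(\Gamma)\setminus F|\geq2$. The hypotheses of Lemma~\ref{lem:local-representation} are therefore met for $\Gamma$ with the cut condition \eqref{eq:cut-condition}. That lemma gives $S_F\in\mathsf{V}(B_2(C_r))$ for all $r\geq2$ and $S_F\in\mathsf{V}(B_2(\mathbb{Z}))$, and subalgebras inherit this. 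The empty generating set in the named-zero language gives $\{0\}$ and is trivial. Only a counting argument is needed here, not the prism-specific Lemma~\ref{lem:small-generators}, because $N-2$ leaves room for one extra omitted star by cardinality alone.

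\emph{Exclusion over abelian groups.} I would use Theorem~\ref{thm:graph-group-exact} with $G$ abelian. It gives $B_I(G)\models\mathbf{w}_{v_0}\preceq\mathbf{u}_\Gamma$ for every nonempty $I$, whether or not \eqref{eq:colored-equality} holds, by the signed-product argument. In $S_H$, assign each letter variable to the identically named letter. Every retained star word evaluates to $h$ by Lemma~\ref{lem:SH}. The missing word $\mathbf{w}_{v_0}$ evaluates to $0$, since $v_0\notin H$ and the only nonzero triple products are edges of $H$. So the left side of the inequality gives $0+h=0\neq h$, and the identity fails in $S_H$. Hence $S_H\notin\mathsf{V}(B_I(G))$.

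\emph{Main obstacle.} The only delicate point is checking that the index-propagation step behind Theorem~\ref{thm:graph-group-exact} really is graph-general. It uses the $AB$- and $BC$-cycles through $v_0$ and requires all variables to be nonzero. Both hold here because every letter still occurs in a retained star. So I expect no genuine difficulty, only care in matching the hypotheses of the cited results.
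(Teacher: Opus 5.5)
Your proposal is correct and matches the paper's proof essentially step for step: the size comes from counting the carrier; the capture criterion with $|F|\leq|X|\leq N-2$ forces a second omitted star, so the local representation lemma applies; and the natural assignment in $S_H$ is set against the abelian case of Theorem~\ref{thm:graph-group-exact}. Your extra checks are the points the paper leaves implicit: that no letter becomes isolated, and that the index propagation works for general graphs.
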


\begin{proof}
The size follows from \eqref{eq:SH-carrier}.
Lemma~\ref{lem:capture-criterion} puts a nonempty
$k$-generated subsemiring into some $S_F$ with
$1\leq|F|\leq k\leq N-2$.
At least two stars are therefore omitted, so the local
representation theorem applies.  In the named-zero
language the empty-generated algebra is $\{0\}$.
Finally, the natural assignment in $S_H$ has
$\mathbf{u}_\Gamma=h$ and $\mathbf{w}_{v_0}=0$, whereas
Theorem~\ref{thm:graph-group-exact} gives the identity
in every Brandt semiring over an abelian group.
\end{proof}

\begin{lemma}\label{lem:distinct-cycles}
Suppose $\Gamma$ satisfies \eqref{eq:colored-equality}
and has no edge cut of size less than three.  The two
endpoints of a $B$-edge belong to distinct $AC$-cycles.
\end{lemma}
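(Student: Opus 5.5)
The plan is to argue by contradiction: a $B$-edge whose endpoints lie on a common $CA$-cycle should produce an edge cut of size two. The equality \eqref{eq:colored-equality} is used to make $\Gamma$ behave like a planar map, and the cut condition then gives the contradiction.

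Suppose the $B$-edge $e=uv$ has both endpoints on one $CA$-cycle $Z$. Since $u,v$ are adjacent in the bipartite graph $\Gamma$, they lie in opposite bipartition classes. So $Z$ splits into two $u$--$v$ arcs $P_1,P_2$ of odd length. Not both arcs can be single edges, since then $Z$ would be a $2$-cycle, i.e. a double edge in a simple graph. So some arc, say $P_2$, has interior vertices.

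First I establish a planarity (Jordan-type) statement. Use the triangle complex from the proof of Theorem~\ref{thm:graph-group-exact}: one triangle for each vertex of $\Gamma$, sides paired along variables, vertex classes equal to the bicolored cycles. This is a closed connected orientable surface, with orientations coming from the bipartition signs. Its Euler characteristic is
\[
 c(\Gamma)-3N/2+N=2
\]
by \eqref{eq:colored-equality}. Dually, $\Gamma$ is cellularly embedded with faces the bicolored cycles. The separation property I actually need can be read off from Lemma~\ref{lem:tree-dual} without classifying surfaces. Choose a spanning tree $T$ of $Q$ containing the $Q$-edge of $e$ and the $Q$-edges of the $C$- and $A$-letters of $P_1$. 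These lie in the star of the vertex $Z$ of $Q$, apart from $e$, so I expect this to be possible after checking that they form no cycle in $Q$; I would first try to reroute $e$ through the $AB$- and $BC$-cycles at $u,v$ if a cycle appears. Lemma~\ref{lem:tree-dual} says that deleting the edges of $T$ from $\Gamma$ leaves a spanning tree $T^*$, and that for every spanning tree the complement is connected. Applied to the cycle $e\cup P_1$ of $\Gamma$, this ought to show that this cycle separates the remaining vertices into an ``inside'' and an ``outside''. The intended content is that, in $\Gamma$ viewed as a sphere map, the closed curve $e\cup P_1$ bounds a disk avoiding the face $Z$.

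Granting this, let $X$ consist of the vertices of $P_1$, including $u,v$, together with all vertices on the side of $e\cup P_1$ not containing the face $Z$. Every edge leaving $X$ must run along $Z$, because it cannot cross the curve. The edges of $Z$ leaving $X$ are exactly the two edges of $P_2$ at $u$ and at $v$. The remaining edges at $u$ and $v$ are the edges of $P_1$ and the internal edge $e$. Hence $|\partial_\Gamma X|=2$. The complement of $X$ contains the interior vertices of $P_2$, so it is nonempty, and $X$ is nonempty. This contradicts the absence of edge cuts of size less than three.

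The main obstacle is the separation step: justifying rigorously that $e\cup P_1$ separates, using only \eqref{eq:colored-equality}. If the tree-dual route becomes awkward, the fallback is to quote the standard fact that a connected closed surface with Euler characteristic $2$ is a sphere, and apply the Jordan curve theorem to the embedded cycle. The final cut count and the simplicity argument are routine.
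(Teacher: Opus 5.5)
Your fallback argument is correct, but it takes a genuinely different route from the paper's. The paper never passes to a surface. Assuming the $B$-edge $uv$ has both endpoints on one $AC$-cycle, it deletes $u,v$. It then joins the two $A$-neighbours of $u,v$ by a new $A$-edge and their two $C$-neighbours by a new $C$-edge. The result is a connected, properly coloured, cubic bipartite multigraph; it is connected because each component of $\Gamma-\{u,v\}$ would need at least three of the only four edges leaving $\{u,v\}$. The $AB$- and $BC$-cycle counts are unchanged, while the $AC$-cycle through $u,v$ splits into two, one closed up from each arc. Hence $c'-N'/2=4$, contradicting the multigraph clause of Lemma~\ref{lem:tree-dual}.

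That proof stays inside the combinatorial toolkit already built, in keeping with the paper's avoidance of surface classification, and uses the cut hypothesis only for connectedness. Your proof explains the phenomenon geometrically: a $B$-chord of the face $Z$ in a sphere map would bound a disk and so yield a $2$-edge cut. But it costs three imported results: recognition of the sphere from $\chi=2$, the dual embedding of $\Gamma$ with the bicoloured cycles as faces, and the Jordan curve theorem.

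Three points need tightening.

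First, the tree--dual route as you sketch it cannot work. The two end edges of $P_1$ have the same colour, say $A$, and both lie on $Z$ and on the $AB$-cycle through $e$. So they are parallel edges of $Q$, and no spanning tree of $Q$ contains both. Putting the $Q$-edges of $e\cup P_1$ into $T$ would say nothing about sides anyway. That parallelism is nevertheless the crux, and it repairs your first route combinatorially. In the equality case the complement of any spanning tree of $\Gamma$ is a spanning tree of $Q$: a cut of $Q$ meets every star of $\Gamma$ evenly, so it cannot lie inside a tree of $\Gamma$. A tree exchange then shows that $\{f_1,f_2\}$ contains an edge cut of $\Gamma$ whenever $f_1,f_2$ are distinct parallel $Q$-edges. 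Take a spanning tree $T$ of $Q$ through $f_1$ and delete $f_2$ from $T^*$; any other edge reconnecting the two sides must be $f_1$.

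Second, ``it cannot cross the curve'' only shows that an edge leaving $X$ starts on $e\cup P_1$ and enters the region containing $Z$. You must also check that at each interior vertex of $P_1$ the third ($B$-)edge lies on the side away from $Z$. This holds because the $CA$-corner there belongs to the face $Z$. At $u$ and $v$ the third edges, which are the end edges of $P_2$, lie on the $Z$ side for the same reason.

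Third, simplicity gives that both arcs have length at least three, since a one-edge arc would be parallel to $e$. You need this both for $e\cup P_1$ to be a cycle and for the complement of $X$ to be nonempty.
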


\begin{proof}
Suppose a $B$-edge $uv$ joined two vertices of one
$AC$-cycle.  Delete $u,v$, reconnect their two
$A$-neighbors by an $A$-edge, and reconnect their
two $C$-neighbors by a $C$-edge.  Proper coloring
and simplicity ensure that this creates no loops; parallel
edges may occur.  The result is properly colored, cubic,
and bipartite.
It is connected: if $\Gamma-\{u,v\}$ had at least two
components, each would have at least three boundary
edges in $\Gamma$, although only four edges leave
$\{u,v\}$.  This is impossible.

The $AB$- and $BC$-cycles containing $uv$ are
shortened but their numbers do not change.
The vertices $u,v$ have opposite bipartition signs,
so both paths between them on their $AC$-cycle have
odd length.  One has an $A$-edge at each end and the
other a $C$-edge at each end.  The reconnections close
these paths separately, splitting one $AC$-cycle into
two.  Thus the new graph has $N'=N-2$ vertices and
$c'=c(\Gamma)+1$.  It satisfies
\[
 c'-N'/2=(c(\Gamma)+1)-(N-2)/2=4,
\]
contrary to the multigraph version of
Lemma~\ref{lem:tree-dual}.  The supposed edge cannot exist.
\end{proof}

\begin{theorem}[Uniform graph witnesses]\label{thm:graph-witness}
Suppose $\Gamma$ satisfies the cut condition
\eqref{eq:cut-condition} and the colored equality
\eqref{eq:colored-equality}.  Delete one star to obtain
$H$.  Then
\[
 |S_H|=5N/2+2,\qquad S_H\notin\mathcal{B}.
\]
Every subsemiring of $S_H$ generated by at most $N-1$
elements belongs to $\mathsf{V}(B_2(C_r))$ for every
$r\geq2$, and to $\mathsf{V}(B_2(\mathbb{Z}))$.
\end{theorem}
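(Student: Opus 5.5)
The plan has three parts: the size and non-membership claims, which are immediate, and the bound of $N-1$ generators, which is the real work.

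\emph{Size and non-membership.} The cardinality $|S_H|=5N/2+2$ comes from \eqref{eq:SH-carrier}. There are $3N/2$ letters, one $p_a$ for each of the $N/2$ $A$-edges, one $q_c$ for each of the $N/2$ $C$-edges, and the two elements $h,0$. The cut condition gives $H$ no isolated vertices, because each letter keeps its other endpoint. For $S_H\notin\mathcal{B}$ I use the colored equality \eqref{eq:colored-equality} together with Theorem~\ref{thm:colored-identity}: the inequality $\mathbf{w}_{v_0}\preceq\mathbf{u}_\Gamma$ holds in every $B_I(G)$. In $S_H$ the same-name assignment gives $\mathbf{u}_\Gamma=h$ and $\mathbf{w}_{v_0}=0$ by Lemma~\ref{lem:SH}, so the inequality fails.

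\emph{Reduction to two omitted stars.} Let $X$ be a generating set with $|X|\leq N-1$, and use the notation $Z,L,M$ of Lemma~\ref{lem:capture-criterion}. If $|X|\leq N-2$, Corollary~\ref{cor:abelian-graph} already applies. So assume $|X|=N-1$. Following the pattern of Lemma~\ref{lem:small-generators}, I will look for a permitted $w\ne v_0$. This means $w$ is not mandatory, and the graph edge $v_0w$, if it exists, is not in $Z$. Given such a $w$, put $F=V(\Gamma)\setminus\{v_0,w\}$. Then $M\subseteq F$ and $Z\subseteq E_F$, and Lemma~\ref{lem:capture-criterion} embeds $\langle X\rangle$ into $S_F$. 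Lemma~\ref{lem:local-representation} then applies, since two vertices are omitted.

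\emph{The counting step, which I expect to be the main obstacle.} Suppose no $w$ is permitted. Then every vertex $w\ne v_0$ is either mandatory or a neighbour of $v_0$ whose connecting edge lies in $Z$.
\begin{enumerate}[(1)]
\item Each mandatory vertex contains a $B$-letter of $L$, because both mandatory patterns $\{a,b\}$ and $\{b,c\}$ involve $b$. Each $B$-letter lies in exactly two stars. The $B$-edge at $v_0$ has its other endpoint $v_0'$, which is either mandatory or forbidden through that $B$-edge; in both cases $b_{v_0}$ is a letter generator, since no $p$- or $q$-symbol carries a $B$-index. Hence all $N/2$ $B$-edges lie in $L$, up to the at most two non-$B$ neighbours of $v_0$. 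Those neighbours may be forbidden through $A$- or $C$-indices in $Z$ and then need not contain a $B$-letter, so the accounting must handle them.
\item Every mandatory vertex also contains an $A$- or $C$-letter of $L$. I therefore need the set $L\cap(A\cup C)$ to cover all the vertices other than $v_0$ and its $B$-mate. The $A$- and $C$-edges form the disjoint union of the $AC$-cycles, of total length $N$ vertices. Covering vertices along a cycle of length $2\ell$ with edges needs at least $\ell$ edges, with an adjustment for removed vertices. This gives at least about $N/2$ indices from $A\cup C$, so the total is $|Z|\geq N/2+N/2=N$.
\item Lemma~\ref{lem:distinct-cycles} is the essential ingredient here. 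It says $v_0$ and its $B$-mate lie on distinct $AC$-cycles. So each of those two cycles loses only one vertex, and a path of $2\ell-1$ vertices still needs $\ell$ edges to cover. This keeps the bound at $N$ rather than $N-1$, and it is exactly why the colored equality improves the generator bound from $N-2$ to $N-1$.
\end{enumerate}
The bound $|Z|\geq N$ contradicts $|Z|\leq|X|\leq N-1$. The delicate points are the forbidden neighbours of $v_0$, which are covered by an $A$- or $C$-index instead of a mandatory pair, and keeping the $B$-count and the $AC$-count disjoint. I will mirror the prism proof exactly, with the upper and lower cycles replaced by the $AC$-cycles.
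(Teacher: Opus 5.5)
Your route is the paper's: look for a permitted second deletion $w$, and otherwise get $|Z|\geq N$ from $N/2$ indices of colour $B$ plus $N/2$ indices of colours $A,C$, the latter via Lemma~\ref{lem:distinct-cycles}. The size, non-membership and capture parts are fine. There is, however, a hole in step (1), which you flag but never close. You concede that the $B$-edges at the $A$- and $C$-neighbours $w_A,w_C$ of $v_0$ might escape $L$, and then use the full count $N/2$ anyway. This matters: with those two $B$-indices unaccounted for, your count gives only $|Z|\geq N-2$, which does not contradict $|X|\leq N-1$.

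The missing idea is the bipartition. Every $B$-edge other than $v_0w_B$ has an endpoint in the bipartition class of $v_0$, and that endpoint is not $v_0$. It is therefore retained and not adjacent to $v_0$, so it cannot be forbidden through an edge to $v_0$. It must be mandatory, and this puts its $B$-letter, which is that very edge, into $L$. In particular the $B$-edges at $w_A$ and $w_C$ are forced from their other endpoints. Equivalently, a $B$-edge outside $L$ would have to join $w_A$ to $w_C$, which lie in the same class. So all $N/2$ $B$-indices lie in $Z$ with no exception.

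A smaller correction to step (2): a forbidden neighbour $w_A$ or $w_C$ is covered by its edge to $v_0$. That edge lies in $Z$, but possibly only as the index of some $p_a$ or $q_c\in X$. You should therefore count $Z\cap(A\cup C)$, not $L\cap(A\cup C)$. With these two repairs, your step (3) completes the argument exactly as in the paper.
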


\begin{proof}
Each color matching has $N/2$ edges, all still appearing
after one star is deleted.  Formula
\eqref{eq:SH-carrier} gives the size.
The natural assignment makes $\mathbf{u}_\Gamma=h$ and the
missing word zero in $S_H$, whereas
Theorem~\ref{thm:colored-identity} gives the identity in
all Brandt semirings.  Hence $S_H\notin\mathcal{B}$.

For the local claim, use the notation $X,Z,L,M$ of
Section~\ref{sec:capture}, with $1\leq|X|\leq N-1$.
The empty-generated algebra in the named-zero language
is $\{0\}$ and already belongs to the required varieties.
As in Lemma~\ref{lem:small-generators}, it suffices to
find a retained vertex $w\notin M$ such that its
possible edge to $v_0$ is not in $Z$.
Suppose no such vertex exists.
Every $B$-edge other than the one at $v_0$ has a
retained endpoint in the same bipartition class as
$v_0$, hence not adjacent to $v_0$.  This endpoint
must be mandatory, forcing the $B$-edge to be an
actual letter generator.  The other endpoint $w_B$
of the $B$-edge at $v_0$ forces that edge too,
either through being mandatory or through its index
being in $Z$.  Thus all $N/2$ $B$-indices lie
in $Z$.

Every vertex outside $\{v_0,w_B\}$ is covered by
an $A$- or $C$-index in $Z$, either because it
is mandatory or because of its selected edge to $v_0$.
By Lemma~\ref{lem:distinct-cycles}, the two exceptions
lie on different even $AC$-cycles.  On either of
these cycles, covering all but one vertex requires at
least half the cycle length in selected edges.  On
every other cycle all vertices are covered, again
requiring at least half the length.  Hence at least
$N/2$ $A/C$-indices lie in $Z$ as well.
This gives $|X|\geq|Z|\geq N$, a contradiction.

A permitted second deletion therefore exists.  The
capture criterion embeds $\langle X\rangle$ into
$S_F$ with exactly two stars omitted.  The local
representation theorem, already proved for every graph
satisfying the cut condition, completes the proof.
\end{proof}

For the prism coloring,
$c_{AB}=m$, $c_{BC}=m$, and $c_{CA}=2$.
Indeed the first two two-color subgraphs each consist
of $m$ four-cycles, and the last consists of the two
horizontal $2m$-cycles.  Thus
\eqref{eq:colored-equality} holds with $N=4m$,
recovering the earlier construction and its sharp
second-deletion threshold.

The cut condition alone does not give the noncommutative
identity.  Let $\Gamma=K_{3,3}$, with positive vertices
$u_i$ and negative vertices $v_j$, indices modulo
three, and edges
\[
 a_i:u_i v_i,\qquad b_i:u_i v_{i+1},\qquad
 c_i:u_i v_{i+2}.
\]
Its positive stars are $a_i b_i c_i$, and its negative
stars are $a_j b_{j-1}c_{j-2}$.  Delete $u_0$.
For noncommuting group elements $x,z$, assign
\[
\begin{array}{c|ccc}
 i&a_i&b_i&c_i\\ \hline
 0&1&x&z^{-1}x^{-1}z\\
 1&z^{-1}&z&1\\
 2&z^{-1}x&1&x^{-1}z
\end{array}
\]
The two retained positive products are
$z^{-1}z=1$ and $z^{-1}x\,x^{-1}z=1$.
The three negative products are respectively
\[
 1,\qquad z^{-1}x\,x^{-1}z=1,\qquad
 z^{-1}x z\,z^{-1}x^{-1}z=1.
\]
The missing product is $xz^{-1}x^{-1}z\ne1$.
Taking $x=(12)$ and $z=(23)$ gives a
counterevaluation in a corner isomorphic to $S_3$.

Nevertheless $K_{3,3}$ satisfies the cut condition.
For a vertex set containing $p$ positive and $q$
negative vertices, its cut has size
$3(p+q)-2pq$.  On taking the smaller shore, assume
$1\leq p+q\leq3$.  The value is three only for a
singleton; for totals two and three it is at least four
and five, respectively.  Each bicolored subgraph is one
six-cycle, so $c(\Gamma)=3$, whereas $N/2+2=5$.

\section{Flat group extensions}
\label{sec:unbounded-flat-groups}

\subsection{Commutative witnesses}

A commutative version of the prism construction gives a
second uniform obstruction.  Its local representations use
an infinite cyclic group instead of matrix indices.
The ordinary nonfinite basability of $\flat(\mathbb{Z})$ is
known; see \cite{MS25,GRZ26}.  The construction below gives
the stronger bounded-variable conclusion, simultaneous
finite witnesses, and the conclusion for every group
without a finite exponent.

Let $H$ be a finite linear three-uniform hypergraph
without isolated vertices.  In this subsection its edges
are unordered triples of distinct vertices.
On the disjoint union
\[
 C_H=V(H)\dot\cup\{p_x:x\in V(H)\}\dot\cup\{h,0\}
\]
put flat addition and commutative multiplication, with
the following nonzero products:
\begin{equation}\label{eq:commutative-hypergraph-products}
 xy=p_z\quad(\{x,y,z\}\in H),\qquad xp_x=h.
\end{equation}
All products not specified by these rules or their
commutative counterparts are zero.

\begin{lemma}\label{lem:commutative-hypergraph}
The algebra $C_H$ is a commutative flat ai-semiring.
Every square and every product of four elements is zero.
A product of three elements is nonzero precisely when
its factors are the three vertices of an edge; its value
is then $h$.
\end{lemma}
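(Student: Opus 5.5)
The plan is to follow the proof of Lemma~\ref{lem:SH}, adapted to the commutative setting. First I check that the multiplication table is well defined. Commutativity is built in by declaring each rule together with its commutative counterpart, so the issue is that a pair $\{x,y\}$ of vertices should receive at most one value $p_z$. Linearity gives this: two distinct edges through $x,y$ would share two vertices. A product $xp_{x'}$ with $x\ne x'$ is zero. Every product involving $h$, $0$, or two $p$-elements is zero, as is $xx$, since edges consist of distinct vertices. Hence the table is unambiguous. Squares vanish by inspection: $x^2=0$, $p_x^2=0$, $h^2=0$.

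For associativity and the description of triple products, I classify nonzero products $(xy)z$. The intermediate product $xy$ must be nonzero, and since only $p$-elements multiply further to something nonzero, $xy=p_w$ for an edge $\{x,y,w\}$. Then $p_wz\ne0$ forces $z=w$, giving value $h$. The same holds for $x(yz)$. Thus both bracketings are nonzero exactly when $\{x,y,z\}$ is an edge (in any order), and then both equal $h$, because $xy=p_z$ and $zp_z=h$, and symmetrically $yz=p_x$ and $xp_x=h$. Otherwise both bracketings vanish. The case where the intermediate product is $h$ cannot recur, since $h$ annihilates everything. Four-factor products then vanish because any three-factor product is $h$ or $0$, and both of these are annihilating.

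Distributivity under flat addition follows, as before, from $0$-cancellativity: if $xy=xz\ne0$, then $y=z$. I verify this by cases on the left factor. For a vertex $x$, equal outputs $p_w$ from $y\ne z$ would give two edges $\{x,y,w\},\{x,z,w\}$ sharing $x,w$, contradicting linearity. The output $h$ from a vertex $x$ arises only with the factor $p_x$, which is unique. For $x=p_w$, the only nonzero multiple is $p_wz=h$ with $z=w$, again unique. Commutativity handles the right-hand versions. Then, for $y\ne z$, the products $xy,xz$ are distinct or both zero, so $xy+xz=0=x(y+z)$. The case $y=z$ is trivial, and products with $0$ are zero. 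Addition is flat, hence commutative, idempotent, and associative.

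The only delicate step is the associativity bookkeeping, in particular making sure that a vertex $x$ outside the edge cannot combine with $p_z$. This is exactly the rule $xp_x=h$ with matching index. I would present that step carefully and treat the remaining checks as routine.
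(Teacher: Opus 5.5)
Your proposal is correct and follows essentially the paper's argument. Linearity makes the table well defined; classifying the nonzero bracketings $(xy)z$ and $x(yz)$ through an intermediate $p$-element gives associativity and the triple rule; and zero-cancellation, by the same case split (a vertex factor, the output $h$, a factor $p_x$), gives distributivity of flat addition. The only cosmetic difference is that you derive the vanishing of four-factor products from the fact that triple products lie in $\{h,0\}$, whereas the paper simply appeals to the disjoint types.
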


\begin{proof}
Linearity makes the multiplication unambiguous.
A nonzero product $(xy)z$ must have $xy=p_z$;
therefore its three factors are vertices forming an
edge, and its value is $h$.
Conversely an edge gives this value in either bracketing.
Every other triple product is zero in both bracketings.
This proves associativity and the asserted triple rule.
The disjoint types in the construction give the square
and four-factor assertions.

For zero-cancellation, fix a vertex $x$.
Equal nonzero products $xy=xy'=p_z$ would give two
edges containing $x,z$, so linearity gives $y=y'$.
The output $h$ occurs only with the factor $p_x$.
A factor $p_x$ has just one nonzero partner, namely $x$.
These observations prove zero-cancellation; commutativity
gives it on both sides.
As in Lemma~\ref{lem:SH}, zero-cancellation is exactly
what is needed for flat addition to distribute.
\end{proof}

Apply this construction to the prism hypergraph $H_m$
of \eqref{eq:prism-faces}, with $U_0$ deleted, forgetting
only the order inside each edge.  It has $6m$ vertices,
so
\begin{equation}\label{eq:commutative-prism-size}
 |C_{H_m}|=12m+2.
\end{equation}
Keep the words in $\mathbf{u}_m$ ordered as in the original
prism identity.  Lemma~\ref{lem:prism-identity} shows that
this identity holds in every flat group, and indeed in
every Brandt semiring.  In $C_{H_m}$, the identically
named evaluation gives $\mathbf{u}_m=h$ and $a_0b_0c_0=0$.
Thus
\begin{equation}\label{eq:commutative-prism-failure}
 C_{H_m}\not\models a_0b_0c_0\preceq\mathbf{u}_m.
\end{equation}

\begin{lemma}\label{lem:commutative-local-representation}
Let $\Gamma$ satisfy \eqref{eq:cut-condition}, and let
$F$ be a nonempty set of its vertex-stars omitting
at least two graph vertices.  Then
\[
 C_F\in\mathsf{V}(\flat(\mathbb{Z})).
\]
All constructions preserve the absorbing zero.
\end{lemma}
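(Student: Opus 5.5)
Mirroring the proof of Lemma~\ref{lem:local-representation}, I will realize $C_F$ as a Rees quotient of a subsemiring of a flat extension of a free abelian group, and then place that group inside $\mathsf{V}(\flat(\mathbb{Z}))$. Matrix indices are no longer available to separate letters from their products, so this job goes to an extra integer coordinate recording word length. Take $R=\mathbb{Z}$ and $K=\mathbb{Z}^{E_F}/W_F\cong\mathbb{Z}^d$ from Lemma~\ref{lem:primitive}. Work in $\flat(K\times\mathbb{Z})$, written additively. Assign to each hypergraph vertex $x\in E_F$ the generator
\[
 X_x=([x],1).
\]
A product of $n$ generators then has second coordinate $n$. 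By Lemma~\ref{lem:short-support}, distinct vertices have distinct classes, and \eqref{eq:labels} says that $[a]+[b]+[c]=0_K$ exactly when $abc\in F$.

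Let $T$ be the subsemiring generated by the $X_x$. It contains $0$, since $X_x+X_y=0$ for $x\ne y$. Its nonzero elements are values of words of length $n\geq1$. Put $H_0=(0_K,3)$ and $P_x=(-[x],2)$. Both lie in $T$: every $x\in E_F$ lies on some star of $F$, and the other two letters of that star multiply to $P_x$.

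Let $D$ be the set of elements of $T$ dividing $H_0$ in $T^1$, and let $I=T\setminus D$. As before, $I$ is an ideal absorbing addition, so the Rees quotient $T/I$ is a flat semiring. I will check the surviving elements level by level.
\begin{enumerate}[$(\rm i)$]
\item Length-$1$ elements all survive.
\item A length-$2$ element $([x]+[y],2)$ survives exactly when $[x]+[y]+[z]=0$ for some $z$, that is, when $xyz$ is an edge of $F$. It then equals $P_z$.
\item Length-$3$ elements survive only if they equal $H_0$.
\item Longer words are killed, since the $\mathbb{Z}$-coordinate can only increase under multiplication.
\end{enumerate}

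Next I will verify that the products in $T/I$ match those of $C_F$.
\begin{enumerate}[$(\rm i)$]
\item $X_xX_y$ survives exactly when $\{x,y,z\}$ is an edge, and it then equals $P_z$.
\item $X_xX_x$ must die. Its value is $(2[x],2)$, and it would survive only if $2[x]+[z]=0$ for some $z$. The vector $2e_x+e_z$ has support of size at most two, so it lies outside $W_F$ by Lemma~\ref{lem:short-support}. Hence $X_x^2$ dies.
\item $X_xP_{x'}=([x]-[x'],3)$ equals $H_0$ exactly when $x=x'$, by distinctness of classes.
\item $P_zP_{z'}$ has length $4$ and dies.
\end{enumerate}
Distinct $P_z$ are distinct because $[z]\ne[z']$. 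The same-symbol map is therefore an isomorphism $C_F\cong T/I$, and it preserves zero.

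Finally I need $\flat(\mathbb{Z}^{d+1})\in\mathsf{V}(\flat(\mathbb{Z}))$. Lemma~\ref{lem:smash} with all index sets singletons gives $\flat(\mathbb{Z}^{d+1})$ as a quotient of $\flat(\mathbb{Z})^{d+1}$, which settles this.

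I expect the main obstacle to be the failure of matrix positions to separate types. In particular, squares $X_x^2$ and length-$2$ products coinciding with each other or with generators must be ruled out. The length coordinate handles the second problem, and the support-two clause of Lemma~\ref{lem:short-support} handles the first. Commutativity of $\flat$ causes no trouble, because $C_F$ is itself commutative.
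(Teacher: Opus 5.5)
Your proposal is correct and is essentially the paper's own proof: you use the same length coordinate in $\flat(\mathbb{Z}\times K)$ with $K=\mathbb{Z}^{E_F}/W_F$, the same generators $([x],1)$ and elements $P_x$, $H_0$, the same Rees quotient by the non-divisors of $H_0$, and the zero-smash lemma to land in $\mathsf{V}(\flat(\mathbb{Z}))$. One point needs tightening: \eqref{eq:labels} only covers triples with one letter of each color, so for the survival of length-two elements you should appeal directly to Lemma~\ref{lem:short-support}, as the paper does, to get the triple test for arbitrary triples with repetition allowed (a support-three vector of $W_F$ is a multiple of some $\rho_v$, and a repeated letter gives support at most two).
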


\begin{proof}
Use the integral relation module from
\eqref{eq:relation-module} and put
\[
 K=\mathbb{Z}^{E_F}/W_F.
\]
Lemma~\ref{lem:primitive} makes $K$ a finitely generated
free abelian group.  Write $[x]$ for the class of $e_x$.
The short-support lemma shows that distinct vertices have
distinct classes, and, for any three vertices with
repetition allowed,
\begin{equation}\label{eq:commutative-triple-test}
 [x]+[y]+[z]=0
 \quad\Longleftrightarrow\quad
 \{x,y,z\}\text{ is an edge of }F.
\end{equation}
Indeed, a repeated vertex gives a nonzero integral vector
with support at most two, which cannot belong to $W_F$.
For three distinct vertices, a support-three relation must
be a scalar multiple of a vertex-star.  Its three
coefficients are $1$, so that scalar is $1$.

In the flat extension of the additive group $\mathbb{Z}\times K$,
generate a subsemiring $R$ by zero and
\[
 A_x=(1,[x])\qquad(x\in E_F).
\]
Every nonzero element of $R$ is a nonempty product of
these generators, and its first coordinate is its length.
Let $H_0=(3,0_K)$.
For an incident edge $\{x,y,z\}$,
\[
 A_xA_yA_z=H_0,\qquad
 A_yA_z=(2,-[x])=:P_x.
\]
Thus $H_0$, every $A_x$, and every $P_x$ belong to
$R$ and divide $H_0$.

Adjoin a formal identity for the divisibility criterion, and
let $J$ consist of zero and all elements of $R$
which do not divide $H_0$.
If a product involving an element divides $H_0$,
then that element also divides $H_0$.
Hence $J$ is a multiplicative ideal.
Flat addition also makes $J+R\subseteq J$, so its
Rees quotient is a semiring.

The first coordinate identifies all surviving elements.
At degree one they are the $A_x$.
A degree-two element $A_yA_z$ survives precisely when
some $A_x$ completes it to $H_0$; by
\eqref{eq:commutative-triple-test}, it is then $P_x$.
At degree three only $H_0$ survives, and every element
of degree at least four lies in $J$.
Distinctness within the first two types follows from
$[x]\ne[y]$; their degrees separate the three types.
Finally
\[
 A_xP_y=H_0\quad\Longleftrightarrow\quad [x]=[y]
 \quad\Longleftrightarrow\quad x=y.
\]
These are exactly the multiplication rules
\eqref{eq:commutative-hypergraph-products}.
The same-symbol correspondence therefore gives
$R/J\cong C_F$.

Since $\mathbb{Z}\times K$ is free abelian of finite rank,
the zero-smash construction gives
$\flat(\mathbb{Z}\times K)\in\mathsf{V}(\flat(\mathbb{Z}))$.
The subsemiring and quotient above prove the assertion.
\end{proof}

Write $N_+=(\mathbb Z_{>0},+)$ for the positive cyclic
semigroup.  Its operation becomes multiplication in
$\flat(N_+)$; the adjoined absorbing zero is a new
symbol, distinct from every integer.

\begin{corollary}\label{cor:commutative-scalar-representation}
Each $C_F$ in Lemma~\ref{lem:commutative-local-representation}
is a Rees quotient of a subsemiring of $\flat(N_+)$,
and hence also of a subsemiring of $\flat(\mathbb{Z})$.
\end{corollary}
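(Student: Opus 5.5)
The idea is to imitate Corollary~\ref{cor:integer-labels}: replace the free abelian group $K$ in the proof of Lemma~\ref{lem:commutative-local-representation} by integer labels, and then shift them so that everything lies in the positive semigroup. Work with $R=\mathbb Q$ in Lemma~\ref{lem:short-support}. In the quotient space $\mathbb Q^{E_F}/W_F$ there are only finitely many vectors that must stay nonzero. These are each difference $[x]-[y]$ with $x\ne y$, and each sum $[x]+[y]+[z]$, repetition allowed, for which $\{x,y,z\}$ is not an edge of $F$. By \eqref{eq:commutative-triple-test}, in its rational form, all of them are nonzero. Choose a basis and apply the functional $\ell_t$. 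Each forbidden vector becomes a nonzero polynomial in $t$, so we may pick an integer $t$ avoiding all roots and clear denominators. This gives integers $\lambda_x$, $x\in E_F$, that are pairwise distinct and satisfy $\lambda_x+\lambda_y+\lambda_z=0$ exactly when $\{x,y,z\}$ is an edge of $F$.

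Next, make the labels positive. Choose an integer $M>0$ so large that $\mu_x=M+\lambda_x>0$ for every $x$. Then
\[
 \mu_x+\mu_y+\mu_z=3M \iff \{x,y,z\}\in F,
\]
and the $\mu_x$ remain distinct. In $\flat(N_+)$, generate a subsemiring $R'$ by the elements $\mu_x$. Nonzero elements of $R'$ are sums of generators. The length of a product is no longer recorded by a first coordinate, so the degree argument has to be redone. I would take $M$ larger than $3\max|\lambda_x|$ as well. Then a sum of $n$ generators lies strictly between $(n-\tfrac13)M$ and $(n+\tfrac13)M$, and the degree $n$ is determined by the value.

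Then take $H_0=3M$ and $P_x=2M-\lambda_x=\mu_y+\mu_z$ for an incident edge. Let $J$ consist of zero together with all elements of $R'$ that do not divide $H_0$, with divisibility taken in $R'$ with an identity adjoined. Exactly as before, $J$ is an ideal absorbing addition. The survivors are determined as follows:
\begin{itemize}
\item In degree one they are the $\mu_x$.
\item In degree two, an element $\mu_y+\mu_z$ survives exactly when it completes to $3M$ by some $\mu_x$ with $\{x,y,z\}\in F$. In that case it equals $P_x$, and $P_x$ depends only on $x$.
\item In degree three only $H_0$ survives, and every element of degree at least four lies in $J$.
\end{itemize}
The types are distinct because the degree windows are disjoint. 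Within a type, distinctness follows from $\lambda_x\ne\lambda_y$. Finally, $\mu_x+P_y=3M$ holds iff $\lambda_x=\lambda_y$, that is, iff $x=y$. These are precisely the product rules \eqref{eq:commutative-hypergraph-products}, so $R'/J\cong C_F$.

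The main obstacle is this degree bookkeeping. Without the coordinate recording length, we must make sure that no sum of a different number of generators accidentally equals $H_0$, $P_x$ or some $\mu_x$. Choosing $M$ large relative to $\max|\lambda_x|$ handles this.

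The statement for $\flat(\mathbb Z)$ follows because $\flat(N_+)$ is a subsemiring of $\flat(\mathbb Z)$: the positive integers are closed under addition, and the flat sum of positive integers is either one of them or $0$. Hence $R'$ is also a subsemiring of $\flat(\mathbb Z)$, and the same Rees quotient works there. All maps preserve the absorbing zero.
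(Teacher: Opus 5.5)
Your route is the paper's: integer labels from $\ell_t$, a large positive shift, generation inside $\flat(N_+)$, and the Rees quotient by the non-divisors of $3M$. The survivors are $M+\lambda_x$, $2M-\lambda_x$ and $3M$, as in the paper. Working over $\mathbb Q$ and clearing denominators, instead of using a $\mathbb Z$-basis of the free group $K$, makes no difference.

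The step that fails is the quantitative one you singled out. Put $L=\max_x|\lambda_x|$. A sum of $n$ generators equals $nM+\Lambda$ with $|\Lambda|\le nL$, not $\le L$. So $M>3L$ only gives $|s-nM|<nM/3$, and your window $\bigl((n-\tfrac13)M,(n+\tfrac13)M\bigr)$ is already wrong for $n=2$.

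This is not cosmetic. Your bound permits $M=4L$, and nothing excludes some $\lambda_x=-L$, since $-\lambda$ is an equally valid labelling. In that case $4\mu_x=4(M-L)=3M=H_0$. Then $H_0$ is the fourth power of the generator $\mu_x$. The square $2\mu_x$ divides $H_0$ (because $2\mu_x+2\mu_x=H_0$), so it survives. The quotient therefore has a nonzero square and is not $C_F$, where all squares and four-fold products vanish.

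The repair is simply to take $M$ larger, e.g.\ $M>6L$ as the paper does. Then the intervals $[n(M-L),n(M+L)]$ for $n=1,2,3$ are pairwise disjoint; this needs $M>3L$ and $M>5L$. Also any four or more generators sum to at least $4(M-L)>3M$, so no such sum divides $3M$. Hence every divisor of $3M$ has a well-defined degree, at most three. With that, your survivor analysis goes through unchanged, including $\mu_x+P_y=3M+\lambda_x-\lambda_y$ surviving only for $x=y$, and so does the passage to $\flat(\mathbb Z)$.
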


\begin{proof}
Work in the free abelian group $K$ from the preceding proof.
The forbidden vectors $[x]-[y]$, for $x\ne y$,
and $[x]+[y]+[z]$, for every nonedge triple with
repetitions allowed, are nonzero by the short-support lemma.
There are only finitely many such vectors.
Choose a basis $b_1,\ldots,b_r$ of $K$, and define
$\ell_t(b_i)=t^{i-1}$ for an integer $t$.
For a nonzero vector $v=\sum_i c_i b_i$, the value
$\ell_t(v)=\sum_i c_it^{i-1}$ is a nonzero polynomial
in $t$, so it vanishes at only finitely many integers.
Choose $t$ avoiding the roots for all forbidden vectors,
and put $\lambda_x=\ell_t([x])$.
These integers are pairwise distinct and satisfy
\[
 \lambda_x+\lambda_y+\lambda_z=0
 \quad\Longleftrightarrow\quad
 \{x,y,z\}\text{ is an edge of }F.
\]
Put $L=\max_x|\lambda_x|$, and choose an integer $N>6L$.
In additive notation for $N_+$, generate its flat
subsemiring by the adjoined zero and $N+\lambda_x$, and
retain exactly the divisors of $3N$.
Every generator is positive, and any four generators
have sum at least $4(N-L)>3N$.
The same divisibility argument as above leaves precisely
the values
\[
 N+\lambda_x,\qquad 2N-\lambda_x,\qquad 3N.
\]
The choice of $N$ separates these types.
A pair survives precisely when a third generator makes
an edge, and a generator times a complementary pair
survives precisely for matching indices.
The resulting Rees quotient is $C_F$.
\end{proof}

\begin{lemma}\label{lem:commutative-capture}
For every nonempty $X\subseteq C_{H_m}$, there is a
nonempty set $F\subseteq H_m$ of at most $|X|$ stars
such that the same-symbol map embeds
$\langle X\rangle$ into $C_F$.
\end{lemma}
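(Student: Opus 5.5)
We adapt the proof of Lemma~\ref{lem:capture-criterion} to the commutative construction. Given a nonempty $X\subseteq C_{H_m}$, let $Z$ be its set of indices. A vertex $x$ indexes itself, $p_x$ has index $x$, and $h,0$ have no index. Let $L=X\cap V(H_m)$ be the actual vertex generators, so that $L\subseteq Z$ and $|Z|\leq|X|$. Because multiplication is now commutative and the edges are unordered, a star $\{x,y,z\}\in H_m$ is called \emph{mandatory} when at least two of its three vertices lie in $L$. Let $M$ be the set of mandatory stars. The qualitative claim is then: if $F\subseteq H_m$ is nonempty, $M\subseteq F$ and $Z\subseteq E_F$, then the same-symbol map embeds $\langle X\rangle$ into $C_F$.

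The qualitative part follows the earlier argument closely, using Lemma~\ref{lem:commutative-hypergraph}. No operation creates a new vertex, since products have type $p$, $h$ or $0$, and flat sums retain a summand or give zero. A new $p_z$ can arise only as a product $xy$ of two vertex generators with $\{x,y,z\}$ an edge. That edge is then mandatory, so it lies in $F$, and $z\in E_F$. The remaining products are $xp_x=h$, which depend only on symbol matching. Edge products are preserved because $M\subseteq F$, and since $F\subseteq H_m$ no zero product becomes nonzero. Zero already lies in $\langle X\rangle$ because squares vanish. Injectivity together with preservation of zero then gives preservation of flat addition, as in the noncommutative case.

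The main step is the quantitative bound $|F|\leq|X|$. I would first bound $|M|$ by double counting. Put $L_M=L\cap E_M$. Each mandatory star has at least two incidences with $L_M$, and every vertex of $H_m$ has degree at most two, because each letter of the prism lies in exactly two stars of the full hypergraph. Hence
\[
 2|M|\leq\sum_{x\in L_M}\deg_M(x)\leq 2|L_M|,
\]
so $|M|\leq|L_M|$. Next, for each $z\in Z\setminus E_M$ choose one star of $H_m$ containing $z$; this is possible because $H_m$ has no isolated vertices. Together with $M$ these stars form $F$, and
\[
 |F|\leq|L_M|+|Z\setminus E_M|\leq|L_M|+|Z|-|L_M|\leq|X|,
\]
using $L_M\subseteq Z\cap E_M$. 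If $F$ would otherwise be empty, choose any one star; the bound still holds since $|X|\geq1$.

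The only real point to check is that forgetting the order inside edges leaves the degree bound and the linearity used in the embedding intact. Linearity is a property of the underlying vertex sets, so it is unaffected. The degree bound of two still holds because the vertices of $H_m$ are the same letters. The mandatory condition is the symmetric version of the earlier one, which is why double counting still gives $|M|\leq|L_M|$.
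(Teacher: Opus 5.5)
Your proposal is correct and follows the paper's own proof essentially step for step. It uses the same index set $Z$, the same symmetric notion of a mandatory star, the same double count $2|M|\leq 2|L_M|$ from degree at most two, the same choice of one incident star for each $z\in Z\setminus E_M$ (with a fallback star when $F$ would be empty), and the same no-new-vertex argument for the same-symbol embedding.
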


\begin{proof}
Let $L=X\cap V(H_m)$ be the actual vertex generators.
Let $Z$ consist of their indices and of the indices
$x$ for which $p_x\in X$; $h,0$ contribute no index.
Thus $L\subseteq Z$ and $|Z|\leq|X|$.
Call a retained star mandatory if it contains at least
two members of $L$, and let $M$ be their set.
Writing $L_M=L\cap E_M$, the degree bound two gives
\[
 2|M|\leq\sum_{x\in L_M}\deg_M(x)\leq2|L_M|.
\]
For each $z\in Z\setminus E_M$, choose an incident
retained star, which exists because $H_m$ has no
isolated vertices.  Adjoin these stars to $M$ to
obtain $F$.  Then
\[
 |F|\leq |M|+|Z\setminus E_M|
 \leq |L_M|+|Z|-|Z\cap E_M|\leq |Z|\leq|X|.
\]
If this set is empty, choose one retained star instead.

No operation creates a new vertex.
Every newly generated $p_x$ comes from a product of
two actual vertex generators and hence from a mandatory
star.  Thus $F$ supplies every symbol in $\langle X\rangle$.
It preserves all nonzero vertex-pair products, and the
products $xp_x=h$ depend only on matching symbols.
Since $F\subseteq H_m$, no zero product becomes nonzero.
The same-symbol map therefore preserves multiplication.
It is injective and preserves zero, hence preserves flat
addition as well.
\end{proof}

\begin{theorem}[Uniform commutative witnesses]
\label{thm:commutative-uniform}
For $k\geq1$, put
\[
 m=\max\{2,\lceil(k+2)/4\rceil\},\qquad
 U_k=C_{H_{m}}.
\]
Then $U_k$ is a commutative flat semiring of exactly
$12m+2$ elements, lies outside $\mathcal{P}$, and every
subsemiring generated by at most $k$ elements belongs
to $\mathsf{V}(\flat(\mathbb{Z}))$.
In fact each such subsemiring is a semiring divisor of
$\flat(N_+)$, and hence of $\flat(\mathbb{Z})$.
These statements hold in both signatures.
\end{theorem}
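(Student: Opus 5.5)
The plan is to assemble the theorem from the commutative lemmas just proved, in the same way Theorem~\ref{thm:uniform} was assembled from the noncommutative ones. The argument is simpler here because the capture bound of Lemma~\ref{lem:commutative-capture} already leaves enough room for a second deletion.

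\emph{Global properties of $U_k$.} By Lemma~\ref{lem:commutative-hypergraph}, $U_k=C_{H_m}$ is a commutative flat ai-semiring. Its size $12m+2$ is \eqref{eq:commutative-prism-size}. The prism identity $\mathbf{w}_0\preceq\mathbf{u}_m$ is one of the defining laws of $\mathcal{P}$, and it fails in $C_{H_m}$ by \eqref{eq:commutative-prism-failure}. Hence $U_k\notin\mathcal{P}$.

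\emph{Local part.} Let $X\subseteq U_k$ with $1\leq|X|\leq k$. Lemma~\ref{lem:commutative-capture} embeds $\langle X\rangle$ by the same-symbol map into some $C_F$ with $\varnothing\ne F\subseteq H_m$ and $|F|\leq|X|\leq k$. The choice $m\geq\lceil(k+2)/4\rceil$ gives $k\leq4m-2$. Since $\Gamma_m$ has $4m$ vertices, $F$ omits at least two of them. Moreover $m\geq2$ makes $\Gamma_m\cong C_{2m}\mathbin{\square}K_2$ with $2m\geq4$, so Lemma~\ref{lem:prism-cuts} gives the cut condition \eqref{eq:cut-condition}. Lemma~\ref{lem:commutative-local-representation} then applies and yields $C_F\in\mathsf{V}(\flat(\mathbb{Z}))$. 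Corollary~\ref{cor:commutative-scalar-representation} sharpens this: $C_F$ is a Rees quotient of a subsemiring of $\flat(N_+)$. Composing with the embedding of $\langle X\rangle$, each such subsemiring is a subsemiring of a quotient of a subsemiring of $\flat(N_+)$, that is, a semiring divisor of $\flat(N_+)$. Since $N_+\subseteq\mathbb{Z}$ gives an embedding $\flat(N_+)\hookrightarrow\flat(\mathbb{Z})$, it is also a divisor of $\flat(\mathbb{Z})$.

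\emph{Signatures.} In the named-zero language, the empty generating set generates $\{0\}$, which lies in every variety considered. Every map used (the capture embedding, the Rees quotients, and the zero-smash embeddings) preserves the absorbing zero, as already noted in the cited lemmas, so the same argument works in both signatures.

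The one step that needs care is the counting: $|F|\leq k$ must leave at least two vertices of $\Gamma_m$ omitted, and the lemma's hypothesis of two omitted graph vertices is exactly what that guarantees. I expect no further obstacle, since every remaining step cites a lemma proved above.
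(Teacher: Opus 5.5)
Your proposal is correct and follows essentially the same route as the paper. You capture $\langle X\rangle$ in some $C_F$ with $|F|\le k\le 4m-2$, so at least two vertices of $\Gamma_m$ are omitted, then apply Lemma~\ref{lem:commutative-local-representation}, use Corollary~\ref{cor:commutative-scalar-representation} for the divisor statement, and treat $\{0\}$ separately in the named-zero language. Your explicit checks of the cut condition via Lemma~\ref{lem:prism-cuts} and of the embedding $\flat(N_+)\hookrightarrow\flat(\mathbb{Z})$ fill in steps the paper leaves implicit.
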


\begin{proof}
The size and the failed prism identity were established
in \eqref{eq:commutative-prism-size} and
\eqref{eq:commutative-prism-failure}.
By the capture lemma, a nonempty at-most-$k$-generated
subsemiring embeds in some $C_F$ with $|F|\leq k$.
Since $4m\geq k+2$, at least two prism vertices
lie outside $F$.
Apply Lemma~\ref{lem:commutative-local-representation}.
The stronger divisor assertion follows from
Corollary~\ref{cor:commutative-scalar-representation},
by taking the inverse image of the embedded subsemiring
under its quotient map.
In the named-zero signature the empty-generated algebra
is $\{0\}$, which also has the required property.
\end{proof}

\subsection{Finite bases and variable complexity}

For the rank-one finite case we use the established theorem
of Jackson \cite{Jackson08}, recorded in
\cite[Lemma 2.3]{RJZL}:
\begin{equation}\label{eq:group-criterion}
 G\text{ finite}:\quad
 \flat(G)\text{ is finitely based}
 \quad\Longleftrightarrow\quad
 \text{all Sylow subgroups of }G\text{ are abelian}.
\end{equation}
This result concerns the binary semiring language;
Lemma~\ref{lem:constant} transfers it to the named absorbing
zero without a change of convention.

\begin{theorem}[The exact flat-group reduction]
\label{thm:flat-group-all}
Let $G$ be any group.  If $G$ has no finite exponent,
the identities of $\flat(G)$ have no basis with a
uniform finite bound on the number of variables.
For arbitrary $G$,
\[
 \flat(G)\text{ is finitely based}
 \quad\Longleftrightarrow\quad
 \begin{gathered}
 G\text{ has finite exponent, and its group}\\
 \text{quasi-equational theory is finitely axiomatizable}.
 \end{gathered}
\]
The equivalence concerns both semiring signatures in
this paper.
\end{theorem}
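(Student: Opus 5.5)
The plan has two parts. First I show that groups without finite exponent admit no bounded-variable basis, using the commutative witnesses of Theorem~\ref{thm:commutative-uniform}. Then I settle the finite-exponent case by citing the correspondence of \cite{RJZL}. Throughout I work in the binary language $\{+,\cdot\}$ and transfer everything to $\{+,\cdot,0\}$ by Lemma~\ref{lem:constant}, which preserves both finite basability and the existence of a bounded-variable basis.

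\emph{Unbounded exponent.} The aim is to prove $\mathsf{V}(\flat(\mathbb{Z}))\subseteq\mathsf{V}(\flat(G))$ whenever $G$ has no finite exponent. There are two cases.

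\begin{enumerate}[$(\rm a)$]
\item If $G$ contains an element of infinite order, then $\flat(\mathbb{Z})$ embeds in $\flat(G)$ as the flat extension of that cyclic subgroup.
\item Otherwise every element has finite order, but the orders are unbounded. So there is an infinite set $P$ of integers $r$ with $C_r\leq G$.
\end{enumerate}

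In case (b), $\mathbb{Z}$ embeds in $\prod_{r\in P}C_r$ via $n\mapsto(n\bmod r)_r$. This map is injective because a nonzero integer is divisible by only finitely many $r$. The one-index case of Lemma~\ref{lem:smash} identifies $\flat(\prod_{r\in P}C_r)$ as a quotient of $\prod_{r\in P}\flat(C_r)$. Hence $\flat(\mathbb{Z})$ is a subsemiring of a quotient of a product of subsemirings of $\flat(G)$, and so lies in $\mathsf{V}(\flat(G))$.

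Now fix $k$ and take $U_k=C_{H_m}$ from Theorem~\ref{thm:commutative-uniform}. Every at-most-$k$-generated subsemiring of $U_k$ lies in $\mathsf{V}(\flat(\mathbb{Z}))\subseteq\mathsf{V}(\flat(G))$. Therefore $U_k$ satisfies every identity of $\flat(G)$ in at most $k$ variables, since an evaluation lives in the subsemiring generated by its values.

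On the other hand, $\flat(G)=B_1(G)$ satisfies the prism identity $a_0b_0c_0\preceq\mathbf{u}_m$ by Lemma~\ref{lem:prism-identity}, while $U_k$ fails it by \eqref{eq:commutative-prism-failure}. Suppose a basis used at most $k$ variables per identity. Then $U_k$ would satisfy that basis, and hence every identity of $\flat(G)$, which is a contradiction. This proves the first assertion, and in particular $\flat(G)$ is not finitely based.

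\emph{The equivalence.} If $G$ has no finite exponent, both sides are false: the left by the previous paragraph, the right by its first clause. If $G$ has finite exponent, I would invoke the finite-exponent correspondence of \cite{RJZL}. That correspondence says $\flat(G)$ is finitely based exactly when the quasi-identities of $G$ admit a finite axiomatization. The expected mechanism is that identities of $\flat(G)$ encode group quasi-identities through sums of words with a common value, and finite exponent makes inverses term-definable as $x^{e-1}$.

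The main obstacle is checking that this cited correspondence is stated for arbitrary, possibly infinite, groups of finite exponent in the binary language. If it is, Lemma~\ref{lem:constant} gives the named-zero version. As a consistency check, for finite $G$ the result should agree with \eqref{eq:group-criterion}.
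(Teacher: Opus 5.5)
Your proposal is correct and follows essentially the same route as the paper. You place $\flat(\mathbb{Z})$ in $\mathsf{V}(\flat(G))$ via the zero-smash lemma, play the commutative witnesses $U_k$ against the prism identity, and cite \cite[Lemma 2.2]{RJZL} for finite exponent, with Lemma~\ref{lem:constant} handling the named zero. The only difference is cosmetic: instead of your split into an infinite-order case and an unbounded-torsion case (embedding $\mathbb{Z}$ into $\prod_{r\in P}C_r$), the paper picks $g_d$ with $g_d^d\ne1$ and uses the single infinite-order tuple $(g_d)_{d\geq1}\in G^{\mathbb N}$, which covers both cases at once.
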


\begin{proof}
Suppose $G$ has no finite exponent.
For each $d\geq1$, choose $g_d\in G$ with $g_d^d\ne1$.
The tuple $g=(g_d)_{d\geq1}\in G^{\mathbb N}$ has
infinite order: its $d$-th coordinate rules out $g^d=1$.
The cyclic subgroup generated by $g$, together with zero,
and the zero-smash lemma give
\[
 \flat(\mathbb{Z})\in
 \mathsf{V}(\flat(G^{\mathbb N}))\subseteq\mathsf{V}(\flat(G)).
\]
Consequently $U_k$ satisfies every at-most-$k$-variable
identity of $\flat(G)$.
But its prism identity holds in $\flat(G)$ and
fails in $U_k$.  No basis with variable bound $k$
can therefore exist.
For finite exponent, the asserted equivalence is the
established correspondence \cite[Lemma 2.2]{RJZL}.
Lemma~\ref{lem:constant} transfers finite basability to
the named absorbing-zero signature; the witnesses
already preserve that constant.
\end{proof}

\begin{corollary}\label{cor:commutative-variable-interval}
Every variety
\[
 \mathsf{V}(\flat(N_+))\subseteq\mathcal W\subseteq\mathcal{P}
\]
has no bounded-variable identity basis.  For $N\geq26$,
\[
 4\left\lfloor\frac{N-2}{12}\right\rfloor-1
 \ \leq\ \nu_{\mathcal W}(N)\ \leq\ N.
\]
Thus the variable complexity grows linearly in $N$, and
the uniform term-length bound grows at least linearly.
This applies to the flat extension of every group of
unbounded exponent, in both signatures.
\end{corollary}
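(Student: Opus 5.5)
The argument mirrors Theorem~\ref{thm:full-interval} and Theorem~\ref{thm:linear-variable-complexity}, with the commutative witnesses $U_k=C_{H_m}$ of Theorem~\ref{thm:commutative-uniform} replacing the noncommutative prism semirings.

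\emph{Step 1: no bounded-variable basis.} Fix $\mathcal W$ with $\mathsf{V}(\flat(N_+))\subseteq\mathcal W\subseteq\mathcal{P}$ and a proposed variable bound $k$. By Theorem~\ref{thm:commutative-uniform}, every subsemiring of $U_k$ generated by at most $k$ elements is a semiring divisor of $\flat(N_+)$. Hence it lies in $\mathsf{V}(\flat(N_+))\subseteq\mathcal W$. Therefore $U_k$ satisfies every identity of $\mathcal W$ in at most $k$ variables. On the other hand, \eqref{eq:commutative-prism-failure} shows that $U_k\notin\mathcal{P}\supseteq\mathcal W$. So no basis with variable bound $k$ exists.

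\emph{Step 2: upper bound on $\nu_{\mathcal W}(N)$.} This is verbatim the identification-of-variables argument in Theorem~\ref{thm:linear-variable-complexity}. Take a failing assignment, merge variables with equal values, and obtain at most $N$ variables.

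\emph{Step 3: lower bound on $\nu_{\mathcal W}(N)$.} Put $m=\lfloor (N-2)/12\rfloor$. The hypothesis $N\geq26$ gives $m\geq2$, and $|C_{H_m}|=12m+2\leq N$. This witness lies outside $\mathcal{P}$, hence outside $\mathcal W$. Its subsemirings generated by at most $k=4m-2$ elements lie in $\mathcal W$. To justify this, I would invoke the capture lemma (Lemma~\ref{lem:commutative-capture}). It gives $|F|\leq 4m-2$, so at least two of the $4m$ prism vertices are omitted. Lemma~\ref{lem:commutative-local-representation} and Corollary~\ref{cor:commutative-scalar-representation} then place the subsemiring in $\mathsf{V}(\flat(N_+))$. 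Here I must check that the commutative local representation needs only $F$ omitting two vertices of $\Gamma_m$. The prism satisfies the cut condition by Lemma~\ref{lem:prism-cuts} (for $r=2m\geq4$).

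It follows that any identity excluding $C_{H_m}$ has at least $4m-1$ variables, which gives the lower bound $4\lfloor (N-2)/12\rfloor-1$. The term-length remark follows exactly as after Theorem~\ref{thm:linear-variable-complexity}, since length is at least the number of variables.

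\emph{Step 4: flat extensions of groups of unbounded exponent.} As in the proof of Theorem~\ref{thm:flat-group-all}, such $G$ yields $\flat(\mathbb{Z})\in\mathsf{V}(\flat(G))$. Also $\flat(N_+)$ embeds in $\flat(\mathbb{Z})$. Lemma~\ref{lem:prism-identity} shows that $\flat(G)=B_1(G)$ satisfies all prism identities, so $\mathsf{V}(\flat(G))$ lies in the interval. The named-zero signature is handled because all maps preserve the absorbing zero; for the basis claim, Lemma~\ref{lem:constant} also applies.

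The main obstacle is the bookkeeping in Step 3. I need the exact count $k=4m-2$ to match the hypotheses of the capture and local-representation lemmas. I also need the threshold $N\geq26$ to guarantee $m\geq2$.
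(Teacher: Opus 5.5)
Your proposal is correct and follows essentially the same route as the paper. The commutative witnesses $C_{H_m}$ with $m=\lfloor (N-2)/12\rfloor$ give both the no-bounded-basis claim and the lower bound. The variable-identification argument gives the upper bound. The unbounded-exponent case follows from $\flat(N_+)\leq\flat(\mathbb{Z})\in\mathsf{V}(\flat(G))$ together with the prism identity. The only difference is cosmetic: you re-derive the $(4m-2)$-generator statement from the capture and local-representation lemmas, whereas the paper simply cites Theorem~\ref{thm:commutative-uniform} at $k=4m-2$, whose own proof does the same thing.
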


\begin{proof}
The uniform commutative witness in
Theorem~\ref{thm:commutative-uniform} lies outside
$\mathcal{P}$, whereas each of its at-most-$k$-generated
subsemirings belongs to $\mathsf{V}(\flat(N_+))$.
This excludes any basis with variable bound $k$.
For the quantitative statement put
$m=\lfloor(N-2)/12\rfloor\geq2$.
The witness $C_{H_m}$ has $12m+2\leq N$ elements
and satisfies every identity of $\mathcal W$ in at
most $4m-2$ variables.  An excluding identity therefore
needs at least $4m-1$ variables.
The upper bound follows from the variable-identification
argument in the proof of
Theorem~\ref{thm:linear-variable-complexity};
the length comparison follows from the definitions.
If $G$ has unbounded exponent, the proof of
Theorem~\ref{thm:flat-group-all} gives
$\flat(\mathbb{Z})\in\mathsf{V}(\flat(G))$, so the lower
inclusion follows from $\flat(N_+)\leq\flat(\mathbb{Z})$.
The upper inclusion follows from the prism identity.
\end{proof}

\section{Naturally semilattice-ordered inverse semigroups}
\label{sec:natural-classification}

\subsection{Structural representations}
\label{sec:regular}

\begin{proposition}[The structure of regular flat semirings]
\label{prop:regular-structure}
Every nonzero regular flat semiring is an orthogonal sum
of Brandt semirings.  Equivalently, its multiplicative
semigroup is a primitive inverse semigroup.
\end{proposition}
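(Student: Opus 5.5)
The plan is to extract the multiplicative structure from the distributive law alone. The key consequence is that flat addition forces $0$-cancellativity. Suppose $xy=xz\ne0$ with $y\ne z$. Then
\[
 0=x\cdot 0=x(y+z)=xy+xz=xy\ne0,
\]
a contradiction. Hence
\[
 xy=xz\ne0\Longrightarrow y=z,
\]
and symmetrically $yx=zx\ne0\Longrightarrow y=z$. I also record that $0$ is multiplicatively absorbing by the definition \eqref{eq:flat}. So $S$ is a regular semigroup with zero that is $0$-cancellative on both sides.

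The first step is to show that $S$ is an inverse semigroup with every nonzero idempotent primitive. Let $e,f$ be idempotents with $f\leq e$ in the usual idempotent order, so $f=ef=fe$, and suppose $f\ne0$. Then $e\cdot f=f=f\cdot f$ is nonzero, and right $0$-cancellation gives $e=f$. Thus nonzero idempotents are primitive.

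For uniqueness of inverses, take $a\ne0$ and suppose $x,x'$ are both inverses of $a$. Then $ax$ and $ax'$ are nonzero idempotents. From $a\cdot x a=a=a\cdot x'a$ and left cancellation we get $xa=x'a$. From $ax\cdot a=a=ax'\cdot a$ and right cancellation we get $ax=ax'$. Left cancellation by $a$ in the latter (the value $ax\ne 0$) then gives $x=x'$. Hence $S$ is inverse, and since its nonzero idempotents are primitive, it is a primitive inverse semigroup.

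The second step is to invoke the standard structure theorem: a primitive inverse semigroup with zero is a $0$-direct union (orthogonal sum) of Brandt semigroups $B_{I_\lambda}(G_\lambda)$. Rather than only cite it, I would sketch it. The nonzero $\mathcal{D}$-classes $D_\lambda$ satisfy $D_\lambda D_\mu=\{0\}$ for $\lambda\ne\mu$. Indeed, primitivity makes $0$-minimal ideals coincide with $\mathcal J$-classes together with $0$, and $ab\ne0$ forces $d(a)=r(b)$ by primitivity. Each $D_\lambda\cup\{0\}$ is then a Brandt semigroup, with $I_\lambda$ indexing its idempotents and $G_\lambda$ the maximal subgroup.

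The third step is to check that the addition matches. Addition is flat on $S$, the orthogonal sum carries flat addition, and Brandt semirings carry flat addition. So the semigroup isomorphism is automatically an isomorphism of semirings. The equivalence in the statement goes the other way as well: an orthogonal sum of Brandt semirings is regular, and its multiplicative reduct is primitive inverse. Conversely, a primitive inverse semigroup with zero and flat addition is distributive, because it is $0$-cancellative. This follows from Brandt multiplication, as in the distributivity argument of Lemma~\ref{lem:SH}.

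I expect the main obstacle to be the case of $S$ without a nonzero idempotent interplay issue. More precisely, it is making the structure theorem rigorous without heavy citation, in particular showing that products across distinct $\mathcal D$-classes vanish and that each class is a Brandt semigroup. If needed, I would simply cite the Clifford--Preston theorem on primitive inverse semigroups.
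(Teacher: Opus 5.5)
Your argument is correct and has the same skeleton as the paper's proof. Flat distributivity gives $0$-cancellation, which makes the multiplicative reduct inverse with primitive nonzero idempotents. Flatness then makes any zero-preserving multiplicative isomorphism automatically additive.

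The differences are in two steps.

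\emph{Uniqueness of inverses.} You cancel $a$ on both sides of $axa=ax'a$: right cancellation gives $ax=ax'$, then left cancellation gives $x=x'$ (the intermediate $xa=x'a$ is not needed). This actually shows that a nonzero element has a unique \emph{inner} inverse. The paper instead first proves that distinct nonzero idempotents are orthogonal, and then runs the classical chain $b=bab=(ca)b=c(ab)=c(ac)=cac=c$. Your route is shorter. You should still add two one-line remarks: regularity produces an inverse $bab$ from an inner inverse $b$, and $0$ is its own only inverse.

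\emph{The Brandt decomposition.} You cite the Clifford--Preston theorem on primitive inverse semigroups, whereas the paper proves it directly. It shows $st\neq0\iff d(s)=r(t)$ and builds the coordinates $(i,g,j)\mapsto u_igu_j^{-1}$. The nonvanishing direction of that equivalence comes from $(st)(t^{-1}s^{-1})=ss^{-1}\neq0$; your sketch omits it, but the Brandt multiplication rule needs it. The citation is legitimate. However, the explicit groupoid coordinates are what the paper later reuses to identify $\mathcal{B}_D$ with $B_{I_D}(G_D)$ in Theorem~\ref{thm:local-clifford-representation}. So the self-contained version buys that reuse, together with independence from external structure theory.
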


\begin{proof}
Flat distributivity implies zero-cancellation.  Indeed,
if $ab=ac\ne0$ and $b\ne c$, then
$a(b+c)=0$ but $ab+ac=ab\ne0$; the right-sided
argument is the same.
For nonzero idempotents $e,f$ with $ef\ne0$,
\[
 e(ef)=ef=e\cdot f,\qquad (ef)f=ef=e\cdot f.
\]
Cancellation gives $ef=f$ and $ef=e$, respectively.
Thus distinct nonzero idempotents are orthogonal.

Regularity supplies an inverse: replace an inner inverse
$x$ of $a$ by $xax$.  If $b,c$ are both
inverses of nonzero $a$, the nonzero idempotents
$ab,ac$ satisfy $(ab)(ac)=ac$, so $ab=ac$.
Similarly $(ba)(ca)=ba$ gives $ba=ca$.  Hence
\[
 b=bab=(ca)b=c(ab)=c(ac)=cac=c.
\]
Zero has only zero as an inverse.  The semigroup is inverse,
and orthogonality makes every nonzero idempotent primitive.

We give the Brandt coordinates explicitly.
For nonzero $s,t$,
\[
 st=s\,d(s)r(t)\,t,
\]
so it is zero when $d(s)\ne r(t)$.  If these idempotents
agree, then
\[
 (st)(t^{-1}s^{-1})=s\,r(t)\,s^{-1}=ss^{-1}\ne0,
\]
so $st\ne0$.  The nonzero elements therefore form a
groupoid, with exact composability condition
$d(s)=r(t)$.

Partition the nonzero idempotents by $e\sim f$ if some
$s$ has $r(s)=e,d(s)=f$.  This is an equivalence
relation by idempotents, inverses, and composition.
In one component fix $e_0$.  For each $e_i$, choose
$u_i$ with $d(u_i)=e_0,r(u_i)=e_i$, taking
$u_0=e_0$, and let
\[
 G=\{g:d(g)=r(g)=e_0\}.
\]
This is a group.  The map
\[
 (i,g,j)\longmapsto u_i g u_j^{-1}
\]
is a bijection onto the component's nonzero elements: for
$r(s)=e_i,d(s)=e_j$ the inverse label is
$g=u_i^{-1}s u_j$, and $s=u_i g u_j^{-1}$.
It preserves Brandt multiplication, using
$u_j^{-1}u_j=e_0$ and orthogonality when middle indices
differ.  Distinct components multiply to zero.  The
zero-preserving bijection also preserves flat addition.
This proves the claimed decomposition.
\end{proof}

In this decomposition the Clifford case is precisely the
case where all index sets have size one.  A block with two
indices contains an off-diagonal element, which cannot
belong to a subgroup, whereas a rank-one block is a group
with the shared zero.

\begin{theorem}[All non-Clifford regular flat semirings]
\label{thm:regular-infinite}
Let $S$ be a regular flat semiring whose multiplicative
semigroup is not Clifford.  No cardinality restriction is
imposed.  Then, in either stated language,
\[
 S\text{ is finitely based}\quad\Longleftrightarrow\quad
 S\text{ is combinatorial}.
\]
In every negative case its identities have no basis with
a uniform variable bound.
\end{theorem}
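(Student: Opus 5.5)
The plan is to reduce everything to the Brandt results through Proposition~\ref{prop:regular-structure} and Theorem~\ref{thm:orthogonal}. Since $S$ is regular and flat, the case $S=\{0\}$ is excluded: the one-element algebra is Clifford. So Proposition~\ref{prop:regular-structure} writes $S$ as an orthogonal sum $\bigoplus^0_\lambda B_{I_\lambda}(G_\lambda)$. The remark following that proposition identifies the Clifford case with all index sets being singletons. Hence the non-Clifford hypothesis gives some $\lambda$ with $|I_\lambda|\ge2$. Theorem~\ref{thm:orthogonal} then yields $\mathsf V(S)=\mathsf V(B_2(G))$ with $G=\prod_\lambda G_\lambda$. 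Because finite basability, and the existence of a bounded-variable basis, depend only on the set of identities, $S$ has these properties exactly when $B_2(G)$ does.

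Next I would match combinatoriality of $S$ with triviality of $G$. The maximal subgroups of a primitive inverse semigroup with zero are the trivial group $\{0\}$ and the groups $G_\lambda$, one for each nonzero idempotent $\mathcal H$-class. So $S$ is combinatorial if and only if every $G_\lambda$ is trivial, that is, if and only if $G=\{1\}$. Here one should note that the product of trivial groups is trivial, and that the product is nontrivial as soon as one factor is.

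Then I would apply Corollary~\ref{cor:brandt-classification} to $B_2(G)$. If $G=\{1\}$, then $B_2$ is finitely based in both languages, by \cite{Sh} together with Lemma~\ref{lem:constant}. If $G\ne\{1\}$, Theorem~\ref{thm:all-groups} shows that $B_2(G)$ has no basis with a uniform variable bound, in either language. This gives both the equivalence and the stronger negative statement.

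The only step needing care is the signature bookkeeping. In the language $\{+,\cdot,0\}$, Theorem~\ref{thm:orthogonal} must be read with the zero named. This is harmless, because all the embeddings and quotients used there (the coordinate axes and the zero smash) preserve the absorbing zero. Alternatively, one can pass through Lemma~\ref{lem:constant}: $S$ and $B_2(G)$ both have an absorbing zero, so it suffices to compare their binary varieties. No cardinality assumption enters, since Theorem~\ref{thm:orthogonal} and Theorem~\ref{thm:all-groups} allow arbitrary groups and index sets.
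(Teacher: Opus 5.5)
Your proposal is correct and follows essentially the same route as the paper. Both arguments decompose $S$ via Proposition~\ref{prop:regular-structure}, use non-Cliffordness to get an index set of size at least two, identify $\mathsf{V}(S)$ with $\mathsf{V}(B_2(\prod_\lambda G_\lambda))$ by Theorem~\ref{thm:orthogonal}, and conclude with Corollary~\ref{cor:brandt-classification} and Theorem~\ref{thm:all-groups}. Your added remarks on excluding $\{0\}$ and on the named-zero signature are accurate bookkeeping that the paper leaves implicit.
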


\begin{proof}
By Proposition~\ref{prop:regular-structure}, write
$S=\bigoplus^{0} B_{I_\lambda}(G_\lambda)$.
Some index set has size at least two, so
Theorem~\ref{thm:orthogonal} identifies its variety with
$\mathsf{V}(B_2(G))$, where
$G=\prod_\lambda G_\lambda$.
This product group is trivial exactly when every
$G_\lambda$ is trivial.  Now apply
Corollary~\ref{cor:brandt-classification} and
Theorem~\ref{thm:all-groups}.
\end{proof}

\begin{theorem}[Complete finite regular flat classification]
\label{thm:finite-regular}
A finite regular flat semiring $S$ is finitely based if
and only if at least one of the following holds:
\begin{enumerate}[$(\rm i)$]
\item its multiplicative semigroup is combinatorial;
\item its multiplicative semigroup is Clifford and every
Sylow subgroup of every maximal subgroup is abelian.
\end{enumerate}
The criterion holds in both stated signatures.
\end{theorem}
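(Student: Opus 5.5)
We split according to whether the multiplicative semigroup of $S$ is Clifford; in each case we reduce to a result already available. The zero-only algebra is finitely based and satisfies (i), so we may assume $S\ne\{0\}$. By Proposition~\ref{prop:regular-structure} we write
\[
 S=\bigoplus^{0}_{\lambda} B_{I_\lambda}(G_\lambda).
\]
Since $S$ is finite, there are finitely many summands, each $I_\lambda$ is finite, and each $G_\lambda$ is finite. The maximal subgroups of $S$ are the groups $G_\lambda$ (up to isomorphism), together with the trivial group at zero.

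\emph{Non-Clifford case.} Some $|I_\lambda|\ge2$, and Theorem~\ref{thm:regular-infinite} applies directly: $S$ is finitely based if and only if $S$ is combinatorial. Condition (ii) fails here because $S$ is not Clifford. So the criterion reduces to (i), as claimed.

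\emph{Clifford case.} Every $|I_\lambda|=1$, and Theorem~\ref{thm:orthogonal} gives $\mathsf{V}(S)=\mathsf{V}(\flat(G))$ with $G=\prod_\lambda G_\lambda$. This product is finite. Finite basability depends only on the variety, so by Jackson's criterion \eqref{eq:group-criterion}, $S$ is finitely based if and only if every Sylow subgroup of $G$ is abelian.

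The main step is to show that this holds exactly when every Sylow subgroup of every $G_\lambda$ is abelian. For a prime $p$, a Sylow $p$-subgroup of the finite direct product $\prod_\lambda G_\lambda$ is the product of Sylow $p$-subgroups of the factors. Such a product is abelian precisely when each factor is. Hence the Clifford case gives exactly condition (ii).

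It remains to check that (i) and (ii) together cover the Clifford case. If $S$ is Clifford and combinatorial, every $G_\lambda$ is trivial, so (ii) also holds. The criterion is therefore consistent in the overlap.

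For the signature claim, Theorem~\ref{thm:regular-infinite} is already stated in both languages. Jackson's criterion transfers to the named absorbing zero by Lemma~\ref{lem:constant}, as recorded after \eqref{eq:group-criterion}. I expect no real obstacle; the only care needed is the Sylow-subgroup-of-product fact and checking that the degenerate cases (the zero algebra, and the overlap of (i) with the Clifford case) are consistent.
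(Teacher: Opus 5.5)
Your proposal is correct and follows essentially the same route as the paper. Both use the Brandt decomposition of Proposition~\ref{prop:regular-structure}, settle any block with at least two indices by Theorem~\ref{thm:regular-infinite}, and in the all-rank-one case reduce to $\flat(\prod_\lambda G_\lambda)$ via the Sylow-subgroups-of-a-product observation and Jackson's criterion. The only difference is minor: the paper separately notes that a nonzero combinatorial rank-one sum generates the variety of the duplicated two-element semilattice, whereas you absorb that case into (ii) through the trivial group; both are valid.
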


\begin{proof}
Treat the zero-only algebra separately; it satisfies
$x\approx y$ and is finitely based.
Use the decomposition of Proposition~\ref{prop:regular-structure}.
If a block has at least two indices,
Theorem~\ref{thm:regular-infinite} proves the criterion.
If all blocks have one index, the variety is that of
$\flat(G)$, where $G$ is the
finite product of the block groups.  Its Sylow subgroups
are products of the corresponding Sylow subgroups of the
factors, so all are abelian exactly when the condition
holds in every factor.  Apply \eqref{eq:group-criterion}.
For a nonzero combinatorial rank-one sum, both operations
are the same semilattice operation.  It generates the
variety of the two-element semilattice with the operation
duplicated, and is finitely based as well.
\end{proof}

\begin{corollary}[The arbitrary-cardinality Clifford reduction]
\label{cor:clifford-reduction}
For an arbitrary Clifford regular flat semiring
$S=\bigoplus^{0} \flat(G_\lambda)$, its finite-basis
problem is exactly that of
$\flat(\prod_\lambda G_\lambda)$.
Writing $G=\prod_\lambda G_\lambda$, this
semiring is finitely based exactly when $G$
has finite exponent and its group quasi-equational theory
is finitely axiomatizable.
If $G$ has no finite exponent, there is no
bounded-variable identity basis.
\end{corollary}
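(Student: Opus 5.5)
The plan is to reduce everything to the single flat group extension $\flat(G)$ with $G=\prod_\lambda G_\lambda$, and then quote Theorem~\ref{thm:flat-group-all}.

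First, by Proposition~\ref{prop:regular-structure} and the remark after it, a Clifford regular flat semiring is an orthogonal sum $\bigoplus^{0}\flat(G_\lambda)$ in which every Brandt block has a single index. The zero-only algebra is handled separately: it satisfies $x\approx y$, and the trivial group has finite exponent and a finitely axiomatizable quasi-equational theory, so both sides of the criterion hold. Otherwise $\Lambda$ is nonempty and every $I_\lambda$ is a singleton, so the first case of Theorem~\ref{thm:orthogonal} gives
\[
 \mathsf{V}(S)=\mathsf{V}\bigl(\flat(\textstyle\prod_\lambda G_\lambda)\bigr)=\mathsf{V}(\flat(G)).
\]
Theorem~\ref{thm:orthogonal} allows arbitrary cardinalities, so this holds without any finiteness assumption. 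Two algebras generating the same variety have the same equational theory. Hence $S$ is finitely based exactly when $\flat(G)$ is, and a bounded-variable basis for one is a bounded-variable basis for the other. This proves the first sentence, and it works in either signature: the constant $0$ is the absorbing element in both algebras, and Lemma~\ref{lem:constant} transfers finite basability and bounded-variable bases between $\{+,\cdot\}$ and $\{+,\cdot,0\}$.

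Next I apply Theorem~\ref{thm:flat-group-all} to $G$. It says that $\flat(G)$ is finitely based iff $G$ has finite exponent and its quasi-equational theory is finitely axiomatizable, and that if $G$ has no finite exponent then $\flat(G)$ has no bounded-variable basis. Transporting both statements to $S$ through the equality of varieties gives the remaining two assertions.

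There is no real obstacle here. The one point needing care is that the variety equality in Theorem~\ref{thm:orthogonal} holds for infinitely many summands and infinite groups. That rests on the arbitrary-product zero-smash Lemma~\ref{lem:smash}, which gives $\bigvee_\lambda\mathsf{V}(\flat(G_\lambda))=\mathsf{V}(\flat(G))$, together with the coordinate-axis embedding of the orthogonal sum.
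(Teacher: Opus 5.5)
Your proposal is correct and matches the paper's proof: Theorem~\ref{thm:orthogonal} identifies $\mathsf{V}(S)$ with $\mathsf{V}(\flat(G))$. Theorem~\ref{thm:flat-group-all}, whose finite-exponent half is the correspondence \cite[Lemma 2.2]{RJZL} that the paper cites directly, and Lemma~\ref{lem:constant} then give the remaining assertions in both signatures. Your separate check of the zero-only algebra is needed, because there the variety equality with $\flat(\{1\})$ fails, and the paper makes this point only in its remark after Theorem~\ref{thm:orthogonal}.
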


\begin{proof}
The first assertion is Theorem~\ref{thm:orthogonal}.
For finite exponent use \cite[Lemma 2.2]{RJZL}.
The unbounded-exponent assertion follows from
Theorem~\ref{thm:flat-group-all}.
Lemma~\ref{lem:constant} treats the absorbing constant.
\end{proof}

The finite-exponent case follows from the group
quasi-equational correspondence, and the commutative
construction in Section~\ref{sec:unbounded-flat-groups}
settles the unbounded-exponent case.

The Clifford reduction admits a stronger form.  Here addition
is the meet in the natural order of an inverse semigroup,
and it need not be flat.
This representation and the resulting finite classification
belong to the framework of \cite{Jackson08,Jackson19}.
We give explicit projections and a proof in the binary
language, also recording the arbitrary-cardinality formula
and its named-zero form.

\begin{theorem}[A subdirect representation for Clifford semirings]
\label{thm:clifford-natural}
Let $S$ be a Clifford inverse semigroup with a zero,
at least one nonzero element, and a natural meet operation.
Equip $S$ with this meet as addition.
For each nonzero idempotent $e$, let $G_e$ be its
maximal subgroup, and put
\[
 G=\prod_{e\in E(S)\setminus\{0\}}G_e.
\]
Then $S$ is a subdirect product of the flat semirings
$\flat(G_e)$, and
\[
 \mathsf{V}(S)=\mathsf{V}(\flat(G)).
\]
These assertions hold in both stated signatures, with
no finiteness assumption.
\end{theorem}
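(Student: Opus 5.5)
For each nonzero idempotent $e$, I will define a projection $\pi_e\colon S\to\flat(G_e)$ by
\[
 \pi_e(s)=\begin{cases} es,& e\leq_{\rm nat} ss^{-1},\\ 0,&\text{otherwise}.\end{cases}
\]
In a Clifford semigroup the idempotents are central and $ss^{-1}=s^{-1}s$. So when $e\leq ss^{-1}$, the element $es$ lies in $G_e$: it has $(es)(es)^{-1}=e\,ss^{-1}=e$.

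\emph{Multiplicativity.} Since $(st)(st)^{-1}=ss^{-1}tt^{-1}$ by centrality of idempotents, we have $e\leq (st)(st)^{-1}$ exactly when $e\leq ss^{-1}$ and $e\leq tt^{-1}$. In that case $est=(es)(et)$, again using $e=e^2$ and centrality. Otherwise both sides are $0$, since if one factor's projection is the flat zero, the product in $\flat(G_e)$ is zero.

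\emph{Additivity.} This is the key step. I claim $\pi_e(s\wedge t)=\pi_e(s)+\pi_e(t)$ in the flat sense.
\begin{itemize}
\item If $e\leq d(s),d(t)$ and $es=et$, then $es\leq_{\rm nat}s,t$, so $es\leq s\wedge t$. Hence $e\leq d(s\wedge t)$ and $e(s\wedge t)=es$, because below a fixed element the natural order is determined by the domain idempotent.
\item If $es\ne et$, then $e\not\leq d(s\wedge t)$. Otherwise $e(s\wedge t)$ would lie below both $s$ and $t$ with domain $e$, forcing $es=e(s\wedge t)=et$.
\item If $e\not\leq d(s)$, then $d(s\wedge t)\leq d(s)$ gives $e\not\leq d(s\wedge t)$, so both sides are $0$.
\end{itemize}

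\emph{Separation.} For $s\ne t$, take $e=ss^{-1}$ (or $e=tt^{-1}$ if $s=0$). Then $\pi_e(s)=s$. If $\pi_e(t)=s$ as well, then $e\leq d(t)$ and $et=s$, so $s\leq_{\rm nat}t$. In that case, if $s\ne t$, the idempotent $f=tt^{-1}$ separates them: $\pi_f(t)=t\ne0$ while $\pi_f(s)=0$ because $f\not\leq ss^{-1}$ (as $f\ne e$, $e\leq f$). Zero maps to zero, and $S$ has at least one nonzero element. Each $\pi_e$ is surjective since $G_e\subseteq S$. So $S$ is a subdirect product of the $\flat(G_e)$.

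\emph{Variety equality.} The subdirect representation gives $\mathsf V(S)\subseteq\mathsf V\{\flat(G_e)\}$. For the reverse inclusion, each $\flat(G_e)=G_e\cup\{0\}$ is a subsemiring of $S$: meets of distinct group elements of $G_e$ are below both and lie in $eSe$ with smaller domain, and I must check they equal $0$. This is the likely obstacle, because the meet of distinct elements of $G_e$ is some element with domain $f<e$, not necessarily $0$. I would handle it by using $\pi_e$ instead: $\flat(G_e)$ is a homomorphic image of $S$, which gives $\flat(G_e)\in\mathsf V(S)$ directly. Finally, $\bigvee_e\mathsf V(\flat(G_e))=\mathsf V(\flat(G))$ follows exactly as in the proof of Theorem~\ref{thm:orthogonal}, via Lemma~\ref{lem:smash} and the coordinate subgroup embeddings.

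\emph{Named zero.} All maps send $0$ to $0$, so the statement also holds in the signature $\{+,\cdot,0\}$.
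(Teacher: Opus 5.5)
Your proof is correct and follows the paper's argument essentially step for step: the same projections $\pi_e$, the same analysis of $\pi_e(s\wedge t)$, separation by domain idempotents, the surjections $\pi_e$ (rather than the possibly non-closed set $G_e\cup\{0\}$) for the reverse inclusion, and the zero-smash join. The one step you leave implicit, $f\ne e$ in your separation case, is immediate: if $f=e$, then $s=et=ft=t$.
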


\begin{proof}
Idempotents in a Clifford inverse semigroup are central,
and
\[
 d(s)=r(s),\qquad d(st)=d(s)d(t).
\]
For each nonzero idempotent $e$, define
\[
 \pi_e(s)=
 \begin{cases}
 es\in G_e,&e\leq_{\rm nat} d(s),\\
 0,&e\not\leq_{\rm nat} d(s).
 \end{cases}
\]
Here the zero in the codomain is the new element of
$\flat(G_e)$.  Since $e\leq_{\rm nat} d(st)$ holds exactly
when $e\leq_{\rm nat} d(s)$ and $e\leq_{\rm nat} d(t)$, centrality gives
\[
 (es)(et)=est
\]
in the nonzero case, and both sides of the required
multiplicativity equation are zero in every other case.
Thus $\pi_e$ preserves multiplication and zero.

Let $u=s\wedge t$.
If $\pi_e(s)=\pi_e(t)=v\ne0$, then
$v=es=et$ is a common lower bound of $s,t$.
Consequently $v\leq_{\rm nat} u$, so $e=d(v)\leq_{\rm nat} d(u)$ and
$\pi_e(u)=eu=v$.
Conversely, if $\pi_e(u)\ne0$, then $e\leq_{\rm nat} d(u)$;
the inequalities $u\leq_{\rm nat} s,t$ imply
\[
 eu=es=et\ne0.
\]
Therefore, if the two images are different or at least
one is zero, the image of their meet is zero.
This is exactly the flat addition rule in the codomain.
Hence every $\pi_e$ is a semiring homomorphism.
It is onto, because its restriction to $G_e$ is the
identity and it maps the zero of $S$ to zero.

The family separates elements.
A nonzero $s$ is separated from zero by
$\pi_{d(s)}$.
If nonzero $s,t$ have different domains, one of their
domain idempotents is not below the other; the projection
indexed by that idempotent sends one element to a nonzero
value and the other to zero.
If their domains agree, say at $e$, then
$\pi_e(s)=s$ and $\pi_e(t)=t$.
Thus $S$ embeds subdirectly in
$\prod_{e\ne0}\flat(G_e)$.
The surjections give the reverse variety inclusion.
Finally the zero-smash construction and the coordinate
group embeddings give
\[
 \bigvee_{e\ne0}\mathsf{V}(\flat(G_e))
   =\mathsf{V}\left(\flat\left(\prod_{e\ne0}G_e\right)\right),
\]
as in Theorem~\ref{thm:orthogonal}.
This proves the exact formula.
\end{proof}

\begin{remark}[Adjoining an absorbing zero]
\label{rem:clifford-nozero}
If a non-singleton Clifford inverse semigroup $S$ has
natural meets but no zero, the same variety formula holds
in $\{+,\cdot\}$, with the product over all $e\in E(S)$.
Indeed, $S$ has at least two idempotents: otherwise it
would be a group, whose equality natural order cannot
have all binary meets unless it is a singleton.
Distinct idempotents yield comparable distinct idempotents
$p<_{\rm nat}q$, by taking their product if necessary.
Both operations on $\{p,q\}$ are meet.
Assigning an exclusively occurring variable the value
$p$ and all other variables the value $q$ proves that
every valid identity of $S$ has matching variable content.
Adjoining a fresh absorbing zero therefore preserves all
its binary identities.
The resulting $S^0$ is still Clifford and has natural
meets, so Theorem~\ref{thm:clifford-natural} applies.
Since $\mathsf{V}(S)=\mathsf{V}(S^0)$, it gives the asserted formula.
The singleton algebra is treated separately.
\end{remark}

\begin{corollary}[All finite natural Clifford semirings]
\label{cor:finite-clifford-natural}
\textup{\cite[Theorem 8]{Jackson19}.}
Let $S$ be a finite Clifford inverse semigroup whose
natural order has binary meets, and use these meets as
addition.  Then $S$ is finitely based if and only if
every Sylow subgroup of every maximal subgroup is abelian.
This holds in the binary language and in the language
with its named absorbing zero.
\end{corollary}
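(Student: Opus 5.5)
The plan is to reduce to the flat-group criterion \eqref{eq:group-criterion} through Theorem~\ref{thm:clifford-natural}, handling degenerate cases separately. A finite Clifford inverse semigroup with all binary natural meets has a minimum element in the natural order: it is the meet of all elements. I will first check that this minimum is an absorbing zero. If $z$ is the minimum, then $z\leq_{\rm nat} e$ for every idempotent $e$, so $z$ is idempotent. It is also below $zs$ and $sz$ after multiplying the relation $z\leq_{\rm nat}s$ by $z$ and using monotonicity. Combining this with the centrality of idempotents should give $zs=z$. If this argument is awkward, I will instead invoke Remark~\ref{rem:clifford-nozero}: it gives the variety formula in $\{+,\cdot\}$ without assuming a zero, and the binary language suffices by Lemma~\ref{lem:constant}. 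The singleton algebra is trivially finitely based and satisfies the Sylow condition vacuously, so it needs no further treatment.

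With a zero present and at least one nonzero element, Theorem~\ref{thm:clifford-natural} gives $\mathsf{V}(S)=\mathsf{V}(\flat(G))$ with $G=\prod_{e\neq 0}G_e$. Since $S$ is finite, this is a finite product of finite groups. Finite basability is a property of the variety, so $S$ is finitely based if and only if $\flat(G)$ is. By \eqref{eq:group-criterion}, this holds exactly when all Sylow subgroups of $G$ are abelian.

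It remains to translate the Sylow condition from $G$ to its factors. A Sylow $p$-subgroup of a finite direct product is the product of Sylow $p$-subgroups of the factors. Such a product is abelian if and only if each factor is, since each factor embeds as a coordinate subgroup. The maximal subgroup $G_0=\{0\}$ is trivial and contributes nothing, so the condition on $G$ matches the stated condition on all maximal subgroups of $S$. The named-zero signature then follows from Lemma~\ref{lem:constant}, or directly from the fact that Theorem~\ref{thm:clifford-natural} holds in both signatures.

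The main obstacle is the first step: ensuring that the meet-minimum is a genuine absorbing zero, so that Theorem~\ref{thm:clifford-natural} applies rather than only the remark. Everything after that step is a formal combination of the subdirect representation with Jackson's criterion.
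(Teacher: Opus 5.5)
Your proposal is correct and matches the paper's proof: take the meet-minimum $z$, show it is an absorbing zero, apply Theorem~\ref{thm:clifford-natural} with \eqref{eq:group-criterion}, and transfer the Sylow condition through the finite product group, with the singleton case handled separately. The step you flag as the main obstacle is a one-liner, as in the paper. Since $z$ is idempotent, $z\leq_{\rm nat}s$ unwinds directly to $z=sz^{-1}z=sz$, and dually to $z=zz^{-1}s=zs$; your monotonicity remark adds nothing, and the fallback through Remark~\ref{rem:clifford-nozero} is not needed.
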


\begin{proof}
The singleton case is immediate.
The meet of all elements of the finite set $S$ is a
least element $z$.  Since $z$ is below an idempotent,
it is itself idempotent.  For every $s$, the natural
order relation $z\leq_{\rm nat} s$ gives
\[
 z=s(z^{-1}z)=(zz^{-1})s,
\]
so $sz=zs=z$.  Thus $z$ is a multiplicative zero
and also absorbs meet.
Apply Theorem~\ref{thm:clifford-natural} and the finite
flat-group criterion \eqref{eq:group-criterion}.
The product-group Sylow condition is equivalent to the
condition in each factor, as in
Theorem~\ref{thm:finite-regular}.
\end{proof}

Theorem~\ref{thm:clifford-natural} also reduces the
arbitrary-cardinality natural Clifford problem exactly
to one flat-group problem.
The complete reduction to finite exponent and a finite
quasi-equational basis is stated in
Corollary~\ref{cor:clifford-all-basis} below.

\begin{corollary}\label{cor:clifford-all-basis}
Let $S\ne\{0\}$ be a natural Clifford semiring with zero,
of arbitrary cardinality, and put
\[
 G=\prod_{e\in E(S)\setminus\{0\}}G_e.
\]
Then $S$ is finitely based if and only if its maximal
subgroups have a common finite exponent and $G$
has a finitely axiomatizable group quasi-equational theory.
If there is no common finite exponent, $S$ has no
bounded-variable identity basis.
In the binary language the zero assumption can be removed
for non-singleton $S$, using the product over all
idempotents.
\end{corollary}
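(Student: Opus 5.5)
The plan is to reduce everything to Theorem~\ref{thm:clifford-natural} and then apply the flat-group results. Since $S\ne\{0\}$ has a zero and natural meets, that theorem gives
\[
 \mathsf{V}(S)=\mathsf{V}(\flat(G)),\qquad G=\prod_{e\in E(S)\setminus\{0\}}G_e,
\]
in both signatures. Finite basability and the existence of a bounded-variable basis depend only on the equational theory. So $S$ is finitely based, or has a bounded-variable basis, exactly when $\flat(G)$ does.

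The second step translates the exponent condition. A direct product of groups has finite exponent exactly when the factors have a common finite exponent. Indeed, if $g^n=1$ for all $g\in G$, then every $G_e$, which embeds as a coordinate subgroup, satisfies $x^n=1$. Conversely, a common exponent $n$ for all $G_e$ gives $g^n=1$ coordinatewise. Theorem~\ref{thm:flat-group-all} then yields the stated equivalence directly. It also gives the absence of a bounded-variable basis when there is no common finite exponent, since then $G$ has no finite exponent. Lemma~\ref{lem:constant} and the fact that Theorem~\ref{thm:flat-group-all} is stated for both signatures cover the named-zero language.

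The last step handles a non-singleton $S$ without zero in the binary language. Here I would invoke Remark~\ref{rem:clifford-nozero}. Adjoining a fresh absorbing zero gives $S^0$, which is again a natural Clifford semiring with zero and satisfies $\mathsf{V}(S)=\mathsf{V}(S^0)$. The nonzero idempotents of $S^0$ are exactly all idempotents of $S$, with the same maximal subgroups. The previous two steps applied to $S^0$ then give the statement with the product over all of $E(S)$.

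I expect no serious obstacle. The only point needing care is the bookkeeping in this zero-free case: one has to check that $E(S^0)\setminus\{0\}=E(S)$ and that the maximal subgroups are unchanged, so that the common-exponent condition and the group $G$ are literally the same. Both follow because the adjoined zero is a new element not lying in any nonzero subgroup.
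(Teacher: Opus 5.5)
Your proposal is correct and follows essentially the same route as the paper. The paper's proof also identifies $\mathsf{V}(S)$ with $\mathsf{V}(\flat(G))$ via Theorem~\ref{thm:clifford-natural}, notes that the product group has finite exponent exactly when the factors share a common finite exponent, and then applies Theorem~\ref{thm:flat-group-all} together with Remark~\ref{rem:clifford-nozero} for the zero-free binary case.
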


\begin{proof}
Theorem~\ref{thm:clifford-natural} identifies
$\mathsf{V}(S)$ with $\mathsf{V}(\flat(G))$.
The product group has finite exponent precisely when
its factors have a common finite exponent.
Apply Theorem~\ref{thm:flat-group-all} and
Remark~\ref{rem:clifford-nozero}.
\end{proof}

We next consider locally Clifford inverse semirings.
Recall that two idempotents $f,g$ belong to the same
Green $\mathcal D$-class if there is an element $a$
with $d(a)=f,r(a)=g$.
For a nonzero $\mathcal D$-class $D$, let
$\mathcal{B}_D$ be its connected groupoid with a new
zero: multiply arrows as in $S$ when their middle
idempotents match, and give zero otherwise.
The groupoid coordinates in the proof of
Proposition~\ref{prop:regular-structure} identify
$\mathcal{B}_D$ with $B_{I_D}(G_D)$, where
$I_D=E(S)\cap D$ and $G_D$ is one maximal subgroup
in $D$.
Equip it with flat addition.
The construction uses the nonzero $\mathcal D$-classes
and applies to semigroups of arbitrary cardinality.

\begin{theorem}[Subdirect Brandt representation]
\label{thm:local-clifford-representation}
Let $S\ne\{0\}$ be a locally Clifford inverse semigroup
with zero and natural meets.  Then $S$, with natural
meet addition, is a subdirect product of the flat
Brandt semirings $\mathcal{B}_D$ over its nonzero
$\mathcal D$-classes.  In particular
\begin{equation}\label{eq:orthogonalization}
 \mathsf{V}(S)=\bigvee_{D\ne\{0\}}\mathsf{V}(\mathcal{B}_D)
      =\mathsf{V}\left(\bigoplus_{D\ne\{0\}}^{0}\mathcal{B}_D\right).
\end{equation}
All maps preserve zero.
If $S$ is finite, its orthogonal sum on the right has
exactly $|S|$ elements.
\end{theorem}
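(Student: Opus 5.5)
Following the Clifford case (Theorem~\ref{thm:clifford-natural}), I will write down, for each nonzero $\mathcal D$-class $D$, an explicit projection $\pi_D\colon S\to\mathcal{B}_D$. Then I will check that each $\pi_D$ is a surjective semiring homomorphism and that the family separates points.

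\textbf{The projection.} For $s\in S$ let
\[
 \pi_D(s)=\text{the unique } fs \text{ with } f\in E(S)\cap D,\ f\leq_{\rm nat} r(s),\ fs\in D,
\]
and $\pi_D(s)=0$ if no such $f$ exists. This is the restriction of $s$ to the part of its domain lying in $D$. Well-definedness is where local Cliffordness enters. In $r(s)Sr(s)$, which is Clifford, the idempotents below $r(s)$ commute with all elements of that monoid. I will use this to show two things. First, if $f\leq r(s)$ and $fs\in D$, then $d(fs)=s^{-1}fs$ is again an idempotent of $D$. Second, at most one idempotent of $D$ lies below a given idempotent. For the second point, two distinct $\mathcal D$-related idempotents $f,f'\leq e$ give an element $a\in eSe$ with $d(a)=f$, $r(a)=f'$. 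But $eSe$ is Clifford, where $\mathcal D$ is trivial on idempotents, so $f=f'$.

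\textbf{Homomorphism.} For multiplicativity, write $st=s\,d(s)r(t)\,t$ and use that $fst\in D$ forces $fs\in D$ and $d(fs)\,t\in D$. Primitivity-like uniqueness then shows that either both sides are zero, or $\pi_D(st)=\pi_D(s)\pi_D(t)$ with matching middle idempotents. The nonmatching case gives zero in $\mathcal{B}_D$ and zero in the projection. Additivity follows the Clifford proof. If $\pi_D(s)=\pi_D(t)=v\ne0$, then $v$ is a common lower bound, so $v\leq s\wedge t$, and restricting gives $\pi_D(s\wedge t)=v$. Conversely, if $\pi_D(s\wedge t)=v\ne0$, then restricting $s$ and $t$ to $r(v)$ yields $v$ for both. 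So the image of a meet is zero exactly when the images differ or one is zero, which is flat addition. Surjectivity holds because $\pi_D$ is the identity on $D$, since for $s\in D$ the only candidate is $f=r(s)$, and $\pi_D(0)=0$.

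\textbf{Separation and consequences.} A nonzero $s$ is separated from $0$ by $\pi_{D_s}$, where $D_s$ is its own class. Suppose $s\ne t$ are nonzero. If $s$ and $t$ lie in different $\mathcal D$-classes, I will argue via the natural order. If $\pi_{D_s}(t)=s$, then $s\leq_{\rm nat} t$, so $s<t$, and then $\pi_{D_t}(s)=0\ne t$, because restrictions of $s$ stay below $s<t$. If $s$ and $t$ lie in the same class $D$, then $\pi_D(s)=s\ne t=\pi_D(t)$. This gives the subdirect embedding, hence $\mathsf V(S)=\bigvee_D\mathsf V(\mathcal B_D)$, and the orthogonal-sum equality comes from the coordinate-axis embedding as in Theorem~\ref{thm:orthogonal}.

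The cardinality claim is a count. The nonzero $\mathcal D$-classes partition $S\setminus\{0\}$, so the orthogonal sum has $\sum_D|D|+1=|S|$ elements. The main obstacle will be the multiplicativity check, specifically controlling $\mathcal D$-class membership of $fst$ versus $fs$ and $d(fs)t$. I will first try to handle it by working inside the Clifford local monoids and using that idempotents there are central.
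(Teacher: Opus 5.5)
Your proposal is correct and follows the paper's proof essentially step for step: explicit projections, uniqueness of a $D$-idempotent below a given idempotent via local Cliffordness, then products, meets, surjectivity and separation. The only difference is that you restrict on the range side ($fs$ with $f\leq_{\rm nat} r(s)$) where the paper restricts on the domain side ($sf$ with $f\leq_{\rm nat} d(s)$), and the two maps coincide because $fs=s(s^{-1}fs)$. For the multiplicativity step you flagged, centrality in local monoids is not needed. The condition $f\leq_{\rm nat} r(st)=s\,r(t)s^{-1}$ already gives $f\leq_{\rm nat} r(s)$ and $s^{-1}fs\leq_{\rm nat} r(t)$. Conversely, matching middle idempotents $g=s^{-1}fs\leq_{\rm nat} r(t)$ give $f=sgs^{-1}\leq_{\rm nat} r(st)$. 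Your extra condition $fs\in D$ is automatic, since $r(fs)=f$.
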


\begin{proof}
First, below any idempotent $e$ there is at most one
idempotent belonging to a specified $\mathcal D$-class.
Otherwise choose distinct $f,g\leq_{\rm nat} e$ in that class
and an arrow $a$ with $d(a)=f,r(a)=g$.
It belongs to $eSe$, but cannot lie in any subgroup,
since its domain and range differ.
This contradicts the Clifford property of $eSe$.

For a nonzero class $D$, define
\[
 \phi_D(s)=
 \begin{cases}
 sf,&\text{if }f\in I_D\text{ and }f\leq_{\rm nat} d(s),\\
 0,&\text{if no such }f\text{ exists}.
 \end{cases}
\]
The preceding uniqueness makes this well-defined.
In the nonzero case $d(sf)=f$, so $sf$ belongs to
$D$, as required.
Every member of $D$ is its own image, and zero maps
to zero; hence $\phi_D$ is onto.

The map $\phi_D$ preserves multiplication.
If $\phi_D(st)\ne0$, choose $h\in I_D$ with
\[
 h\leq_{\rm nat} d(st)=t^{-1}d(s)t.
\]
Then $h\leq_{\rm nat} d(t)$, so $\phi_D(t)=th$.
Its range $g=tht^{-1}$ is an idempotent in $D$
below $d(s)$, giving $\phi_D(s)=sg$.
These two images are composable and their product is
\[
 (sg)(th)=sth=\phi_D(st).
\]
Conversely, suppose $\phi_D(s)=sf$ and
$\phi_D(t)=th$ are nonzero and composable.
The matching condition is $f=tht^{-1}$.
Since $f\leq_{\rm nat} d(s)$, it follows that
$h\leq_{\rm nat} t^{-1}d(s)t=d(st)$; thus
$\phi_D(st)=sth=(sf)(th)$.
We have proved that a product image is nonzero exactly
when the two images have a nonzero product in
$\mathcal{B}_D$, and that their values agree.

Now let $u=s\wedge t$.
If $\phi_D(s)=\phi_D(t)=a\ne0$, then
$a\leq_{\rm nat} s,t$, so $a\leq_{\rm nat} u$.
Its domain lies in $I_D$ below $d(u)$, and the
uniqueness just proved forces $\phi_D(u)=a$.
Conversely, if $\phi_D(u)=a\ne0$, the restrictions
$a\leq_{\rm nat} u\leq_{\rm nat} s,t$ imply
$\phi_D(s)=\phi_D(t)=a$.
This is precisely the flat addition rule, so
$\phi_D$ preserves meet.

Finally suppose all projections of $s,t$ agree.
If $s\ne0$, choose its class $D$.
Then $\phi_D(s)=s=\phi_D(t)$, which implies
$s\leq_{\rm nat} t$.
Interchanging $s,t$ gives the reverse inequality
when both are nonzero.  A nonzero element is also
separated from zero by its own class.
Thus the projections separate elements and give the
claimed subdirect embedding.
Surjectivity gives the reverse variety inclusion;
the orthogonal-sum equality follows from
Theorem~\ref{thm:orthogonal}.
The nonzero carriers $D$ partition $S\setminus\{0\}$,
so the orthogonal sum has the same cardinality in the
finite case.
\end{proof}

\begin{corollary}\label{cor:local-clifford-all}
Let $S$ be a non-Clifford locally Clifford inverse
semigroup with zero and natural meet addition, of any
cardinality.  Then $S$ is finitely based if and only
if it is combinatorial.
In every negative case its identities have no basis
with a uniform variable bound.
For a Clifford $S$, the exact flat-group reduction
of Theorem~\ref{thm:clifford-natural} applies instead.
\end{corollary}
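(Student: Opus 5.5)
The plan is to reduce everything to the regular flat case via Theorem~\ref{thm:local-clifford-representation}. That theorem identifies $\mathsf{V}(S)$ with $\mathsf{V}(T)$, where $T=\bigoplus^{0}_{D\ne\{0\}}\mathcal{B}_D$ and each $\mathcal{B}_D\cong B_{I_D}(G_D)$. Finite basability, and the existence of a bounded-variable basis, depend only on the equational theory, hence only on the variety. So the whole question transfers to the orthogonal sum $T$, which is a regular flat semiring. The statement about the Clifford case is just a pointer to Theorem~\ref{thm:clifford-natural} and needs no proof.

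Next I will check that $T$ is non-Clifford exactly when $S$ is. If $S$ is not Clifford, some element $s$ has $d(s)\ne r(s)$. Since $d(s)$ and $r(s)$ are $\mathcal D$-related, the class $D$ of $s$ contains at least two idempotents, so $|I_D|\geq2$. Hence some summand of $T$ has at least two indices, and $T$ is a non-Clifford regular flat semiring (remark after Proposition~\ref{prop:regular-structure}). Theorem~\ref{thm:regular-infinite} then says that $T$ is finitely based if and only if it is combinatorial, and that in the negative case there is no bounded-variable basis. Equivalently, by Theorem~\ref{thm:orthogonal} together with Corollary~\ref{cor:brandt-classification} and Theorem~\ref{thm:all-groups}, this holds if and only if $\prod_D G_D$ is trivial.

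It remains to match combinatoriality of $T$ with that of $S$. The maximal subgroups of $S$ at nonzero idempotents are exactly the groups $G_D$, up to isomorphism within a $\mathcal D$-class. The subgroup at $0$ is trivial. So $S$ is combinatorial if and only if every $G_D$ is trivial, if and only if $T$ is combinatorial.

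Both signatures are covered: all maps in Theorem~\ref{thm:local-clifford-representation} preserve zero, and Lemma~\ref{lem:constant} handles the named constant. The main point needing care is the first reduction. One must make sure that passing between $S$ and $T$ preserves the \emph{absence of a bounded-variable basis}, not only finite basability. This is immediate, because the two algebras have identical identities and therefore identical bases.
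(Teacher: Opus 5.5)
Your proposal is correct and follows essentially the paper's argument: use the subdirect Brandt representation to pass to the orthogonal sum, note that a non-Clifford $S$ has a class with two idempotents, and observe that the group product $\prod_D G_D$ is trivial exactly when $S$ is combinatorial. The only difference is that you go through Theorem~\ref{thm:regular-infinite}, whereas the paper invokes the orthogonal-sum normal form \eqref{eq:orthogonal-normal-form} directly to get $\mathsf{V}(B_2(\prod_D G_D))$ and then applies Corollary~\ref{cor:brandt-classification}; these amount to the same argument, since Theorem~\ref{thm:regular-infinite} is itself proved in exactly that way.
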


\begin{proof}
A non-Clifford $S$ has a class $D$ containing at
least two idempotents.
Equations \eqref{eq:orthogonalization} and
\eqref{eq:orthogonal-normal-form} identify its variety
with that of $B_2(\prod_DG_D)$.
This product group is trivial exactly in the
combinatorial case.
Apply Corollary~\ref{cor:brandt-classification}.
\end{proof}

The zero hypothesis may again be removed in the binary
language for non-singleton semigroups.
The variable-content argument of
Remark~\ref{rem:clifford-nozero} needs only two comparable
distinct idempotents; these exist in every non-singleton
inverse semigroup with natural meets.
Adjoining a fresh zero preserves local Cliffordness,
natural meets, and all binary identities.

\subsection{Regular semirings}
\label{sec:regular-identity-characterization}

The natural inverse-semigroup hypotheses can be recovered from
regularity and a single identity in the binary semiring language.
As before, regularity refers only to the multiplicative reduct.
Write $\mathcal F$ for the variety generated by all flat
ai-semirings, of arbitrary cardinality.
The identity in the following theorem is an instance of the
flat identity scheme in \cite[Lemma 2.1]{JRZ}.

\begin{theorem}\label{thm:regular-flat-identity}
Let $S$ be an ai-semiring whose multiplicative reduct is regular.
The following conditions are equivalent:
\begin{enumerate}[$(\rm i)$]
\item $S$ satisfies
\begin{equation}\label{eq:regular-flat-identity}
 u+yuz+yvz\approx v+yuz+yvz;
\end{equation}
\item the multiplicative reduct of $S$ is a locally Clifford
inverse semigroup, and addition is meet in its natural order;
\item $S\in\mathcal F$.
\end{enumerate}
The equivalence holds for semirings of arbitrary cardinality
in the binary language and in its expansion by a named
common absorbing zero.
\end{theorem}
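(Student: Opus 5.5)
The plan is to prove the cycle (iii)$\Rightarrow$(i)$\Rightarrow$(ii)$\Rightarrow$(iii).

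\emph{(iii)$\Rightarrow$(i).} It suffices to check \eqref{eq:regular-flat-identity} in every flat ai-semiring, exactly as for \eqref{eq:flat-law}. If $u=v$ it is trivial. If $u\ne v$, then $u+v=0$, so distributivity gives $yuz+yvz=y(u+v)z=0$. This zero absorbs the remaining summand on each side, so both sides equal $0$.

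\emph{(ii)$\Rightarrow$(iii).} Suppose first that $S$ has a zero. If $S=\{0\}$ it is flat. Otherwise Theorem~\ref{thm:local-clifford-representation} represents $S$ as a subdirect product of the flat Brandt semirings $\mathcal{B}_D$, so $S\in\mathcal F$. If $S$ has no zero and is non-singleton, I would adjoin a fresh absorbing zero. The remark after Corollary~\ref{cor:local-clifford-all} says this preserves local Cliffordness, natural meets and all binary identities, so $\mathsf V(S)=\mathsf V(S^0)\subseteq\mathcal F$. The singleton case is trivial. In the named-zero signature, the ambient algebras already carry the zero, so only the first case arises.

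\emph{(i)$\Rightarrow$(ii).} This is the main step and the expected obstacle, since all the inverse-semigroup structure must be extracted from regularity and \eqref{eq:regular-flat-identity}. I would proceed in the following order.

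\begin{enumerate}[$(1)$]
\item For $a$ with inner inverse $x$, put $a'=xax$, so $a'$ is an inverse of $a$. Substitute $u=a$, $y=aa'$, $z=a'a$, so that $yuz=a$. The identity becomes
\[
 a+ab'\!\!\;\text{-free form: }\ a+yvz\approx v+a+yvz,
\]
that is, $v\le a+(aa')v(a'a)$ in the additive order. Dually, with the roles of $u,v$ exchanged, $u\le v+yuz+yvz$. Specialising $u,v$ to idempotents, products of idempotents, and to $e$ versus $efe$, I aim to show that every product of idempotents equals its own sandwich. This should give that $E(S)$ is a band, then a commutative band.
\item With commuting idempotents, regularity makes $S$ an inverse semigroup; uniqueness of inverses then follows as in Proposition~\ref{prop:regular-structure}.
\item Show that the additive order is the reverse of $\le_{\rm nat}$. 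Taking $u=a$ and $v=b$ with $y=r(a)$, $z=d(a)$, one should get $a+b=r(a)\,b\,d(a)+\dots$. From this I would deduce that $a+b$ lies below both $a$ and $b$ in $\le_{\rm nat}$. For the converse, I would use that multiplication is monotone and that $(a+b)^{-1}$ behaves well: idempotents are additively closed because $e+f$ lies below $e$ in the natural order. It would then follow that $a+b$ is the greatest common lower bound.
\item Prove local Cliffordness. Take $a\in eSe$ with $d(a)=f$ and $r(a)=g$. Apply \eqref{eq:regular-flat-identity} with $u=f$, $v=g$, $y=a$, $z=a^{-1}$ (and the symmetric choice) to force $f=g$. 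This shows every element of $eSe$ lies in a subgroup.
\end{enumerate}

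In steps (3) and (4) I expect the substitutions that make $yuz$ and $yvz$ collapse to be the hard part: finding the right $y,z$ so that these terms reduce to the meet candidates. That is where I would spend most effort, trying idempotent sandwiches $y=r(\cdot)$, $z=d(\cdot)$ first.

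Finally, the named-zero version needs nothing extra: every step above is an equational argument in the binary reduct and does not use the constant.
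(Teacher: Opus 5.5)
Your directions (iii)$\Rightarrow$(i) and (ii)$\Rightarrow$(iii) are fine and match the paper. However, (i)$\Rightarrow$(ii) is not carried out, and the one concrete substitution you give fails. In step (4), the instances $(u,v,y,z)=(d(a),r(a),a,a^{-1})$ and $(d(a),r(a),a^{-1},a)$ cannot force $d(a)=r(a)$. In $B_2^1$ with $e=1$ and $a=e_{12}$, the first instance has $yvz=e_{12}e_{11}e_{21}=0$ and the second has $yuz=e_{21}e_{22}e_{12}=0$, so both sides of both instances are $0$. Since $B_2^1$ is regular, has natural meet addition and is not locally Clifford, no argument built on these instances can give local Cliffordness.

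The missing idea is to use the corner idempotent itself as $v$. With $(u,v,y,z)=(d(a),e,a,a^{-1})$ both sandwiches equal $r(a)$, because $ae=a$, and the identity becomes $d(a)+r(a)=e+r(a)$. To turn this into $d(a)r(a)=r(a)$, and then $d(a)=r(a)$ by applying it to $a^{-1}$, you need $e+f=ef$ for idempotents. You never prove this; your remark that $e+f$ lies below $e$ presupposes it. The paper obtains $e+ef=ef$ from $(e,ef,f,f)$. It then shows that $t=e+f$ satisfies $et=ft=ef$ and $t=r(t)(e+f)=r(t)e$, so $t$ is idempotent and $t=t^2=ef$.

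Step (3) is also only a hope. The instance $(a,b,r(a),d(a))$ yields just $b\leq a+r(a)\,b\,d(a)$ in the additive order, not an expression for $a+b$, and nothing in your sketch shows $a+b\leq_{\rm nat}a$. In the paper this is the hardest part, and it uses local Cliffordness. Your ordering, with the meet property before local Cliffordness, therefore discards the tool that is needed.

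The paper first proves that if $g,h$ lie in one subgroup $G_f$ and $g+h=h$, then $g=h$: set $q=g^{-1}h$ and use $(f,q,q^{-1},f)$. Next, for $t=a+b$ and $e=d(t)$, the elements $x=t^{-1}ae$ and $y=t^{-1}be$ of $eSe$ satisfy $x+y=e$, hence $x+e=e$. Left multiplication by $x^{-1}$ gives $d(x)+x^{-1}=x^{-1}$ inside a subgroup, so the incomparability lemma makes $x$ idempotent, and likewise $y$. Then $xy=x+y=e$ forces $x=y=e$, whence $t=r(t)\,a\,d(t)\leq_{\rm nat}a$. That $t$ is the greatest lower bound comes from $(c,a,r(c),d(c))$, which gives $c+a=c$ whenever $c\leq_{\rm nat}a$.

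Step (1), by contrast, is closer than you think. Take your inequality $v\leq a+(aa')v(a'a)$ with $a=e$ and $v=f$, where $e,f$ are idempotents in one $\mathcal R$- or $\mathcal L$-class. It gives $f\leq e+efe=e$, and symmetrically $e\leq f$, so $e=f$. Together with regularity this yields unique inverses directly, without the unspecified detour through showing that $E(S)$ is a band.
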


\begin{proof}
We first prove (i)$\Rightarrow$(ii) in six steps.  The
inverse notation in the proof refers to semigroup inverses
in the multiplicative reduct.

\emph{Step 1: the multiplication is inverse.}
If idempotents $e,f$ belong to the same Green $\mathcal R$-class
or to the same $\mathcal L$-class, then
$efe=e$ and $fef=f$.
Use $(u,v,y,z)=(e,f,e,e)$ in
\eqref{eq:regular-flat-identity} to obtain $e=e+f$, and use
$(e,f,f,f)$ to obtain $e+f=f$.  Hence $e=f$.
Thus each $\mathcal R$-class and each $\mathcal L$-class
contains at most one idempotent.
Regularity supplies an inverse of every element: if $aba=a$,
replace $b$ by $bab$.
If $b,c$ are inverses of $a$, the idempotents $ab,ac$
belong to the $\mathcal R$-class of $a$, and $ba,ca$
belong to its $\mathcal L$-class.  Consequently
$ab=ac$ and $ba=ca$, and
\[
 b=bab=(ca)b=c(ab)=c(ac)=cac=c.
\]
The inverse is unique, as required.

\emph{Step 2: addition of idempotents is their product.}
Let $e,f$ be idempotents; they commute by Step 1.
Substitution $(u,v,y,z)=(e,ef,f,f)$ in
\eqref{eq:regular-flat-identity} gives $e+ef=ef$.
Interchanging $e,f$ gives $f+ef=ef$.
Put $t=e+f$.  Distributivity yields
\[
 et=ft=te=tf=ef,\qquad t^2=ef.
\]
From $et=ft$, multiplication on the right by
$t^{-1}$ gives $er(t)=fr(t)$.  Since idempotents
commute, $r(t)e=r(t)f$, and hence
\[
 t=r(t)t=r(t)(e+f)=r(t)e+r(t)f=r(t)e.
\]
Thus $t$ is idempotent, so $t=t^2=ef$.  We have proved
\begin{equation}\label{eq:idempotent-natural-addition}
 e+f=ef\qquad(e,f\in E(S)).
\end{equation}

\emph{Step 3: the inverse semigroup is locally Clifford.}
Take $a\in eSe$, where $e$ is idempotent, and put
$g=d(a)$, $f=r(a)$.  Both $f,g$ lie below $e$.
For $(u,v,y,z)=(g,e,a,a^{-1})$, both sandwiched
products in \eqref{eq:regular-flat-identity} equal $f$.
Using \eqref{eq:idempotent-natural-addition}, the identity
therefore gives $gf=f$.
Applying the same argument to $a^{-1}$ gives $gf=g$.
Thus $d(a)=r(a)$, so $a$ belongs to a subgroup.
Every local monoid $eSe$ is Clifford.

\emph{Step 4: distinct elements of one subgroup are not
comparable under addition.}
Suppose $g,h\in G_f$ and $g+h=h$, where $f$ is
the identity of this subgroup.  Put $q=g^{-1}h$.
Left multiplication by $g^{-1}$, and then by $q^{-1}$,
gives
\[
 f+q=q,\qquad q^{-1}+f=f.
\]
Now substitute $(u,v,y,z)=(f,q,q^{-1},f)$ in
\eqref{eq:regular-flat-identity}.  Its two sides reduce,
respectively, to $f$ and $q$.  Hence $q=f$ and
$g=h$.

\emph{Step 5: a sum is a natural lower bound.}
Let $t=a+b$ and $e=d(t)$, and define
\[
 x=t^{-1}ae,\qquad y=t^{-1}be.
\]
Then $x,y\in eSe$, and $x+y=t^{-1}te=e$.
In particular $x+e=e$.
By Step 3, $x$ lies in a subgroup with identity
$h=d(x)=r(x)$.  Its inverse also belongs to $eSe$.
Left multiplication of $x+e=e$ by $x^{-1}$ yields
$h+x^{-1}=x^{-1}$.
Step 4 now gives $x^{-1}=h$, so $x=h$ is idempotent.
The same argument makes $y$ idempotent.
Step 2 gives $xy=x+y=e$, while $x,y\leq_{\rm nat} e$ in the
natural order.  It follows that $x=y=e$.
Multiplying $t^{-1}ae=e$ on the left by $t$ gives
\[
 t=tt^{-1}ae=r(t)a\,d(t)\leq_{\rm nat} a.
\]
The last inequality is the natural order: multiplication
by idempotents on either side restricts an inverse-semigroup
element.  Similarly $t\leq_{\rm nat} b$.

\emph{Step 6: this lower bound is the greatest one.}
If $c\leq_{\rm nat} a$ in the natural order, then
\[
 r(c)c\,d(c)=r(c)a\,d(c)=c.
\]
Substitution $(u,v,y,z)=(c,a,r(c),d(c))$ in
\eqref{eq:regular-flat-identity} gives $c+a=c$.
If $c\leq_{\rm nat} a,b$, we therefore have
$c+t=c+(a+b)=c$.
By Step 5, applied to the pair $c,t$, its sum $c$
lies naturally below $t$.  Thus $t=a\wedge b$,
completing (i)$\Rightarrow$(ii).

For (ii)$\Rightarrow$(iii), apply the subdirect Brandt
representation in Theorem~\ref{thm:local-clifford-representation}.
If a zero is absent in the binary language, the zero-adjunction
argument following Corollary~\ref{cor:local-clifford-all}
preserves all identities and reduces to that theorem.
The one-element case is immediate.

Finally every flat semiring satisfies
\eqref{eq:regular-flat-identity}.  If $u=v$, the two
sides agree.  Otherwise $u+v=0$, so
$yuz+yvz=y(u+v)z=0$, which absorbs the remaining sum.
The identity is preserved under homomorphic images,
subalgebras and products.  This proves (iii)$\Rightarrow$(i).
\end{proof}

The ambient variety $\mathcal F$ has the finite basis
from \cite[Lemma 2.1]{JRZ}: besides the ai-semiring axioms,
use the sixteen instances of
\[
 x_1ux_2+y_1uy_2+y_1vy_2
 \approx x_1vx_2+y_1uy_2+y_1vy_2,
\]
where each of $x_1,x_2,y_1,y_2$ may independently be absent.
Theorem~\ref{thm:regular-flat-identity} shows that, among
regular ai-semirings, the single four-variable identity
\eqref{eq:regular-flat-identity} already characterizes
membership in this finitely based ambient variety.
It also recovers the inverse structure and the natural addition.
In the named-zero language include the common-zero absorption
axioms.  The same flat-variety description holds, since a
common absorbing zero in a nontrivial flat quotient is its
unique existing zero.

\subsection{Finite-basis classification}
\label{sec:s7-consequences}

The following divisor lemma connects local Clifford
structure with Brandt monoids.

\begin{lemma}\label{lem:brandt-monoid-divisor}
Let $S$ be a finite inverse semigroup with natural
meet addition.  If $S$ is not locally Clifford,
then the naturally semilattice-ordered Brandt monoid $B_2^1$
is a subsemiring of a homomorphic image of a
subsemiring of $S$.
\end{lemma}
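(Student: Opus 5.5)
Since $S$ is not locally Clifford, I would fix an idempotent $e$ whose local monoid $eSe$ is not Clifford. Then there is $a\in eSe$ with $d(a)\ne r(a)$, and I would work inside $M=eSe$, which is a subsemiring. It is closed under multiplication, and under natural meet because $s\wedge t\leq_{\rm nat} s$ forces $s\wedge t=e(s\wedge t)e$ when $s=ese$. Here $e$ is the identity of $M$. The aim is to find, inside a quotient of a subsemiring of $M$, a copy of $B_2$ together with the identity $e$, all with natural meet addition.

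First I would use finiteness to improve $a$ so that $d(a)$ and $r(a)$ are $\mathcal D$-related but incomparable, and so that they generate a Brandt-type configuration. Put $f=d(a)$ and $g=r(a)$. If $g<_{\rm nat} f$ or $f<_{\rm nat} g$, then powers of $a$ give a strictly descending chain of idempotents $a^na^{-n}$, which is impossible in a finite inverse semigroup when $d(a)$ and $r(a)$ lie in one $\mathcal D$-class (finite $\mathcal D$-classes contain no comparable distinct idempotents). So $f$ and $g$ are incomparable. Put $k=fg$, which is strictly below both.

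Next I would take the subsemiring $T$ generated by $e$, $a$ and $a^{-1}$, closed under meets. Every element of $T\setminus\{e\}$ lies in $SaS\cup Sa^{-1}S$, or is a product or meet of such elements. Let $J$ be the set of elements of $T$ lying strictly below, in the $\mathcal J$-order, the $\mathcal J$-class of $a$. This $J$ is a multiplicative ideal of $T$. It is also closed under meets with arbitrary elements, since $s\wedge t\leq_{\rm nat} s$ implies $s\wedge t\in SsS$. Hence the Rees quotient $T/J$ is a semiring quotient.

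The surviving nonidentity elements should be exactly $f,g,a,a^{-1}$. Products such as $a^2=a\,fg\,a$ and meets such as $f\wedge g=fg$ fall into $J$. For $a^2$ this holds because, in a finite inverse semigroup, $\mathcal J$-equivalence of $a^2$ with $a$ would force $fg\in\{f,g\}$, contradicting incomparability. Other products of $a$ and $a^{-1}$ reduce either to $f$, $g$, $a$ or $a^{-1}$, or to elements below $k$.

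Finally I would check that $e$ meets each of $f,g,a,a^{-1}$ correctly. The meet $e\wedge f$ is $f$, since $f\leq_{\rm nat} e$. For $e\wedge a$, any lower bound of $e$ is idempotent, so $e\wedge a$ is an idempotent below $a$, namely $a\,d(e\wedge a)$ idempotent. This forces it below $fg$, so it lies in $J$ and becomes zero. With these meets, $T/J$ is $\{e,f,g,a,a^{-1},0\}$ with exactly the multiplication and natural meet of $B_2^1$, with $e$ as the adjoined identity and $f\wedge e=f$ as in $B_2^1$.

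The main obstacle is the careful verification that $J$ absorbs all other generated elements and that meets computed in $S$ project correctly. In particular, the meet of $a$ with $e$, $f$ or $g$ must lie in $J$. The first thing I would try is to show that any idempotent below $a$ is below $k$, because $a\,h=h$ forces $h\leq_{\rm nat} f$ and $h\leq_{\rm nat} g$.
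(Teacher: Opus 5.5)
Your argument is correct, but it takes a somewhat different route from the paper.

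The paper works with the whole $\mathcal D$-class $D$, in $M=eSe$, of an element with distinct domain and range. Using $\mathcal D=\mathcal J$, it collapses the ideal $MDM\setminus D$ of $R=MDM\cup\{e\}$. It then identifies the quotient with the full $B_n^1(G)$ through groupoid coordinates, and only at the end selects two indices, identity labels, the zero and the adjoined identity to obtain $B_2^1$.

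You instead pass to the subsemiring $T$ generated by $e,a,a^{-1}$ and collapse everything strictly $\mathcal J$-below $a$, so the quotient is already $B_2^1$. This avoids the Brandt coordinatization and gives a slightly sharper conclusion: $B_2^1$ is itself a homomorphic image of a subsemiring of $S$. The price is a term induction showing that every element of $T$ other than $e,f,g,a,a^{-1}$ lands in $J$. The paper's choice of $R$ makes the surviving elements visible at once. Both routes rest on the same finite fact: naturally comparable elements of one $\mathcal D$-class coincide.

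Two small points:
\begin{enumerate}
\item No ``improvement'' of $a$ is needed, since $a$ itself witnesses $f\,\mathcal D\,g$, so incomparability of $f,g$ is automatic.
\item In the induction you should state the comparability fact for arbitrary elements, not only idempotents. It follows in one line: if $u\leq_{\rm nat}s$ in one class, then $d(u)\leq_{\rm nat}d(s)$ in that class, so $d(u)=d(s)$ and $u=s\,d(u)=s$. Your observation about idempotents below $a$ handles the meets of $a$ or $a^{-1}$ with $e,f,g$. It does not handle $a\wedge a^{-1}$, which need not be idempotent. The general fact sends it, like every meet of two distinct elements of $\{f,g,a,a^{-1}\}$, into $J$.
\end{enumerate}
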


\begin{proof}
The least element of $S$ is an absorbing zero, by
the argument in Corollary~\ref{cor:finite-clifford-natural}.
Choose $e$ such that the finite inverse monoid
$M=eSe$ is not Clifford.
It is closed under natural meet: a restriction of
an element of $eSe$ again has domain and range
below $e$.
Choose in $M$ an element with different domain and
range, and let $D$ be its $\mathcal D$-class.
Thus $D$ has at least two idempotents and is not the
class of units of $M$.

We use two elementary finite-semigroup facts.
First, $\mathcal D=\mathcal J$ in a finite semigroup.
Consequently, for $J=MDM$, the set $I=J\setminus D$
is an ideal of $M$.
Second, naturally comparable elements in one
$\mathcal D$-class of a finite inverse semigroup
are equal.
For the latter assertion it suffices first to consider
idempotents $f\leq_{\rm nat} g$ in one class.
An arrow between them gives mutually inverse bijections
of the principal right ideals $fM$ and $gM$.
But $fM\subseteq gM$, and if $f\ne g$, then
$g\notin fM$, contradicting equality of their finite
cardinalities.
For $a\leq_{\rm nat} b$ in one class, apply this to
$d(a)\leq_{\rm nat} d(b)$; their equality gives $a=b$.

Put $R=J\cup\{e\}$.  Here $e\notin J$, since the
chosen class is not the class of units.
The set $R$ is closed under multiplication.
It is closed under meet as well: an element below a
member of $J$ belongs to that ideal.
The same observation shows that $i\wedge r\in I$
for $i\in I,r\in R$.
Thus collapsing $I$ is a semiring Rees quotient.

The multiplicative quotient $J/I=D\cup\{0\}$ is
the Brandt principal factor $B_n(G)$, with $n\geq2$.
One may see its coordinates directly from the arrows
between the idempotents of $D$: a nonmatching product
must lie below $D$, by the comparability fact, and
matching products are groupoid composition.
The image of $e$ adjoins an identity.
Distinct elements of $D$ have their meet in $I$,
again by comparability.
Also $e\wedge a=a$ for idempotent $a\in D$;
for nonidempotent $a\in D$, this meet lies in $I$,
because an element below $e$ is idempotent.
These are exactly the natural meet rules in
$B_n^1(G)$.  Hence
\[
 R/I\cong (B_n^1(G),+_{\rm nat},\cdot).
\]
Choose two indices, the identity group labels, the
zero, and the adjoined identity.
They form the required subsemiring $B_2^1$.
All constructions preserve the absorbing zero.
\end{proof}

The standard semiring $S_7$ has three elements $1,a,0$,
flat addition, a multiplicative identity $1$, and $a^2=0$.
For its operations and earlier nonfinite-basis results see
\cite{JRZ}.
A finite ai-semiring $A$ is \emph{strongly nonfinitely based}
(SNFB) if every finite ai-semiring $T$ with
$A\in\mathsf{V}(T)$ is nonfinitely based.

\begin{lemma}\label{lem:s7-snfb}
The three-element semiring $S_7$ is strongly nonfinitely
based in the binary ai-semiring language.  Equivalently,
\[
 T\text{ finite},\quad S_7\in\mathsf{V}(T)
 \quad\Longrightarrow\quad T\text{ is nonfinitely based}.
\]
\end{lemma}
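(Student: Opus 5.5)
The plan is to prove the stronger statement that $\mathsf{V}(S_7)$ is \emph{inherently} nonfinitely based: every locally finite ai-semiring variety containing $S_7$ is nonfinitely based. The lemma follows at once, because for a finite $T$ the variety $\mathsf{V}(T)$ is locally finite, and $S_7\in\mathsf{V}(T)$ gives $\mathsf{V}(S_7)\subseteq\mathsf{V}(T)$. The model is Sapir's Zimin-word criterion for semigroups, transported to flat semirings in the way Jackson transports it for flat algebras.

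\emph{Step 1: flat Zimin monoids lie in $\mathsf{V}(S_7)$.} Let $Z_1=x_1$ and $Z_{n+1}=Z_nx_{n+1}Z_n$. Let $M(Z_n)$ be the Rees quotient of the free monoid by the ideal of words that are not subwords of $Z_n$, and let $\flat M(Z_n)$ carry flat addition. I would show $\flat M(Z_n)\in\mathsf{V}(S_7)$ in two stages.
\begin{enumerate}[(a)]
\item A power of $S_7$ with coordinatewise operations, followed by a Rees quotient, realises the flat extension of a monoid of the form $\{1,a,0\}^r$ modulo an ideal. The flat addition survives exactly as in the zero-smash argument of Lemma~\ref{lem:smash}: two distinct surviving tuples differ in a coordinate, so their sum collapses.
\item Each subword of $Z_n$ is encoded by its vector of letter-position indicators, and the coordinates are chosen so that a product survives exactly when the concatenation is again a subword of $Z_n$.
\end{enumerate}
If this coordinatisation turns out to be awkward, I would instead apply the known result of \cite{JRZ} that $S_7$ generates the flat monoids $\flat M(W)$ for every finite set $W$ of words, and specialise it to $W=\{Z_n\}$.

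\emph{Step 2: a Sapir-type argument in a locally finite variety.} Let $\mathcal V\supseteq\mathsf{V}(S_7)$ be locally finite, and suppose $\mathcal V$ had a basis $\Sigma$ using at most $k$ variables. I would build, for large $n$, an ai-semiring $A_n$ from the finitely generated free algebra $F_{\mathcal V}(k)$ together with a Rees/flat-type construction over $M(Z_n)$. The required properties are:
\begin{enumerate}[(a)]
\item every $k$-generated subalgebra of $A_n$ embeds into a product of copies of $F_{\mathcal V}(k)$ and $\flat M(Z_n)$, so $A_n\models\Sigma$;
\item $A_n$ fails an identity of $\mathcal V$.
\end{enumerate}
For (b), local finiteness forces $F_{\mathcal V}(k)$ to satisfy a nontrivial identity of the form $\mathbf{Z}_N\approx\mathbf{t}$, where $\mathbf{t}$ is a term not containing the word $Z_N$ as a summand. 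This is the Zimin unavoidability step, obtained by a pigeonhole argument over the finitely many values of the $k$-generated free algebra. Such an identity is false in $\flat M(Z_N)\in\mathcal V$ unless the summands of $\mathbf t$ collapse. I would choose $N$ in terms of $|F_{\mathcal V}(k)|$ so that the collapse argument gives a genuine identity of $\mathcal V$ failing in $A_n$.

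\emph{Main obstacle.} The hardest step is Step 2, adapting Sapir's argument to the additive structure. In a flat setting, sums of distinct words collapse to zero. The identities produced by unavoidability are therefore of the form $\mathbf{Z}_N\preceq\mathbf{u}$ rather than word equalities. I must check two things: that the distributive expansion of terms does not destroy the isoterm property of $Z_n$ in $\flat M(Z_n)$, and that the flat-law \eqref{eq:flat-law}-type identities of $\mathcal V$ are respected in $A_n$. I would first try to use that $Z_n$ is an isoterm for $\flat M(Z_n)$ even with respect to $\preceq$. This holds because any nonzero sum in that algebra equals each of its summands, so an identity $\mathbf{Z}_n\preceq \mathbf{u}$ forces every word of $\mathbf u$ to coincide with $Z_n$ under the relevant substitution.
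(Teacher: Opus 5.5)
\textbf{Your plan targets a false statement, so it cannot be repaired as it stands.} You propose to show that every locally finite variety containing $S_7$ is nonfinitely based, i.e.\ that $S_7$ is inherently nonfinitely based. That fails. Let $\mathcal U$ be the variety of ai-semirings defined by $xy\approx yx$ and $x^2\approx x^3$. It is finitely based, with a basis in at most three variables, it is locally finite, and it contains $S_7$. This is Proposition~\ref{prop:s7-infinite-boundary}, and the paper's closing remark stresses the point: $S_7$ is SNFB yet lies in a locally finite finitely based variety. So your Step~2 must break down for $\mathcal V=\mathcal U$.

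The breakdown is already visible in Step~1. Since $S_7$ is commutative, $\mathsf{V}(S_7)\models xy\approx yx$. In $\flat M(Z_n)$ with $n\geq2$, however, the subwords $x_1x_2$ and $x_2x_1$ of $Z_2=x_1x_2x_1$ are distinct nonzero elements. Hence $\flat M(Z_n)\notin\mathsf{V}(S_7)$. Your letter-position encoding is intrinsically commutative, so it cannot separate these. Your fallback claim, that $S_7$ generates $\flat M(W)$ for every finite $W$, is false for every noncommutative $W$, so it cannot be what \cite{JRZ} proves.

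For the same reason Zimin words are neither isoterms nor additively minimal for $S_7$: we have $S_7\models x_1x_2x_1\approx x_1^2x_2$. The Zimin technique is what the paper applies to the noncommutative $B_2^1$ (Lemma~\ref{lem:brandt-zimin-minimal} and Corollary~\ref{cor:brandt-monoid-inherent}). That is exactly why $B_2^1$ is inherently nonfinitely based while $S_7$ is not.

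\textbf{What is missing is an argument that uses the finiteness of $T$ itself.} Local finiteness of $\mathsf{V}(T)$ is not enough, since any proof relying only on it would apply verbatim to $\mathcal U$. The reduction ``$T$ finite, hence $\mathsf{V}(T)$ locally finite'' discards precisely the information that matters.

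The witnesses are also forced into $\mathcal U$. Suppose $T\in\mathcal U$, for instance $T=S_7$, and $k\geq3$. An algebra all of whose $k$-generated subalgebras lie in $\mathsf{V}(T)$ satisfies the three-variable basis of $\mathcal U$. It is therefore itself commutative and satisfies $x^2\approx x^3$. So any witness against a bounded-variable basis of such a $T$ must be commutative, must lie in $\mathcal U$, and must fall outside $\mathsf{V}(T)$ for reasons tied to $T$ being finite. Noncommutative Zimin-type witnesses can never do this job.

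For comparison, the paper gives no proof of this lemma. It is quoted from earlier work (the text points to \cite{JRZ} just before it) and used as a black box in Lemma~\ref{lem:s7-transfer} and Theorem~\ref{thm:s7-finite-detection}.
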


\begin{lemma}[Transfer and the absorbing constant]
\label{lem:s7-transfer}
If a finite ai-semiring $A$ satisfies $S_7\in\mathsf{V}(A)$,
then $A$ is itself SNFB.  Lemma~\ref{lem:s7-snfb} also holds
after naming the existing absorbing zero of $S_7$,
among ai-semirings with a named absorbing zero.
More generally, if $\mathsf{V}(A)=\mathsf{V}(T)$ for a finite $T$,
then $S_7\in\mathsf{V}(A)$ implies that $A$ is nonfinitely
based, whether or not $A$ is finite.
\end{lemma}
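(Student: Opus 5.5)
The plan is to reduce all three assertions to Lemma~\ref{lem:s7-snfb} by monotonicity of generated varieties, using Lemma~\ref{lem:constant} only for the named-constant clause.

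\emph{First assertion.} Let $A$ be finite with $S_7\in\mathsf{V}(A)$, and let $T$ be any finite ai-semiring with $A\in\mathsf{V}(T)$. Closure of varieties under $\mathsf{H},\mathsf{S},\mathsf{P}$ gives
\[
 S_7\in\mathsf{V}(A)\subseteq\mathsf{V}(T).
\]
Lemma~\ref{lem:s7-snfb} then shows that $T$ is nonfinitely based. Since $T$ was arbitrary, $A$ is SNFB by definition.

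\emph{Third assertion.} Being finitely based depends only on the set of valid identities, and hence only on the variety. If $\mathsf{V}(A)=\mathsf{V}(T)$ with $T$ finite and $S_7\in\mathsf{V}(A)$, then $S_7\in\mathsf{V}(T)$. Lemma~\ref{lem:s7-snfb} makes $T$ nonfinitely based. Because $A$ and $T$ have the same identities, $A$ is nonfinitely based too, whether or not $A$ is finite.

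\emph{Named-zero version.} This is the one step needing care. Work in $\{+,\cdot,0\}$ and let $T$ be a finite ai-semiring whose named $0$ absorbs both operations, with $S_7$ (zero named) in $\mathsf{V}(T)$ in the expanded signature. Every constant-free identity valid in $T$ is an identity of the expanded theory, so it holds in $S_7$. Hence the binary reduct of $S_7$ lies in the variety of the binary reduct of $T$. Lemma~\ref{lem:s7-snfb} then makes the binary reduct of $T$ nonfinitely based. Since $0$ is an absorbing element of $T$ for both operations, Lemma~\ref{lem:constant} shows that finite basability in $\{+,\cdot\}$ and in $\{+,\cdot,0\}$ coincide for $T$. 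Therefore $T$ is nonfinitely based in the expanded language, which is the named-zero form of Lemma~\ref{lem:s7-snfb}. The two transfer statements then follow in that language by exactly the same monotonicity arguments.

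I expect the main obstacle to be the reduct step, specifically checking that passing to reducts respects membership in the generated variety. The claim I would verify first is that an identity valid in all members of an expanded variety, and containing no constant, is a binary identity of the reducts. After that, Lemma~\ref{lem:constant} applies directly because $T$ has an absorbing zero.
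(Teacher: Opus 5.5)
Your proposal is correct and follows essentially the same route as the paper. The two transfer statements come from transitivity of variety membership together with Lemma~\ref{lem:s7-snfb}. The named-zero clause comes from passing to binary reducts, so that $S_7$ lies in the variety of the reduct of $T$, and then applying Lemma~\ref{lem:constant} to the finite $T$ with its absorbing zero.
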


\begin{proof}
If $A\in\mathsf{V}(T)$, transitivity gives $S_7\in\mathsf{V}(T)$;
apply Lemma~\ref{lem:s7-snfb} when $T$ is finite.
The assertion about equal varieties is the same argument.

For the signature assertion let finite $(T;0)$ generate
a variety containing $(S_7;0)$, where the notation means
that the existing zero is named.  Its binary reduct already
has $S_7$ in its variety.  Lemma~\ref{lem:constant}
equates finite basability in the two signatures, and
Lemma~\ref{lem:s7-snfb} gives the result.
\end{proof}

\begin{lemma}[A separating sandwich identity]
\label{lem:s7-sandwich}
Let $r\geq2$, and let the exponent of $G$ divide $r$.
Every $B_I(G)$, $I\ne\varnothing$, satisfies
\begin{equation}\label{eq:s7-sandwich}
 x^r yx^r\approx x^r y^{r+1}x^r.
\end{equation}
The semiring $S_7$ fails this identity.
\end{lemma}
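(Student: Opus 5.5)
The proof splits into two parts: verify the identity in $B_I(G)$ by direct evaluation, and exhibit an assignment in $S_7$ that breaks it. No earlier structural results are needed beyond the multiplication rule \eqref{eq:brandt} and flat addition.

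\emph{Validity in $B_I(G)$.} Both sides are single words, so addition plays no role and we only compare products. Assign values to $x,y$.

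If $x=0$ or $y=0$, both sides are $0$.

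Otherwise write $x=(i,g,j)$. First compute the power $x^r$.
\begin{itemize}
\item If $i\neq j$, then $x^2=0$. Since $r\geq2$, we get $x^r=0$, and both sides vanish.
\item If $i=j$, then $x^r=(i,g^r,i)=(i,1,i)=e_{ii}$, because the exponent of $G$ divides $r$.
\end{itemize}
So in the remaining case $x^r=e_{ii}$. Now take $y=(k,h,\ell)$.

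The left side is $e_{ii}\,y\,e_{ii}$. It is nonzero exactly when $k=\ell=i$, and then it equals $(i,h,i)$.

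For the right side, first compute $y^{r+1}$. If $k\neq\ell$, then $y^{r+1}=0$. If $k=\ell$, then $y^{r+1}=(k,h^{r+1},k)=(k,h,k)$, again using $h^r=1$. The right side $e_{ii}\,y^{r+1}\,e_{ii}$ is therefore nonzero exactly when $k=\ell=i$, and then it equals $(i,h,i)$.

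The two sides agree in every case, so \eqref{eq:s7-sandwich} holds in every $B_I(G)$.

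\emph{Failure in $S_7$.} Recall that $S_7=\{1,a,0\}$ with $1$ a multiplicative identity and $a^2=0$. Take $x=1$ and $y=a$. The left side is $1\cdot a\cdot 1=a$. Since $r+1\geq3$, the right side is $a^{r+1}=0$. As $a\neq0$, the identity fails.

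The only step needing any care is the exponent bookkeeping. The hypothesis $r\geq2$ is what makes an off-diagonal $x$ vanish already at $x^r$. The condition $\exp G\mid r$ is what turns $x^r$ into an idempotent and collapses $h^{r+1}$ to $h$.
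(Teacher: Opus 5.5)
Your proof is correct and follows essentially the same route as the paper's. Both show that $x^r$ is either $0$ or the diagonal idempotent $e_{ii}$, and that both sandwiches equal $(i,h,i)$ when $y=(i,h,i)$ lies in that corner group and are $0$ otherwise. Both use the same counterexample $x=1$, $y=a$ in $S_7$; yours simply writes out the index bookkeeping for $y=(k,h,\ell)$ more explicitly.
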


\begin{proof}
In a Brandt semiring, $x^r$ is either zero or a diagonal
matrix idempotent $e_i=(i,1,i)$.  In the latter case,
if $y$ belongs to the group at $i$, then
$y^{r+1}=y$.  If $y$ does not belong to that group,
both sandwiched products are zero: a different diagonal
corner stays different, and a nonzero off-diagonal element
has square zero.  This proves the identity, including all
zero assignments.  In $S_7$, take $x=1,y=a$; the two
sides are $a$ and $0$.
\end{proof}

\begin{theorem}[Varieties containing $S_7$]
\label{thm:s7-finite-detection}
For a finite natural inverse semiring $S$,
\[
 S_7\in\mathsf{V}(S)
 \quad\Longleftrightarrow\quad
 S\text{ is not locally Clifford}.
\]
Every finite natural inverse semiring that is
not locally Clifford is SNFB and hence nonfinitely based.
\end{theorem}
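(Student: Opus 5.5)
Both directions reduce to results already in place, so the plan is a short chain of citations plus one small embedding.

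\emph{Reverse direction.} Suppose $S$ is a finite natural inverse semiring that is not locally Clifford. Lemma~\ref{lem:brandt-monoid-divisor} puts $B_2^1$ in $\mathsf{V}(S)$, so it suffices to find $S_7$ inside $B_2^1$. Take the subset $\{1,e_{12},0\}$. We have $e_{12}^2=0$ and $1$ is the identity, so it is closed under multiplication. For addition we need the natural meets. In $B_2^1$ the elements below $1$ are the idempotents, and $e_{12}$ is not idempotent, so $1\wedge e_{12}=0$. Moreover $0$ is least and $x\wedge x=x$. So the subset is closed under meet, and the induced addition is flat. This is exactly $S_7$, so $S_7\in\mathsf{V}(B_2^1)\subseteq\mathsf{V}(S)$.

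\emph{Forward direction.} Suppose $S$ is locally Clifford and nontrivial. By the argument of Corollary~\ref{cor:finite-clifford-natural}, the least element is an absorbing zero; the trivial case is immediate. Theorem~\ref{thm:local-clifford-representation} then gives
\[
\mathsf{V}(S)=\bigvee_D\mathsf{V}(\mathcal{B}_D).
\]
There are finitely many classes $D$, each $\mathcal{B}_D\cong B_{I_D}(G_D)$ with $G_D$ finite. Let $r\ge2$ be a common multiple of the exponents of the $G_D$. By Lemma~\ref{lem:s7-sandwich}, every generator, and hence $S$, satisfies \eqref{eq:s7-sandwich}, which $S_7$ fails. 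Thus $S_7\notin\mathsf{V}(S)$.

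\emph{Final assertion.} If $S$ is not locally Clifford, the reverse direction gives $S_7\in\mathsf{V}(S)$. Lemma~\ref{lem:s7-transfer}, applicable since $S$ is finite, shows $S$ is SNFB. Taking $T=S$ in the SNFB definition then shows $S$ is nonfinitely based.

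The only step needing care is the meet computation inside $B_2^1$. In particular, we must confirm that $e_{12}$ is not naturally below $1$; this holds because only idempotents lie below an identity.
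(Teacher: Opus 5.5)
Your proof is correct and follows the paper's argument essentially step for step: the $B_2^1$ divisor lemma supplies $\{1,e_{12},0\}\cong S_7$ in the non-locally-Clifford case, and the subdirect Brandt representation with the sandwich identity \eqref{eq:s7-sandwich} excludes $S_7$ in the locally Clifford case. Lemma~\ref{lem:s7-transfer} then gives the SNFB conclusion, and your explicit checks that $1\wedge e_{12}=0$ in $B_2^1$ and that the least element is an absorbing zero only spell out details the paper leaves implicit.
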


\begin{proof}
If $S$ is not locally Clifford, the divisor in
Lemma~\ref{lem:brandt-monoid-divisor} contains
\[
 \{1,e_{12},0\}\cong S_7:
 \qquad e_{12}^2=0,\quad1+e_{12}=0.
\]
Thus $S_7\in\mathsf{V}(S)$, with the zero preserved.
If $S\ne\{0\}$ is locally Clifford, choose a common
multiple $r\geq2$ of the exponents of its finitely many
maximal subgroups.  Its Brandt factors in
Theorem~\ref{thm:local-clifford-representation} satisfy
\eqref{eq:s7-sandwich}, and therefore so does $S$.
The lemma excludes $S_7$ from its variety.
The zero-only case is immediate.
The SNFB assertion follows from Lemma~\ref{lem:s7-transfer}.
\end{proof}

\begin{theorem}[All finite natural inverse semirings]
\label{thm:finite-natural-classification}
Let $S$ be a finite inverse semigroup whose natural
order has binary meets, and use them as addition.
In either $\{+,\cdot\}$ or $\{+,\cdot,0\}$,
$S$ is finitely based if and only if at least one
of the following holds:
\begin{enumerate}[$(\rm i)$]
\item $S$ is locally Clifford and combinatorial;
\item $S$ is Clifford and all Sylow subgroups of
every maximal subgroup are abelian.
\end{enumerate}
\end{theorem}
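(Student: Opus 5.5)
The plan is a case split on whether $S$ is locally Clifford. Each case reduces to results already established, and both signatures are handled by Lemma~\ref{lem:constant} wherever needed. The zero-only and singleton algebras are finitely based and satisfy (i), so assume $S$ is nontrivial. As in Corollary~\ref{cor:finite-clifford-natural}, the meet of all elements is an absorbing zero, so $S$ has a zero and the zero-dependent results apply.

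\emph{Case 1: $S$ is not locally Clifford.} Then neither (i) nor (ii) holds, since Clifford implies locally Clifford. Theorem~\ref{thm:s7-finite-detection} says $S$ is SNFB, and in particular $S$ is nonfinitely based. For the named-zero signature, I would use Lemma~\ref{lem:s7-transfer}, or simply Lemma~\ref{lem:constant}.

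\emph{Case 2: $S$ is locally Clifford but not Clifford.} Corollary~\ref{cor:local-clifford-all} gives that $S$ is finitely based exactly when it is combinatorial, which is exactly condition (i). Condition (ii) is excluded here because $S$ is not Clifford, so the criterion matches.

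\emph{Case 3: $S$ is Clifford.} Corollary~\ref{cor:finite-clifford-natural} gives that $S$ is finitely based exactly when every Sylow subgroup of every maximal subgroup is abelian, which is condition (ii). I then need to check that (i) adds nothing in this case. If $S$ is Clifford and combinatorial, every maximal subgroup is trivial, so its Sylow subgroups are trivially abelian and (ii) already holds. Hence ``(i) or (ii)'' is equivalent to (ii) for Clifford $S$.

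The only delicate point is making the case boundaries line up with the disjunction in the statement. In particular, the Clifford-and-combinatorial overlap must be consistent between Corollary~\ref{cor:local-clifford-all} and Corollary~\ref{cor:finite-clifford-natural}, and the argument above shows it is. Everything else is direct citation.
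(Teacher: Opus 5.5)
Your proof is correct and follows essentially the paper's route. The non-locally-Clifford case goes through Theorem~\ref{thm:s7-finite-detection} exactly as in the paper. For the locally Clifford case, the paper applies Theorem~\ref{thm:finite-regular} once to the orthogonalization from Theorem~\ref{thm:local-clifford-representation}, whereas you split that case and cite Corollaries~\ref{cor:local-clifford-all} and~\ref{cor:finite-clifford-natural}; these rest on the same subdirect representations, and you correctly check that the Clifford-and-combinatorial overlap is already covered by (ii).
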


\begin{proof}
The singleton case is immediate; otherwise $S$
has its natural absorbing zero.
If $S$ is locally Clifford, its orthogonalization
in Theorem~\ref{thm:local-clifford-representation}
is a finite regular flat semiring with exactly the
same identities.  Its group factors and its classes
with two or more idempotents retain precisely those
features of $S$.  Theorem~\ref{thm:finite-regular}
therefore gives alternatives (i) and (ii).
If $S$ is not locally Clifford,
Theorem~\ref{thm:s7-finite-detection} and
Lemma~\ref{lem:s7-snfb} give nonfinite basability directly.
\end{proof}

\begin{corollary}[A membership characterization]
\label{cor:finite-natural-brandt-variety}
For a finite natural inverse semiring $S$,
\[
 S\in\mathcal{B}\quad\Longleftrightarrow\quad
 S\text{ is locally Clifford}.
\]
\end{corollary}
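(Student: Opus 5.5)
The two implications use different earlier results, so I will prove them separately. The only delicate point is that the trivial and zero-only cases need no special treatment.

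\emph{If $S$ is locally Clifford, then $S\in\mathcal{B}$.} The singleton $\{0\}$ lies in every variety, so assume $S$ has at least two elements. As in Corollary~\ref{cor:finite-clifford-natural}, the least element of the finite set $S$ is then an absorbing zero. Theorem~\ref{thm:local-clifford-representation} gives
\[
 \mathsf{V}(S)=\mathsf{V}\Bigl(\bigoplus^{0}_{D\ne\{0\}}\mathcal{B}_D\Bigr).
\]
By Theorem~\ref{thm:orthogonal}, the right side equals $\mathsf{V}(\flat(G))$ or $\mathsf{V}(B_2(G))$, with $G=\prod_D G_D$.

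\begin{itemize}
\item In the second case, $B_2(G)$ is a generator of $\mathcal{B}$.
\item In the first case, $\flat(G)=B_1(G)$ embeds in $B_2(G)$ as a diagonal corner together with zero, as in the proof of Proposition~\ref{prop:join}. So $\flat(G)\in\mathcal{B}$ as well.
\end{itemize}

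Either way $S\in\mathcal{B}$.

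\emph{If $S$ is not locally Clifford, then $S\notin\mathcal{B}$.} By Theorem~\ref{thm:s7-finite-detection}, $S_7\in\mathsf{V}(S)$. Suppose for contradiction that $S\in\mathcal{B}$. Then $S_7\in\mathcal{B}$, so $S_7$ satisfies every identity valid in all $B_n(G)$ with $n\geq 2$.

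The sandwich identity \eqref{eq:s7-sandwich} cannot be used directly, because Lemma~\ref{lem:s7-sandwich} needs a bounded exponent and $\mathcal{B}$ involves all groups. I therefore look for an exponent-free identity that holds in every Brandt semiring and fails in $S_7$. The candidate is
\[
 xyx\approx xy^2x .
\]

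Check it in $B_I(G)$. If $x$ is zero, both sides are zero. If $x=(i,g,j)$ with $i\ne j$, then $x$ is off-diagonal and $x\,\cdot\,x=0$ through any middle factor that returns to index $i$; both sides vanish unless the middle element runs from $j$ to $i$. Rather than track the general case, I simplify to $x^2yx^2\approx x^2y^2x^2$, where $x^2$ is zero or lies in the group at a diagonal position $i$.

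This identity fails in $B_2(G)$ for nontrivial $G$, since $y^2\ne y$ in a group. So I abandon it and use an identity involving addition instead: in a Brandt semiring, $yu+u$ is nonzero only when $y$ fixes $u$ on the left. The candidate is
\[
 x^2 y x^2 + x^2 \approx x^2 y^2 x^2 + x^2 .
\]

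Check it in $B_I(G)$:
\begin{itemize}
\item If $x^2=0$, both sides are $0$.
\item If $x^2=(i,g,i)$ with $g\ne 1$, then $x^2$ is not idempotent.
\end{itemize}

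To avoid the non-idempotent case, replace $x^2$ by the idempotent-producing term $x\,x$ with $x$ an idempotent-like variable, or equivalently use $e\approx e^2$ patterns. The identity I will use is
\[
 x^2+x+ xyx \approx x^2+x+xy^2x .
\]

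Check it in $B_I(G)$. If $x\ne x^2$ then $x+x^2=0$ and both sides vanish. Otherwise $x$ is $0$ or $e_{ii}$:
\begin{itemize}
\item If $x=0$, both sides are $0$.
\item If $x=e_{ii}$, then $e_{ii}ye_{ii}+e_{ii}$ is nonzero iff $e_{ii}ye_{ii}=e_{ii}$, that is, iff $y$ has its $(i,i)$ group entry equal to $1$. If $y=(i,1,i)$ then $y^2=y$, so both sides equal $e_{ii}$. If $y=(i,g,i)$ with $g\ne1$, both sides vanish. If $y$ is off-diagonal relative to $i$, both sandwiched terms are $0$.
\end{itemize}

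So the identity holds in every $B_I(G)$. In $S_7$, take $x=1$, $y=a$: the left side is $1+1+a=0$ by flat addition, and the right side is $1+1+0=0$. Both sides are $0$, so this candidate does not separate $S_7$ either.

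The main obstacle is therefore finding a genuinely exponent-free separating identity. My fallback is the Brandt-monoid divisor $B_2^1$ from Lemma~\ref{lem:brandt-monoid-divisor}, and I would show directly that $B_2^1\notin\mathcal{B}$. Since $1+e_{12}=0$ in $B_2^1$, the Brandt identity to try is
\[
 x+y+xy \approx x+y+yx ,
\]
which should hold in every $B_I(G)$: unless $x=y$, the sum $x+y$ is $0$. In $B_2^1$, taking $x=y=1$ gives $1$ on both sides, so this one fails to separate too. I would then search among flat identities, using Theorem~\ref{thm:regular-flat-identity}: $\mathcal{B}$ consists of flat-generated semirings, and a finite natural inverse semiring lies in $\mathcal F$ only if it is locally Clifford. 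Since $\mathcal{B}\subseteq\mathcal F$, the identity \eqref{eq:regular-flat-identity} holds in $\mathcal{B}$. On the other hand, $S$ is regular, and by the implication (i)$\Rightarrow$(ii) of that theorem, if $S$ satisfied \eqref{eq:regular-flat-identity} it would be locally Clifford. Hence $S\notin\mathcal{B}$, which completes the equivalence.
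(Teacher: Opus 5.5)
Your final argument is correct. The forward direction (Theorem~\ref{thm:local-clifford-representation} followed by Theorem~\ref{thm:orthogonal}) is the paper's. For the reverse direction you take a different route.

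The paper argues via $B_2^1$. It uses the finite divisor Lemma~\ref{lem:brandt-monoid-divisor} to put $B_2^1$ in $\mathsf{V}(S)$. It then uses the explicit assignment in Section~\ref{sec:boundary}, under which every prism identity $\mathbf{w}_0\preceq\mathbf{u}_m$ fails in $B_2^1$, although these identities hold throughout $\mathcal{B}$ by Lemma~\ref{lem:prism-identity}.

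You instead observe that $\mathcal{B}\subseteq\mathcal{F}$. So $S\in\mathcal{B}$ would make the regular ai-semiring $S$ satisfy \eqref{eq:regular-flat-identity}, and (i)$\Rightarrow$(ii) of Theorem~\ref{thm:regular-flat-identity} then forces local Cliffordness.

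What your route buys: it avoids the finite-semigroup facts behind the divisor lemma, so this direction holds for natural inverse semirings of any cardinality. In the natural setting it is even a two-line check. For $a\in eSe$, the instances $(u,v,y,z)=(d(a),e,a,a^{-1})$ and $(r(a),e,a^{-1},a)$ give $d(a)r(a)=r(a)$ and $r(a)d(a)=d(a)$, hence $d(a)=r(a)$.

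What the paper's route buys: a different strengthening. A finite non-locally-Clifford $S$ lies outside the prism variety $\mathcal{P}$, whereas yours puts it outside $\mathcal{F}$. These conclusions are independent, since $S_{H_m}\in\mathcal{F}\setminus\mathcal{P}$, and the three-element $D$ of Theorem~\ref{thm:full-interval} lies in $\mathcal{P}\setminus\mathcal{F}$.

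When you write this up, delete everything before your last paragraph. Two points in that material are worth noting:
\begin{itemize}
\item The search for an identity of $\mathcal{B}$ failing in $S_7$ cannot succeed. By Theorem~\ref{thm:s7-exponent}, $S_7\in\mathsf{V}(\flat(\mathbb{Z}))\subseteq\mathcal{B}$.
\item Your check that $x^2+x+xyx\approx x^2+x+xy^2x$ holds in every $B_I(G)$ is wrong. In $\flat(C_2)$, take $x$ to be the group identity and $y$ the involution. The left side is $0$ and the right side is the group identity.
\end{itemize}
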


\begin{proof}
The subdirect representation proves membership in
the locally Clifford case.
Otherwise the divisor lemma would place $B_2^1$
in $\mathcal{B}$, contrary to the explicit failure of
the prism identities in Section~\ref{sec:boundary}.
\end{proof}

\begin{theorem}\label{thm:s7-exponent}
For every group $G$ and every nonempty index set $I$,
the following are equivalent:
\begin{enumerate}[$(\rm i)$]
\item $S_7\in\mathsf{V}(\flat(G))$;
\item $S_7\in\mathsf{V}(B_I(G))$;
\item there is no positive integer $r$ such that
$g^r=1$ for all $g\in G$.
\end{enumerate}
\end{theorem}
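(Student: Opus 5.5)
We prove (i)$\Rightarrow$(ii)$\Rightarrow$(iii)$\Rightarrow$(i).

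\emph{(i)$\Rightarrow$(ii).} This follows from Proposition~\ref{prop:join}: $\flat(G)=B_1(G)$ embeds in $B_I(G)$ as one diagonal corner, so $\mathsf{V}(\flat(G))\subseteq\mathsf{V}(B_I(G))$.

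\emph{(ii)$\Rightarrow$(iii).} We argue by contraposition with Lemma~\ref{lem:s7-sandwich}. If some $r\geq1$ has $g^r=1$ for all $g\in G$, replace $r$ by $2r$ so that $r\geq2$. Then every $B_I(G)$ satisfies \eqref{eq:s7-sandwich}, while $S_7$ fails it. Hence $S_7\notin\mathsf{V}(B_I(G))$.

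\emph{(iii)$\Rightarrow$(i).} As in the proof of Theorem~\ref{thm:flat-group-all}, unbounded exponent gives an element of infinite order in $G^{\mathbb N}$. Combined with the zero-smash lemma, this yields $\flat(\mathbb{Z})\in\mathsf{V}(\flat(G))$. It therefore suffices to show $S_7\in\mathsf{V}(\flat(\mathbb{Z}))$, and in fact we aim to exhibit $S_7$ in $\mathsf{V}(\flat(N_+))$ via a concrete divisor.

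\emph{Main step.} The obstacle is that $S_7$ has a multiplicative identity $1$, which a Rees quotient of $\flat(N_+)$ cannot supply, so we work with a product. In $\flat(\mathbb{Z})^{\mathbb N}$ take the elements
\[
 \mathbf{1}=(0,0,0,\ldots),\qquad \mathbf{a}=(n)_{n\geq1},
\]
written additively inside $\mathbb{Z}$, so that multiplication adds coordinates. Consider the subsemiring generated by $\mathbf 1$, $\mathbf a$ and $\mathbf 0$; its elements are the powers $\mathbf a^k$, $\mathbf 1$, $\mathbf 0$, together with flat sums. Distinct generated elements differ in every coordinate: $\mathbf a^k=(kn)_n$ and $\mathbf a^j$ differ at each $n\geq1$, and $\mathbf 1$ differs from each $\mathbf a^k$ everywhere. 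Hence any sum of two distinct elements is $\mathbf 0$ coordinatewise, and the subsemiring is exactly $\{\mathbf 1,\mathbf a,\mathbf a^2,\ldots,\mathbf 0\}$ with flat addition. This is $\flat$ of the monoid $(\mathbb N,+)$. The set $\{\mathbf a^k:k\geq2\}\cup\{\mathbf 0\}$ is an ideal absorbing addition, since adding a distinct element gives $\mathbf 0$. Collapsing it is a Rees quotient with three elements $1,a,0$, flat addition, $1$ an identity, and $a^2=0$, which is $S_7$.

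If infinitely many coordinates are inconvenient, it suffices to note that a single coordinate of $\flat(\mathbb Z)$ already separates all of $0,n,2n,\ldots$ with $n=1$, so $\flat(\mathbb Z)$ itself contains $\flat(\mathbb N,+)$ and the quotient applies directly.

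Finally, zero is preserved throughout, so the argument covers both signatures.
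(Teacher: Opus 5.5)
Your proof is correct and is essentially the paper's. You use the diagonal corner for (i)$\Rightarrow$(ii) and the sandwich identity \eqref{eq:s7-sandwich} for (ii)$\Rightarrow$(iii); for (iii)$\Rightarrow$(i) the paper takes the Rees quotient of $\{0\}\cup\{g^j:j\geq0\}$ by $\{0\}\cup\{g^j:j\geq2\}$ for an infinite-order $g\in G^{\mathbb N}$, which is exactly your quotient of $\flat(\mathbb N,+)\leq\flat(\mathbb Z)$. As you note yourself, the direct power $\flat(\mathbb Z)^{\mathbb N}$ is unnecessary.

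One correction: drop the announced aim of placing $S_7$ in $\mathsf{V}(\flat(N_+))$, because it is false. The semiring $\flat(N_+)$ satisfies $xy+x+z\approx xy+x$, while $S_7$ fails it at $x=y=1$, $z=a$. Your construction never uses that aim anyway, since $\mathbf 1$ relies on the integer $0\notin N_+$.
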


\begin{proof}
A diagonal corner gives (i)$\Rightarrow$(ii).
Lemma~\ref{lem:s7-sandwich} gives (ii)$\Rightarrow$(iii),
taking a multiple $r\geq2$ of a finite exponent if necessary.
For (iii)$\Rightarrow$(i), choose for every $m\geq1$
an element $g_m\in G$ with $g_m^m\ne1$.
Then $g=(g_m)_{m\geq1}\in G^{\mathbb N}$ has infinite
order, because its $d$-th coordinate rules out $g^d=1$.
Lemma~\ref{lem:smash} gives
\[
 \flat(G^{\mathbb N})\in\mathsf{V}(\flat(G)).
\]
In this semiring take the subsemiring
$T=\{0\}\cup\{g^j:j\geq0\}$ and its ideal
$J=\{0\}\cup\{g^j:j\geq2\}$.
All powers displayed are distinct.  The semiring Rees
quotient $T/J$ has the two nonzero elements $1,g$,
with $g^2=0$, and flat addition.  It is $S_7$.
These constructions all preserve the absorbing zero.
\end{proof}

\begin{corollary}\label{cor:s7-local-exponent}
Let $S\ne\{0\}$ be a locally Clifford inverse
semigroup with zero and natural meet addition, of any
cardinality.  Then $S_7\in\mathsf{V}(S)$ if and only if
its maximal subgroups have no common finite exponent.
\end{corollary}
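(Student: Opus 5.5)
The plan is to reduce to Theorem~\ref{thm:s7-exponent} through the subdirect Brandt representation. By Theorem~\ref{thm:local-clifford-representation} and the orthogonal-sum formula \eqref{eq:orthogonal-normal-form}, $\mathsf{V}(S)$ equals either $\mathsf{V}(\flat(G))$ or $\mathsf{V}(B_2(G))$, where
\[
 G=\prod_{D\ne\{0\}}G_D .
\]
The first case occurs when every nonzero $\mathcal D$-class has a single idempotent, and the second otherwise. Both cases are instances of $\mathsf{V}(B_I(G))$ with $|I|=1$ or $|I|=2$.

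Theorem~\ref{thm:s7-exponent} then says that $S_7\in\mathsf{V}(S)$ holds exactly when $G$ has no finite exponent. It remains to check that $G$ has a finite exponent if and only if the maximal subgroups of $S$ have a common finite exponent. A direct product has exponent $r$ precisely when every factor satisfies $g^r=1$. So $G$ has finite exponent exactly when the groups $G_D$ have a common finite exponent.

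The groups $G_D$ run over one maximal subgroup from each nonzero $\mathcal D$-class. Maximal subgroups at idempotents in the same $\mathcal D$-class are isomorphic. The maximal subgroup at $0$ is trivial. Hence the family $\{G_D\}$ has a common exponent if and only if all maximal subgroups of $S$ do.

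The only point needing care is the case distinction between the two cases of \eqref{eq:orthogonal-normal-form}. This step is harmless because Theorem~\ref{thm:s7-exponent} covers every nonempty index set. No step is a real obstacle.
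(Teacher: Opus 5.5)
Your proposal is correct and follows the paper's proof: Theorem~\ref{thm:local-clifford-representation} together with the orthogonal-sum formula identifies $\mathsf{V}(S)$ with $\mathsf{V}(\flat(G))$ or $\mathsf{V}(B_2(G))$ for $G=\prod_D G_D$, and Theorem~\ref{thm:s7-exponent} then finishes, using that a direct product has exponent dividing $r$ exactly when every factor does. Your remark that maximal subgroups in one $\mathcal D$-class are isomorphic, and that the one at $0$ is trivial, justifies the passage from the representatives $G_D$ to all maximal subgroups, which the paper leaves implicit.
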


\begin{proof}
Put $G=\prod_DG_D$, taking one maximal subgroup in
each nonzero $\mathcal D$-class.  Theorems
\ref{thm:local-clifford-representation} and
\ref{thm:orthogonal} identify $\mathsf{V}(S)$ with either
$\mathsf{V}(\flat(G))$ or $\mathsf{V}(B_2(G))$.
The group $G$ has exponent dividing $r$ exactly
when every factor does.  Apply Theorem~\ref{thm:s7-exponent}.
\end{proof}

A common finite exponent is a single positive integer
divisible by the exponent of every maximal subgroup.
In particular, every finite Brandt variety excludes
$S_7$, including every $\mathsf{V}(B_2(C_p))$.
Conversely, $S_7\in\mathsf{V}(B_2(\mathbb Z))\subseteq\mathcal{B}$.

\begin{lemma}[Variable bounds in locally finite varieties]
\label{lem:locally-finite-rank}
In a finite signature, a locally finite variety is finitely
based if and only if it has an identity basis with a uniform
finite bound on the number of variables.
Consequently every finite nonfinitely based algebra has
no bounded-variable identity basis.
\end{lemma}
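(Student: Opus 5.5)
The forward direction is immediate: a finite basis uses only finitely many variables, so its maximum variable count is a uniform bound. For the converse, let $\mathcal V$ be locally finite in a finite signature, and let $\Sigma$ be a basis in which every identity uses at most $k$ variables. I will show that $\Sigma$ can be replaced by a finite subset.

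The key step uses the free algebra $F=F_{\mathcal V}(x_1,\ldots,x_k)$, which is finite by local finiteness. Every identity in at most $k$ variables can be renamed to use only $x_1,\ldots,x_k$. It then holds in $\mathcal V$ exactly when both sides have the same image in $F$. I fix a finite set of terms $\tau_a$, one representing each element $a\in F$. I also assume the signature has only finitely many operation symbols. Let $\Delta$ consist of:
\begin{itemize}
\item for each operation symbol $f$ of arity $n$ and each tuple $(a_1,\ldots,a_n)\in F^n$, the identity $f(\tau_{a_1},\ldots,\tau_{a_n})\approx\tau_{f(a_1,\ldots,a_n)}$;
\item for each $i\le k$, the identity $x_i\approx\tau_{x_i}$.
\end{itemize}
This $\Delta$ is a finite set of identities, each valid in $\mathcal V$.

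By induction on terms, $\Delta$ proves $\mathbf t\approx\tau_{[\mathbf t]}$ for every term $\mathbf t$ in $x_1,\ldots,x_k$, where $[\mathbf t]$ denotes the image of $\mathbf t$ in $F$. Consequently $\Delta$ derives every valid identity of $\mathcal V$ in at most $k$ variables. In particular it derives every member of $\Sigma$, up to renaming, so $\Delta$ is a finite basis. The only point needing care is the finiteness of the signature, which guarantees that $\Delta$ is finite. That is immediate in the two semiring signatures used here.

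For the consequence, let $A$ be a finite algebra in a finite signature. Then $\mathsf V(A)$ is locally finite: the $n$-generated free algebra embeds in $A^{A^n}$, which is finite. If $A$ had a bounded-variable basis, the first part would make it finitely based. Hence a finite nonfinitely based algebra has no bounded-variable identity basis. I expect no real obstacle; the one step worth stating carefully is the induction showing that $\Delta$ reduces every $k$-variable term to its normal form.
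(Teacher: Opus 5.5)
Your proposal is correct and follows essentially the same route as the paper. Both use the finite free algebra $F_{\mathcal V}(k)$, a representative term for each of its elements, and the finite family of operation-table identities together with $x_i\approx\tau_{x_i}$, and both reduce every $k$-variable term to its representative by induction. Your observation that the $n$-generated free algebra of $\mathsf V(A)$ embeds in $A^{A^n}$ also supplies the local-finiteness step that the paper leaves implicit in the consequence.
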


\begin{proof}
Only the reverse direction needs proof.  Suppose a basis
uses at most $k$ variables.  The relatively free algebra
$F_{\mathcal V}(k)$ is finite.  Choose a representing
$k$-variable term $\mathbf{t}_a$ for each of its elements $a$.
For every basic operation $f$ and every tuple of elements,
take the valid identity
\[
 f(\mathbf{t}_{a_1},\ldots,\mathbf{t}_{a_m})\approx \mathbf{t}_{f(a_1,\ldots,a_m)},
\]
together with $x_i\approx \mathbf{t}_{[x_i]}$; nullary operations,
if present, are included in the same way.
This is a finite family of identities valid in $\mathcal V$.
Induction on a term reduces every $k$-variable term to
the representative of its value in the free algebra.
Thus this finite family entails every valid $k$-variable
identity and hence the proposed bounded-variable basis.
It is a finite basis for $\mathcal V$.
\end{proof}

Consequently every finite target covered by
Lemma~\ref{lem:s7-snfb} has no bounded-variable identity
basis.

\begin{theorem}\label{thm:three-obstructions}
Let $S$ be a finite natural inverse semiring.
Then $S$ is nonfinitely
based if and only if at least one of the following holds:
\begin{enumerate}[$(\rm i)$]
\item $S_7\in\mathsf{V}(S)$;
\item $B_2(C_p)\in\mathsf{V}(S)$ for some prime $p$;
\item $\flat(P)\in\mathsf{V}(S)$ for some finite
nonabelian $p$-group $P$.
\end{enumerate}
Each obstruction can occur while the other two are absent,
even within the class of finite natural inverse semirings.
Thus none of the three alternatives can be omitted.
\end{theorem}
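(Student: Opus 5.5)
The plan is to combine the finite classification in Theorem~\ref{thm:finite-natural-classification} with the detection results already proved. There are three things to show: each obstruction forces nonfinite basability, conversely some obstruction holds whenever $S$ is nonfinitely based, and each obstruction can occur without the other two.

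\emph{Sufficiency.} Under (i), Lemma~\ref{lem:s7-snfb} applies directly with $T=S$. Under (ii), note that $B_2(C_p)\in\mathsf{V}(S)$ forces $\mathsf{V}(S)\subseteq\mathcal{P}$ only if $S\in\mathcal{B}$, so that case needs care. If $S$ is not locally Clifford, then (i) already holds by Theorem~\ref{thm:s7-finite-detection}. If $S$ is locally Clifford, Theorem~\ref{thm:local-clifford-representation} gives $\mathsf{V}(S)\subseteq\mathcal{B}$, and Corollary~\ref{cor:interval} then applies.

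Under (iii), with $S$ not locally Clifford we are again in case (i). Otherwise $\mathsf{V}(S)$ equals either $\mathsf{V}(\flat(G))$ or $\mathsf{V}(B_2(G))$ for the finite product group $G$. Suppose first that some class has two idempotents. Then $S$ is not combinatorial, because $\flat(P)\in\mathsf{V}(S)$ and the combinatorial case satisfies $xy\approx x+y$-type semilattice laws that fail in $\flat(P)$; so Theorem~\ref{thm:regular-infinite} applies. In the Clifford case, I would argue that $G$ has a nonabelian Sylow subgroup. The route I would try first is that $\flat(P)\in\mathsf{V}(\flat(G))$ forces every group identity of $G$ to hold in $P$. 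Then, by the finite-group correspondence underlying \eqref{eq:group-criterion}, in the form that a finite flat group whose variety contains a flat nonabelian $p$-group is nonfinitely based, \eqref{eq:group-criterion} gives nonfinite basability. This step is the main obstacle. I would instead argue directly: if all Sylow subgroups of $G$ are abelian, then $G$ satisfies the group laws of nilpotent class one restricted to each $p$-part. The quasi-identities of $G$ pass to the flat variety, since a flat semiring satisfying a semiring identity encodes group quasi-identities, and a nonabelian $p$-group violates one of them.

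\emph{Necessity.} If $S$ is nonfinitely based, Theorem~\ref{thm:finite-natural-classification} puts us in one of three situations. If $S$ is not locally Clifford, (i) holds. If $S$ is locally Clifford but neither combinatorial nor Clifford, some $\mathcal D$-class has two idempotents and a nontrivial group, so $B_2(C_p)$ embeds for a prime $p$ dividing that group order; this gives (ii). If $S$ is Clifford with a nonabelian Sylow subgroup $P$ of a maximal subgroup, then $\flat(P)$ embeds, giving (iii).

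\emph{Independence.} For (i) alone, take $B_2^1$. It is not locally Clifford and is combinatorial, with all subgroups trivial, so neither $B_2(C_p)$ nor $\flat(P)$ lies in its variety. To see this, I would check that $B_2^1$ satisfies $x^2\approx x^3$, which fails in both candidates. For (ii) alone, take $B_2(C_p)$. It excludes $S_7$ by Lemma~\ref{lem:s7-sandwich}, and it excludes nonabelian $\flat(P)$ by Lemma~\ref{lem:prism-identity}-type abelian identities, for instance the identity $x^py\approx yx^p$-free commutator law $xyx^{p-1}y^{p-1}\ldots$. More simply, Theorem~\ref{thm:graph-group-exact} with $K_{3,3}$ yields an identity of $B_2(C_p)$ that fails in flat nonabelian groups. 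For (iii) alone, take $\flat(P)$ itself. It excludes $S_7$ by Lemma~\ref{lem:s7-sandwich}, and it excludes $B_2(C_p)$ because it is Clifford and satisfies $x^{2}y\approx \ldots$; concretely, flat groups satisfy $xy^{r}x\approx x^2y^r$-type identities with $r=|P|$, which fail on $e_{12},e_{21}$.
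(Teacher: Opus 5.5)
\textbf{The main gap.} Your necessity argument, your (i) case, and your three independence examples follow the paper. Your proof that (ii) implies nonfinite basability is a valid alternative route. If $S$ is not locally Clifford, (i) holds; otherwise $\mathsf{V}(S)\subseteq\mathcal{B}$, and Corollary~\ref{cor:interval} applies. The paper instead argues contrapositively, using $x^2\approx x^3$ and $x^{r+1}\approx x$ to show that both finitely based structures exclude $B_2(C_p)$.

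The genuine gap is the claim that (iii) implies nonfinite basability when $S$ is Clifford. Your reduction needs $\flat(P)\in\mathsf{V}(\flat(G))$, with $G$ finite, to force a nonabelian Sylow subgroup of $G$, and you never prove this. ``The group laws of nilpotent class one restricted to each $p$-part'' is not a set of identities of $G$. You never name the quasi-identity that $P$ is supposed to violate. The claim that quasi-identities of $G$ ``pass to the flat variety'' is the substance of Jackson's flat-algebra translation \cite{Jackson08}, asserted here without proof or citation.

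The paper avoids this case analysis entirely with \cite[Theorem 6.1]{JRZ}: a finite ai-semiring whose variety contains a finite ai-semiring with a nonabelian nilpotent subgroup is nonfinitely based, and $\flat(P)$ is such a semiring. Alternatively, the route you abandoned does close:
\begin{enumerate}[$(\rm 1)$]
\item If $r$ is the exponent of $G$, the law $x^{r+1}\approx x$ shows that the exponent of $P$ divides $r$.
\item Replace each $x_i^{-1}$ by $x_i^{2r-1}$ and multiply both sides on the right by $x_1^r\cdots x_n^r$. This turns every group law of $G$ into an identity of $\flat(G)$, so $P\in\mathsf{var}(G)$.
\item Since $G$ is finite, the finite group $P$ lies in $\mathsf{HSP}_{\rm fin}(G)$.
\item Finite groups all of whose Sylow subgroups are abelian are closed under subgroups, quotients and finite direct products, so $P$ would be abelian.
\end{enumerate}

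\textbf{Further misstatements.} Each of these is easily repaired.
\begin{enumerate}[$(\rm a)$]
\item In the non-Clifford subcase of (iii), $B_2$ does not satisfy $xy\approx x+y$: at $e_{12},e_{21}$ the left side is $e_{11}$ and the right side is $0$. Use $x^2\approx x^3$ instead, which holds in $B_2$ and fails in $\flat(P)$.
\item In the necessity part, neither $B_2(C_p)$ nor $\flat(P)$ need embed in $S$. In $R_7$ of Theorem~\ref{thm:seven}, the two-index block and $C_2$ lie in different $\mathcal D$-classes, and $B_2(C_2)$ is larger than $R_7$. In $\flat(P_0)\times\flat(P_0)$, distinct elements of the top group have nonzero meets such as $(g,0)$, and $\flat(P_0\times P_0)$ does not embed. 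What you actually get is membership in $\mathsf{V}(S)$, from the product-group formula of Theorem~\ref{thm:orthogonal} and the projections $\pi_e$ of Theorem~\ref{thm:clifford-natural}; this is all the statement needs.
\item For the independence of (iii), the identity $xy^rx\approx x^2y^r$ holds in every $B_2(C_q)$. For off-diagonal $x$ both sides vanish, since $x^2=0$ and $y^r$ is zero or diagonal; for diagonal $x$ the corner group is abelian. So this identity excludes nothing. Use $x^{r+1}\approx x$, with $r$ the exponent of $P$, which fails at $e_{12}$.
\end{enumerate}
Your $K_{3,3}$ argument via Theorem~\ref{thm:graph-group-exact}, applied with a one-element index set, does correctly exclude $\flat(P)$ from $\mathsf{V}(B_2(C_p))$. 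It is a legitimate alternative to the paper's laws $x^{p+2}\approx x^2$ and $x^{p+1}y^{p+1}\approx y^{p+1}x^{p+1}$.
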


\begin{proof}
If $S$ is not locally Clifford, (i) follows from
Theorem~\ref{thm:s7-finite-detection}.
If $S$ is locally Clifford but not Clifford and is
nonfinitely based, it has a nontrivial maximal subgroup.
The orthogonalization and product-group formula put
$B_2(C_p)$ in its variety for a prime divisor $p$
of the order of such a subgroup.
If $S$ is Clifford and nonfinitely based, its finite
flat-group reduction has a nonabelian Sylow subgroup;
the corresponding flat group is a divisor in its variety.
This proves necessity using
Theorem~\ref{thm:finite-natural-classification}.

Condition (i) implies nonfinite basability by Lemma~\ref{lem:s7-snfb}.  Condition (iii) does so by
\cite[Theorem 6.1]{JRZ}, which states that every finite
ai-semiring whose variety contains a finite ai-semiring
with a nonabelian nilpotent subgroup is nonfinitely based.
For (ii), if $S$ were finitely based, its two
possible structures in
Theorem~\ref{thm:finite-natural-classification} would
both exclude $B_2(C_p)$: a locally Clifford
combinatorial $S$ satisfies $x^2\approx x^3$,
while a finite Clifford $S$ satisfies
$x^{r+1}\approx x$ for a common group exponent $r$.
The first fails in its nontrivial group corner and the
second fails at an off-diagonal element of $B_2(C_p)$.

To show that no alternative can be omitted, use,
respectively, $B_2^1$,
$B_2(C_p)$, and $\flat(P)$.
The first satisfies $x^2\approx x^3$, excluding
both types of nontrivial-group obstruction.
The second excludes $S_7$ by the sandwich identity;
it also satisfies
\[
 x^{p+2}\approx x^2,\qquad
 x^{p+1}y^{p+1}\approx y^{p+1}x^{p+1}.
\]
The latter identity holds because each $(p+1)$-st
power is zero or stays in an abelian diagonal group.
Any group whose flat extension belongs to this variety
must therefore have exponent dividing $p$ and be abelian.
Finally $\flat(P)$ satisfies $x^{r+1}\approx x$,
where $r$ is its exponent; this excludes both $S_7$
and every $B_2(C_q)$.
\end{proof}

\begin{remark}
The three obstructions cannot be combined arbitrarily:
\textup{(i)} and \textup{(iii)} together imply
\textup{(ii)}.  Indeed, \textup{(i)} gives a natural
$B_2^1$ divisor and hence $B_2\in\mathsf{V}(S)$.
Condition \textup{(iii)} gives $\flat(C_p)\in\mathsf{V}(S)$.
The Brandt join formula then puts $B_2(C_p)$ in
$\mathsf{V}(S)$.
\end{remark}

\begin{corollary}[Transfer within the finite natural inverse class]
\label{cor:relative-natural-transfer}
Let $A,B$ be finite natural inverse semirings
with $A\in\mathsf{V}(B)$.  If $A$ is nonfinitely based,
then $B$ is nonfinitely based.
Equivalently, every finite natural inverse member of the
variety generated by a finitely based finite natural
inverse semiring is finitely based.
\end{corollary}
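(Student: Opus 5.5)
The plan is to argue by contraposition from the characterization in Theorem~\ref{thm:three-obstructions}. Assume $A\in\mathsf{V}(B)$ with $A$ nonfinitely based. By that theorem, at least one obstruction lies in $\mathsf{V}(A)$: $S_7$, some $B_2(C_p)$, or some $\flat(P)$ with $P$ a finite nonabelian $p$-group. Since $\mathsf{V}(A)\subseteq\mathsf{V}(B)$, the same obstruction lies in $\mathsf{V}(B)$. Applying the reverse direction of Theorem~\ref{thm:three-obstructions} to the finite natural inverse semiring $B$ then shows that $B$ is nonfinitely based.

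Concretely, the three cases are checked as follows. For (i), $S_7\in\mathsf{V}(B)$ with $B$ finite makes $B$ nonfinitely based by Lemma~\ref{lem:s7-snfb}. For (iii), \cite[Theorem 6.1]{JRZ} applies directly to $B$. For (ii), I would repeat the argument in the proof of Theorem~\ref{thm:three-obstructions}. Suppose $B$ were finitely based. By Theorem~\ref{thm:finite-natural-classification}, $B$ is then either locally Clifford and combinatorial, and so satisfies $x^2\approx x^3$, or finite Clifford, and so satisfies $x^{r+1}\approx x$. Each of these identities fails in $B_2(C_p)$, which contradicts $B_2(C_p)\in\mathsf{V}(B)$.

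The equivalent formulation is just the contrapositive. If $B$ is finitely based and $A$ is a finite natural inverse semiring in $\mathsf{V}(B)$, then $A$ cannot be nonfinitely based. Both signatures are covered because Theorem~\ref{thm:three-obstructions} and Lemma~\ref{lem:constant} hold in either language.

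The only point needing care is that the sufficiency direction of Theorem~\ref{thm:three-obstructions} uses nothing about the algebra beyond its being a finite natural inverse semiring whose variety contains the obstruction. This is exactly what holds for $B$, so no step here should be a genuine obstacle. The substantive work has already been done in Theorems~\ref{thm:finite-natural-classification} and \ref{thm:three-obstructions}.
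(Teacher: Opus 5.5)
Your proof is correct and is essentially the paper's own argument: one of the three obstructions of Theorem~\ref{thm:three-obstructions} lies in $\mathsf{V}(A)\subseteq\mathsf{V}(B)$, and the sufficiency direction of that theorem, applied to $B$, makes $B$ nonfinitely based. Your case-by-case check just repeats the sufficiency part of that theorem's proof, and the details you give there are accurate.
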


\begin{proof}
One of the three obstructions in
Theorem~\ref{thm:three-obstructions} belongs to $\mathsf{V}(A)$,
and therefore to $\mathsf{V}(B)$.  Apply the same theorem to
$B$.
\end{proof}

\begin{corollary}\label{cor:finite-regular-identity}
Let $S$ be a finite ai-semiring with regular multiplication
satisfying \eqref{eq:regular-flat-identity}.
Then $S$ is finitely based if and only if either
\begin{enumerate}[$(\rm i)$]
\item its multiplicative semigroup is combinatorial; or
\item its multiplicative semigroup is Clifford and every
Sylow subgroup of every maximal subgroup is abelian.
\end{enumerate}
This holds in both signatures considered here.
\end{corollary}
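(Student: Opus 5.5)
The statement follows by combining Theorem~\ref{thm:regular-flat-identity} with the finite classifications already proved. Since $S$ has regular multiplication and satisfies \eqref{eq:regular-flat-identity}, the implication (i)$\Rightarrow$(ii) of Theorem~\ref{thm:regular-flat-identity} makes the multiplicative reduct a finite locally Clifford inverse semigroup, and the addition is the meet in its natural order. So $S$ is a finite natural inverse semiring that is locally Clifford, and Theorem~\ref{thm:finite-natural-classification} applies. That theorem says $S$ is finitely based exactly when either it is locally Clifford and combinatorial, or it is Clifford with all Sylow subgroups of every maximal subgroup abelian. Both theorems are stated for $\{+,\cdot\}$ and for $\{+,\cdot,0\}$, so both signatures are handled at once.

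The remaining step is to match these alternatives with (i) and (ii) of the corollary. Local Cliffordness is already known, so ``locally Clifford and combinatorial'' is the same as ``combinatorial'', which is (i). Alternative (ii) is literally the same condition in both statements. Hence the equivalence holds.

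Two details need care. First, the singleton algebra, and more generally the existence of an absorbing zero. Theorem~\ref{thm:finite-natural-classification} treats the singleton separately and otherwise uses the natural least element, which is an absorbing zero as shown in Corollary~\ref{cor:finite-clifford-natural}; finiteness makes this apply here. Second, in the named-zero signature, the hypothesis of Theorem~\ref{thm:regular-flat-identity} includes the common absorbing zero axioms. That zero must coincide with the natural least element, because an absorbing element for meet is the least element.

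As a cross-check, one could instead use (i)$\Rightarrow$(iii) to place $S$ in $\mathcal F$, then apply Theorem~\ref{thm:local-clifford-representation} to replace $S$ by its orthogonalization, a finite regular flat semiring with the same identities. Theorem~\ref{thm:finite-regular} then gives the same criterion, since the orthogonalization has the same maximal subgroups and is combinatorial or Clifford exactly when $S$ is. There is no substantial obstacle in either route; the only thing to check is this bookkeeping of the combinatorial and Clifford conditions across the representation.
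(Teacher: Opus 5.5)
Your proposal is correct and follows the paper's own proof: Theorem~\ref{thm:regular-flat-identity} makes $S$ a locally Clifford natural inverse semiring, and Theorem~\ref{thm:finite-natural-classification} then gives the criterion, with ``locally Clifford and combinatorial'' reducing to ``combinatorial''. Your remarks on the singleton case, the named zero, and the alternative route through the orthogonalization are accurate but not needed.
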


\begin{proof}
Theorem~\ref{thm:regular-flat-identity} makes $S$ a
locally Clifford natural inverse semiring.
Apply Theorem~\ref{thm:finite-natural-classification}.
\end{proof}

\begin{corollary}\label{cor:relative-regular-transfer}
Let $B$ be a finitely based finite regular ai-semiring
satisfying \eqref{eq:regular-flat-identity}.
Every finite member $A\in\mathsf{V}(B)$ whose multiplication
is regular is finitely based.
\end{corollary}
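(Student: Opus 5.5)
The plan is to reduce the statement to Corollary~\ref{cor:finite-regular-identity} applied to both $A$ and $B$. The reduction rests on Theorem~\ref{thm:regular-flat-identity}, which says that for a regular ai-semiring, satisfying \eqref{eq:regular-flat-identity} is equivalent to membership in $\mathcal F$.

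First, $A$ satisfies \eqref{eq:regular-flat-identity}, because this identity is valid in $B$ and hence throughout $\mathsf{V}(B)$. Since $A$ is finite with regular multiplication, Theorem~\ref{thm:regular-flat-identity} makes $A$ a finite locally Clifford natural inverse semiring. The same holds for $B$. Both algebras are therefore finite natural inverse semirings with $A\in\mathsf{V}(B)$.

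Next, Corollary~\ref{cor:relative-natural-transfer} applies directly. If $A$ were nonfinitely based, then $B$ would be nonfinitely based as well, contradicting the hypothesis on $B$. So $A$ is finitely based.

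Alternatively, one can argue through the three obstructions of Theorem~\ref{thm:three-obstructions}. Each obstruction lying in $\mathsf{V}(A)$ would also lie in $\mathsf{V}(B)$, and then $B$ would be nonfinitely based.

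The only point needing care is the zero-only or singleton case. A one-element $A$ is trivially finitely based. For the named-zero signature, the same argument goes through because every cited result holds in both signatures.

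I expect no genuine obstacle here. The one thing to check is that the hypothesis ``natural inverse semiring'' in Corollary~\ref{cor:relative-natural-transfer} is met by $A$. It is, because Theorem~\ref{thm:regular-flat-identity}(ii) recovers the inverse structure and shows that the addition of $A$ is exactly the natural meet.
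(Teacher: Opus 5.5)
Your argument is correct and is the paper's own proof. The identity passes from $B$ to $A$, Theorem~\ref{thm:regular-flat-identity} makes both finite natural inverse semirings, and Corollary~\ref{cor:relative-natural-transfer} then finishes (your opening sentence mentions Corollary~\ref{cor:finite-regular-identity}, but the argument you actually give never uses it).
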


\begin{proof}
The identity passes from $B$ to $A$, so both are
natural inverse semirings by
Theorem~\ref{thm:regular-flat-identity}.
Apply Corollary~\ref{cor:relative-natural-transfer}.
\end{proof}

\begin{proposition}\label{prop:flat-local-s7}
Let $S$ be any finite flat ai-semiring.  If for some
nonzero idempotent $e$, the local monoid $eSe$
has a nonzero nonunit, then $S$ contains $S_7$
as a subsemiring.  Moreover, $S$
is SNFB and hence nonfinitely based.
\end{proposition}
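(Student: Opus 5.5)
The plan is to produce the subsemiring $\{e,a,0\}$ for a suitable nonzero nonunit $a\in eSe$ with $a^2=0$, and then invoke Lemma~\ref{lem:s7-transfer}.

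First I would record that flat distributivity forces zero-cancellation. If $xy=xz\ne0$ with $y\ne z$, then $x(y+z)=x0=0$, while $xy+xz=xy\ne0$; the right-sided version is the same.

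Next take $b\in eSe$ nonzero and not a unit of the monoid $eSe$. Since $b=eb$ and $b=be$ are nonzero, cancellation shows that $e$ is the only idempotent $f$ with $fb=b$ or $bf=b$. I would then consider the powers $b,b^2,\dots$. Because $S$ is finite, some power $b^\omega$ is idempotent. If every power of $b$ were nonzero, then $b^\omega b^\omega=b^\omega\ne0$ and $eb^\omega=b^\omega$ would give $b^\omega=e$ by cancellation. Hence $b^{\omega-1}b=e$ (with $\omega\ge2$, or $b=e$ itself), so $b$ would be a unit of $eSe$, a contradiction. Therefore some power of $b$ is zero. Let $n\geq2$ be least with $b^n=0$, and put $a=b^{\lceil n/2\rceil}$ if that is nonzero. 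More simply, put $a=b^{n-1}$ when $n\ge2$, which gives $a\ne0$, $a^2=b^{2n-2}=0$ since $2n-2\ge n$, and $ea=ae=a$. The case $n=1$ is impossible because $b\ne0$.

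Now check that $T=\{e,a,0\}$ is a subsemiring. We have $a\ne e$, since $e^2=e\ne0=a^2$. The products are $ee=e$, $ea=ae=a$, $aa=0$, and anything times $0$ is $0$. Flat addition gives $e+a=0$ and $x+x=x$. The map $1\mapsto e$, $a\mapsto a$, $0\mapsto 0$ is then an isomorphism $S_7\cong T$, so $S_7$ is a subsemiring of $S$ and $S_7\in\mathsf V(S)$. Lemma~\ref{lem:s7-transfer} then makes $S$ SNFB, since $S$ is finite, and in particular nonfinitely based by Lemma~\ref{lem:s7-snfb}.

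The main obstacle is the nilpotency step: showing that a nonunit of $eSe$ in a finite flat semiring has some zero power. The idempotent-power argument combined with cancellation against $e$ is what settles it. I would double-check the edge case where $b^\omega=e$ gives an inverse $b^{\omega-1}$ (or $b^{2\omega-1}$ if $\omega=1$) inside $eSe$, so that $b$ really is a unit there.
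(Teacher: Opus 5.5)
Your proposal is correct and is essentially the paper's argument. Both use zero-cancellation from flat distributivity to force the idempotent power of a nonunit $b\in eSe$ to be $0$ rather than $e$, take $a$ to be the last nonzero power of $b$ to get $\{e,a,0\}\cong S_7$, and then conclude with Lemma~\ref{lem:s7-transfer}. The paper phrases the cancellation step as ``$eSe$ has no nonzero idempotent other than $e$,'' whereas you apply it directly to $b^\omega$.
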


\begin{proof}
The local monoid $M=eSe$ is a flat subsemiring with
unit $e$ and zero $0$.
The structural step is \cite[Proposition 2.4]{JRZ}
applied to $M$; we include its short argument.
It has no nonzero idempotent other than $e$: if
$f^2=f\ne0$, then $ef=ff\ne0$, and zero-cancellation
gives $e=f$.
For a nonzero nonunit $b$, finiteness gives an
idempotent power.  This cannot be $e$, since
$b^m=e$ would make $b$ a unit of $M$.
Thus some power is zero.  If $b^j$ is the last
nonzero power, set $a=b^j$.  Then $a^2=0$,
and $\{e,a,0\}\cong S_7$ with flat addition.
The conclusion follows from Lemma~\ref{lem:s7-transfer}.
\end{proof}

For a nonzero finite flat semiring that itself has a
multiplicative identity, this recovers the known
classification of \cite[Corollary 4.13]{JRZ}:
it is finitely based exactly when it is $\flat(G)$
for a finite group whose Sylow subgroups are all abelian.
Indeed, either every nonzero element is a unit or the
proposition applies.  This local criterion applies to all
finite flat semirings, including those with nonregular
multiplication.

\subsection{Minimal generators}
\label{sec:smallest}
\label{sec:smallest-natural}

\begin{theorem}[The smallest nonfinitely based regular example]
\label{thm:seven}
Among finite flat semirings with regular multiplication,
the smallest nonfinitely based cardinality is seven.
Up to isomorphism there is exactly one such seven-element
semiring:
\[
 R_7=B_2\oplus^{0} \flat(C_2).
\]
Its variety is $\mathsf{V}(B_2(C_2))$.
\end{theorem}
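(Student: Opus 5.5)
The plan is to rest entirely on Proposition~\ref{prop:regular-structure} and Theorem~\ref{thm:finite-regular}. A nonzero finite regular flat semiring is an orthogonal sum $\bigoplus^{0}B_{n_\lambda}(G_\lambda)$. Counting elements gives
\[
 |S|=1+\sum_\lambda n_\lambda^2|G_\lambda|.
\]
By Theorem~\ref{thm:finite-regular}, $S$ is nonfinitely based exactly in two situations.

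\emph{Case (a).} Some $n_\lambda\ge2$ and some $G_\mu\ne\{1\}$, so $S$ is neither combinatorial nor Clifford.

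\emph{Case (b).} All $n_\lambda=1$ and some $G_\lambda$ has a nonabelian Sylow subgroup.

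First I will bound case (b) from below. The smallest group with a nonabelian Sylow subgroup has order $8$ (the groups $D_4$ and $Q_8$). By the Theorem~\ref{thm:orthogonal} reasoning, a rank-one sum is finitely based unless some summand group itself has a nonabelian Sylow subgroup. So in case (b) we get $|S|\ge 1+8=9>7$.

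Next I will handle case (a). Two possibilities arise.
\begin{enumerate}[$(\rm i)$]
\item The same block has $n\ge2$ and $G\ne\{1\}$. Then $|S|\ge 1+4\cdot2=9$.
\item Different blocks supply the two features. The smallest contribution is a block $B_2$ with $4$ elements, plus a block with a nontrivial group. The latter has at least $n^2|G|\ge2$ elements, with equality only for $\flat(C_2)$. So $|S|\ge1+4+2=7$, with equality forcing exactly the summands $B_2$ and $\flat(C_2)$ and nothing else.
\end{enumerate}
For uniqueness up to isomorphism, note that any additional summand adds at least $1$. Also, $n_\lambda\ge3$ gives at least $9$ nonzero elements. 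So the only $7$-element configuration is $R_7=B_2\oplus^0\flat(C_2)$. The orthogonal-sum decomposition is canonical: its blocks are the $\mathcal D$-classes together with zero. Hence the isomorphism type is determined by the multiset of blocks.

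Finally I will confirm nonfinite basability at seven and the variety. Theorem~\ref{thm:orthogonal} gives $\mathsf{V}(R_7)=\mathsf{V}(B_2(C_1\times C_2))=\mathsf{V}(B_2(C_2))$. Theorem~\ref{thm:regular-infinite}, or Theorem~\ref{thm:all-groups}, shows this is nonfinitely based. The zero-only algebra is finitely based, and every $S$ with $|S|\le6$ falls under the bounds above, so it is finitely based. Hence seven is minimal.

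The main obstacle I expect is not conceptual but bookkeeping. I must check exhaustively that no configuration with at most $6$ elements is non-combinatorial and non-Clifford, and that the group-order lower bound $8$ for case (b) is applied correctly. Care is also needed that the "exactly one" claim uses isomorphism invariance of the block decomposition, which follows from the $\mathcal D$-class description given after Proposition~\ref{prop:regular-structure}.
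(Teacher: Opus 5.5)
Your proposal is correct and follows essentially the same route as the paper. You decompose via Proposition~\ref{prop:regular-structure} and split the negative cases of Theorem~\ref{thm:finite-regular} in two: a Clifford case needing a nonabelian $p$-group of order at least $8$, hence at least nine elements, and a non-Clifford case with at least $8$ nonzero elements in one block or at least $4+2$ across two blocks, where equality forces $B_2$ and $\flat(C_2)$ and Theorem~\ref{thm:orthogonal} gives the variety. Your remark on canonicity of the $\mathcal D$-class decomposition is harmless but unnecessary: uniqueness only needs every seven-element example to decompose as $B_2\oplus^{0}\flat(C_2)$.
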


\begin{proof}
The variety assertion follows from
Theorem~\ref{thm:orthogonal}, so $R_7$ is nonfinitely
based.  Its two blocks have four and two nonzero elements,
and share one zero.

In a finite Clifford negative case,
Theorem~\ref{thm:finite-regular} forces a nonabelian Sylow
subgroup.  A group of order $p$ or $p^2$ is abelian,
so a nonabelian $p$-group has order at least $p^3\geq8$.
Such a block already contributes at least nine elements
including zero.
In a non-Clifford negative case a rank-at-least-two block
and a nontrivial group are both required.  If in one block,
they give at least $2^2\cdot2=8$ nonzero elements.
If in different blocks, they give at least $4+2=6$.
Equality at total size seven forces exactly the displayed
two blocks and no others.
\end{proof}

The next result determines the smallest regular flat
generators of every finite cyclic Brandt variety.

\begin{theorem}[Unique minimum cyclic-variety generators]
\label{thm:cyclic-minimum}
Let $r\geq2$ and write
\[
 r=\prod_{i=1}^s q_i,\qquad q_i=p_i^{a_i},
\]
where the primes $p_i$ are distinct and $a_i\geq1$.
Among all finite regular flat semirings generating
$\mathsf{V}(B_2(C_r))$, the minimum size is
\[
 5+\sum_{i=1}^s q_i.
\]
Up to isomorphism the unique minimum generator is
\[
 B_2\oplus^{0}\flat(C_{q_1})\oplus^{0}\cdots
       \oplus^{0}\flat(C_{q_s}).
\]
In particular, for a prime power $r=p^a$ the minimum is
$p^a+5$, uniquely attained by
$B_2\oplus^{0} \flat(C_{p^a})$.
\end{theorem}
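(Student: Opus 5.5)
By Proposition~\ref{prop:regular-structure}, a nonzero finite regular flat semiring has the form $S=\bigoplus^{0}_{\lambda}B_{n_\lambda}(G_\lambda)$ with finite groups, and its size is $1+\sum_\lambda n_\lambda^2|G_\lambda|$. Theorem~\ref{thm:orthogonal} gives $\mathsf{V}(S)=\mathsf{V}(B_2(G))$ or $\mathsf{V}(\flat(G))$ with $G=\prod_\lambda G_\lambda$, according to whether some $n_\lambda\geq2$. The plan is to convert the condition $\mathsf{V}(S)=\mathsf{V}(B_2(C_r))$ into two separate requirements on the blocks: some block has rank at least two, and $\mathsf{V}(\flat(G))=\mathsf{V}(\flat(C_r))$. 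After that, the minimization becomes a counting problem.

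\emph{Step 1: reduction to a group condition.} First, $B_2\in\mathsf{V}(B_2(C_r))$. The variety $\mathsf{V}(\flat(G))$ satisfies $xy\approx yx$ together with flat absorption of the form $x^{k+1}\approx x$, while $B_2$ fails commutativity. Hence some block must have rank at least two. Second, I will show that $\mathsf{V}(B_2(G))=\mathsf{V}(B_2(C_r))$ forces $G$ to be abelian of exponent exactly $r$. Here $\flat(G)$ embeds as a corner, so the identities $x^{r}y^{r}\cdots$ can be used: specifically, $x^{r+2}\approx x^2$ and $x^{r+1}y^{r+1}\approx y^{r+1}x^{r+1}$ as in Theorem~\ref{thm:three-obstructions}, together with a commutator-type identity inside corners. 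These show that $G$ is abelian of exponent dividing $r$. Exponent exactly $r$ follows because $B_2(C_r)$ fails $x^{d+2}\approx x^2$ for every proper divisor $d$ of $r$, using a diagonal generator. Conversely, any abelian $G$ with exponent $r$ has $\mathsf{V}(\flat(G))=\mathsf{V}(\flat(C_r))$: $C_r$ embeds in $G$, and $G$ embeds in $C_r^k$, so Proposition~\ref{prop:join} and Lemma~\ref{lem:smash} give equality.

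The commutativity step is the main obstacle. The identity must force abelian corner groups in $\mathsf{V}(B_2(G))$. My first attempt is the identity $x^{r+1}y^{r+1}\approx y^{r+1}x^{r+1}$, checked in $B_2(C_r)$ by the case analysis in Theorem~\ref{thm:three-obstructions}. In $\flat(G)\subseteq B_2(G)$ it reduces to commutativity of $G$ once exponent divides $r$.

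\emph{Step 2: counting.} The rank-two contribution costs at least $4$ nonzero elements. The group part requires that the orders $|G_\lambda|$ satisfy $\mathrm{lcm}$ of exponents equal to $r$. Any block of rank $n$ with group $G_\lambda$ costs $n^2|G_\lambda|\geq|G_\lambda|\geq\exp(G_\lambda)$. For each $i$, some block must have exponent divisible by $q_i$. If one block carries several prime-power components $q_i,q_j,\ldots$, its cost is at least their product. This product is at least their sum, with equality impossible because all $q_i\geq2$ and there are at least two factors; the exception $2\cdot2$ cannot occur since the primes are distinct. If that block has rank at least two, its cost is at least $4\prod q_i>4+\sum q_i$. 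So the minimum $1+4+\sum q_i$ is achieved only when there is a separate $B_2$ block and one block $\flat(G_\lambda)$ with $|G_\lambda|=\exp(G_\lambda)=q_i$ for each $i$, that is, $G_\lambda\cong C_{q_i}$, with no further blocks. Uniqueness then follows by tracing the equality cases.
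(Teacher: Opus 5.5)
Your proposal is correct and follows essentially the paper's argument: $x^{r+2}\approx x^2$ bounds the corner exponents, the failures of $x^{d+2}\approx x^2$ for proper divisors $d$ of $r$ force an element of order $q_i$ in some corner, an absorption law $x^{k+1}\approx x$ forces a rank-two block, and the count then uses the same Lagrange bound with $\prod Q>\sum Q$ and $4\prod Q>4+\sum Q$. The commutativity step you call the main obstacle is valid but unnecessary: Lagrange gives $|G_\lambda|\geq\prod Q$ for any finite group, and at equality a group of order $q_i$ containing an element of order $q_i$ is cyclic. Likewise, invoking $xy\approx yx$ in $\mathsf{V}(\flat(G))$ before you have proved $G$ abelian is premature but harmless, since $x^{k+1}\approx x$ alone already excludes $B_2$.
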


\begin{proof}
The displayed semiring has the required variety because
$C_r\cong\prod_i C_{q_i}$ and
Theorem~\ref{thm:orthogonal} applies.
Let a finite regular flat semiring $A$ generate that
same variety, and write its Brandt blocks as
$B_{n_\lambda}(G_\lambda)$.
The target satisfies $x^{r+2}\approx x^2$: on a
diagonal element this is the exponent law, and on an
off-diagonal element both sides are zero.  In a corner
group of $A$, cancellation therefore gives exponent
dividing $r$.

For each $i$, the target fails
$x^{r/p_i+2}\approx x^2$, so $A$ fails it too.
Off-diagonal elements cannot witness this failure; hence
some corner contains an element whose order has full
$p_i$-part $q_i$, and therefore contains an element
of order $q_i$.  Assign requirement $q_i$ to one
such block.  A group supporting a set $Q$ of assigned
requirements has order at least $\prod_{q\in Q}q$,
by Lagrange's theorem.  For two or more distinct prime
powers their product is strictly greater than their sum;
for a singleton it equals that requirement.

There must also be a block with $n_\lambda\geq2$.
Otherwise every element lies in a group of exponent
dividing $r$ or is zero, and $x^{r+1}\approx x$
would hold in $A$, contrary to its failure on an
off-diagonal element of $B_2(C_r)$.
Choose one such block.  It contributes at least four
nonzero elements in addition to the sum of its assigned
requirements, strictly more if any requirement is
assigned there: for a nonempty assignment,
$4\prod Q>4+\sum Q$.
Every other block contributes at least the sum of its
assigned requirements, with strict inequality if it has
none or more than one, or if its rank exceeds one.
Summing gives at least
$4+\sum_iq_i$ nonzero elements.

Equality forces one trivial-group rank-two block with no
assigned requirement, and one rank-one group of order
$q_i$ for each requirement.  Such a group contains an
element of its full order $q_i$, so it is cyclic.  No
additional blocks are possible.  This proves both
minimality and uniqueness.
\end{proof}

\begin{corollary}\label{cor:cyclic-natural-minimum}
For $r=\prod_iq_i\geq2$ as in
Theorem~\ref{thm:cyclic-minimum}, the minimum size of
a finite natural inverse semiring generating
$\mathsf{V}(B_2(C_r))$ is still
\[
 5+\sum_iq_i.
\]
The orthogonalization of every minimum generator is
the unique flat generator in that theorem.
The minimum generator itself need not be unique once
flatness is removed.
\end{corollary}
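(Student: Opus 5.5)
By Corollary~\ref{cor:finite-natural-brandt-variety}, I would transfer everything to the flat setting through orthogonalization and then invoke Theorem~\ref{thm:cyclic-minimum}. Let $S$ be a finite natural inverse semiring with $\mathsf{V}(S)=\mathsf{V}(B_2(C_r))$.

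\emph{Step 1: $S$ is locally Clifford.} Since $S\in\mathsf{V}(B_2(C_r))\subseteq\mathcal B$, Corollary~\ref{cor:finite-natural-brandt-variety} gives this directly. A more self-contained route uses Theorem~\ref{thm:s7-finite-detection} together with Lemma~\ref{lem:s7-sandwich}. If $S$ were not locally Clifford, then $S_7\in\mathsf{V}(S)=\mathsf{V}(B_2(C_r))$. But $B_2(C_r)$ satisfies the sandwich identity \eqref{eq:s7-sandwich} and $S_7$ fails it.

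\emph{Step 2: pass to the orthogonalization.} The case $S=\{0\}$ is excluded, since $B_2(C_r)$ is nontrivial. Theorem~\ref{thm:local-clifford-representation} then gives a finite orthogonal sum $O=\bigoplus^0_D\mathcal B_D$ of Brandt semirings with $|O|=|S|$ and $\mathsf{V}(O)=\mathsf{V}(S)=\mathsf{V}(B_2(C_r))$. This $O$ is a finite regular flat semiring, so Theorem~\ref{thm:cyclic-minimum} applies to it. It gives $|S|=|O|\geq5+\sum_iq_i$, with equality only when $O$ is isomorphic to $B_2\oplus^0\flat(C_{q_1})\oplus^0\cdots\oplus^0\flat(C_{q_s})$.

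\emph{Step 3: attainment.} Conversely, this flat semiring is itself a natural inverse semiring. By Proposition~\ref{prop:regular-structure} its multiplication is primitive inverse. Distinct nonzero elements of a Brandt block, or of different blocks, have only $0$ as a common natural lower bound, so flat addition is natural meet. Its orthogonalization is itself, so the bound is attained. This proves the first two assertions.

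\emph{Step 4: non-uniqueness.} For $r=2$, I would exhibit a non-flat seven-element example inside $P=B_2\times\flat(C_2)$. Write $C_2=\{1,g\}$, and take
\[
 S=\{(0,0)\}\cup\{(e_{ij},0):i,j\in\{1,2\}\}\cup\{(e_{11},1),(e_{11},g)\}.
\]
A direct product of natural inverse semirings is again one, with coordinatewise natural meet. So it suffices to check that $S$ is closed under both operations, which is a finite check. For example, $(e_{11},1)(e_{12},0)=(e_{12},0)$ and $(e_{11},1)+(e_{11},g)=(e_{11},0)$. Closure makes $S$ a natural inverse subsemiring. It is not flat, because the sum $(e_{11},1)+(e_{11},g)$ of two distinct elements is the nonzero element $(e_{11},0)$. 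Its nonzero $\mathcal D$-classes are the four elements $(e_{ij},0)$ and the group $\{(e_{11},1),(e_{11},g)\}\cong C_2$. Hence its orthogonalization is $B_2\oplus^0\flat(C_2)$, which has size $7=5+2$. By Step 2, $\mathsf{V}(S)=\mathsf{V}(B_2(C_2))$, so $S$ is a second minimum generator not isomorphic to the flat one.

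The main obstacle I expect is this last step: verifying that the embedded example really has natural meets and the stated $\mathcal D$-class structure. The subsemiring-of-a-product realization is chosen to reduce that verification to a finite closure check. The same stacking of a $C_{q_i}$-labelled idempotent above $e_{11}$ should give examples for general $r$.
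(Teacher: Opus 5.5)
Your treatment of the first two assertions follows the paper. You get local Cliffordness from Corollary~\ref{cor:finite-natural-brandt-variety}; your alternative through Theorem~\ref{thm:s7-finite-detection} and Lemma~\ref{lem:s7-sandwich} also works. You then pass to the orthogonalization of Theorem~\ref{thm:local-clifford-representation}, which preserves size and variety, and apply Theorem~\ref{thm:cyclic-minimum}. For non-uniqueness you use a different seven-element example: yours is the semiring $R'_7$ of Theorem~\ref{thm:smallest-natural}. The paper instead uses $\{(e_{ij},1)\}\cup\{(0,1),(0,g),(0,0)\}$ and gets its variety from surjectivity of both coordinate projections. Your example is valid, but Step~4 skips two checks that your stated reasoning does not supply.

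\emph{Inversion.} Closure under $+$ and $\cdot$ does not make a subset of $P$ an inverse subsemigroup. For example, $\{(e_{12},0),(0,0)\}$ is closed under both operations but is a null semigroup. You also need closure under inversion. For your set this is immediate, since $(e_{ij},0)^{-1}=(e_{ji},0)$ and both group elements are self-inverse. The coordinatewise meet is then the natural meet of $S$.

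\emph{Local Cliffordness.} Your ``by Step~2'' invokes Theorem~\ref{thm:local-clifford-representation}, which requires $S$ to be locally Clifford. In Step~1 you deduced this from $\mathsf{V}(S)=\mathsf{V}(B_2(C_r))$, which is now the conclusion being sought. The check is not vacuous: for $B_2^1$ the same $\mathcal D$-class data give $B_2\oplus^0\flat(\{1\})$, whose variety $\mathsf{V}(B_2)$ misses $S_7\in\mathsf{V}(B_2^1)$. For your $S$ it does hold. With $e=(e_{11},1)$, the local monoid $eSe=\{(e_{11},1),(e_{11},g),(e_{11},0),(0,0)\}$ is Clifford, and the other local monoids are semilattices with at most two elements.

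A simpler fix is available. Both coordinate projections of your $S$ are onto, so $\mathsf{V}(S)=\mathsf{V}(B_2)\vee\mathsf{V}(\flat(C_2))=\mathsf{V}(B_2(C_2))$ by Proposition~\ref{prop:join}, with no orthogonalization needed. With these additions the argument is complete.
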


\begin{proof}
Any such generator belongs to $\mathcal{B}$, so is
locally Clifford by
Corollary~\ref{cor:finite-natural-brandt-variety}.
Its orthogonalization has the same size and variety.
Theorem~\ref{thm:cyclic-minimum} gives the lower bound
and the asserted orthogonalization.
The flat example there attains the bound.

For nonuniqueness take $r=2$, let
$C_2=\{1,g\}$, and consider the following
seven-element subset of
$B_2\times \flat(C_2)$:
\[
 A=\{(e_{ij},1):1\leq i,j\leq2\}
       \cup\{(0,1),(0,g),(0,0)\}.
\]
It is closed under multiplication, inversion, and
coordinatewise natural meet.
Products of matrix-unit pairs that do not match
give $(0,1)$; operations with the last three
elements stay in the displayed set.
Both coordinate projections are onto.
Hence
\[
 \mathsf{V}(A)=\mathsf{V}(B_2)\vee\mathsf{V}(\flat(C_2))
      =\mathsf{V}(B_2(C_2)).
\]
Its zero is $(0,0)$, while
$(e_{11},1)+(e_{22},1)=(0,1)\ne(0,0)$.
Thus $A$ is nonflat and cannot be isomorphic to
the flat minimum $R_7$.
\end{proof}

The next result puts the small examples directly into
the finite-basis classification.  Say that a semiring
is $S_7$-free if $S_7\notin\mathsf{V}(S)$.

\begin{theorem}\label{thm:smallest-natural}
The smallest
nonfinitely based finite natural inverse
semiring has six elements and is uniquely $B_2^1$,
up to isomorphism.
Among the $S_7$-free members of this class the
smallest nonfinitely based size is seven.
There are exactly four isomorphism types at that size,
and all four generate $\mathsf{V}(B_2(C_2))$.
Consequently they are also exactly the minimum finite
natural inverse-semiring generators of that variety.
\end{theorem}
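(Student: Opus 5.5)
The plan is to split the finite natural inverse semirings $S$ into the locally Clifford and the non-locally-Clifford ones, and to control each part with the classification in Theorem~\ref{thm:finite-natural-classification}.

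\emph{Non-locally-Clifford semirings.} Such an $S$ is never finitely based. It has a local monoid $eSe$ that is not Clifford, so $eSe$ contains an element $a$ with $d(a)\ne r(a)$. Then $d(a),r(a)\leq_{\rm nat} e$ are distinct idempotents, and $e$ must be different from both of them. I claim $S$ contains at least the six elements $e,\,d(a),\,r(a),\,a,\,a^{-1}$ and the zero. The arrows $a,a^{-1}$ are not idempotent, and $a\ne a^{-1}$ because their domains differ. The zero is the least element, and it differs from the other five: if $d(a)=0$ then $a=ad(a)=0$, which contradicts $d(a)\ne r(a)$.

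For $|S|=6$ these are all the elements. I would check that the product table is forced. The element $d(a)r(a)$ is an idempotent below both $d(a)$ and $r(a)$, and the only candidate is $0$. Also $a^2=a\,d(a)r(a)\,a=0$, and $ea=a=ae$. This is exactly $B_2^1$, with $e$ as the adjoined identity. Meets are determined by the natural order, so the isomorphism is one of semirings.

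For $|S|\le 5$ the same count gives a contradiction. So the smallest non-locally-Clifford size is six, and the only example is $B_2^1$.

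\emph{Locally Clifford semirings.} By Theorem~\ref{thm:local-clifford-representation}, $S$ has a flat orthogonalization of the same size and the same variety. That orthogonalization is a regular flat semiring. Theorem~\ref{thm:seven} then shows that a nonfinitely based locally Clifford $S$ has at least seven elements, so $B_2^1$ is the unique smallest nonfinitely based member of the whole class.

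\emph{The $S_7$-free case.} By Theorem~\ref{thm:s7-finite-detection}, the $S_7$-free members are exactly the locally Clifford ones. The minimum nonfinitely based size is therefore seven. Each size-seven example has an orthogonalization of size seven, and by Theorem~\ref{thm:seven} that orthogonalization is $R_7$. So each such example generates $\mathsf{V}(B_2(C_2))$ and has exactly one two-idempotent $\mathcal D$-class (a copy of $B_2$), one $\mathcal D$-class carrying $C_2$, and zero.

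It remains to count the ways of reassembling these pieces into a natural inverse semigroup with a different natural order. The nonzero idempotents are $f_1,f_2$, forming one $\mathcal D$-class, and $g$, whose maximal subgroup is $\{g,x\}$. Uniqueness of an idempotent per class below any idempotent forbids $g\le f_i$ and forbids both $f_1,f_2\le g$. This leaves the following semilattices:
\begin{itemize}
\item $\{f_1,f_2,g\}$ is an antichain. This gives the flat $R_7$.
\item $f_1<g$ only. Here one must decide whether $xf_1=f_1$ or not. The element $xf_1$ lies in the $f_1$-corner group, which is trivial, so $xf_1=f_1$ is forced.
\item Both $f_1,f_2$ lie below $g$. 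This is forbidden by local Cliffordness, since $f_1,f_2$ are $\mathcal D$-related.
\end{itemize}
Since $f_1<g$ and $f_2<g$ are isomorphic choices, this tentative count gives only two types. The count has to reach four, so I expect the additional types to come from the bottom of the order.

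I would look for configurations where the meet of $f_1$ and $f_2$ is not the absorbing zero. One example is the nonflat $A$ from Corollary~\ref{cor:cyclic-natural-minimum}, where $(0,1)$ is an idempotent lying below everything except the zero $(0,0)$. A dual configuration is also possible, where the $C_2$ part sits below the Brandt part.

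So the plan for the count is a careful case analysis of how the meet semilattice $E(S)$ can sit. The invariants are the $\mathcal D$-classes (a $B_2$ block, a $C_2$ block, and zero) together with the "at most one idempotent per class below $e$" rule. For each candidate semilattice I then determine the multiplication by restriction, and I check meet-distributivity using the projections $\phi_D$ of Theorem~\ref{thm:local-clifford-representation}.

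The main obstacle I expect is making this enumeration exhaustive and proving it gives exactly four pairwise non-isomorphic types. Once the count is done, every one of the four generates $\mathsf{V}(B_2(C_2))$ by the orthogonalization argument. Together with Corollary~\ref{cor:cyclic-natural-minimum}, this shows they are exactly the minimum natural generators of that variety.
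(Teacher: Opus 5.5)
The genuine gap is in the seven-element classification, and one step there is false. The earlier parts are sound. Your direct count of $e,d(a),r(a),a,a^{-1},0$ is a valid substitute for the paper's divisor-plus-bijection argument, although $e\ne d(a),r(a)$ needs the finite fact that comparable $\mathcal D$-related idempotents coincide. The lower bound of seven is also correct.

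The classification is the whole content of ``exactly four isomorphism types''. The rule that at most one idempotent of a given $\mathcal D$-class lies below any idempotent does \emph{not} forbid $g\leq_{\rm nat}f_i$. Below $f_1$, the idempotents $f_1$ and $g$ lie in different $\mathcal D$-classes. What the rule actually gives is symmetry. If $a$ is the connecting arrow with $d(a)=f_1$, $r(a)=f_2$, and $g\leq_{\rm nat}f_1$, then $aga^{-1}$ is an idempotent in the class of $g$ below $f_2$, so it equals $g$.

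The possible orders are therefore:
\begin{enumerate}[(a)]
\item an antichain;
\item $g<_{\rm nat}f_1$ and $g<_{\rm nat}f_2$, so that $f_1f_2=g$;
\item up to symmetry, $f_1<_{\rm nat}g$ with $f_2$ incomparable to $g$.
\end{enumerate}
The nonflat $A$ you found is exactly case (b), which your list had excluded. Your proposed ``dual configuration'', with the $C_2$ part below the Brandt part, is the same configuration again. The genuinely opposite one is case (c), your case $f_1<g$.

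The missing idea is that in case (b) the multiplication is not forced. The restriction $ga=ag$ of the connecting arrow has domain and range $g$, so it lies in $\{g,x\}$, and both values occur. This gives two types, and they are not isomorphic. The elements $a$ and $a^{-1}$ are the only ones outside all subgroups, and their sum $a+a^{-1}$ is the idempotent $g$ in one type and the nonidempotent $x$ in the other. Adding the antichain (the flat $R_7$) and case (c) (where, as you say, $xf_1=f_1$ is forced) gives exactly four. The three order types are separated by their numbers of minimal nonzero idempotents: three, one and two.

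To make the enumeration exhaustive and to confirm that each candidate really is a natural inverse semiring, the paper uses the two projections $\phi_D$ of Theorem~\ref{thm:local-clifford-representation}. These embed any such $S$ into $B_2\times\flat(C_2)$. The image is then determined by the order type together with, in case (b), the $C_2$-coordinate $t$ of the arrow. The four resulting images $R_7$, $R_1$, $R_g$, $R'_7$ are checked directly for closure under product, inversion and coordinatewise meet. As written, your enumeration stops at two types and cannot reach the stated count.
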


\begin{proof}
An $S_7$-free finite member is locally Clifford by
Theorem~\ref{thm:s7-finite-detection}.
Its orthogonalization has the same size and identities;
Theorem~\ref{thm:seven} gives the lower bound seven,
with orthogonalization necessarily $R_7$ at equality.
A non-locally-Clifford member has $B_2^1$ as a
semiring divisor and therefore has at least six elements.
The six-element $B_2^1$ contains $S_7$, so is
nonfinitely based by Lemma~\ref{lem:s7-snfb}.
If a six-element $S$ has this divisor, restrict the
quotient map to the inverse image of that subsemiring.
The resulting surjection from a subsemiring of $S$
onto $B_2^1$ must be a bijection on all of $S$.
This proves the six-element assertion.

We now describe and classify the seven-element cases.
Put $C_2=\{1,g\}$, and work inside
$P=B_2\times \flat(C_2)$, with coordinatewise
natural meet. We identify $R_7$ with its coordinate-axis
copy. The four subsemirings are
\begin{align*}
 R_7&=\{(e_{ij},0):1\leq i,j\leq2\}
             \cup\{(0,1),(0,g),(0,0)\},\\
 R_t&=\{(e_{11},1),(e_{22},1),(e_{12},t),(e_{21},t)\}
             \cup\{(0,1),(0,g),(0,0)\},
             \quad t\in\{1,g\},\\
 R'_7&=\{(e_{ij},0):1\leq i,j\leq2\}
             \cup\{(e_{11},1),(e_{11},g),(0,0)\}.
\end{align*}
The other three semirings are nonflat.

For closure of $R_t$, matching matrix products have
the displayed labels because $1t=t$ and $t^2=1$.
Nonmatching products have first coordinate zero and
land in its last three elements, as do all products
involving those elements.  Inversion interchanges the
off-diagonal entries, with $t^{-1}=t$.
The meet of distinct matrix elements has first
coordinate zero, so it too is retained.
For $R'_7$, the four matrix-axis elements,
together with $(0,0)$, form $B_2$.
The two additional nonzero elements have the same
first coordinate $e_{11}$, and form a group of
order two; their products with the other elements
stay on the matrix axis.  Their meet is $(e_{11},0)$,
and their meets with any other matrix element are
$(e_{11},0)$ or $(0,0)$, as appropriate.
The axis construction proves closure for $R_7$.
Thus all four are inverse subsemigroups closed under
the ambient natural meet, with that meet as their own
natural meet.  Both coordinate projections are onto.
Hence all four generate
\[
 \mathsf{V}(B_2)\vee\mathsf{V}(\flat(C_2))
 =\mathsf{V}(B_2(C_2)),
\]
so are nonfinitely based and $S_7$-free.

Let $S$ be any seven-element
$S_7$-free nonfinitely based member.
Its orthogonalization is $R_7$, so its nonzero
$\mathcal D$-classes are
\[
 M=\{e,f,a,b\},\qquad H=\{h,g\}\cong C_2,
\]
where $e,f$ are the two matrix idempotents, $a,b$
are inverse connecting arrows, and $h$ is the unit
of $H$.  The two projections of
Theorem~\ref{thm:local-clifford-representation} embed
$S$ into $B_2\times \flat(C_2)$.
The idempotents $e,f$ are incomparable and cannot
both lie below $h$, by local Cliffordness.
If $h\leq_{\rm nat} e$, conjugating by a connecting arrow
gives an idempotent in the class $H$ below $f$.
It must be $h$, the unique idempotent of $H$.
Thus $h\leq_{\rm nat} e$ implies $h\leq_{\rm nat} f$, and conversely.
Up to exchanging $e,f$, the only possible orders are
\begin{enumerate}[$(\rm a)$]
\item $e,f,h$ pairwise incomparable;
\item $h<_{\rm nat}e$ and $h<_{\rm nat}f$;
\item $e<_{\rm nat}h$, with $f$ incomparable to $h$.
\end{enumerate}
In (a) the cross-class restrictions all vanish, so the
projection image is $R_7$.
In (b), the $H$-coordinates of $e,f$ are both
$1$.  The coordinate of $a$ is a value
$t\in\{1,g\}$, and that of $b$ is $t^{-1}=t$.
The matrix coordinates of $h,g$ vanish, giving
exactly $R_1$ or $R_g$.
In (c), all $H$-coordinates of $M$ vanish.
Both $h,g$ project to $e_{11}$, since their
restriction lies in the trivial group there.
This gives exactly $R'_7$.
The injective projections determine the full operations,
so there are no further cases or parameters.

The orders (a), (b), (c) have, respectively, three,
one, and two nonzero minimal idempotents, distinguishing
the three families.  In $R_1$ and $R_g$, the two
off-diagonal elements are the only elements outside
all subgroups.  Their sum is the idempotent $h$
in $R_1$, but the nonidempotent $g$ in $R_g$.
Thus these two are also nonisomorphic.
The final minimum-generator assertion follows from
Corollary~\ref{cor:cyclic-natural-minimum} at $r=2$.
\end{proof}

\newpage
\section{Products and infinite constructions}
\label{sec:products-infinite}

\subsection{Products and adjoining identities}
\label{sec:arbitrary-local-families}
\label{sec:boundary}

\begin{theorem}\label{thm:product-fb}
Let $S_1,\ldots,S_t$, $t\geq1$, be finite
natural inverse semirings.
Their direct product is finitely based if and only if
either
\begin{enumerate}[$(\rm i)$]
\item every $S_i$ is locally Clifford and combinatorial;
or
\item every $S_i$ is Clifford, and every Sylow subgroup
of every maximal subgroup of every $S_i$ is abelian.
\end{enumerate}
The same criterion holds for their orthogonal sum and
for the finite join $\bigvee_i\mathsf{V}(S_i)$.

In particular, if all factors are finitely based, their
product is finitely based exactly when all factors are
combinatorial or all factors are Clifford.
It is nonfinitely based exactly when some factor is
non-Clifford and some other factor has a nontrivial subgroup.
\end{theorem}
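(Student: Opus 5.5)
The plan is to reduce all three objects to one finite natural inverse semiring and then read off the answer from Theorem~\ref{thm:finite-natural-classification}. Put $P=S_1\times\cdots\times S_t$.

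\emph{Step 1: $P$ is a finite natural inverse semiring.} The product of inverse semigroups is inverse, with coordinatewise inverses. Its natural order is coordinatewise, since $a=ba^{-1}a$ holds in $P$ exactly when it holds in every coordinate. Hence coordinatewise meets are natural meets, and the product semiring is the natural inverse semiring of the product semigroup.

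\emph{Step 2: the three varieties coincide.} By definition, $\mathsf{V}(P)=\bigvee_i\mathsf{V}(S_i)$. Every finite $S_i$ with more than one element has its natural absorbing zero, as in Corollary~\ref{cor:finite-clifford-natural}. Singleton factors contribute only the trivial variety and can be discarded; if all factors are singletons, everything is trivial and finitely based, and (i) holds. The orthogonal sum embeds in $P$ as the union of the coordinate axes and contains every summand. So it generates the same variety as $P$, exactly as in the proof of Theorem~\ref{thm:orthogonal}. Finite basability depends only on the variety, so it suffices to classify $P$.

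\emph{Step 3: transfer the structural conditions coordinatewise.} An idempotent of $P$ is a tuple $e=(e_i)$, and its local monoid is $ePe=\prod_i e_iS_ie_i$. A product of monoids is Clifford iff every factor is, so $P$ is locally Clifford iff every $S_i$ is, and $P$ is Clifford iff every $S_i$ is. The maximal subgroup of $P$ at $e$ is $\prod_i G_{e_i}$. Its Sylow $p$-subgroups are products of Sylow $p$-subgroups of the factors, hence abelian iff each factor's is. In particular $P$ is combinatorial iff every $S_i$ is. Taking $e_j=0$ for $j\ne i$ shows that every maximal subgroup of every factor actually occurs, up to a trivial factor. Theorem~\ref{thm:finite-natural-classification} applied to $P$ then yields exactly conditions (i) and (ii).

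\emph{Step 4: the case where all factors are finitely based.} Each $S_i$ satisfies (i) or (ii) of Theorem~\ref{thm:finite-natural-classification}. If all are combinatorial, (i) holds for the product; each is locally Clifford because either alternative implies it. If all are Clifford, (ii) holds, since a Clifford finitely based factor has abelian Sylow subgroups. Conversely, (i) or (ii) for the product forces all factors to be combinatorial or all to be Clifford.

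For the last sentence, suppose some factor is non-Clifford and some factor has a nontrivial subgroup. Then neither (i) nor (ii) holds for the product. If no such pair of factors exists, then either all factors are Clifford or all are combinatorial, and Step 4 applies. A single factor cannot play both roles, since a finitely based non-Clifford factor is combinatorial; so the two factors must be different.

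I expect the main obstacle to be Step 1 together with the local-monoid computation in Step 3. The point is to verify carefully that the natural order and local monoids of a product are coordinatewise, so that "locally Clifford" really does transfer in both directions. The rest is bookkeeping on top of Theorem~\ref{thm:finite-natural-classification}.
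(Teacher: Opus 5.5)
Your proposal is correct and follows essentially the same route as the paper. Both arguments transfer the natural order, local monoids, maximal subgroups and Sylow subgroups coordinatewise, apply Theorem~\ref{thm:finite-natural-classification}, identify the three varieties via the coordinate-axis embedding of the orthogonal sum, and note that a finitely based non-Clifford factor must be combinatorial. You are slightly more explicit than the paper about discarding singleton factors and about using zero coordinates to realize each factor's maximal subgroups inside the product.
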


\begin{proof}
The product has natural meets coordinatewise.
Cliffordness, local Cliffordness, and being combinatorial
hold in the product precisely when they hold in every
factor; local monoids and maximal subgroups are the
corresponding direct products.  The Sylow subgroups of
a finite product group are the products of its Sylow
subgroups, so their being abelian is also coordinatewise.
Apply Theorem~\ref{thm:finite-natural-classification}.
The product generates the join of the factor varieties.
Their orthogonal sum embeds on the coordinate axes,
because zero absorbs both operations; each summand also
embeds in that sum.  Hence it generates the same join.
If the factors are already finitely based, they are all
locally Clifford and all their maximal subgroups have
abelian Sylow subgroups.  The criterion reduces to the
last assertion.  The two offending features cannot occur
in one factor, since that factor would not be finitely based.
\end{proof}

\begin{corollary}[Finite natural inverse monoids]
\label{cor:inverse-monoid-fb}
A finite inverse monoid with natural meet addition is
finitely based if and only if it is Clifford and every
Sylow subgroup of every maximal subgroup is abelian.
\end{corollary}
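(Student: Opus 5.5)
The plan is to reduce to Theorem~\ref{thm:finite-natural-classification} by showing that a finite inverse monoid with natural meet addition which is locally Clifford is automatically Clifford. This eliminates alternative (i) except in a degenerate case that is already Clifford. Let $M$ be such a monoid with identity $1$. The identity is an idempotent, and the local monoid $1M1=M$ is the whole semigroup. Hence, by definition, $M$ is locally Clifford if and only if $M$ itself is Clifford. No finiteness is needed for this observation.

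Now apply Theorem~\ref{thm:finite-natural-classification} in either signature. If $M$ is finitely based, then (i) or (ii) holds. In case (ii), $M$ is Clifford and all Sylow subgroups of its maximal subgroups are abelian, which is the asserted condition. In case (i), $M$ is locally Clifford, hence Clifford by the observation above, and it is also combinatorial. So every maximal subgroup is trivial, its Sylow subgroups are trivially abelian, and the condition holds again.

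Conversely, suppose $M$ is Clifford with abelian Sylow subgroups in every maximal subgroup. Then alternative (ii) applies directly, so $M$ is finitely based.

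The only point needing care is that the classification theorem is stated for finite inverse semigroups whose natural order has binary meets. Here the monoid identity is treated as an ordinary element of the algebra, so no constant is added to the signature and the theorem applies verbatim. The singleton monoid is trivially both Clifford and finitely based, so it poses no exception.

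No step seems to be a real obstacle. Some ``main'' content is hidden in the earlier detection of $S_7$ via Theorem~\ref{thm:s7-finite-detection}: a non-Clifford monoid is not locally Clifford, so it has $S_7$ in its variety and is therefore SNFB. It is still worth recording this as a remark, since it gives the stronger conclusion in the negative non-Clifford case.
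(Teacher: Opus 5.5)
Your proposal is correct and follows essentially the paper's own route. The paper likewise observes that taking the global identity as the corner idempotent makes ``locally Clifford'' equivalent to ``Clifford'' for a monoid, and then applies Theorem~\ref{thm:finite-natural-classification}; your handling of alternative (i), where combinatorial maximal subgroups trivially have abelian Sylow subgroups, just makes explicit what the paper leaves implicit.
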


\begin{proof}
A monoid is locally Clifford only if it is Clifford,
by taking its global identity as the corner idempotent;
the converse holds as well.
Apply Theorem~\ref{thm:finite-natural-classification}.
\end{proof}

\begin{theorem}\label{thm:arbitrary-local-family}
Let $(S_\lambda)_{\lambda\in\Lambda}$ be a nonempty family
of locally Clifford inverse semigroups with natural meet
addition, with no cardinality restrictions on the family or
its members.  Put
$\mathcal W=\bigvee_\lambda\mathsf{V}(S_\lambda)
             =\mathsf{V}(\prod_\lambda S_\lambda)$.
If every factor is one-element, $\mathcal W$ is trivial.
Otherwise discard one-element factors and, in the binary
language, adjoin a zero to any remaining factor lacking one.
These changes preserve $\mathcal W$.
For these factors put
\[
 G = \prod_{\lambda\in\Lambda}
       \prod_{D\ne\{0\}}G_{\lambda,D},
\]
where one maximal subgroup is chosen in each nonzero
$\mathcal D$-class of $S_\lambda$.
Then
\[
 \mathcal W=\mathsf{V}\left(\bigoplus_\lambda^{0} S_\lambda\right)
 =\begin{cases}
   \mathsf{V}(B_2(G)),&\text{if some factor is not Clifford},\\
   \mathsf{V}(\flat(G)),&\text{if every factor is Clifford}.
  \end{cases}
\]
Consequently:
\begin{enumerate}[$(\rm i)$]
\item If some factor is not Clifford, $\mathcal W$ is
finitely based exactly when every factor is combinatorial.
Every negative case has no bounded-variable identity basis.
\item If every factor is Clifford, $\mathcal W$ is
finitely based exactly when $G$ has finite exponent and
has a finite basis for its group quasi-equational theory.
If $G$ has no finite exponent, $\mathcal W$ has no
bounded-variable identity basis.
\end{enumerate}
The finite-exponent condition means that all maximal subgroups
throughout the entire family share a common finite exponent.
\end{theorem}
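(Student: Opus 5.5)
We reduce everything to the single-algebra results already proved: Theorem~\ref{thm:local-clifford-representation}, Theorem~\ref{thm:orthogonal}, Corollary~\ref{cor:brandt-classification} with Theorem~\ref{thm:all-groups}, and Theorem~\ref{thm:flat-group-all}.

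\emph{Preliminary reductions.} Since $\mathsf{V}$ of a product is the join of the factor varieties, $\mathcal W=\mathsf{V}(\prod_\lambda S_\lambda)$ is immediate. One-element factors generate the trivial variety, so discarding them leaves the join unchanged. If all factors are trivial, $\mathcal W$ is trivial. Otherwise each remaining factor is a non-singleton inverse semigroup with natural meets. In the binary language we adjoin a zero exactly as in the remark following Corollary~\ref{cor:local-clifford-all}. Two comparable distinct idempotents force every valid identity to have matching variable content, so $\mathsf{V}(S_\lambda)=\mathsf{V}(S_\lambda^0)$. Adjoining the zero preserves local Cliffordness, Cliffordness and natural meets. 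It also preserves the nonzero $\mathcal D$-classes and their maximal subgroups, so the definition of $G$ is unaffected.

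\emph{Variety formula.} For each remaining $S_\lambda$, Theorem~\ref{thm:local-clifford-representation} gives $\mathsf{V}(S_\lambda)=\mathsf{V}\bigl(\bigoplus^0_{D}\mathcal B_D\bigr)$, with $\mathcal B_D\cong B_{I_D}(G_{\lambda,D})$. Taking the join over $\lambda$ gives $\mathcal W$ as the join of all the $\mathsf{V}(\mathcal B_{\lambda,D})$. This join is the variety of the single orthogonal sum over all pairs $(\lambda,D)$, by the coordinate-axis argument in the proof of Theorem~\ref{thm:orthogonal}, which imposes no cardinality restriction. The same argument shows that $\bigoplus^0_\lambda S_\lambda$ generates $\mathcal W$: it embeds on coordinate axes in the product and contains every summand.

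Applying \eqref{eq:orthogonal-normal-form} to the big orthogonal sum yields $\mathsf{V}(B_2(G))$ or $\mathsf{V}(\flat(G))$. The case depends on whether some $|I_{\lambda,D}|\ge2$. That happens exactly when some $\mathcal D$-class of some $S_\lambda$ contains two idempotents, that is, when some factor is not Clifford (by the remark after Proposition~\ref{prop:regular-structure}, applied blockwise).

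\emph{Consequences.} In case (i), Corollary~\ref{cor:brandt-classification} and Theorem~\ref{thm:all-groups} apply to $B_2(G)$. The variety is finitely based iff $G$ is trivial, which happens iff every $G_{\lambda,D}$ is trivial, that is, iff every factor is combinatorial. Every negative case has no bounded-variable basis. In case (ii), Theorem~\ref{thm:flat-group-all} applies to $\flat(G)$ directly. A product group has finite exponent iff its factors share a common finite exponent. Each maximal subgroup of $S_\lambda$ is isomorphic to the chosen one in its $\mathcal D$-class, so this is exactly the stated condition on all maximal subgroups. Lemma~\ref{lem:constant} transfers both conclusions to the named-zero signature.

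The main point needing care is the first step. We must check that zero adjunction and the removal of trivial factors leave the join invariant in both signatures. We must also check that the orthogonal-sum argument of Theorem~\ref{thm:orthogonal} is truly cardinality-free when applied to a doubly indexed family. Both checks amount to reusing the earlier proofs verbatim.
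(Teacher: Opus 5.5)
Your proposal is correct and follows the paper's proof essentially step for step. Both proofs use the same sequence: discard trivial factors and adjoin zeros, apply the subdirect Brandt representation to each factor, use the cardinality-free orthogonal-sum formula over the doubly indexed family with the coordinate-axis embedding of $\bigoplus^0_\lambda S_\lambda$, and finish with the all-group Brandt classification for (i) and Theorem~\ref{thm:flat-group-all} for (ii), computing exponents coordinatewise.

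Two small remarks, neither a gap. The remark after Proposition~\ref{prop:regular-structure} is stated for flat semirings, but the fact you need from it is immediate from $d(s)\mathrel{\mathcal D}r(s)$. Your final appeal to Lemma~\ref{lem:constant} is redundant, since the cited classifications are already stated in both signatures.
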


\begin{proof}
Apply Theorem~\ref{thm:local-clifford-representation} to
each factor, and take the join of its Brandt-factor varieties.
The arbitrary orthogonal-sum formula in
Theorem~\ref{thm:orthogonal} combines these factors into
the stated Brandt or flat-group variety.
A class with at least two idempotents occurs precisely when
one of the original factors is not Clifford.
The orthogonal sum of the original factors embeds on the
coordinate axes of their direct product, while each factor
embeds in that sum.  It therefore generates the same join.
The preprocessing without a zero uses the identity-preserving
zero-adjunction argument already used above.

For (i), $G$ is trivial precisely when every maximal
subgroup of every factor is trivial.  Apply the all-group
Brandt classification and its bounded-variable conclusion.
For (ii), apply Theorem~\ref{thm:flat-group-all}.
Finally a direct product of groups has exponent dividing
$r$ exactly when each factor has exponent dividing $r$.
\end{proof}

The prism identities fail in $B_2^1$. For every $m$, assign
\[
 a_i=c_i=e_{11},\qquad \bar a_i=e_{12},\qquad
 \bar c_i=e_{21},\qquad d_i=1,
\]
with $b_i=1$ for $i\ne0$ and $b_0=e_{22}$.
Every retained prism word has value $e_{11}$: the
only special lower product is
$e_{12}e_{22}e_{21}=e_{11}$.
The missing word has value
$e_{11}e_{22}e_{11}=0$.
Thus $\mathbf{w}_0\preceq\mathbf{u}_m$ fails for every $m$.

Flat addition on the enlarged multiplicative monoid fails
distributivity: $e_{11}1=e_{11}e_{11}\ne0$ with
$1\ne e_{11}$, contrary to zero-cancellation.

Here $B_I^1(G)$ is equipped with natural meet addition,
so $1+s=s$ for a matrix idempotent $s$, and
$1+s=0$ for a nonidempotent nonzero matrix element.
The global identity is an element, not a named constant
in either signature of this paper.

\begin{theorem}[Brandt monoid reduction]\label{thm:brandt-monoid-join}
For every group $G$ and every $|I|\geq2$,
\begin{equation}\label{eq:brandt-monoid-join}
 \mathsf{V}(B_I^1(G))=\mathsf{V}(B_2^1(G))
            =\mathsf{V}(B_2^1)\vee\mathsf{V}(\flat(G)).
\end{equation}
\end{theorem}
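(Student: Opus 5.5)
We prove the two equalities separately: index independence, as in Proposition~\ref{prop:join}, and then a join decomposition modelled on the zero-smash argument, with the adjoined identity carried along.

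\emph{Index independence.} Choosing two indices of $I$ gives a natural-meet subsemiring $B_2^1(G)\subseteq B_I^1(G)$. It contains the identity and zero and is closed under products, inverses and meets, since a meet of elements supported on those two indices stays there or becomes $0$. So $\mathsf{V}(B_2^1(G))\subseteq\mathsf{V}(B_I^1(G))$.

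For the reverse inclusion with finite $I$, we extend Lemma~\ref{lem:smash} to monoids. In $\prod_{j\le r}B_2^1(G)$, take the subsemiring whose coordinates are either all equal to $1$ or all in $B_2(G)$. Collapse the tuples with a zero coordinate; this set is an ideal that absorbs meet. The resulting Rees quotient is $B_{2^r}^1(G^r)$:
\begin{itemize}
\item the nonidentity part is exactly the smash quotient of Lemma~\ref{lem:smash}, with matching natural meets;
\item the identity tuple meets a tuple of diagonal idempotents to that tuple;
\item the identity tuple meets any other tuple to one with a zero coordinate, matching $1+s=s$ or $0$ in $B_{2^r}^1(G^r)$.
\end{itemize}
Applying this construction with $G$ replaced by $G^r$, or simply selecting indices and a diagonal group embedding $G\to G^r$, places $B_I^1(G)$ inside $\mathsf{V}(B_2^1(G))$. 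Infinite $I$ reduces to finite $I$ because each evaluation involves finitely many indices, as in Proposition~\ref{prop:join}.

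\emph{Join decomposition.} The containment $\supseteq$ is immediate. $B_2^1$ embeds in $B_2^1(G)$ via identity labels. $\flat(G)$ is a Rees quotient of the subsemiring $\{(1,g,1):g\in G\}\cup\{0\}$; in fact this set is already isomorphic to $\flat(G)$, because distinct elements of one group meet to $0$ in the natural order.

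For $\subseteq$, consider $P=B_2^1\times\flat(G)^1$, where $\flat(G)^1$ is $\flat(G)$ with an adjoined identity $1'$ and natural meet. Take the subsemiring
\[
T=\{(1,1')\}\cup\bigl(B_2\times\flat(G)\bigr).
\]
Collapsing the tuples with a zero coordinate gives $B_2^1(G)$, by the smash computation together with the identity checks above. Here the meet of $(1,1')$ with $(e_{ii},g)$ is $(e_{ii},0)$ unless the labels match, exactly as in $B_2^1(G)$.

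\emph{Main obstacle.} This last step has a gap: $P$ involves $\flat(G)^1$, not $\flat(G)$, so it only shows membership in $\mathsf{V}(B_2^1)\vee\mathsf{V}(\flat(G)^1)$. To close it, we use that $\mathsf{V}(\flat(G)^1)\subseteq\mathsf{V}(B_2^1)\vee\mathsf{V}(\flat(G))$. Equivalently, we replace the second coordinate of $(1,1')$ by a coordinate in which the identity is represented by a group element. Concretely, map the identity to $(1,1_G)$ and each element $(i,g,j)$ to $((i,1,j),g)$ in $B_2^1\times\flat(G)$. We must check that the meet of $(1,1_G)$ with $((i,1,i),g)$ equals $((i,1,i),0)$ when $g\ne1$. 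This element has a zero coordinate, so it collapses correctly.

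The remaining delicate check is that the collapsed set (tuples with a zero coordinate) is closed under meet with everything. It is, since $0$ absorbs meet coordinatewise. It is also closed under products with $(1,1_G)$, which is a two-sided identity coordinatewise. Verifying these meet rules is the main point of care. The other steps are routine adaptations of Lemma~\ref{lem:smash} and Proposition~\ref{prop:join}.
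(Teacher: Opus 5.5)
Your proposal is correct and is essentially the paper's proof. Index independence uses the same zero-smash quotient with the identity tuple adjoined (the paper takes an arbitrary power $A^K$, so infinite $I$ is handled directly rather than by your finite-evaluation reduction), and the join uses the same subsemiring $J\cup\{(e_{ij},g)\}\cup\{(1,1_G)\}$ of $B_2^1\times\flat(G)$. The detour through $\flat(G)^1$ is unnecessary, since your concrete fix is exactly the paper's construction and does not need the inclusion $\mathsf{V}(\flat(G)^1)\subseteq\mathsf{V}(B_2^1)\vee\mathsf{V}(\flat(G))$.
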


\begin{proof}
First put $A=B_2^1(G)$, and take a nonempty set $K$.
Inside $A^K$, let $J$ be the set of tuples with a
zero coordinate, put
$C=(B_2(G)\setminus\{0\})^K$, and let $u$ be
the tuple of global identities.  The set
$R=J\cup C\cup\{u\}$ is a subsemiring.
Indeed $J$ absorbs both operations; products and sums
of two members of $C$ belong to $C\cup J$.
For $c\in C$, the sum $u+c$ equals $c$ if
all coordinates of $c$ are matrix idempotents, and
otherwise belongs to $J$.
Collapsing $J$ gives
\[
 R/J\cong B_{\{1,2\}^K}^1(G^K).
\]
The identification collects the left indices, group
labels, and right indices of a tuple, and sends $u$
to the global identity.  In particular it preserves
the natural meet with that identity.
Choose nonempty $K$ with $I\hookrightarrow\{1,2\}^K$
and use the diagonal embedding $G\hookrightarrow G^K$.
This embeds $B_I^1(G)$ in the quotient above.
The reverse variety inclusion follows by selecting two
indices of $I$, proving index independence.

For the remaining equality, inside $B_2^1\times
\flat(G)$ let $J$ again consist of elements
with a zero coordinate, and put
\[
 R=J\cup\{(e_{ij},g):i,j\in\{1,2\},\ g\in G\}
       \cup\{(1,1_G)\}.
\]
The same closure argument applies.  Specifically,
$(1,1_G)+(e_{ij},g)$ stays nonzero modulo $J$
exactly when $i=j$ and $g=1_G$.
Thus $R/J\cong B_2^1(G)$.
Conversely $B_2^1$ embeds using identity group labels,
and $\flat(G)$ embeds as a diagonal corner with
zero.  This proves the join formula.
All subsemirings and quotients preserve zero.
\end{proof}

\begin{corollary}\label{cor:brandt-monoid-fb}
If $H$ is a finite
group and $|I|\geq2$, then $B_I^1(H)$ is
nonfinitely based, including infinite index sets.
More generally the same conclusion holds for $B_I^1(G)$
whenever there are group embeddings
$H\leq G\leq H^K$ for a finite group $H$ and
a nonempty set $K$.
\end{corollary}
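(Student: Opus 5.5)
The plan is to reduce to the six-element monoid $B_2^1$ and then use the strong nonfinite basability of $S_7$. By Theorem~\ref{thm:brandt-monoid-join},
\[
 \mathsf{V}(B_I^1(G))=\mathsf{V}(B_2^1)\vee\mathsf{V}(\flat(G))
\]
for every group $G$ and every $|I|\geq2$. Since $B_2^1$ contains $\{1,e_{12},0\}\cong S_7$ (as noted in the proof of Theorem~\ref{thm:s7-finite-detection}), we always have $S_7\in\mathsf{V}(B_I^1(G))$. It therefore suffices to show that $\mathsf{V}(B_I^1(G))$ is generated by a single finite algebra $T$. Then the equal-varieties clause of Lemma~\ref{lem:s7-transfer}, together with Lemma~\ref{lem:s7-snfb}, gives nonfinite basability of $B_I^1(G)$, even though $B_I^1(G)$ may itself be infinite.

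First consider the case $H$ finite and $G=H$. Put $T=B_2^1(H)$, which is finite. By Theorem~\ref{thm:brandt-monoid-join}, $\mathsf{V}(B_I^1(H))=\mathsf{V}(B_2^1(H))$ for every $|I|\geq2$, including infinite $I$. Hence $\mathsf{V}(B_I^1(H))=\mathsf{V}(T)$, and the argument above applies.

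For the general sandwich $H\leq G\leq H^K$, the aim is $\mathsf{V}(\flat(G))=\mathsf{V}(\flat(H))$. For the lower inclusion, $\flat(H)$ embeds in $\flat(G)$ as a subsemiring, with $0$ retained. For the upper inclusion, $\flat(G)$ embeds into $\flat(H^K)$, and Lemma~\ref{lem:smash} with all $I_\lambda$ singletons exhibits $\flat(H^K)$ as a quotient of $\flat(H)^K$. Hence $\flat(H^K)\in\mathsf{V}(\flat(H))$. Combining this with the join formula of Theorem~\ref{thm:brandt-monoid-join} gives
\[
 \mathsf{V}(B_I^1(G))=\mathsf{V}(B_2^1)\vee\mathsf{V}(\flat(H))=\mathsf{V}(B_2^1(H)),
\]
which is generated by the finite algebra $B_2^1(H)$. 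We conclude as before.

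The main point requiring care is the transfer step. Lemma~\ref{lem:s7-snfb} is stated for finite generators, so the argument must pass explicitly through equality of varieties with the finite $T$, rather than through finiteness of $B_I^1(G)$ itself. The named-zero signature, if needed, follows from Lemma~\ref{lem:constant}. No use of the prism witnesses is required here.
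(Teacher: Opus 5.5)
Your proposal is correct and follows the paper's own argument. You use Theorem~\ref{thm:brandt-monoid-join} and the zero-smash lemma to get $\mathsf{V}(B_I^1(G))=\mathsf{V}(B_2^1(H))$, a variety generated by a finite semiring containing $S_7$, and you conclude via Lemma~\ref{lem:s7-snfb} applied to that finite generator, which is exactly what the paper's phrase ``the identical finite-basis status follows'' records.
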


\begin{proof}
The finite semiring $B_2^1(H)$ contains
$\{1,e_{12},0\}\cong S_7$, and is nonfinitely based
by Lemma~\ref{lem:s7-snfb}.
For the more general assertion the zero-smash lemma
and a diagonal embedding give
$\mathsf{V}(\flat(H^K))=\mathsf{V}(\flat(H))$.
The group embeddings therefore imply
$\mathsf{V}(\flat(G))=\mathsf{V}(\flat(H))$.
Equation~\eqref{eq:brandt-monoid-join} yields
$\mathsf{V}(B_I^1(G))=\mathsf{V}(B_2^1(H))$, so the identical
finite-basis status follows.
\end{proof}

This includes infinite direct powers and infinite
restricted direct products of a finite group $H$.
Taking $H$ nonabelian gives infinite nonabelian examples.
The finite-exponent case was established in
\cite[Corollary 4.12]{JRZ}.

\begin{corollary}[Brandt monoids of finite exponent]
\label{cor:brandt-monoid-finite-exponent}
For any group $G$ of finite exponent and any index
set $I$ with $|I|\geq2$, the natural inverse
semiring $B_I^1(G)$ is nonfinitely based.
\end{corollary}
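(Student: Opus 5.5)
The plan is to reduce to a finite group and then apply the previous corollary. Let $r$ be the exponent of $G$, which is finite by hypothesis. By Theorem~\ref{thm:brandt-monoid-join},
\[
 \mathsf{V}(B_I^1(G))=\mathsf{V}(B_2^1)\vee\mathsf{V}(\flat(G)),
\]
so everything depends on $\mathsf{V}(\flat(G))$. The aim is to find a finite group $H$ with $\mathsf{V}(\flat(G))=\mathsf{V}(\flat(H))$. Once that is done, the equation gives $\mathsf{V}(B_I^1(G))=\mathsf{V}(B_2^1(H))$. The finite semiring $B_2^1(H)$ contains $\{1,e_{12},0\}\cong S_7$, hence is SNFB by Lemma~\ref{lem:s7-snfb}. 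Lemma~\ref{lem:s7-transfer}, in its equal-varieties form, then shows that $B_I^1(G)$ is nonfinitely based, even though $G$ itself may be infinite.

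To find $H$, I would use that the identities of $\flat(G)$ involving $n$ variables depend only on the $n$-generated subgroups of $G$. Every evaluation lies in $\flat$ of the subgroup generated by the assigned values, together with zero. If $G$ is locally finite of finite exponent and there is a uniform bound on the orders of its $n$-generated subgroups for each $n$ (restricted Burnside), one could hope for a finite $H$. However, $G$ might be a non-locally-finite Burnside group, and then no single finite $H$ exists. So I would not rely on that route in general.

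The more robust route bypasses $H$ and shows directly that $S_7$ and $B_2^1$ obstruct any finite basis of the join. For this I would exploit the locally finite variety generated by $B_2^1$ together with exponent-$r$ identities. Since $\mathsf{V}(B_I^1(G))\supseteq\mathsf{V}(B_2^1)\ni S_7$, the SNFB argument applies only to finite generators, so finite generation of the variety is the key issue. The main obstacle is therefore exactly this step: exhibiting, or circumventing the need for, a finite algebra generating $\mathsf{V}(B_I^1(G))$.

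My first attempt would be the following. For an assumed finite basis $\Sigma$ with at most $n$ variables, every model of $\Sigma$ satisfies the same $n$-variable identities. I would then take $H$ to be the relatively free group on $n$ generators in the group variety generated by $G$, assuming it is finite; this holds in all locally finite cases. The corresponding finite $B_2^1(H)$ then satisfies $\Sigma$. I would then try to import the proof of Lemma~\ref{lem:s7-snfb}, namely its family of witnesses, to show that these witnesses satisfy all identities in few variables. If the relatively free group is infinite, the fallback is to run the $S_7$-witness argument directly on $B_I^1(G)$. For this I would check that the witnesses used in Lemma~\ref{lem:s7-snfb} have $n$-generated subsemirings lying in $\mathsf{V}(B_2^1)\subseteq\mathsf{V}(B_I^1(G))$. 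This is plausible because those witnesses are combinatorial and need only the $S_7$-part, with the group playing no role.
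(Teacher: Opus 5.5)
Your first paragraph is correct, but it only covers groups with $\mathsf{V}(\flat(G))=\mathsf{V}(\flat(H))$ for some finite $H$. That case is essentially Corollary~\ref{cor:brandt-monoid-fb}. Such an $H$ need not exist, even for a locally finite $G$ of exponent $3$ whose $n$-generated subgroups have uniformly bounded order. Take the group of Proposition~\ref{prop:infinite-clifford-sylow-boundary}: a finite $H$ would make $\flat(H)$ finitely based while $\flat(\operatorname{UT}_3(\mathbb F_3))$ lies in its variety, contrary to \cite[Theorem 6.1]{JRZ}.

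For the general case your proposal gives no argument, and both suggested repairs fail at the same point.
\begin{itemize}
\item In the ``first attempt'', $B_2^1(H)$ lies in $\mathsf{V}(B_I^1(G))$. The witnesses that Lemma~\ref{lem:s7-snfb} supplies for it are therefore excluded only from the possibly smaller variety $\mathsf{V}(B_2^1(H))$. They satisfy $\Sigma$, but nothing places them outside $\mathsf{V}(B_I^1(G))$, so no contradiction results.
\item In the fallback, having all $n$-generated subsemirings in $\mathsf{V}(B_2^1)$ is only half of what is needed. Each witness must also \emph{fail} an identity valid in $B_I^1(G)$, that is, in the join with $\mathsf{V}(\flat(G))$.
\end{itemize}
That second requirement is exactly where finite exponent has to enter, for instance through a periodicity law such as $x^{r+2}\approx x^2$, which holds in $B_I^1(G)$ when $r$ is the exponent of $G$. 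Your remark that ``the group plays no role'' is therefore the wrong diagnosis: your plan never uses the hypothesis. Moreover, Lemma~\ref{lem:s7-snfb} concerns only finite generating algebras, so it cannot be applied as a black box here.

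The missing ingredient is a form of the $S_7$ obstruction for periodic, rather than finite, algebras. The paper does not reprove it. It cites \cite[Corollary 4.12]{JRZ}, which states directly that $B_2^1(G)$ is nonfinitely based for every group $G$ of finite exponent, infinite groups included. Theorem~\ref{thm:brandt-monoid-join} then gives $\mathsf{V}(B_I^1(G))=\mathsf{V}(B_2^1(G))$ for every $|I|\geq2$, and Lemma~\ref{lem:constant} handles the named-zero signature. Without that result, or a proof of it, your argument proves the corollary only when the flat variety of $G$ has a finite generator.
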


\begin{proof}
For $I=\{1,2\}$, this is
\cite[Corollary 4.12]{JRZ}, which allows infinite groups.
Theorem~\ref{thm:brandt-monoid-join} gives the assertion
for all $I$.  Lemma~\ref{lem:constant} transfers it
between the two signatures considered here.
\end{proof}

\subsection{Infinite examples}
\label{sec:infinite-natural-boundaries}

\begin{proposition}[An infinite finitely based extension]
\label{prop:s7-infinite-boundary}
There is a countably infinite, locally finite, finitely
based ai-semiring $A$ containing $S_7$ as a
subsemiring.  Its generated variety is locally finite.
\end{proposition}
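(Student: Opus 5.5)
The idea is to let $A$ be a countable algebra generating a variety $\mathcal V$ that is finitely based \emph{by definition} and locally finite \emph{by a word-counting argument}. By Lemma~\ref{lem:s7-snfb}, $\mathcal V$ cannot be generated by a finite algebra. So $A$ must be infinite, and the natural choice is a countable relatively free algebra modified so that it contains a copy of $S_7$.

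\emph{The variety.} Let $\mathcal V$ be the variety of ai-semirings defined, in the binary language, by the ai-semiring axioms together with
\[
 xy\approx yx,\qquad x^3\approx x^2 .
\]
Then $S_7\in\mathcal V$: it is commutative, $1^3=1=1^2$, $a^3=0=a^2$, and $0^3=0^2$. Local finiteness follows as for \eqref{eq:nilpotent-laws}. Every term is an idempotent sum of words. Modulo the two laws, a word in $n$ variables is determined by an exponent vector with entries in $\{0,1,2\}$, not all zero. Hence there are at most $3^n-1$ word values and at most $2^{3^n-1}$ term values. So $F_{\mathcal V}(n)$ is finite, and $\mathcal V$ is locally finite and finitely based.

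\emph{The algebra.} Let $F=F_{\mathcal V}(\aleph_0)$, the countably generated relatively free algebra. It is countably infinite, since the words $x_1,x_2,\ldots$ are pairwise distinct in it (use $S_7$ or the two-element semilattice to separate them), and it generates $\mathcal V$. All defining identities of $\mathcal V$ have the same variables on both sides, and this balance is preserved under deduction. Therefore adjoining a fresh absorbing element $0$ to $F$ yields $C=F\cup\{0\}\in\mathcal V$, by the same variable-content argument as in Remark~\ref{rem:clifford-nozero} and Lemma~\ref{lem:constant}. Put $A=C\times S_7$. Then $A\in\mathcal V$, $A$ is countably infinite, and $\mathsf V(A)=\mathcal V$ because $F\le C$ is a subalgebra of a projection image. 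Hence $A$ is finitely based, and $A$ is locally finite because it lies in a locally finite variety.

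\emph{Embedding $S_7$.} The element $0\in C$ satisfies $0\cdot0=0+0=0$. Consider $\{(0,1),(0,a),(0,0)\}\subseteq A$. Its operations are computed in the second coordinate only, so it is a subsemiring isomorphic to $S_7$.

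The step I expect to need the most care is the claim that adjoining an absorbing zero to $F$ stays inside $\mathcal V$. This is exactly where balancedness of all consequences of the defining laws is used. I would verify it directly: evaluate each side of an identity in $C$. If some variable is sent to $0$, both sides contain that variable, so both sides equal $0$. Otherwise every variable is sent into $F$, where the identity already holds. Everything else is routine counting.
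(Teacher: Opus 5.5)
Your proof is correct and is essentially the paper's: the same variety defined by $xy\approx yx$ and $x^2\approx x^3$, the same exponent-vector count for local finiteness, and $A$ a product of the countably generated free algebra with $S_7$. The one difference is that the paper skips the zero adjunction. It takes $A=F\times S_7$ and embeds $S_7$ by $s\mapsto(t,s)$, where $t=x^2$ for a free generator $x$, since $t^2=x^4=x^2=t$. Your balancedness check for $F\cup\{0\}$ is valid, but the paper needs that construction only in its named-zero variant.
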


\begin{proof}
Let $\mathcal U$ be defined by the ai-semiring axioms and
\[
 xy\approx yx,\qquad x^2\approx x^3.
\]
It is finitely based and contains $S_7$.
On $k$ generators, every multiplicative word reduces
to a commutative monomial with each exponent in
$\{0,1,2\}$; there are at most $3^k-1$ nonempty
such monomials.  Distributivity and idempotent addition
then leave only finitely many sums.  Thus $\mathcal U$
is locally finite.
Its countably generated free algebra $F$ is countably
infinite and generates $\mathcal U$; distinct free
generators are separated in a two-element semilattice
with the operation duplicated, which belongs to $\mathcal U$.
Put $A=F\times S_7$.  It still generates $\mathcal U$.
For any free generator $x$, the element $t=x^2$
satisfies $t^2=t$, and $s\mapsto(t,s)$ embeds
$S_7$ into $A$.
For the named-zero signature, add the three absorption
laws to $\mathcal U$ and use $s\mapsto(0_F,s)$.
The same normal form and argument apply.
\end{proof}

We construct a finitely based non-locally-Clifford natural
inverse semiring and a finitely based Clifford example of
bounded exponent containing a nonabelian finite subgroup.

We use the following established representation theorem
of Garvacki\u{\i} \cite{Garv71}, in the explicit formulation
of \cite[Theorem 2]{DT11}.
The algebras representable by partial one-to-one maps,
with composition and intersection, form the variety
$\mathcal R_{\cap}$ defined by the ai-semiring axioms
and the following five identities:
\begin{equation}\label{eq:partial-injection-basis}
\begin{aligned}
 xv+uv+uy&\approx xv+uv+uy+xy,\\
 v+uv+uy&\approx v+uv+uy+y,\\
 xv+v+y&\approx xv+v+y+xy,\\
 x+u+uy&\approx x+u+uy+xy,\\
 xv+uv+u&\approx xv+uv+u+x.
\end{aligned}
\end{equation}
The source writes the first formula with variables
allowed in the formally unitized semigroup, provided
all four displayed products are nonempty.
Its nontrivial specializations are exactly these five
binary-language identities.  Every nontrivial semilattice
with left-zero multiplication satisfies the first identity
and cannot be a semigroup of partial injections.
The meet-preserving Wagner--Preston representation puts
every natural inverse semiring in
$\mathcal R_{\cap}$; see also \cite[Section 7]{JRZ}.

\begin{lemma}[Finite traces]\label{lem:partial-injection-finite-trace}
Every identity failing in an algebra of partial injections
under composition and intersection already fails in the
full symmetric inverse semiring on a finite set.
This also holds with the empty map named as zero.
\end{lemma}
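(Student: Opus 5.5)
The plan is to reduce an arbitrary failing instance to a finite one by restricting the partial injections to a finite set of points that witnesses the failure. Let $A$ be an algebra of partial injections on a set $\Omega$, under composition and intersection. Suppose an identity $\mathbf{p}\approx\mathbf{q}$ fails under some assignment $x\mapsto\alpha_x$. Expanding both sides by distributivity, we may take each side to be a finite sum of words. Since intersection is the additive operation, the value of a sum is the intersection of the values of its words. A failure means the two resulting partial maps differ, so there is a point $\omega\in\Omega$ at which they disagree. Say $\omega$ lies in the domain of the left value with image $\omega'$, while the right value is either undefined at $\omega$ or sends it elsewhere. For each word in $\mathbf{p}$ (and each relevant word in $\mathbf{q}$), record the finite trajectory of $\omega$ through the successive letters. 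Let $\Phi\subseteq\Omega$ be the finite set of all points visited along all these trajectories, for all words of both sides.

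Next, define the restricted assignment $\beta_x=\alpha_x\cap(\Phi\times\Phi)$, which is a partial injection on the finite set $\Phi$, that is, an element of the symmetric inverse monoid $\mathcal I_\Phi$ with intersection. The key claim is a compatibility statement: for every word $\mathbf{w}$ of either side and every $\phi\in\Phi$, the trajectory of $\phi$ is computed identically under $\beta$ and $\alpha$ as long as it stays in $\Phi$, and under $\beta$ it is undefined as soon as it leaves $\Phi$. Consequently, at the chosen point $\omega$, every word of $\mathbf{p}$ still sends $\omega\mapsto\omega'$, because its entire trajectory lies in $\Phi$. Hence the left side under $\beta$ still maps $\omega\mapsto\omega'$.

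For the right side, each word in $\mathbf{q}$ either fails to be defined at $\omega$ or sends it to a different point. In both cases the restriction cannot make it defined with value $\omega'$, since restriction only shrinks graphs. If instead the right side sends $\omega$ to $\omega''\ne\omega'$ and the left side is undefined at $\omega$, the same argument applies with the roles swapped. So the identity fails in $(\mathcal I_\Phi,\circ,\cap)$, which is the full symmetric inverse semiring on a finite set.

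The one point needing care is the case where some word of the "defined'' side has its trajectory pass through points whose images under $\alpha$ leave $\Phi$. This cannot happen, because $\Phi$ was chosen to contain all visited points of the defined words; only undefined trajectories may escape, and those remain undefined. For the named-zero version, the empty map is the zero of both $A$, if present, and $\mathcal I_\Phi$. Restriction sends the empty map to the empty map, so any occurrences of $0$ in the identity are evaluated consistently and the argument is unchanged. I expect no real obstacle beyond this bookkeeping; the main step is the trajectory-compatibility claim, which follows by induction on word length.
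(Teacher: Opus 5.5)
Your proposal is correct and is essentially the paper's argument: you take a pair in the value of one side but not the other, restrict every assigned map to the finite set of points on the witnessing side's word paths, and use the fact that restriction only shrinks graphs to control the other side. One small slip: it is not true that each word of $\mathbf{q}$ omits $(\omega,\omega')$, only that their intersection does, but your monotonicity remark applied to the whole term $\mathbf{q}$ already gives the conclusion (adding the $\mathbf{q}$-trajectories to $\Phi$ is harmless but unnecessary).
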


\begin{proof}
Orient a failed identity as $\mathbf{p}\approx\mathbf{q}$ and choose a
pair $(a,b)$ in the value of $\mathbf{p}$ but not that of $\mathbf{q}$.
Distributivity expands each constant-free term into a
nonempty intersection of multiplicative words.
For every word occurring in $\mathbf{p}$, retain its path from
$a$ to $b$ under the given assignment.
Let $Y$ be the finite set of all vertices on these
finitely many finite paths.
Restrict every assigned partial injection to
$Y\times Y$.
All the selected paths remain, so $(a,b)$ still
belongs to the value of $\mathbf{p}$.
Composition and intersection are monotone under
restriction of the assigned relations.
The new value of $\mathbf{q}$ is therefore contained in its
old value and still omits $(a,b)$.
This is a failed evaluation in the symmetric inverse
semiring on $Y$.
If zero is named, any term containing it is the empty
map.  Such a term cannot be the chosen side $\mathbf{p}$;
if it is $\mathbf{q}$, the same argument applies.
\end{proof}

\begin{proposition}\label{prop:infinite-natural-fb-counterexample}
Let $T$ be the semiring of all partial bijections of
$\mathbb N$ having finite domain, with composition
and intersection.  Both $T$ and
\[
 T^1=T\cup\{\operatorname{id}_{\mathbb N}\}
\]
are countably infinite, locally finite natural inverse
semirings whose multiplicative semigroups are periodic
and not locally Clifford.  Both generate
$\mathcal R_{\cap}$, and hence have the explicit finite
basis above.  They contain $B_2^1$ and $S_7$.
All assertions hold in both stated signatures.
\end{proposition}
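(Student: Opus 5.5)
We prove the assertions one at a time. The structural facts come first. $T$ is closed under composition, inversion and intersection: finite-domain partial bijections stay finite-domain under all three. It is an inverse subsemigroup of the symmetric inverse monoid on $\mathbb N$. Its natural order is restriction of maps, so the natural meet of $s,t$ is the restriction to $\{x: s(x)=t(x)\}$, which is exactly the intersection $s\cap t$ of graphs. Adjoining $\operatorname{id}_{\mathbb N}$ preserves this: its products with elements of $T$ stay in $T$, and its intersection with $s$ is the restriction of $s$ to its fixed points, again the natural meet. Both sets are countable, since there are countably many finite partial bijections. The empty map is an absorbing zero for both operations. A finite-domain partial bijection $s$ has finite orbit structure, so some power $s^{n}$ is idempotent; thus both semigroups are periodic.

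Local finiteness comes next. Finitely many elements of $T$ have domains and ranges contained in a finite set $Y$, and everything they generate is a partial bijection on $Y$. So the generated subsemiring lies in the finite symmetric inverse semiring $I_Y$. For $T^1$ the identity may also appear among the generators, but it only adds at most one element to a finitely generated subsemiring of $T$, so local finiteness persists.

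Next we show that neither semigroup is locally Clifford and that both contain $B_2^1$ and $S_7$. Let $e=\operatorname{id}_{\{1,2\}}$, let $e_{ij}$ be the one-point maps $j\mapsto i$, and let $0=\varnothing$. The corner $eTe$ contains $e_{12}$, whose domain $\{2\}$ and range $\{1\}$ differ, so $eTe$ is not Clifford. The set $\{e,e_{11},e_{12},e_{21},e_{22},\varnothing\}$ is closed under composition. Its intersections are $e\cap e_{ii}=e_{ii}$, with all other intersections empty, which are exactly the natural-meet rules of $B_2^1$. So it is a copy of $B_2^1$, and $\{e,e_{12},\varnothing\}$ is a copy of $S_7$. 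The same subsets lie in $T^1$.

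Finally we identify the varieties. Both algebras are algebras of partial injections under composition and intersection, so $\mathsf V(T),\mathsf V(T^1)\subseteq\mathcal R_{\cap}$ by the cited representation theorem. For the reverse inclusion, take any identity failing in $\mathcal R_{\cap}$. It fails in some algebra of partial injections. By Lemma~\ref{lem:partial-injection-finite-trace} it then fails in the symmetric inverse semiring $I_Y$ on a finite set $Y$. Identifying $Y$ with a finite subset of $\mathbb N$ embeds $I_Y$ in $T$, and hence in $T^1$, preserving composition and intersection; the empty map is preserved as zero. So the identity fails in $T$ and in $T^1$. Hence both generate $\mathcal R_{\cap}$, and the five identities \eqref{eq:partial-injection-basis} together with the ai-semiring axioms form a finite basis.

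For the named-zero signature, Lemma~\ref{lem:constant} transfers finite basability, using that the empty map is the absorbing zero. The finite-trace lemma already covers named zero, and all embeddings above preserve $\varnothing$.

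The main obstacle is the reverse inclusion $\mathcal R_{\cap}\subseteq\mathsf V(T)$, and in particular the fact that every member of $\mathcal R_{\cap}$ is representable by partial injections. This is the Garvacki\u{\i} representation theorem, which we invoke rather than reprove. We must also check that the meet of the represented algebras is set intersection; this is part of that theorem as formulated in \cite{DT11}.
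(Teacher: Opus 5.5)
Your proposal is correct and follows the paper's proof essentially step for step: closure under the operations and natural meet as intersection, local finiteness inside the finite symmetric inverse semiring on $Y$, the copies of $B_2^1$ and $S_7$ on a two-point set, and $\mathsf{V}(T)=\mathsf{V}(T^1)=\mathcal R_{\cap}$ via the representation theorem together with Lemma~\ref{lem:partial-injection-finite-trace}. The only divergence is the named-zero case: the paper restricts a representation away from the fixed set of the zero map, whereas you appeal to Lemma~\ref{lem:constant}, whose proof adds exactly the three absorption laws to the binary basis and so gives the same explicit basis equally validly.
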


\begin{proof}
Finite partial bijections are closed under composition,
inversion and intersection.  Adjoining the full identity
preserves these properties.
The natural order is restriction, whose meet is intersection.
The sets are countably infinite.
For finitely many elements of $T$, the union $Y$ of
their domains and ranges is finite; they generate a
subsemiring of the finite symmetric inverse semiring
on $Y$.
For $T^1$, the generated subsemiring is contained in
that finite semiring together with the full identity.
Thus both algebras are locally finite, and in particular
every multiplicative cyclic subsemigroup is finite.

On a fixed two-point subset, take its identity, the four
singleton partial bijections, and the empty map.
They form $B_2^1$.  Its identity $e$, one off-diagonal
arrow $a$, and zero form $S_7$.
The arrow lies in $eTe$ and has distinct domain and
range idempotents, proving failure of local Cliffordness.

Every finite symmetric inverse semiring embeds in $T$.
The finite-trace lemma and the representation theorem
therefore give $\mathsf{V}(T)=\mathcal R_{\cap}$.
Since $T\subseteq T^1$ and $T^1\in\mathcal R_{\cap}$,
the latter has the same variety.
For the named-zero signature, add the three absorption
laws to the displayed finite basis.
To justify this directly, represent a model of those laws
as partial injections.  Its zero is an idempotent map,
thus the identity on some set $D$, and is contained
in every represented map.  All maps fix $D$, and
injectivity prevents points outside $D$ from mapping
into $D$.
Restricting to the complement of $D$ preserves both
operations and injectivity of the representation, and
sends zero to the empty map.
The finite-trace argument then applies unchanged.
\end{proof}

The variety in this proposition is not locally finite.
For example $\flat(\mathbb{Z})$ is represented by the
integer translations and the empty map, and is an infinite
two-generated member of $\mathcal R_{\cap}$.
For comparison, the orthogonal sum of countably many
copies of $B_2^1$ is also countably infinite, locally
finite, periodic and non-locally-Clifford, but is nonfinitely
based.  It embeds on the coordinate axes of a direct power
of $B_2^1$, and contains a copy of $B_2^1$, so its
variety is exactly $\mathsf{V}(B_2^1)$.

\begin{proposition}\label{prop:infinite-clifford-sylow-boundary}
There is a countably infinite locally finite group $G$
of exponent $3$ such that $\flat(G)$ is finitely
based and generates a locally finite variety, although
$G$ contains a nonabelian finite $3$-group.
\end{proposition}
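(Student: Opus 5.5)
Take for $G$ a countably infinite locally finite group of exponent $3$ that contains a nonabelian group of order $27$. The natural choice is the free Burnside group $B(\omega,3)$ of exponent $3$ on countably many generators. Exponent-$3$ groups are locally finite: this is classical (Burnside/Levi--van der Waerden). The free group $B(2,3)$ is nonabelian of order $27$, namely the Heisenberg group, and it embeds in $B(\omega,3)$. So $G$ is countably infinite, locally finite, of exponent $3$, and contains a nonabelian finite $3$-group.

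By Theorem~\ref{thm:flat-group-all}, since $G$ has finite exponent, $\flat(G)$ is finitely based exactly when the group quasi-equational theory of $G$ is finitely axiomatizable. So the main step is to show that the quasivariety generated by $G$ is finitely based. We plan to prove it equals the variety $\mathcal{B}_3$ of all groups of exponent $3$, which is axiomatized by the single identity $x^3\approx1$ together with the group axioms. The quasivariety generated by $G$ contains all of its subgroups and products. The free groups $B(n,3)$ of $\mathcal{B}_3$ embed in $G$, and every group in $\mathcal{B}_3$ is a quotient of some $B(\kappa,3)$. The delicate point is that quasivarieties need not be closed under quotients.

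To handle this, we will use a standard characterization. A group $H$ satisfies every quasi-identity of $G$ if, for each finite presentation in $\mathcal{B}_3$ and each failing consequence, there is a homomorphism to $G$ witnessing the failure. Since $H$ is locally finite, it suffices to check finite subgroups of exponent $3$. So we need the key fact that every finite group of exponent $3$ embeds in $G$. We obtain this because $B(\omega,3)$ contains copies of every finite group of exponent $3$. Such a group is a quotient of some $B(n,3)$, but we need embeddings, not quotients. Instead, we choose $G$ to be the restricted direct product of one copy of each finite group of exponent $3$, taken up to isomorphism. This is a countable, locally finite group of exponent $3$ that contains the nonabelian group of order $27$. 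Every finite exponent-$3$ group embeds in it, and every member of $G$'s quasivariety lies in $\mathcal{B}_3$. By local finiteness and compactness of quasi-identities, any exponent-$3$ group satisfies all quasi-identities of its finite subgroups, and hence of $G$. Therefore the quasivariety of $G$ is $\mathcal{B}_3$, which is finitely based, and $\flat(G)$ is finitely based by Theorem~\ref{thm:flat-group-all}.

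For local finiteness of $\mathsf{V}(\flat(G))$, we apply the zero-smash lemma to see that this variety is generated by the flat extensions $\flat(K)$ of finite exponent-$3$ groups $K$. On $k$ generators, a flat semiring term evaluated in $\flat(K)$ depends only on the word values in a $k$-generated subgroup of $K$. Those subgroups are quotients of the finite group $B(k,3)$. So the $k$-generated free algebra is a subalgebra of a product of copies of $\flat(B(k,3))$ indexed by the finitely many assignments, and is therefore finite. The expected main obstacle is the quasi-identity step: making rigorous that satisfying all quasi-identities of $G$ coincides with exponent $3$, which is where local finiteness of exponent-$3$ groups is essential.
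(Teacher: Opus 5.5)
Your group-theoretic argument is correct and is the paper's construction. You take a restricted direct product of one copy of each finite group in a locally finite, finitely based group variety, so that a failing quasi-identity is witnessed on a finite subgroup that embeds in $G$, and then apply the finite-exponent correspondence. The paper uses the class-two subvariety $\mathcal N$ defined by $x^3\approx1$, $[[x,y],z]\approx1$ instead of all exponent-$3$ groups, so its local finiteness is an elementary collection argument rather than Burnside's theorem; either choice works. One wording fix: the fact you need is that a quasi-identity holding in every finite subgroup of a locally finite group holds in the group. Your sentence about a group satisfying "all quasi-identities of its finite subgroups" does not say that.

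The gap is in the local-finiteness step. Your claim that the $k$-generated free algebra of $\mathsf{V}(\flat(G))$ is a subalgebra of a power of $\flat(B(k,3))$ is false for $k\geq4$. A surjection $\phi\colon B(k,3)\to K_0$ does not induce a semiring map $\flat(B(k,3))\to\flat(K_0)$: if $\phi(g)=\phi(h)$ with $g\neq h$, then $g+h=0$ while $\phi(g)+\phi(h)=\phi(g)\neq0$. Flat addition records coincidences among word values, so identities of $\flat(H)$ encode quasi-identities of $H$, and these are not inherited by quotients.

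Concretely, take the extraspecial group $3^{1+4}_+$ of exponent $3$, generated by $a_1,b_1,a_2,b_2$ with $[a_1,b_1]=[a_2,b_2]\neq1$ central and all other pairs of generators commuting. It violates
\[
 [a_1,a_2]=[a_1,b_2]=[b_1,a_2]=[b_1,b_2]=1,\quad [a_1,b_1]=[a_2,b_2]\ \Longrightarrow\ [a_1,b_1]=1,
\]
whereas every $B(n,3)$ satisfies it. To see this, note that modulo $\gamma_3$ the commutator $[u,v]$ becomes $\bar u\wedge\bar v\in\Lambda^2\mathbb{F}_3^n$. This forces $[a_1,b_1]\in\gamma_3$ and forces the images of the four generators in $\mathbb{F}_3^n$ to lie on one line. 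The formula $[x^\lambda c,x^\mu c']=[x,c']^\lambda[x,c]^{-\mu}$ for $c,c'\in\gamma_2$ then reduces the claim to a short linear-algebra check.

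Let $\Pi$ be the product of the five sums $\mathbf p+\mathbf q$ over the premises $\mathbf p=\mathbf q$, with inverses written as squares. Then $a_1b_1\Pi^3\approx b_1a_1\Pi^3$ is a four-variable identity of $\flat(B(4,3))$ that fails in $\flat(3^{1+4}_+)\leq\flat(G)$. The same group refutes your passing claim that $B(\omega,3)$ contains every finite exponent-$3$ group, so abandoning $B(\omega,3)$ was necessary.

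The repair is to count term functions directly, as the paper does. A word in $k$ variables acts on $\flat(G)$ as $0$ when some variable of its content is $0$, and otherwise as the image of its value in $B(k,3)$. Hence there are at most $2^k\lvert B(k,3)\rvert$ word functions. A flat sum of words is determined as a function by the set of word functions summed, so there are at most $2^{2^k\lvert B(k,3)\rvert}$ term functions in $k$ variables.
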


\begin{proof}
Let $\mathcal N$ be the group variety defined by
\[
 x^3\approx1,\qquad [[x,y],z]\approx1,
\]
where $[x,y]=x^{-1}y^{-1}xy$.
It is locally finite by an explicit collecting argument:
in a group generated by $g_1,\ldots,g_d$, centrality
of commutators puts every element in the form
\[
 g_1^{a_1}\cdots g_d^{a_d}
 \prod_{i<j}[g_i,g_j]^{b_{ij}},
 \qquad a_i,b_{ij}\in\{0,1,2\}.
\]
There are at most $3^{d+\binom d2}$ such forms.

Let $H_1,H_2,\ldots$ represent the isomorphism types of
finite groups in $\mathcal N$, and set
\[
 G=\bigoplus_{i\geq1}H_i,
\]
the restricted direct product with finite coordinate
support.  This is a countably infinite member of
$\mathcal N$, is locally finite, and contains every
finite group in $\mathcal N$, up to isomorphism, as a
coordinate subgroup.
Every quasi-identity failing in a group of $\mathcal N$
fails on a finitely generated, hence finite, subgroup.
That subgroup embeds in $G$.
Consequently the quasi-identities of $G$ are exactly
those of $\mathcal N$.
They have the displayed finite basis relative to groups.
The finite-exponent correspondence
\cite[Lemma 2.2]{RJZL} proves that $\flat(G)$
is finitely based.

Its generated semiring variety is also
locally finite directly.
A multiplicative word on $d$ variables is determined,
as a word function on $\flat(G)$, by its variable
content and its value in the relatively free group of
$\mathcal N$ on those variables.
The collecting bound makes the number of these functions
finite.  Every semiring term is an idempotent sum of them,
so there are only finitely many term functions in each
fixed finite number of variables.

Finally $H=\operatorname{UT}_3(\mathbb F_3)$ is a
nonabelian group of order $27$, of class two and
exponent three: $(I+N)^3=I$ for every strictly upper
triangular $3$-by-$3$ matrix $N$.
Thus $H$ embeds in $G$, and $\flat(H)$ is a
subsemiring of $\flat(G)$.
The finite criterion \eqref{eq:group-criterion} makes
$\flat(H)$ nonfinitely based.
The construction and Lemma~\ref{lem:constant} also give
the named-zero assertion.
\end{proof}

\subsection{Finite generating algebras}

\begin{theorem}\label{thm:finite-natural-generator-replacement}
Let $S$ be a natural inverse semiring.
If $\mathsf{V}(S)=\mathsf{V}(B)$ for some finite ai-semiring $B$,
there is a finite natural inverse semiring
$A$, embedded in a finite direct power of $S$, such
that $\mathsf{V}(A)=\mathsf{V}(S)$.
In particular, if such an $S$ is not locally Clifford,
it is nonfinitely based and has no bounded-variable
identity basis.  These assertions hold in both signatures.
\end{theorem}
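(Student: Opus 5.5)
The plan is to use local finiteness of $\mathsf{V}(B)$. Since $B$ is finite, $\mathsf{V}(S)=\mathsf{V}(B)$ is locally finite. Fix $n=|B|$.

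\emph{Step 1: a finite test bound.} Every identity failing in some member of $\mathsf{V}(B)$ also fails in $B$, under an assignment using at most $n$ distinct values. Identifying variables as in the proof of Theorem~\ref{thm:linear-variable-complexity}, we obtain an equivalent identity in at most $n$ variables. Hence $\mathsf{V}(S)$ is determined by its $n$-variable identities. Indeed, $\mathsf{V}(B)$ is generated by its relatively free algebra $F=F_{\mathsf{V}(S)}(n)$, which is finite and a subdirect factor of $B^{B^n}$. I would therefore aim to realize $F$, or something generating the same variety, inside a finite power of $S$.

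\emph{Step 2: a finite subalgebra of a finite power.} The relatively free algebra $F_{\mathsf{V}(S)}(n)$ embeds in $S^{S^n}$ via the term functions. Because $F$ is finite, only finitely many pairs of distinct elements of $F$ need separating. For each such pair, choose one assignment $\alpha\in S^n$ separating the corresponding terms. This gives a finite set $P\subseteq S^n$, and $F$ embeds into $S^P$ as the subalgebra $A_0$ generated by the $n$ tuples $(\alpha(x_i))_{\alpha\in P}$. This is a finite direct power of $S$, and $\mathsf{V}(A_0)=\mathsf{V}(F)=\mathsf{V}(S)$. However, $A_0$ need not be an inverse subsemigroup. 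So let $A$ be the sub-$(\cdot,+,{}^{-1})$-algebra of $S^P$ generated by the same tuples. Coordinatewise, $S^P$ is an inverse semigroup with natural meets given by coordinatewise meet. Any subset closed under product, inverse and meet is a natural inverse semiring whose natural meet is the inherited one; the restriction order agrees, as in the checks of Theorem~\ref{thm:smallest-natural}.

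\emph{Step 3: finiteness of $A$.} This is the main obstacle. I would show that $A$ is a finitely generated member of $\mathsf{V}(S)$ in the semiring language, by proving that inverses are already semiring terms on a locally finite, hence periodic, algebra. Each element $s$ of $A$ lies in the finite $\langle s\rangle\le S^P$, so some power $s^k$ is idempotent, and then $s^{-1}=s^{k-1}$ inside the group $\mathcal H$-class. But this only holds if $s$ lies in a subgroup, which fails in non-Clifford cases. I would try instead $s^{-1}=$ a term involving $d(s)$. Alternatively, I would bound $A$ directly: $\mathsf{V}(S)$ has only finitely many $n$-ary term functions in $(\cdot,+)$. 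Adding inverse, identities of $S$ in the enlarged signature $\{\cdot,+,{}^{-1}\}$ reduce terms to words with inverted variables, since $(xy)^{-1}=y^{-1}x^{-1}$ and $(x\wedge y)^{-1}=x^{-1}\wedge y^{-1}$. So $A$ is generated as a semiring by the $2n$ tuples and their inverses, and is finite by local finiteness. This second route is the one I would commit to.

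\emph{Step 4: conclusion.} $A$ is then a finite natural inverse semiring embedded in $S^P$, so $\mathsf{V}(A)\subseteq\mathsf{V}(S)$; it contains $A_0$, so equality holds. If $S$ is not locally Clifford, then $S_7\in\mathsf{V}(S)=\mathsf{V}(A)$ by Theorem~\ref{thm:s7-finite-detection}, provided $A$ is not locally Clifford. To get this, I would add to the generators a witnessing non-Clifford $eSe$ element, taken in a diagonal coordinate. Alternatively, Lemma~\ref{lem:brandt-monoid-divisor} shows that $B_2^1$ divides $S$ directly: $S$ contains $B_2^1$ from $e$, an arrow in $eSe$ and meets, using finite generation. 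Then Lemma~\ref{lem:s7-transfer} gives nonfinite basability, and Lemma~\ref{lem:locally-finite-rank} excludes a bounded-variable basis. Lemma~\ref{lem:constant} handles the named zero.
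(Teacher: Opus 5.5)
Your proposal is correct and is essentially the paper's argument. In both, the key point is that the subsemiring generated by finitely many elements together with their inverses is already inverse-closed, because inversion reverses products and preserves natural meets; local finiteness of $\mathsf{V}(B)$ then makes it a finite natural inverse semiring. The differences are minor. You realize $F_{\mathsf{V}(S)}(|B|)$ on finitely many separating assignments, whereas the paper pulls $B$ back through $\mathsf{HSP}$ and then keeps finitely many separating coordinates. Your diagonal witness, which makes $A$ itself non-locally-Clifford, replaces the paper's application of Lemma~\ref{lem:brandt-monoid-divisor} to $\langle e,a,a^{-1}\rangle\le S$.

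One correction to Step~1: identifying variables yields a weaker identity, not an equivalent one. Also, $\mathsf{V}(S)$ is in general \emph{not} axiomatized by its $|B|$-variable identities; by Lemma~\ref{lem:locally-finite-rank} this would contradict the nonfinite basability you prove. All you actually use is $\mathsf{V}(F_{\mathsf{V}(S)}(|B|))=\mathsf{V}(S)$, which holds because $B$ is a homomorphic image of that relatively free algebra.
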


\begin{proof}
The trivial variety is immediate, so assume it is nontrivial.
Since $B\in\mathsf{V}(S)$, choose a subsemiring $D\leq S^I$
and an onto homomorphism $q:D\to B$.
Choose one preimage for each element of $B$, and let
$R$ be their generated subsemiring in $D$.
The variety of the finite algebra $B$ is locally finite,
so $R$ is finite and $q|_R$ is still onto.
Inside $S^I$, let $A_0$ be generated by
$R\cup\{r^{-1}:r\in R\}$.
It is finite for the same local-finiteness reason.
Inversion reverses products and preserves natural meets,
so induction on terms shows that $A_0$ is inverse-closed.
It is also meet-closed, and its natural order is the
restriction of the ambient natural order.
Thus $A_0$ is a finite natural inverse semiring.

Since $R\leq A_0$ and $q|_R:R\to B$ is onto,
$B\in\mathsf{V}(A_0)$.  Hence
\[
 \mathsf{V}(B)\subseteq\mathsf{V}(A_0)\subseteq\mathsf{V}(S)=\mathsf{V}(B).
\]
Finitely many coordinates separate the finitely many
distinct pairs of elements of $A_0$.
Projection onto those coordinates is an embedding and
gives the required finite-power copy $A$.

If $S$ is not locally Clifford, choose an idempotent
$e$ and $a\in eSe$ with $d(a)\ne r(a)$.
The subsemiring generated by $e,a,a^{-1}$ is finite,
and the inverses of all its generators are present.
Since inversion reverses products and preserves natural
meets, induction on terms makes it inverse-closed.
It is still not locally Clifford.
Lemma~\ref{lem:brandt-monoid-divisor} puts $B_2^1$,
and hence $S_7$, in its generated variety.
Therefore $S_7\in\mathsf{V}(S)=\mathsf{V}(B)$.
Lemma~\ref{lem:s7-snfb} makes $B$, and hence $S$,
nonfinitely based.
Lemma~\ref{lem:locally-finite-rank} excludes a
bounded-variable basis.
In the named-zero language all generated subsemirings
and maps above preserve the designated zero.
\end{proof}

\begin{theorem}\label{thm:finite-variety-natural-classification}
Let $S$ be a natural inverse semiring such that
$\mathsf{V}(S)=\mathsf{V}(B)$ for some finite ai-semiring $B$.
Then $S$ is finitely based if and only if it is locally
Clifford and at least one of the following holds:
\begin{enumerate}[$(\rm i)$]
\item $S$ is combinatorial;
\item $S$ is Clifford, and every finite $p$-subgroup
of every maximal subgroup of $S$ is abelian, for every
prime $p$.
\end{enumerate}
Every nonfinitely based case has no bounded-variable identity
basis.  The criterion holds in both signatures, without a
cardinality restriction on $S$.
\end{theorem}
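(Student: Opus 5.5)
Our plan is to reduce to the finite case using Theorem~\ref{thm:finite-natural-generator-replacement}. That result produces a finite natural inverse semiring $A$, embedded in a finite direct power $S^n$, with $\mathsf{V}(A)=\mathsf{V}(S)$. Finite basability and the existence of a bounded-variable basis depend only on the variety, so $S$ and $A$ have the same status. If $S$ is not locally Clifford, the same theorem already shows that $S$ is nonfinitely based with no bounded-variable basis. So we may assume $S$ is locally Clifford. The singleton case is immediate; otherwise adjoin a zero in the binary language if necessary, as in Remark~\ref{rem:clifford-nozero}.

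In the locally Clifford case we translate the conditions on $A$ in Theorem~\ref{thm:finite-natural-classification} into conditions on $S$. Theorems~\ref{thm:local-clifford-representation} and~\ref{thm:orthogonal} identify $\mathsf{V}(S)$ with $\mathsf{V}(B_2(G))$ or $\mathsf{V}(\flat(G))$, where $G=\prod_D G_D$; the second case occurs exactly when $S$ is Clifford. The variety is generated by the finite algebra $B$, hence locally finite, so $S_7\notin\mathsf{V}(S)$. Corollary~\ref{cor:s7-local-exponent} then shows that the maximal subgroups of $S$ share a common finite exponent, so $G$ is locally finite of finite exponent.

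\emph{Non-Clifford case.} By Corollary~\ref{cor:local-clifford-all}, $S$ is finitely based iff combinatorial, and otherwise has no bounded-variable basis. This is alternative (i); alternative (ii) cannot hold here.

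\emph{Clifford case.} We have $\mathsf{V}(S)=\mathsf{V}(\flat(G))=\mathsf{V}(A)$. The Clifford semiring $A\le S^n$ is finite, and we apply Corollary~\ref{cor:finite-clifford-natural} to it. Its maximal subgroups are finite subgroups of products $\prod_i G_{e_i}$ of maximal subgroups of $S$. Hence, if every finite $p$-subgroup of every maximal subgroup of $S$ is abelian, we must show the same for the Sylow subgroups of $A$. A finite $p$-subgroup $P\le\prod_i G_{e_i}$ embeds subdirectly into the product of its coordinate projections. Each projection is a finite $p$-subgroup of some $G_{e_i}$, hence abelian, so $P$ is abelian. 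Thus $A$, and therefore $S$, is finitely based.

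Conversely, suppose some $G_e$ contains a finite nonabelian $p$-group $P$. Then $\flat(P)\le S$ and $\flat(P)\in\mathsf{V}(B)$ with $B$ finite. By \cite[Theorem 6.1]{JRZ}, $B$ is nonfinitely based, and hence so is $S$. Lemma~\ref{lem:locally-finite-rank}, applied to the locally finite variety $\mathsf{V}(B)$, rules out a bounded-variable basis. In every negative case the absence of a bounded-variable basis follows from local finiteness and Lemma~\ref{lem:locally-finite-rank}, and the signature transfer is handled by Lemma~\ref{lem:constant}.

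The main obstacle is the Clifford direction that transfers abelianness from $S$ to the finite Sylow subgroups of $A$. The subdirect-embedding argument above handles it, since $A$ sits inside a finite power of $S$. We also have to check that Cliffordness and combinatoriality of $A$ match those of $S$. These properties pass to subalgebras of powers. In the other direction, $A$ generates the same variety, and $S_7$-freeness together with Theorem~\ref{thm:orthogonal} detects rank two and nontrivial groups through the variety, so $A$ cannot lose them.
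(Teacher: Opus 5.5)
Your argument is correct in substance, but it is organized differently from the paper's. The paper sends every case through the finite replacement $A\le S^n$ and applies Theorem~\ref{thm:finite-natural-classification} to $A$. This forces it to transfer local Cliffordness, Cliffordness and combinatoriality in both directions between $A$ and $S$, using identity~\eqref{eq:regular-flat-identity} and the identities $x^{r+1}\approx x$, $x^{m+1}\approx x^m$ inherited from the finite $A$.

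You split instead on the structure of $S$ itself. The non-locally-Clifford case is the second half of Theorem~\ref{thm:finite-natural-generator-replacement}. The locally Clifford, non-Clifford case is Corollary~\ref{cor:local-clifford-all}, which needs no finiteness hypothesis at all. Only the Clifford case uses $A$, and there you need just the easy transfer ``$S$ Clifford $\Rightarrow$ $A$ Clifford''. Your Sylow argument through coordinate projections, and your appeal to \cite[Theorem 6.1]{JRZ} (applied to $B$ rather than $A$, which is equivalent), agree with the paper.

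Your route is therefore a bit more economical. The reverse structural transfer in your final paragraph is never used by your own case split. You can delete it, together with its vague justification via $S_7$-freeness.

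Two slips should be corrected. First, $\flat(P)\le S$ is false in general. For distinct $g,h\in P\le G_e$, the natural meet $g\wedge h$ is nonzero whenever $fg=fh$ for some nonzero idempotent $f<_{\rm nat}e$. So $P\cup\{0\}$ need not be closed under addition. What you need is $\flat(P)\in\mathsf{V}(S)$. This holds because $\flat(P)\le\flat(G_e)$ and $\flat(G_e)$ is the image of $S$ under the projection $\pi_e$ of Theorem~\ref{thm:clifford-natural}. Alternatively, $\flat(P)\le\flat(G)$ and you already have $\mathsf{V}(S)=\mathsf{V}(\flat(G))$.

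Second, local finiteness of $\mathsf{V}(B)$ does not by itself exclude $S_7$, since $S_7$ is finite and generates a locally finite variety. The common finite exponent comes from the identity $x^{m+r}\approx x^m$ of $B$ (Remark~\ref{rem:finite-variety-group-period}). $S_7$-freeness then follows from Corollary~\ref{cor:s7-local-exponent}. Neither fact is used later in your argument, so that sentence can simply be removed.
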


\begin{proof}
By Theorem~\ref{thm:finite-natural-generator-replacement},
there are a finite natural inverse semiring $A$ and an
embedding $A\leq S^n$, for some finite $n$, such that
$\mathsf{V}(A)=\mathsf{V}(S)$.  Finite basability is therefore the same
for $A$ and $S$.

We first compare their structural properties.
Local Cliffordness is equivalent to
\eqref{eq:regular-flat-identity} in the natural inverse
class, by Theorem~\ref{thm:regular-flat-identity}.
Since $A,S$ have the same identities, one is locally
Clifford exactly when the other is.
Cliffordness and being combinatorial pass from $S$ to
the inverse subsemigroup $A\leq S^n$: domain and range
idempotents agree coordinatewise in the Clifford case,
and all coordinate images of a subgroup are trivial in
the combinatorial case.
They also pass from $A$ to $S$, using the finiteness
of $A$.
If $A$ is Clifford, a common finite exponent $r$ of
its maximal subgroups gives the identity
$x^{r+1}\approx x$.  It holds in $S$, forcing every
element of $S$ to lie in a cyclic subgroup.
If $A$ is combinatorial, its finite multiplicative
semigroup satisfies $x^{m+1}\approx x^m$ for some
$m\geq1$.  This identity holds in $S$ and excludes
every nontrivial subgroup.  Thus each of the three
structural properties agrees for $A$ and $S$.

Suppose first that $S$ satisfies the proposed criterion.
If it is combinatorial, so is $A$; the finite
classification makes $A$, and hence $S$, finitely
based.
Otherwise $S$ and $A$ are Clifford.
Let $P$ be a Sylow subgroup of any maximal subgroup of
$A$.  It is a finite $p$-group for some prime $p$.
Each coordinate projection of the embedding $A\leq S^n$
maps $P$ to a finite $p$-subgroup of a maximal
subgroup of $S$.  These images are abelian by hypothesis.
The projections jointly separate elements, so any two
elements of $P$ commute.  All the required Sylow
subgroups of $A$ are abelian, and the finite
classification again gives finite basability.

Conversely suppose $S$, and therefore $A$, is finitely
based.  The finite classification makes $A$ locally
Clifford, so $S$ is locally Clifford as well.
If $A$ is combinatorial, the structural comparison
already proves (i).
Otherwise $A$, and hence $S$, is Clifford.
Suppose a maximal subgroup $G_e$ of $S$ had a finite
nonabelian $p$-subgroup $P$.
The flat-group projection of
Theorem~\ref{thm:clifford-natural} gives
\[
 \flat(P)\in\mathsf{V}(\flat(G_e))
 \subseteq\mathsf{V}(S)=\mathsf{V}(A).
\]
If $S$ has no zero, first use the identity-preserving
binary-language zero adjunction of
Remark~\ref{rem:clifford-nozero}.
The finite-target theorem
\cite[Theorem 6.1]{JRZ}, already used in
Theorem~\ref{thm:three-obstructions}, now makes the finite
ai-semiring $A$ nonfinitely based: its variety contains
the finite semiring $\flat(P)$, which has a
nonabelian nilpotent subgroup.
This contradicts the choice of $A$.  Thus (ii) holds.

Finally $\mathsf{V}(S)=\mathsf{V}(B)$ is locally finite because $B$
is finite and the signature is finite.
Lemma~\ref{lem:locally-finite-rank} therefore excludes a
bounded-variable identity basis in every negative case.
\end{proof}

\begin{remark}\label{rem:finite-variety-group-period}
Under the hypothesis of
Theorem~\ref{thm:finite-variety-natural-classification},
every maximal subgroup of $S$ is locally finite, and
these groups have a common finite exponent.
For the first assertion, a finite set of group generators
together with their inverses generates a finite subsemiring,
by local finiteness of $\mathsf{V}(B)$; the generated group is
contained in that finite subsemiring.
For the second, the finite multiplicative semigroup of
$B$ satisfies $x^{m+r}\approx x^m$ for some
$m,r\geq1$.  The same identity holds in $S$.
Inside any subgroup it cancels to $g^r=1$.
The classification applies when $\mathsf{V}(S)$ has a finite
generating algebra.
\end{remark}

\section{Inherent nonfinite basability}
\label{sec:inherent}

\subsection{Locally finite envelopes}
\label{sec:not-inherent}

For an algebra generating a locally finite variety,
\emph{inherently nonfinitely based} means that no locally
finite finitely based variety of its signature contains
it.  Finite regular flat semirings and finite naturally
ordered Clifford semirings admit explicit locally finite
finitely based envelopes.
The flat part also follows from the more general
\cite[Proposition 2.22]{RYZ26}, which excludes inherent
nonfinite basability for every finite ai-semiring of
additive height at most two.  The proof below constructs
a locally finite finitely based envelope and also applies
to finite natural Clifford semirings.

\begin{proposition}\label{prop:not-inherent}
Every finite regular flat semiring, and every finite
natural Clifford semiring, belongs to a locally
finite finitely based variety of ai-semirings.  Therefore
none is inherently nonfinitely based, in either stated
language.
\end{proposition}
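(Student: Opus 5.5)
We need an envelope: a finitely based, locally finite variety containing $S$. By the earlier reductions, $\mathsf{V}(S)$ is $\mathsf{V}(B_2(G))$, $\mathsf{V}(\flat(G))$ or the variety of the two-element semilattice with duplicated operation. Here $G$ is a finite group, by Theorem~\ref{thm:orthogonal}, Proposition~\ref{prop:regular-structure} and Theorem~\ref{thm:clifford-natural}. In each case let $r\geq1$ be the exponent of $G$.

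\textbf{Candidate envelope.} Define $\mathcal E_r$ by three groups of identities:
\begin{itemize}
\item[] the ai-semiring axioms;
\item[] the sixteen flat identities of \cite[Lemma 2.1]{JRZ}, which define $\mathcal F$;
\item[] the two periodicity laws $x^{r+2}\approx x^2$ and $x^2y^{r}x\approx x^2 x\,y^{r}\cdot$\,(adjusted), or more simply $x^{r+2}\approx x^2$ together with a law bounding the multiplicative reducts.
\end{itemize}
Containment is checked in $B_2(G)$ as in Theorem~\ref{thm:cyclic-minimum}: there $x^{r+2}\approx x^2$ holds on diagonal and off-diagonal elements alike. Every finite regular flat semiring and every natural Clifford semiring lies in $\mathcal F$ by Theorem~\ref{thm:regular-flat-identity}, since they are locally Clifford.

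\textbf{Local finiteness.} The variety $\mathcal F\cap[x^{r+2}\approx x^2]$ is not obviously locally finite, because the Burnside variety of exponent $r$ need not be. So I would instead take the multiplicative reduct to satisfy the identities of a locally finite semigroup variety. Let $\mathcal G$ be the group variety generated by $G$. It is finitely based by Oates--Powell and locally finite. Choose a finite basis $\Sigma$ of $\mathcal G$ and translate each group law $p\approx q$ into a semigroup law, with inverses written as $r-1$ powers and the law sandwiched between idempotent powers $x^{r}$. Then take $\mathcal E$ to be $\mathcal F$ together with these translated laws and $x^{r+2}\approx x^2$.

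To prove local finiteness, argue as in Proposition~\ref{prop:infinite-clifford-sylow-boundary}. In a flat member, a multiplicative word's value is determined by its variable content, its pattern of adjacent pairs (the Brandt index data), and its value in the relatively free group of $\mathcal G$. This must be shown for all members of $\mathcal E$, not only flat ones. My plan is to use that $\mathcal E\subseteq\mathcal F$, so every $k$-generated member is a quotient of the $k$-generated free algebra of $\mathcal E$, which embeds subdirectly into flat members. Flat members of $\mathcal E$ are then analysed via the zero-cancellative structure in the proof of Proposition~\ref{prop:regular-structure}, so that word functions on $k$ variables are finitely many. Sums of word functions are then finitely many, as in the proof of Theorem~\ref{thm:full-interval}.

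\textbf{Main obstacle.} The hardest step is the local finiteness of $\mathcal E$: non-regular flat members may have nilpotent parts. These are controlled by $x^{r+2}\approx x^2$ only on single variables, not on long products of distinct variables. If this fails, I would fall back on \cite[Proposition 2.22]{RYZ26} for the flat case, since finite flat semirings have additive height at most two. For the Clifford case I would use Theorem~\ref{thm:clifford-natural}: it gives $\mathsf{V}(S)=\mathsf{V}(\flat(G))$ with $G$ finite, so $S$ lies in the same envelope. Lemma~\ref{lem:constant} handles the named zero.
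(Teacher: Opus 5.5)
Your envelope $\mathcal E$ is not locally finite, so the main construction fails exactly at the step you flagged. Apart from the flat identities, every law you add constrains only powers of a single element, or group words sandwiched between powers $x^r$. All such laws hold trivially in any flat semiring whose squares vanish, provided each side contains a factor $x^s$ with $s\geq2$.

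For example, let $W$ be the Rees quotient of the free semigroup on $\{a,b,c\}$ by the ideal of words having a factor $\mathbf{u}\mathbf{u}$, with flat addition. It is $0$-cancellative, being a Rees quotient of a cancellative semigroup, so it is a flat ai-semiring as in Lemma~\ref{lem:SH}. All its squares are zero, so $x^{r+2}\approx x^2$ and your sandwiched laws hold in it. It is infinite by Thue's theorem on square-free ternary words, yet $3$-generated. Thus $W\in\mathcal E$.

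Your local-finiteness plan also rests on two unjustified steps. First, $\mathcal E\subseteq\mathcal F$ does not imply that the free algebras of $\mathcal E$ are subdirect products of flat members of $\mathcal E$. Second, Proposition~\ref{prop:regular-structure} describes only regular flat semirings, so it gives no control over nilpotent members such as $W$ or the prism semirings $S_H$.

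The missing idea is to impose all semigroup identities of the finite semigroup $B_2(G)$, not just periodicity and group laws. Each of your three cases gives $S\in\mathsf{V}(B_2(G))$, using a diagonal corner. Since semigroup identities are particular semiring identities, the multiplicative reduct of $S$ lies in the semigroup variety of $B_2(G)$. That variety has a finite basis $\Delta$ by \cite[Corollary 2]{Volkov19}; when $G$ is trivial, the three classical identities of $B_2$ suffice.

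The ai-semiring axioms together with $\Delta$ define a finitely based variety containing $S$, and local finiteness is then immediate. The semigroup variety of a finite semigroup is locally finite, so $\Delta$ reduces every word in $k$ variables to one of finitely many representatives. Idempotent sums of these are finitely many. The flat identities are not needed at all. What excludes $W$ is a law such as $xyx\approx(xy)^{r+1}x$, which is valid in $B_2(G)$ but fails in $W$ at $x=a$, $y=b$, since $aba\neq0=(ab)^{r+1}a$. This is the paper's proof.

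Your fallback to \cite[Proposition 2.22]{RYZ26} is logically valid, as the paper itself remarks. So is reducing the Clifford case to $\flat(G)$ via Theorem~\ref{thm:clifford-natural}. But that route simply outsources the statement.

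Finally, Lemma~\ref{lem:constant} concerns finite bases of $S$ itself, not envelopes. For the named zero, add the three absorption laws to the envelope. This keeps it finitely based and adds only one normal form, namely terms containing $0$.
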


\begin{proof}
Let $S$ be nonzero.  In the regular flat case use
its decomposition and finite product group $G$.
In the natural Clifford case use the finite product group
in Theorem~\ref{thm:clifford-natural}.
The exact variety formulas and the group-corner inclusion
give $S\in\mathsf{V}(B_2(G))$ in both cases.
Thus its multiplicative reduct
belongs to the semigroup variety of the finite
multiplicative semigroup $T=B_2(G)$.
The latter has a finite identity basis $\Delta$, by
\cite[Corollary 2]{Volkov19}; that result includes the
finite-group case using the finite group basis theorem.
For the trivial group one may use the usual three
Brandt semigroup identities.

Add $\Delta$ to the finite ai-semiring axioms and call
the resulting variety $\mathcal U$.  It contains $S$
and is finitely based.  It is also locally finite.
Indeed, the semigroup variety generated by finite $T$
is locally finite: on $k$ generators its free elements
are term functions on the finite set $T^k$, of which
there are finitely many.  Fix representatives of these
word classes.  Distributivity expands every semiring term
in $k$ variables into a sum of words, and $\Delta$
reduces each word to one of the fixed representatives.
Commutative idempotent addition leaves finitely many
possible sums.  Thus the free $k$-generated semiring
in $\mathcal U$ is finite.

In the named-zero language add the three absorption laws;
terms containing zero give one additional normal form.
The same local finiteness proof applies.  The zero-only
algebra has a trivial envelope separately.
\end{proof}

The proposition supplies explicit locally finite finitely
based envelopes for the examples in
Theorems~\ref{thm:seven} and~\ref{thm:cyclic-minimum}.

\subsection{Zimin words and the structural classification}

The binary-language inherent nonfinite basability of
$B_2^1$ was established in \cite[Corollary 2.25]{RYZ26}.
We give a direct proof of Zimin minimality and apply
\cite[Theorem 1.1]{RYZ26} to obtain the named-zero result
and the structural classification.

Put $Z_1=x_1$ and $Z_{n+1}=Z_nx_{n+1}Z_n$.
A nonempty word $\mathbf{w}$ is \emph{minimal in the additive
order} of an ai-semiring if
$\mathbf{u}+\mathbf{w}\approx \mathbf{w}$ implies literal equality $\mathbf{u}=\mathbf{w}$
for every nonempty word $\mathbf{u}$.
The additive order is opposite to the natural inverse
order when addition is natural meet.

\begin{lemma}\label{lem:brandt-zimin-minimal}
Every Zimin word is minimal in the additive order of
the naturally semilattice-ordered Brandt monoid $B_2^1$.
\end{lemma}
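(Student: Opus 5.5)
We have to show: if $\mathbf{u}$ is a nonempty word with $B_2^1\models \mathbf{u}+Z_n\approx Z_n$, then $\mathbf{u}=Z_n$ literally. In $B_2^1$ addition is natural meet, so the identity says $\mathbf{u}\geq_{\rm nat} Z_n$ under every assignment. Equivalently, whenever $Z_n$ is nonzero, $\mathbf{u}$ must be nonzero and restrict to the value of $Z_n$. The plan is to build, for each candidate $\mathbf{u}\ne Z_n$, one assignment in $B_2^1$ with $Z_n$ nonzero and $\mathbf{u}$ either zero or incomparable with $Z_n$. The available values are the global identity $1$, the matrix units $e_{ij}$, and $0$.

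\emph{Content.} Suppose first that $\mathbf{u}$ contains a variable not occurring in $Z_n$. Assign it $0$ and all other variables $1$; then $Z_n=1$ while $\mathbf{u}=0$, which is not above $1$. Now suppose some $x_i$ is missing from $\mathbf{u}$. Assign $x_i=e_{11}$ and all other variables $1$; then $Z_n=e_{11}$ while $\mathbf{u}=1$. Since $1\ne e_{11}$ and $1\not\leq_{\rm nat}e_{11}$, the inequality fails in the required direction. Hence $\mathbf{u}$ has the same content $\{x_1,\dots,x_n\}$ as $Z_n$.

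\emph{Induction on $n$.} The case $n=1$ asks us to show $\mathbf{u}=x_1$ when $\mathbf{u}$ is a power $x_1^k$. Assign $x_1=e_{12}$: then $x_1^k=0$ for $k\geq2$, while $Z_1\ne0$. For the inductive step, substituting $x_{n+1}\mapsto1$ and letting the other variables vary gives $B_2^1\models \mathbf{u}'+Z_{n-1}^{\,2}$-type relations. This is awkward, so I would instead argue with occurrences directly. In $Z_n$, the letter $x_1$ occupies every odd position and no two $x_1$'s are adjacent. Assigning $x_1=e_{12}$ and $x_j=e_{21}$ for all $j\ge2$ makes $Z_n=e_{12}$ nonzero. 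In $\mathbf{u}$, nonvanishing then forces letters to alternate strictly between $x_1$ and $\{x_j:j\ge2\}$, beginning and ending with $x_1$. So $\mathbf{u}=x_1y_1x_1y_2\cdots y_tx_1$ with each $y_s\in\{x_2,\dots,x_n\}$.

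Next, delete $x_1$ by assigning it $1$. Then $\mathbf{u}$ becomes $y_1\cdots y_t$ and $Z_n$ becomes $Z'_{n-1}(x_2,\dots,x_n)^2$-free, namely the Zimin word in $x_2,\dots,x_n$ with the alternate $x_1$'s removed, which is the shifted Zimin word $Z_{n-1}(x_2,\ldots,x_n)$. Setting $x_1=1$ is a legitimate specialization. The inequality therefore descends to $y_1\cdots y_t+Z_{n-1}\approx Z_{n-1}$ in the shifted variables, and the induction hypothesis gives $y_1\cdots y_t=Z_{n-1}$. Reinserting the alternating $x_1$'s recovers exactly $Z_n$. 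The content step is not really needed separately beyond the base case, but it is harmless to include.

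The main obstacle I expect is checking that the alternation assignment really forces the alternating shape. Specifically, $e_{12}e_{12}=0$ and $e_{21}e_{21}=0$, so any adjacent pair from the same class vanishes; and a word starting or ending with a non-$x_1$ letter gives a nonzero value other than $e_{12}$, which is incomparable with $e_{12}$. Each of these needs verification against the meet rules in $B_2^1$, including the case where the value is $e_{21}$ or a diagonal idempotent.
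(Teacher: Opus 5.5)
Your missing-variable step checks the wrong direction of the order, and the induction does not make up for it. With $x_i\mapsto e_{11}$ and every other variable $\mapsto1$, you get $Z_n=e_{11}$ and $\mathbf{u}=1$. The identity requires $Z_n\leq_{\rm nat}\mathbf{u}$, i.e. $e_{11}\leq_{\rm nat}1$, and this holds: $e_{11}=1\cdot e_{11}^{-1}e_{11}$, or additively $1+e_{11}=e_{11}$. When the other variables are sent to $1$, $\mathbf{u}$ evaluates to $1$, and every idempotent lies below $1$. So an idempotent value of $Z_n$ can never expose a missing letter this way.

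Your remark that the content step is ``not really needed'' is also wrong. In the inductive step the alternation argument gives $\mathbf{u}=x_1y_1x_1\cdots y_tx_1$ with $t\geq0$. When $t=0$, i.e. $\mathbf{u}=x_1$ with $n\geq2$, specializing $x_1\mapsto1$ leaves the empty word. Your induction hypothesis concerns nonempty words, so it does not apply, and the case $\mathbf{u}=x_1$ is never excluded.

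The fix is to give $Z_n$ a non-idempotent value. For example, $x_n\mapsto e_{12}$ and all other variables $\mapsto1$ gives $Z_n=e_{12}$ and $\mathbf{u}=x_1\mapsto1$. Then $e_{12}\not\leq_{\rm nat}1$, since $1\cdot e_{12}^{-1}e_{12}=e_{22}\neq e_{12}$. Equivalently, allow the empty word, read as $1$, in the induction hypothesis and exclude it in the same way.

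The paper does this uniformly. For each $i$ it assigns $x_j\mapsto1$ $(j<i)$, $x_i\mapsto e_{12}$, and $x_j\mapsto e_{21}$ $(j>i)$. Then $Z_n=e_{12}$, so $\mathbf{u}$ must equal the maximal element $e_{12}$. This forces $x_i$ to occur, and forces the word $\mathbf{u}_i$ (letters of index less than $i$ deleted) to have the form $x_iy_1x_i\cdots y_qx_i$. Descending reconstruction from $\mathbf{u}_n=x_n$ then gives $\mathbf{u}=Z_n$. Once repaired, your ascending induction through $x_1\mapsto1$ is this same argument run in the opposite direction.
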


\begin{proof}
Write $c=e_{12}$, $d=e_{21}$, and suppose
\[
 B_2^1\models \mathbf{u}+Z_n\approx Z_n.
\]
Equivalently $Z_n\leq_{\rm nat}\mathbf{u}$ under every assignment.
An extra variable in $\mathbf{u}$ is impossible: assign it
zero and all other variables the identity, obtaining
the false inequality $1\leq_{\rm nat}0$.
Thus $\mathbf{u}$ uses only $x_1,\ldots,x_n$.

For each $i$, assign
\[
 x_j\longmapsto
 \begin{cases}
 1,&j<i,\\
 c,&j=i,\\
 d,&j>i.
 \end{cases}
\]
The recursion and $cdc=c$ give value $c$ for
$Z_n$.
The off-diagonal matrix unit $c$ is maximal in the
natural order and is incomparable with $1$;
therefore $\mathbf{u}$ also has value $c$.

Let $\mathbf{u}_i$ be the word obtained from $\mathbf{u}$ by deleting
all letters of index less than $i$, with the auxiliary
convention $\mathbf{u}_{n+1}=\varepsilon$.
Under this assignment its $x_i$'s read as $c$ and
its larger letters as $d$.
The relations $c^2=d^2=0$ force any nonzero such
word to alternate.
An alternating word has value $c$ exactly when it
begins and ends with $c$; even lengths give $cd$
or $dc$, and odd lengths beginning with $d$ give
$d$.
Consequently
\[
 \mathbf{u}_i=x_i y_1x_i y_2x_i\cdots y_qx_i,\qquad
 \mathbf{u}_{i+1}=y_1\cdots y_q,
\]
where $q\geq0$ and each $y_j$ has index greater than $i$.
For $i=n$ this gives $\mathbf{u}_n=x_n$.
Descending induction then reconstructs each $\mathbf{u}_i$
uniquely by inserting $x_i$ before, between, and
after the letters of $\mathbf{u}_{i+1}$.
These are exactly the successive projections of
$Z_n$.  Hence $\mathbf{u}=\mathbf{u}_1=Z_n$ literally.
\end{proof}

By \cite[Theorem 1.1 and its construction]{RYZ26},
if every Zimin word is minimal for an ai-semiring $A$,
then for every $k\geq1$ there is an infinite finitely
generated flat ai-semiring $T_k$ satisfying all its
identities in at most $k$ variables.
In particular a finite $A$ with this property is
inherently nonfinitely based.

\begin{corollary}\label{cor:brandt-monoid-inherent}
The naturally semilattice-ordered Brandt monoid $B_2^1$ is
inherently nonfinitely based in both stated signatures.
\end{corollary}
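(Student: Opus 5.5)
The plan is to deduce Corollary~\ref{cor:brandt-monoid-inherent} directly from Lemma~\ref{lem:brandt-zimin-minimal} and the quoted form of \cite[Theorem 1.1]{RYZ26}, treating the two signatures separately. For the binary language $\{+,\cdot\}$: every Zimin word is minimal in the additive order of $B_2^1$ by Lemma~\ref{lem:brandt-zimin-minimal}. So for each $k$ the cited theorem supplies an infinite finitely generated flat ai-semiring $T_k$ satisfying every identity of $B_2^1$ in at most $k$ variables.

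Now suppose $B_2^1$ belonged to a locally finite finitely based variety $\mathcal V$. Choose $k$ at least the number of variables in every identity of a finite basis of $\mathcal V$. Each basis identity holds in $B_2^1\in\mathcal V$, so it is an identity of $B_2^1$ in at most $k$ variables, and therefore holds in $T_k$. Hence $T_k\in\mathcal V$. But $T_k$ is infinite and finitely generated, which contradicts local finiteness of $\mathcal V$. This gives the binary assertion. I would also note that $\mathsf V(B_2^1)$ is locally finite because $B_2^1$ is finite, so the definition applies.

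For the named-zero signature $\{+,\cdot,0\}$, I expect the main obstacle to be that $T_k$ is built in the binary language. Suppose $\mathcal V_0$ is a locally finite variety with a finite basis $\Sigma$ containing $(B_2^1;0)$. First, replace $\Sigma$ by the binary translation used in the second half of the proof of Lemma~\ref{lem:constant}. That is, substitute $\mathbf q(z)$ for $0$, where $\mathbf q(x)=x\cdot x'$ after identification. Concretely, take a unary constant-free term that is identically $0$ in $B_2^1$, for instance $\mathbf q(x)=x^2 + x$? This term is not identically zero on $1$, so I would need an honest zero term. Since the identity $1$ is an element, no constant-free unary term vanishes identically on $B_2^1$ (evaluate at $1$). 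So we are in the second case of that proof. There, constant-free identities of $B_2^1$ have matching variable content, and adjoining a fresh absorbing element $\infty$ to a binary model yields a named-zero model.

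So I would apply this step to $T_k$: form $T_k^\infty$ with $\infty$ named as zero. The identities of $\Sigma$ are valid in $B_2^1$ and have at most $k$ variables. Their constant-free members hold in $T_k$ and hence in $T_k^\infty$. Members containing $0$ have $0$ on both sides (by the content argument of Lemma~\ref{lem:constant}), so they evaluate to $\infty$ on both sides in $T_k^\infty$. Thus $T_k^\infty\in\mathcal V_0$. It is infinite and finitely generated, contradicting local finiteness. The delicate point to check is exactly that the constant-containing identities become trivial after adjoining $\infty$. That is, the two sides must have the same variable content and both contain $0$, and this is guaranteed by the variable-content argument already given in Lemma~\ref{lem:constant}.
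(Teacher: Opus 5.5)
Your proof is correct. The binary half is the paper's argument, with the step from ``$T_k$ satisfies every identity of $B_2^1$ in at most $k$ variables'' to inherent nonfinite basability written out.

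In the named-zero half you take a slightly different route. You adjoin a fresh absorbing element $\infty$ to $T_k$ and name it. This obliges you to check that constant-free identities of $B_2^1$ have matching variable content. That holds: set an exclusively occurring variable to $0$ and all others to $1$.

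The paper instead names the zero that the flat semiring $T_k$ already has. Constant-free identities then hold in $T_k$ unchanged, and no content argument is needed. The only further observation required is that a valid identity of $(B_2^1;0)$ containing $0$ contains it on both sides. Your version does not need $T_k$ to possess an absorbing zero, at the cost of that extra check.

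Two small corrections. First, delete the abandoned attempt with $\mathbf q$. Second, in your ``delicate point'' you say zero-containing identities have the same variable content; this is false, since $x0\approx0$ is valid. It is also unnecessary: because $0$ occurs on both sides, both sides evaluate to $\infty$.
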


\begin{proof}
In the binary language apply the quoted criterion to
Lemma~\ref{lem:brandt-zimin-minimal}.
For the named-zero language, a valid identity of
$B_2^1$ either contains no zero constant or contains
zero on both sides.
Indeed any term containing zero is identically zero,
whereas a constant-free term takes value $1$ when
all variables are $1$.
Name the existing absorbing zero of each $T_k$.
It now satisfies all at-most-$k$-variable identities
of the expansion, including those with zero.
If a locally finite finitely based variety contained
that expansion, choose $k$ bounding its finite basis.
Then it would contain the infinite finitely generated
$T_k$, a contradiction.
\end{proof}

\begin{theorem}\label{thm:finite-natural-inherent}
For a finite natural inverse semiring $S$,
in either stated signature,
\[
 S\text{ is inherently nonfinitely based}
 \quad\Longleftrightarrow\quad
 S\text{ is not locally Clifford}.
\]
\end{theorem}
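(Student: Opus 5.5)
The two directions are handled separately. In the locally Clifford case the semiring is shown to lie in an explicit locally finite finitely based variety. In the non-locally-Clifford case inherent nonfinite basability is transferred from $B_2^1$.

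\emph{Locally Clifford implies not inherently nonfinitely based.} Let $S$ be a finite locally Clifford natural inverse semiring; the singleton case is trivial. By Theorem~\ref{thm:local-clifford-representation}, $S$ is a subdirect product of the finite flat Brandt semirings $\mathcal{B}_D$, and its orthogonalization is a finite regular flat semiring $S'$ with $\mathsf{V}(S)=\mathsf{V}(S')$. Proposition~\ref{prop:not-inherent} gives a locally finite finitely based variety $\mathcal U$ containing $S'$. This $\mathcal U$ then contains $\mathsf{V}(S')=\mathsf{V}(S)$, so it contains $S$. Alternatively, one can argue directly: $S\in\mathsf{V}(B_2(G))$ for the finite product group $G$, and the envelope built from the finite semigroup basis of $B_2(G)$ works. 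In the named-zero signature the same envelope is used with the absorption laws added, as already noted in Proposition~\ref{prop:not-inherent}. Hence $S$ is not inherently nonfinitely based in either signature.

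\emph{Not locally Clifford implies inherently nonfinitely based.} By Lemma~\ref{lem:brandt-monoid-divisor}, $B_2^1$ is a subsemiring of a homomorphic image of a subsemiring of $S$, with the zero preserved. Hence $B_2^1\in\mathsf{V}(S)$, also after naming the absorbing zero. Suppose a locally finite finitely based variety $\mathcal W$ contained $S$. Then $\mathcal W$ would contain $B_2^1$, and in the named-zero signature it would contain the expansion of $B_2^1$, since the divisor constructions preserve the designated zero. This contradicts Corollary~\ref{cor:brandt-monoid-inherent}. The argument is simply that inherent nonfinite basability passes upward: any algebra whose variety contains an inherently nonfinitely based algebra is itself inherently nonfinitely based. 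This holds because varieties are closed under $\mathsf{H}$, $\mathsf{S}$ and $\mathsf{P}$.

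\emph{Main obstacle.} The only delicate point is the signature bookkeeping in the named-zero case. One must check that the divisor in Lemma~\ref{lem:brandt-monoid-divisor} is a divisor in the expanded signature, so that the named zero of $S$ maps to the zero of $B_2^1$. The lemma already records that all constructions preserve the absorbing zero, because the least element of $S$ is its absorbing zero and lies in the collapsed ideal $I$. One must also check that a variety in the expanded signature containing $S$ contains the expanded $B_2^1$, which Corollary~\ref{cor:brandt-monoid-inherent} covers. The remaining case, $S$ locally Clifford but $S=\{0\}$, is trivially not inherently nonfinitely based.
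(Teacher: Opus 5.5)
Your proposal is correct and follows essentially the same route as the paper: in the locally Clifford direction you pass to the orthogonalization and apply Proposition~\ref{prop:not-inherent}, and in the converse you combine the $B_2^1$ divisor of Lemma~\ref{lem:brandt-monoid-divisor} with Corollary~\ref{cor:brandt-monoid-inherent}, with zero-preservation handling the named-zero signature. The only point the paper states that you leave implicit is that finiteness of $S$ makes $\mathsf{V}(S)$ locally finite, which this paper's definition of inherent nonfinite basability presupposes.
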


\begin{proof}
If $S$ is locally Clifford, its finite regular flat
orthogonalization has the same identities by
Theorem~\ref{thm:local-clifford-representation}.
Proposition~\ref{prop:not-inherent} puts that
orthogonalization, and hence $S$, in a locally finite
finitely based variety.  The zero-only case has a trivial
envelope as well.
If $S$ is not locally Clifford, the semiring divisor
lemma gives $B_2^1\in\mathsf{V}(S)$.
By Corollary~\ref{cor:brandt-monoid-inherent}, no locally
finite finitely based variety can contain $S$, since
it would then contain $B_2^1$.
This is precisely the required INFB assertion; finiteness
ensures that $\mathsf{V}(S)$ itself is locally finite.
\end{proof}

\begin{corollary}\label{cor:locally-finite-variety-nonlocal}
Let $S$ be a natural inverse semiring with
$\mathsf{V}(S)$ locally finite.  If $S$ is not locally
Clifford, it is inherently nonfinitely based and has no
bounded-variable identity basis.
If, more specifically, $\mathsf{V}(S)=\mathsf{V}(B)$ for some finite
algebra $B$, then
\[
 S\text{ is inherently nonfinitely based}
 \quad\Longleftrightarrow\quad
 S\text{ is not locally Clifford}.
\]
There is no cardinality restriction on $S$.
Both assertions hold in both signatures.
\end{corollary}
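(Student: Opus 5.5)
The plan is to treat the first assertion by reducing to a finitely generated non-locally-Clifford piece, and to handle the second by combining Theorem~\ref{thm:finite-variety-natural-classification}, the envelope of Proposition~\ref{prop:not-inherent}, and the first assertion.

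\emph{First assertion.} Suppose $\mathsf{V}(S)$ is locally finite and $S$ is not locally Clifford. Choose an idempotent $e$ and $a\in eSe$ with $d(a)\ne r(a)$. As in the proof of Theorem~\ref{thm:finite-natural-generator-replacement}, let $S_0$ be the subsemiring generated by $e,a,a^{-1}$. Local finiteness makes $S_0$ finite. Induction on terms, using that inversion reverses products and preserves natural meets, shows that $S_0$ is inverse-closed and meet-closed. Its natural order is the restriction of the ambient one, so $S_0$ is a finite natural inverse semiring, and it is still not locally Clifford. Lemma~\ref{lem:brandt-monoid-divisor} then gives $B_2^1\in\mathsf{V}(S_0)\subseteq\mathsf{V}(S)$. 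By Corollary~\ref{cor:brandt-monoid-inherent}, no locally finite finitely based variety contains $B_2^1$, so none contains $S$. Since $\mathsf{V}(S)$ is locally finite, this is exactly inherent nonfinite basability. Being inherently nonfinitely based, $S$ is in particular nonfinitely based. Lemma~\ref{lem:locally-finite-rank}, applied to the locally finite variety $\mathsf{V}(S)$ in a finite signature, then excludes a bounded-variable basis. In the named-zero signature, all generated subsemirings and quotients preserve the zero, and Corollary~\ref{cor:brandt-monoid-inherent} covers that signature too.

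\emph{Second assertion.} Now assume $\mathsf{V}(S)=\mathsf{V}(B)$ with $B$ finite. This variety is locally finite, so the forward direction (not locally Clifford implies inherently nonfinitely based) is the first assertion. For the converse, suppose $S$ is locally Clifford. Theorem~\ref{thm:finite-natural-generator-replacement} provides a finite natural inverse semiring $A\le S^n$ with $\mathsf{V}(A)=\mathsf{V}(S)$. Local Cliffordness transfers from $S$ to $A$ through identity~\eqref{eq:regular-flat-identity}, by Theorem~\ref{thm:regular-flat-identity}, exactly as in the proof of Theorem~\ref{thm:finite-variety-natural-classification}. Theorem~\ref{thm:finite-natural-inherent} then shows that $A$ is not inherently nonfinitely based. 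Concretely, Proposition~\ref{prop:not-inherent}, applied to its orthogonalization, gives a locally finite finitely based variety containing $A$, hence containing $\mathsf{V}(A)=\mathsf{V}(S)\ni S$. So $S$ is not inherently nonfinitely based. The zero-only and singleton cases are immediate.

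\emph{Main obstacle.} The delicate point is the signature and zero bookkeeping. Lemma~\ref{lem:brandt-monoid-divisor} is stated for finite inverse semigroups, and its proof extracts an absorbing zero as the least element. I therefore apply it only to the finite $S_0$, where that argument is valid, rather than to $S$ itself, which may lack a zero. In the named-zero signature, $S$ already has the named zero and the divisor construction preserves it.
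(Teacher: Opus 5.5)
Your proposal is correct and follows the paper's proof essentially step for step: the finite inverse-closed subsemiring $\langle e,a,a^{-1}\rangle$, the divisor lemma, Corollary~\ref{cor:brandt-monoid-inherent} and Lemma~\ref{lem:locally-finite-rank} give the first assertion, and the finite replacement $A\le S^n$ with Theorem~\ref{thm:finite-natural-inherent} gives the converse. The only minor difference is that you transfer local Cliffordness to $A$ through identity~\eqref{eq:regular-flat-identity}, whereas the paper observes directly that an inverse subsemigroup of a direct power of a locally Clifford semigroup is locally Clifford; both routes work.
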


\begin{proof}
In the nonlocal case choose an idempotent $e$ and
$a\in eSe$ with $d(a)\ne r(a)$.
The subsemiring $N=\langle e,a,a^{-1}\rangle$ is finite
by local finiteness of $\mathsf{V}(S)$.
The inverses of its generators are present, inversion
reverses products, and inversion preserves natural meets.
Induction on terms therefore makes $N$ inverse-closed.
It remains non-locally-Clifford, so the finite divisor
lemma gives $B_2^1\in\mathsf{V}(N)\subseteq\mathsf{V}(S)$.
No locally finite finitely based variety containing $S$
can then exist, by Corollary~\ref{cor:brandt-monoid-inherent}.
The bounded-variable assertion follows from
Lemma~\ref{lem:locally-finite-rank}.

Under the finite-generator hypothesis, take the finite
natural inverse replacement $A\leq S^n$ from
Theorem~\ref{thm:finite-natural-generator-replacement}.
If $S$ is locally Clifford, so is this inverse
subsemigroup $A$ of its direct power.
Theorem~\ref{thm:finite-natural-inherent} puts $A$
in a locally finite finitely based variety.
Since $\mathsf{V}(A)=\mathsf{V}(S)$, the same variety contains $S$.
This proves the reverse direction.
\end{proof}

The semiring $S_7$ is strongly nonfinitely based and
belongs to a locally finite finitely based variety by
Proposition~\ref{prop:s7-infinite-boundary}.


\begin{thebibliography}{99}
\bibitem{GV}
S.~V.~Gusev and M.~V.~Volkov,
\emph{Semiring identities of finite inverse semigroups},
Semigroup Forum \textbf{106} (2023), 403--420.
\href{https://arxiv.org/abs/2204.10514}{arXiv:2204.10514}.
\bibitem{Sh}
V.~Yu.~Shaprynski\v{\i},
\emph{Semiring identities in the semigroup $B_2$},
preprint (2026).
\href{https://arxiv.org/abs/2604.01706}{arXiv:2604.01706}.
\bibitem{JRZ}
M.~Jackson, M.~Ren and X.~Zhao,
\emph{Nonfinitely based ai-semirings with finitely based semigroup
reducts}, Journal of Algebra \textbf{611} (2022), 211--245.
\href{https://arxiv.org/abs/2112.13918}{arXiv:2112.13918}.
\bibitem{RJZL}
M.~Ren, M.~Jackson, X.~Zhao and D.~Lei,
\emph{Flat extensions of groups and limit varieties of additively
idempotent semirings}, Journal of Algebra \textbf{623} (2023), 64--85.
\href{https://arxiv.org/abs/2208.13973}{arXiv:2208.13973}.
\bibitem{Jackson08}
M.~Jackson,
\emph{Flat algebras and the translation of universal Horn
logic to equational logic}, Journal of Symbolic Logic
\textbf{73} (2008), 90--128.
\href{https://doi.org/10.2178/jsl/1208358744}{doi:10.2178/jsl/1208358744}.
\bibitem{Jackson19}
M.~Jackson,
\emph{Low growth equational complexity}, Proceedings of the
Edinburgh Mathematical Society \textbf{62} (2019), 197--210.
\href{https://arxiv.org/abs/1607.07156}{arXiv:1607.07156}.
\bibitem{MS25}
G.~Metcalfe and S.~Santschi,
\emph{Equational theories of idempotent semifields},
Bulletin of the London Mathematical Society
\textbf{57}(3) (2025), 771--785.
\href{https://doi.org/10.1112/blms.13228}{doi:10.1112/blms.13228}.
\href{https://arxiv.org/abs/2402.09876}{arXiv:2402.09876}.
\bibitem{GRZ26}
Z.~Gao, M.~Ren and Y.~Zhou,
\emph{The finite basis problem for flat semirings
$S_c(W)$, $M_c(W)$ and $M(W)$}, preprint (2026).
\href{https://arxiv.org/abs/2609.40155v1}{arXiv:2609.40155v1}.
\bibitem{Garv71}
V.~S.~Garvacki\u{\i},
\emph{$\cap$-semigroups of transformations},
Teoriya Polugrupp i ee Prilozheniya \textbf{2} (1971),
3--13 (in Russian).
\bibitem{DT11}
W.~A.~Dudek and V.~S.~Trokhimenko,
\emph{The relation of semiadjacency on
$\cap$-semigroups of transformations},
Semigroup Forum \textbf{82} (2011), 109--120.
\href{https://doi.org/10.1007/s00233-010-9269-3}{doi:10.1007/s00233-010-9269-3}.
\bibitem{Volkov19}
M.~V.~Volkov,
\emph{Identities in Brandt semigroups, revisited},
Ural Mathematical Journal \textbf{5}(2) (2019), 80--93.
\href{https://doi.org/10.15826/umj.2019.2.008}{doi:10.15826/umj.2019.2.008}.
\href{https://arxiv.org/abs/1906.08175}{arXiv:1906.08175}.
\bibitem{RYZ26}
M.~Ren, M.~Yue and Y.~Zhou,
\emph{Inherently nonfinitely based additively idempotent
semirings}, preprint (2026), version 3.
\href{https://arxiv.org/abs/2609.27641v3}{arXiv:2609.27641v3}.
\end{thebibliography}
\end{document}